\theoremstyle{plain}
\newaliascnt{theorem}{dummy}
\newtheorem{theorem}[theorem]{Theorem}
\newaliascnt{proposition}{dummy}
\newtheorem{proposition}[proposition]{Proposition}
\newaliascnt{corollary}{dummy}
\newtheorem{corollary}[corollary]{Corollary}
\newaliascnt{lemma}{dummy}
\newtheorem{lemma}[lemma]{Lemma}
\newaliascnt{conjecture}{dummy}
\theoremstyle{definition}
\newaliascnt{definition}{dummy}
\newtheorem{definition}[definition]{Definition}
\newaliascnt{example}{dummy}
\newtheorem{example}[example]{Example}
\theoremstyle{remark}
\newaliascnt{remark}{dummy}
\newtheorem{remark}[remark]{Remark}
\numberwithin{equation}{section} 
\newcommand{\calA}{\mathcal{A}}
\newcommand{\calB}{\mathcal{B}}
\newcommand{\calC}{\mathcal{C}}
\newcommand{\calD}{\mathcal{D}}
\newcommand{\calE}{\mathcal{E}}
\newcommand{\calF}{\mathcal{F}}
\newcommand{\calG}{\mathcal{G}}
\newcommand{\calI}{\mathcal{I}}
\newcommand{\calM}{\mathcal{M}}
\newcommand{\calP}{\mathcal{P}}
\newcommand{\bbQ}{\mathbb{Q}}
\newcommand{\bbR}{\mathbb{R}}
\newcommand{\bbE}{\mathbb{E}}
\newcommand{\bbZ}{\mathbb{Z}}
\newcommand{\bbN}{\mathbb{N}}
\newcommand{\bbP}{\mathbb{P}}
\newcommand{\bfP}{\mathbf{P}}
\newcommand{\bfE}{\mathbf{E}}
\newcommand{\bfI}{\mathbf{I}}
\DeclareMathOperator*{\intxn}{\cap}
\DeclareMathOperator*{\Intxn}{\bigcap}
\DeclareMathOperator*{\union}{\cup}
\DeclareMathOperator*{\Union}{\bigcup}
\DeclareMathOperator{\diam}{diam}
\DeclareMathOperator{\diag}{diag}
\DeclareMathOperator{\supp}{supp}
\DeclareMathOperator{\Var}{Var}
\newcommand{\Hof}[1]{H\!\left(#1\right)}
\newcommand{\Eof}[2][]{\bfE_{#1}\!\left(#2\right)}
\newcommand{\Iof}[2][]{\bfI_{#1}\!\left(#2\right)}
\newcommand{\bbPof}[2][]{\bbP_{#1}\!\left\{#2\right\}}
\newcommand{\bbEof}[2][]{\bbE_{#1}\!\left(#2\right)}
\providecommand{\abs}[1]{\lvert#1\rvert}
\providecommand{\Abs}[1]{\left\lvert#1\right\rvert}
\providecommand{\norm}[2][]{\lVert#2\rVert_{#1}}
\providecommand{\Norm}[2][]{\left\lVert#2\right\rVert_{#1}}
\newcommand{\euclid}[1][d]{\mathbb{R}^{#1}}
\newcommand{\innp}[2]{\langle #1, #2 \rangle}
\newcommand{\wt}[1]{\widetilde{#1}}
\newcommand{\wh}[1]{\widehat{#1}}
\newcommand{\ul}[1]{\underline{#1}}
\newcommand{\ol}[1]{\overline{#1}}
\newcommand{\bigO}[2][]{O_{#1}\!\left(#2\right)}
\newcommand{\mFor}{\quad\text{for }}
\newcommand{\mAnd}{\quad\text{ and }\quad}
\newcommand{\by}[1]{\text{(by #1)}}
\newcommand{\dimH}{\dim_{\mathrm{H}}}
\newcommand{\dimP}{\dim_{\mathrm{P}}}
\newcommand{\HcalA}[1]{\mathrm{H}^{\mathcal{A}}_{#1}}
\newcommand{\betaOXj}[3]{\beta^{#1,\xi_{#3}}_{#2}}
\newcommand{\PiBall}[3][]{B^{\Pi_{#1}}\!\left(#2,#3\right)}
\newcommand{\ugamma}[2]{\ol{\gamma}_{#1,#2}^{\omega}(x)}
\newcommand{\lgamma}[2]{\ul{\gamma}_{#1,#2}^{\omega}(x)}
\newcommand{\cgamma}[2]{\gamma_{#1,#2}^{\omega}(x)}
\newcommand{\indicator}[1]{\textbf{\large 1}_{#1}}
\newcommand{\aev}{\text{-a.e.\@ }}
\newcommand{\calN}{\mathcal{N}}
\newcommand{\bfQ}{\mathbf{Q}}
\providecommand{\flr}[1]{\lfloor#1\rfloor}
\providecommand{\Flr}[1]{\left\lfloor#1\right\rfloor}
\newcommand{\calU}{\mathcal{U}}
\newcommand{\HcalCAn}[1]{\mathrm{H}^{\mathcal{C},\mathcal{A},n}_{#1}}
\newcommand{\hcalCA}[1]{\mathrm{h}^{\mathcal{C},\mathcal{A}}_{#1}}
\newcommand{\fof}[2][]{f_{#1}\left(#2\right)}
\newcommand{\hRWphiA}{h_{RW}(\Phi, \calA)}
\newcommand{\diff}{\mathrm{d}}
\newcommand{\wrt}{w.r.t.\@ }
\newcommand{\bfr}{\mathbf{r}}
\newcommand{\bft}{\mathbf{t}}
\title{Dimension of diagonal self-affine measures with exponentially separated projections}
\author{Zhou Feng}
\address{Faculty of Mathematics \\
	Technion - Israel Institute of Technonlogy\\
	Haifa, Israel}
\curraddr{}
\email{\href{mailto: zfeng@campus.technion.ac.il}{zfeng@campus.technion.ac.il}}
\thanks{}
\subjclass[2020]{28A80, 37C45}
\keywords{Self-affine measure, Hausdorff dimension, Lyapunov dimension, disintegration}
\thanks{This research was supported by the Israel Science Foundation (grant No.\ 619/22).}
\begin{document}
	
\begin{abstract}
	Let $ \mu $ be a self-affine measure associated with a diagonal affine iterated function system (IFS) $ \Phi = \{ (x_{1}, \ldots, x_{d}) \mapsto ( r_{i, 1}x_{1} + t_{i,1}, \ldots, r_{i,d}x_{d} + t_{i,d}) \}_{i\in\Lambda} $ on $ \mathbb{R}^{d} $ and a probability vector $ p = (p_{i})_{i\in\Lambda}$. For $ 1 \leq j \leq d $, denote the $ j $-th the Lyapunov exponent by $ \chi_{j} := \sum_{i\in\Lambda} - p_{i} \log | r_{i,j} |$, and define the IFS induced by $ \Phi $ on the $j$-th coordinate as $ \Phi_{j} := \{ x \mapsto r_{i,j}x + t_{i,j}\}_{i\in\Lambda}$. We prove that if $ \chi_{j_{1}} \neq \chi_{j_{2}} $ for $ 1 \leq j_{1} < j_{2} \leq d $, and $ \Phi_{j}$ is exponentially separated for $ 1 \leq j \leq d $, then the dimension of $ \mu $ is the minimum of $ d $ and its Lyapunov dimension. This confirms a conjecture of Rapaport~\cite{Rapaport2023} by removing the additional assumption that the linear parts of the maps in $ \Phi $ are contained in a 1-dimensional subgroup. One of the main ingredients of the proof involves disintegrating $ \mu $ into random measures with convolution structure. In the course of the proof, we establish new results on dimension and entropy increase for these random measures.
\end{abstract}

\maketitle

\section{Introduction}\label{sec:intro}

\subsection{Background and main results} Computing the dimension of self-affine fractals remains a fundamental open problem in fractal geometry; see \cite{BaranyEtAl2023a,Falconer2003}. This paper focuses on determining the dimension of diagonal self-affine measures under mild assumptions. 

An affine iterated function systems (IFS) is a nonempty finite collection $ \Phi = \{ \varphi_{i}(x) = A_{i} x + t_{i} \}_{i\in\Lambda}$ of contracting affine maps on $ \euclid[d] $. It is well known \cite{Hutchinson1981} that there is a unique nonempty compact $ K_{\Phi} $, called the \textit{self-affine set}, satisfying $ K_{\Phi} = \union_{i\in\Lambda} \varphi_{i}(K_{\Phi}) $. Given a probability vector $ p = (p_{i})_{i\in\Lambda} $, the associated \textit{self-affine measure} $ \mu $ is the unique Borel probability measure on $ \euclid[d] $ such that $ \mu = \sum_{i \in \Lambda} p_{i} \varphi_{i} \mu $, where $ \varphi_{i} \mu = \mu \circ \varphi_{i}^{-1} $ denotes the pushforward measure. When the linear parts $ \{A_{i}\}_{i\in\Lambda} $ are diagonal matrices, $ \Phi $ and $ \mu $ are referred to as \textit{diagonal}. In recent years, the exact dimensionality of self-affine measures has been established (see \cite{FengHu2009} for diagonal case and \cite{BaranyKaeenmaeki2017,Feng2023a} for general case). That is, there exists a number $ \dim \mu $, called the \textit{dimension} of $ \mu $, such that
\begin{equation*}
	\lim_{r\to 0} \frac{\log B(x,r)}{\log r} = \dim \mu \mFor \mu \aev x,
\end{equation*}
where $ B(x,r)$ denotes the closed ball centered at $x$ with radius $ r $.

The dimension theory of self-affine sets and measures has been extensively studied. Notably, Falconer~\cite{Falconer1988} introduced the \textit{affinity dimension} $ \dim_{A} \Phi $ which only depends on the linear parts $ \{A_{i}\}_{i\in\Lambda}$. He proved that if $ \norm{A_{i}} < 1/2 $ for all $ i \in \Lambda $, then for Lebesgue almost all translations $ \{t_{i}\}_{i\in\Lambda} $,
\begin{equation}\label{eq:Fal-eq}
	\dimH K_{\Phi} = \min \left\{d, \dim_{A}\Phi \right\},
\end{equation}
where $ \dimH $ denotes the Hausdorff dimension. (In fact, Falconer proved this for $ \norm{A_{i}} < 1/3 $; Solomyak~\cite{Solomyak1998} later showed that $ \norm{A_{i}} < 1/2$ suffices.) Similar results for self-affine measures were obtained by Jordan, Pollicott and Simon~\cite{JordanEtAl2007}, who showed that, under the same norm condition, for Lebesgue almost all $ \{t_{i}\}_{i\in\Lambda} $,
\begin{equation}\label{eq:JPS-eq}
	\dim \mu = \min \left\{d, \dim_{L}(\Phi, p) \right\},
\end{equation}
where $ \dim_{L}(\Phi, p)$ is the \textit{Lyapunov dimension} defined in \eqref{eq:def-LyDim}.

While the above results provide a characterization of typical cases, finding explicit and verifiable conditions for \eqref{eq:Fal-eq} and \eqref{eq:JPS-eq} to hold remains an open challenge. Recently, significant progress has been made in this direction, particularly under the assumption that $\{A_{i}\}_{i\in\Lambda}$ is strongly irreducible (see \cite{BaranyEtAl2019,HochmanRapaport2022,MorrisShmerkin2019} for $ d = 2 $ and \cite{Rapaport2024,MorrisSert2023} for $ d = 3$). 

Diagonal systems, which contrast with and complement the strongly irreducible case, form a significant subclass of IFSs that have been studied since the 1980s~\cite{Bedford1984,McMullen1984}. In this paper, we consider a diagonal affine IFS on $ \euclid[d]$:
\begin{equation}\label{eq:def-diagIFS}
	\Phi = \{ \varphi_{i}(x) = A_{i} x + t_{i} \}_{i \in \Lambda },
\end{equation}
where $ A_{i} = \diag (r_{i,1}, \ldots, r_{i,d})$ ($ 0 < \abs{r_{i,j}} < 1$) are diagonal matrices, and $ t_{i} = (t_{i,1}, \ldots, t_{i, d}) \in \euclid[d]$. Let $ K_{\Phi}$ denote the corresponding self-affine set. Given a probability vector $ p = (p_{i})_{i\in\Lambda}$, let $ \mu $ be the self-affine measure associated with $ \Phi $ and $p$. To state the results concerning the dimensions of $ K_{\Phi} $ and $ \mu $, we introduce some definitions. For $ 1 \leq j \leq d $, denote the \textit{$j$-th Lyapunov exponent} by $\chi_{j} := \sum_{i\in\Lambda} - p_{i} \log \abs{r_{i,j}} $, and define the \textit{IFS induced by $ \Phi $ on the $j$-th coordinate} as $ \Phi_{j} := \{ x \mapsto r_{i,j} x + t_{i,j} \}_{i\in\Lambda} $. Without loss of generality, we assume after possibly permuting the coordinates that $ \chi_{1} \leq \cdots \leq \chi_{d} $. The \textit{Lyapunov dimension} for $ \Phi $ and $ p $ is given by
\begin{equation}\label{eq:def-LyDim}
	\dim_{L}(\Phi, p) = f_{\Phi}(H(p)),
\end{equation}
where $ \Hof{p} := \sum_{i\in\Lambda} - p_{i} \log p_{i} $ is the \textit{entropy}, and $ f_{\Phi} \colon [0, \infty) \to [0, \infty) $ is a function defined as
\begin{equation}\label{eq:f-LyaDim}
	f_{\Phi}(x) = \begin{cases}
		j +\frac{x - \sum_{b=1}^{j}\chi_{b} }{\chi_{j+1}} & \text{if } x \in \left[ \sum_{b=1}^{j}\chi_{b}, \sum_{b=1}^{j+1}\chi_{b}\right) \text{ for some } 0 \leq j \leq d - 1; \\[0.7em]
		d \frac{x }{\sum_{b=1}^{d}\chi_{b}} & \text{if } x \in \left[ \sum_{b=1}^{d} \chi_{b}, \infty\right) .
	\end{cases}
\end{equation}

Next, we introduce the mild separation conditions, originally arising from Hochman's seminal work~\cite{Hochman2014}. Given two affine maps $ \psi_{1}, \psi_{2}  \colon \bbR \to \bbR $ with $ \psi_{i}(x) = s_{i}x + b_{i}$ for $i = 1, 2 $, define
\begin{equation*}
	d(\psi_{1}, \psi_{2}) := \begin{cases}
		\infty & \text{if } s_{1} \neq s_{2}; \\
		\abs{b_{1}-b_{2}} & \text{otherwise}.
	\end{cases}
\end{equation*}
For an affine IFS $ \Psi = \{\psi_{i}\}_{i\in \Lambda} $ on $ \bbR $ and $ n \in \bbN $, denote $ \psi_{u} = \psi_{u_{1}} \cdots \psi_{u_{n}} $ for $ u = u_{1} \cdots u_{n} \in \Lambda^{n} $. Define
\begin{equation} \label{eq:def-ESC-sep}
	\Delta_{n}(\Psi) = \min \{ d(\psi_{u}, \psi_{v}) \colon u, v\in \Lambda^{n}, \, u \neq v \}
\end{equation}
and
\begin{equation}\label{eq:def-Diop-sep}
	S_{n}(\Psi) = \min \{ d(\psi_{u}, \psi_{v}) \colon u, v\in \Lambda^{n}, \, \psi_{u} \neq \psi_{v} \},
\end{equation}
with the convention $ \min \emptyset = 0 $.

\begin{definition} \label{def:SepConds}
	Let $ \Psi $ be an affine IFS on $ \bbR $. We call $ \Psi $ \textit{exponentially separated} (resp.\ \textit{Diophantine}) if there exists $ c > 0 $ so that $ \Delta_{n}(\Psi) > c^{n} $ (resp.\ $S_{n}(\Psi) > c^{n} $) for infinitely many $ n \in \bbN $. We say $ \Psi $ has \textit{no exact overlaps} if $ \Delta_{n}(\Psi) > 0 $ for all $ n \in \bbN $, or equivalently, the semigroup generated by $ \Psi $ is free.
\end{definition}

\begin{remark}\label{rml:seps}
	It follows from \autoref{def:SepConds} that $ \Psi $ is exponentially separated if and only if $ \Psi $ is both Diophantine and has no exact overlaps. Furthermore, $ \Psi $ is Diophantine if it is defined by algebraic parameters (see \cite{Hochman2014}).
\end{remark}

Very recently, Rapaport~\cite{Rapaport2023} made a breakthrough in the dimension theory of diagonal self-affine sets and measures. Specifically, \cite[Theorem 1.3]{Rapaport2023} establishes that \eqref{eq:Fal-eq} holds if, for each $ 1 \leq j_{1} < j_{2} \leq d $ there is $ i \in \Lambda $ so that $ \abs{r_{i,j_{1}}} \neq \abs{r_{i,j_{2}}}$, and $ \Phi_{j} $ is exponentially separated for $ 1 \leq j \leq d $. This builds on an analogous result regarding the dimension of $ \mu $ (\cite[Theorem 1.7]{Rapaport2023}) under the additional assumption that the linear parts of $ \Phi$ lie within a $ 1$-dimensional subgroup. That is, there exist $ c_{1}, \ldots, c_{d} > 0 $ such that
\begin{equation*}
	(\abs{r_{i,1}}, \ldots, \abs{r_{i,d}} ) \in \left\{ (c_{1}^{t}, \ldots, c_{d}^{t}) \colon t\in \bbR \right\} \text{ for all } i \in \Lambda.
\end{equation*}
This assumption is satisfied, in particular, when $ A_{i} $ is the same for all $ i \in \Lambda $. Regarding this assumption, Rapaport pointed out that his argument crucially depends on it, but he expects the result remains true without it (see \cite[Remark 1.8]{Rapaport2023}). Our main result confirms his conjecture by removing the additional assumption.

\begin{theorem} \label{thm:main-mu}
	If $ \chi_{1} < \cdots < \chi_{d}$ and $ \Phi_{j} $ is exponentially separated for $ 1 \leq j \leq d$, then 
	\begin{equation*}
		\dim \mu = \min \left\{d, \dim_{L}(\Phi, p)\right\}.
	\end{equation*}
\end{theorem}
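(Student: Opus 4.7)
The plan is to handle the upper and lower bounds separately. The upper bound $\dim\mu\le\min\{d,\dim_L(\Phi,p)\}$ is standard: since each $A_i$ is diagonal, its singular values are just the $\abs{r_{i,j}}$, so a direct covering argument in the spirit of \cite{JordanEtAl2007}, combined with the piecewise-linear definition \eqref{eq:f-LyaDim} of $f_{\Phi}$, delivers the bound. The content of the theorem is the lower bound, which I would establish by induction on $d$. The base case $d=1$ is Hochman's theorem for exponentially separated self-similar measures. For the inductive step, let $\pi\colon\bbR^{d}\to\bbR^{d-1}$ denote projection onto the first $d-1$ coordinates and let $\Phi'$ be the diagonal IFS induced on $\bbR^{d-1}$; all hypotheses pass to $\Phi'$, so by induction
\[ \dim\pi\mu=\min\{d-1,\dim_{L}(\Phi',p)\}. \]
By the Feng--Hu exact-dimensionality theorem \cite{FengHu2009},
\[ \dim\mu=\dim\pi\mu+\dim\mu_{\pi}^{x}\quad\text{for $\mu$-a.e.\ }x, \]
where $\mu_{\pi}^{x}$ is the conditional measure on the fibre $\pi^{-1}(\pi x)$ (parallel to the $x_{d}$-axis). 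A short check against \eqref{eq:f-LyaDim} shows that the theorem reduces to the fibre bound
\[ \dim\mu_{\pi}^{x}\;\geq\;\min\!\left\{1,\,\frac{H(p)-\sum_{b=1}^{d-1}\chi_{b}}{\chi_{d}}\right\}\quad\text{for $\mu$-a.e.\ }x, \]
the right-hand side being interpreted as $0$ when $H(p)<\sum_{b=1}^{d-1}\chi_{b}$ (a case where the conditional measure is expected to be atomic and the bound is trivial).

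To access $\mu_{\pi}^{x}$ I would, following the strategy indicated by the abstract, disintegrate $\mu$ via the coding map, drawing $\omega=\omega_{1}\omega_{2}\cdots\in\Lambda^{\bbN}$ according to $p^{\bbN}$. Conditioning $\omega$ on the itinerary of its first $d-1$ coordinate projections, and truncating at a stopping time $n=n(\omega)$ chosen so that $\prod_{k\le n}\abs{r_{\omega_{k},d}}$ lies in a prescribed dyadic range, produces a random probability measure on $\bbR$ whose distribution is determined by $\mu_{\pi}^{x}$. Because $A_{i}$ is diagonal, this random measure has a \emph{convolution structure}: up to a random translate it factors as the convolution of a $1$-dimensional self-similar-type measure governed by $\Phi_{d}$ and $p$ with a random atomic measure capturing the residual freedom in $\omega$. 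The assumption $\chi_{1}<\cdots<\chi_{d}$ is used here to guarantee that the stopping times $n(\omega)$ are sharply concentrated and that the horizontal scales separate cleanly, while the exponential separation of $\Phi_{d}$ ensures that the self-similar factor has a uniform positive-dimensional Frostman bound on every dyadic scale.

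The main obstacle, and the step where the argument must genuinely go beyond \cite{Rapaport2023}, is an entropy/dimension-increase theorem for this family of random convolutions \emph{without} the one-dimensional-subgroup hypothesis. In Rapaport's setting there is a single parameter $t$ aligning contractions across all coordinates, which allows a coordinated application of Hochman's inverse theorem with a deterministic schedule of scales; in our setting the stopping times $n(\omega)$ fluctuate with $\omega$, so the entropy bookkeeping has to be done conditionally along the $\pi$-fibre. My plan is: (i) establish Shannon--McMillan-type control for the entropy of the random conditional measure at scale $2^{-n}$, using the strict separation of the $\chi_{j}$'s to align scales across coordinates up to bounded fluctuations; (ii) apply a Hochman-style convolution entropy-increase step, feeding in the positive-dimensional self-similar factor from $\Phi_{d}$, to prove strict entropy gain at a positive proportion of scales whenever the target dimension is not yet attained; and (iii) bootstrap these scale-wise gains along a subsequence furnished by the exponential separation of $\Phi_{d}$, converting them into a genuine lower bound on the local dimension of $\mu_{\pi}^{x}$. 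Combined with the inductive estimate on $\dim\pi\mu$ and exact dimensionality, this delivers the required fibre bound and completes the proof; I expect essentially all the technical weight to fall on step (ii), namely making the convolution entropy-increase argument robust to the $\omega$-dependent schedule of scales.
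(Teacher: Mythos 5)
Your high-level plan (disintegrate into random measures with convolution structure, prove an entropy-increase result, combine with a Ledrappier--Young formula and induction on $d$) is correct, but the specific disintegration you choose is the wrong one, and this is precisely where the difficulty lies.

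You propose conditioning on the itinerary of the first $d-1$ coordinate projections and truncating at a stopping time $n(\omega)$ at which $\prod_{k\le n}\abs{r_{\omega_k,d}}$ hits a prescribed dyadic range. This is the ``minimal cut-set'' construction: you force scales to line up in the $d$-th coordinate and hope to control the residual fluctuations. The paper explicitly argues (see the discussion before \autoref{thm:ExactDim-Intro}) that such cut-sets can be engineered only in conformal settings or when the linear parts lie in a one-dimensional subgroup; in the general non-homogeneous diagonal case, cutting so that one coordinate's contractions are comparable leaves the remaining coordinates' contractions all over the place, and the non-conformal partitions needed for the entropy-increase argument cannot be defined in an $\omega$-uniform way. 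You acknowledge that ``the stopping times $n(\omega)$ fluctuate with $\omega$'' and hope to ``do the entropy bookkeeping conditionally,'' but this understates the problem: after your conditioning, the $d$-th coordinate contraction ratios are still random, so the fluctuations persist at every scale and do not self-average in the way an entropy-increase argument requires.

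The missing idea is to condition on the \emph{entire diagonal matrix} $A_{\varphi_{x\mid N}}$, i.e.\ on the sequence of linear parts in all $d$ coordinates simultaneously, rather than only on the first $d-1$ coordinate itineraries. This is the partition $\Gamma$ of \eqref{eq:def-gamma} and the $\sigma$-algebra $\calA=\vee_{n\ge0}T^{-n}\Gamma$. The point is that under this conditioning the resulting random measure $\mu^\omega$ has a schedule of scales that is \emph{deterministic given $\omega$}: one gets the dynamical self-affinity $\mu^\omega=\nu^\omega_n*A^{\omega\mid n}\mu^{T^n\omega}$ with $A^{\omega\mid n}$ a function of $\omega$ alone, so the non-conformal partitions $\calE^\omega_n=A^{\omega\mid n}\calD_0^d$ are determined by the fibre and the Berry--Esseen/inverse-theorem machinery for repeated self-convolutions (\autoref{prop:conv-e}, \autoref{thm:omega-e-increase}) can be run with no residual randomness in the scale schedule. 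No stopping time is needed; the truncation parameter $N$ is deterministic and is sent to infinity only at the very end, which is exactly what makes $h_{RW}(\Phi,\calA)\to H(p)$.

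A secondary but nontrivial difference: you invoke the Feng--Hu exact-dimensionality theorem for $\mu$ and reduce to a bound on the fibre $\mu^x_\pi$. The paper instead establishes a new Ledrappier--Young formula for the conditional measures $\mu^\omega$ themselves (\autoref{thm:L-Y-formula}), which is not a direct consequence of \cite{FengHu2009} because the fibre measures are only ``dynamically self-affine.'' Your reduction to a one-dimensional fibre estimate also differs from the paper's route through the $d$-dimensional entropy quantity $\kappa_\calA$ and the super-exponential concentration theorem \autoref{thm:super-exp}; the one-dimensional fibre framing is not obviously wrong, but it is not where the argument is carried out, and the crucial entropy-increase step has to be proved in $\bbR^d$ against the $\omega$-adapted non-conformal partitions, not as a one-dimensional statement along a single fibre.
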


Before discussing the proof of \autoref{thm:main-mu} in \autoref{subsec:AboutPf}, we provide some remarks on the assumptions and discuss several applications.

\begin{remark}\label{rmk:LyExp-Distinct}
	Due to the phenomenon of saturation (see \cite[Example 1.2]{Hochman2017}), it is not hard to find examples showing that the assumption $ \chi_{1} < \cdots < \chi_{d} $ cannot be dropped. For the reader's convenience, we give one such example. Let $ \lambda \in \bbQ \intxn (1/\sqrt{2}, 1) $ and $ n > 2 $ such that $ \lambda^{n} < 1/3 $. Define $\Psi = \{ \psi_{0}(x) = \lambda x, \psi_{1}(x) = \lambda x + 1 \} $. Consider the IFS $ \Phi = \left\{ \varphi_{u}  \right\}_{u\in \{0,1\}^{n}} $ on $ \euclid[2] $ given by $ \varphi_{0\cdots 0}(x,y) = (\lambda^{n} x + \psi_{1\cdots 1}(0), \lambda^{n} y) $, $ \varphi_{1\cdots 1}(x,y) = (\lambda^{n} x, \lambda^{n} y + \psi_{1\cdots 1}(0) ) $ and $ \varphi_{u}(x,y) = (\lambda^{n}x + \psi_{u}(0), \lambda^{n}y + \psi_{u}(0))$ for $ u \notin \{ 0\cdots 0, 1 \cdots 1 \}$. Let $ \mu $ the self-affine measure associated with $ \Phi $ and the uniform probability vector $ p $ on $ \{0,1\}^{n}$. Since the orthogonal projection of $ \Phi $ onto the line $ \{(t,-t) \colon t \in \bbR \} $ generates a Cantor set, it follows from $ \lambda^{n} < 1/3 $ and $ \lambda > 1 / \sqrt{2} $ that
	\begin{equation*}
		\dim \mu \leq  1 + \frac{\log 3}{ - n \log \lambda } < 2 = \min \left\{ 2, \frac{\log 2}{-\log \lambda}\right\} = \min \left \{2, \dim_{L}(\Phi, p) \right\}. 
	\end{equation*}
	On the other hand, by \autoref{rml:seps} and $ \lambda \in \bbQ $, the IFS $ \Phi_{1} = \Phi_{2} = \Psi^{n} = \{ \psi_{u}\}_{u\in\{0,1\}^{n}}$ is exponentially separated.
\end{remark}

\begin{remark}
		Various carpet-like examples (see e.g.\ \cite{Bedford1984,McMullen1984,LalleyGatzouras1992,Baranski2007,FengWang2005,Fraser2012a}) indicate that it is necessary to assume that $ \Phi_{j}$ has no exact overlaps for $ 1 \leq j \leq d $. One may expect that the result remains true under this necessary assumption. Recently, Rapaport and Ren~\cite{RapaportRen2024} verified this conjecture for homogeneous diagonal IFSs with rational translations.\footnote{The author believes that incorporating the results from \cite{FengFeng2024} into~\cite{RapaportRen2024} can relax the assumption of rational translations to algebraic translations.} However, even when $ d = 1 $, this conjecture is considered one of the major open problems in fractal geometry and well beyond our reach (see \cite{Hochman2018,Varju2023}).
\end{remark}

\subsection{Applications}

By \autoref{rml:seps}, the following is a direct application of \autoref{thm:main-mu}. 

\begin{corollary} \label{cor:alg-param}
	Suppose $ \chi_{1} < \cdots < \chi_{d}$. If for $ 1 \leq j \leq d $, $ \Phi_{j} $ is defined by algebraic parameters and has no exact overlaps, then $ \dim \mu = \min \left\{ d, \dim_{L} (\Phi, p)\right\}$.
\end{corollary}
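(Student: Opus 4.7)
The plan is simply to verify that, under the hypotheses of the corollary, each $\Phi_{j}$ is exponentially separated, and then invoke \autoref{thm:main-mu} directly. The Lyapunov-exponent ordering assumption $\chi_{1} < \cdots < \chi_{d}$ already matches the corresponding hypothesis of \autoref{thm:main-mu}, so the only thing to upgrade is the separation condition on each coordinate IFS.

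To carry this out, I would appeal to \autoref{rml:seps}, which packages two standard facts: (i) any affine IFS on $\bbR$ defined by algebraic parameters is automatically Diophantine (proved in \cite{Hochman2014}, the idea being that $S_{n}(\Psi)$, when nonzero, is an algebraic number of controlled degree and Mahler measure, so a Liouville-type bound forces $S_{n}(\Psi) > c^{n}$); and (ii) exponential separation is equivalent to the conjunction of Diophantine and no exact overlaps, since $\Delta_{n}(\Psi) = S_{n}(\Psi)$ precisely when no exact overlaps occur at level $n$.

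Applying these to our situation: for each fixed $1 \leq j \leq d$, the hypothesis that $\Phi_{j}$ is defined by algebraic parameters yields, via (i), that $\Phi_{j}$ is Diophantine; combining with the hypothesis that $\Phi_{j}$ has no exact overlaps and applying (ii), we conclude that $\Phi_{j}$ is exponentially separated. Since this holds for every $j$, all hypotheses of \autoref{thm:main-mu} are satisfied, and that theorem yields $\dim \mu = \min\{d, \dim_{L}(\Phi, p)\}$.

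There is no substantive obstacle: the corollary is a straightforward repackaging, and the only content beyond \autoref{thm:main-mu} is the algebraic characterization of exponential separation recorded in \autoref{rml:seps}.
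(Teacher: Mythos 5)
Your proposal is correct and matches the paper's own reasoning: the paper states the corollary follows from \autoref{rml:seps} (algebraic parameters give Diophantine, and Diophantine plus no exact overlaps gives exponential separation) combined with \autoref{thm:main-mu}, which is precisely the chain of implications you spell out.
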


Below we determine the dimension of a concrete new example by \autoref{cor:alg-param}.

\begin{example}
	Let $ a, b \in (1/2,1)$ be distinct algebraic numbers such that $ P(a,b) \neq 0 $ for each two\nobreakdash-variable polynomial $ P $ with coefficients in $ \{0, \pm 1 \}$ and $ P(0, 0) = 1 $. For example, choose $ a = q_{1}/q_{2}, b = q_{2}/q_{3} \in \bbQ$, where $ q_{1}, q_{2}, q_{3} $ are distinct prime numbers. Let $ \mu $ be the self-affine measure associated with the IFS $ \Phi = \{ (x,y) \mapsto (\alpha x, \beta y), (x,y) \mapsto (\beta x + 1, \alpha y + 1)\}$ on $ \euclid[2]$ and the probability vector $ p = (p_{1}, 1 - p_{1})$ with $ p_{1} \in (0, 1/2)$. Then $ \dim \mu = \min \{ 2, \dim_{L} (\Phi, p)\}$.
\end{example}


Next, we give a result about the typical validity of \eqref{eq:JPS-eq} in the spirit as \cite[Theorem 1.8]{Hochman2014}. By $ \dimP $ we denote the packing dimension. Recall that $ \dimH E \leq  \dimP E  $ for $ E \subset \euclid[d]$. For $ m \geq 2 $, let $ \Delta^{m-1}$ denote the set of probability vectors in $ \euclid[m] $.

\begin{corollary} \label{prop:typical-Dim}
Let $ m \geq 2 $ and let $ \bft = (t_{i,j})_{1 \leq i \leq m, 1 \leq j \leq d }\in \euclid[dm] $ such that $ t_{i_{1},j} \neq t_{i_{2},j} $ for $ 1 \leq i_{1} < i_{2} \leq  m $ and $ 1 \leq j \leq  d $.  For $ \bfr = (r_{i,j})_{1 \leq i \leq m, 1 \leq  j \leq d } \in ( (-1,1) \setminus \{0\} )^{dm} $ and $ p \in \Delta^{m-1} $, let $ \mu_{\bfr,p}$ denote the self-affine measure associated with the IFS $ \Phi_{\bfr} = \left \{ (x_{j})_{1 \leq j \leq d} \mapsto (r_{i,j}x_{j}+ t_{i,j})_{1 \leq j \leq d }\right\}_{i=1}^{m} $ and the probability vector $ p $. Then, there exists $ \calE_{\bft} \subset  ( (-1,1) \setminus \{0\} )^{dm} $ with $ \dimP \calE_{\bft} \leq dm - 1 $ such that for $ \bfr \notin \calE_{\bft} $, there exists $ \calF_{\bfr} \subset \Delta^{m-1} $ with $ \dimP \calF_{\bfr} \leq m - 2  $ so that for $ p \notin \calF_{\bfr} $, 
$ \dim \mu_{\bfr,p} = \min \left\{ d, \dim_{L}(\Phi_{\bfr}, p) \right\} $.
\end{corollary}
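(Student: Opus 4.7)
The plan is to apply \autoref{thm:main-mu} after excising appropriate exceptional sets corresponding to its two hypotheses: distinct Lyapunov exponents $\chi_{j}(\bfr,p) = -\sum_{i=1}^{m} p_{i}\log\abs{r_{i,j}}$, and exponential separation of each projected IFS $\Phi_{\bfr,j} = \{x \mapsto r_{i,j}x + t_{i,j}\}_{i=1}^{m}$.

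First, to handle exponential separation: for each $j \in \{1,\ldots,d\}$, let $E_{j} \subset ((-1,1)\setminus\{0\})^{m}$ be the set of $j$-th columns $(r_{1,j},\ldots,r_{m,j})$ for which $\Phi_{\bfr,j}$ fails to be exponentially separated. Since $t_{1,j},\ldots,t_{m,j}$ are distinct, the techniques of \cite{Hochman2014} (applied to the parametrization in which translations are fixed and only the contractions vary) yield $\dimP E_{j} \leq m-1$. Let $\pi_{j}\colon\euclid[dm] \to \euclid[m]$ denote projection onto the $j$-th column; then $\pi_{j}^{-1}(E_{j})$ is $E_{j}\times\euclid[(d-1)m]$ up to coordinate reordering, so $\dimP \pi_{j}^{-1}(E_{j}) \leq (m-1)+(d-1)m = dm-1$. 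In addition, for $1 \leq j_{1} < j_{2} \leq d$ define $N_{j_{1},j_{2}} = \{\bfr : \abs{r_{i,j_{1}}} = \abs{r_{i,j_{2}}} \text{ for every } i \}$; this lies in a finite union of affine subspaces of codimension $m$ in $\euclid[dm]$, hence has packing dimension $dm-m \leq dm-1$ (using $m \geq 2$). I then set $\calE_{\bft} = \Union_{j=1}^{d}\pi_{j}^{-1}(E_{j}) \cup \Union_{1 \leq j_{1}<j_{2} \leq d} N_{j_{1},j_{2}}$, so that countable stability of $\dimP$ yields $\dimP \calE_{\bft} \leq dm-1$.

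Next, fix $\bfr \notin \calE_{\bft}$. For each pair $j_{1}<j_{2}$, the equality $\chi_{j_{1}}(\bfr,p) = \chi_{j_{2}}(\bfr,p)$ becomes the linear condition $\sum_{i=1}^{m} p_{i}\bigl(\log\abs{r_{i,j_{1}}} - \log\abs{r_{i,j_{2}}}\bigr) = 0$. Since $\bfr \notin N_{j_{1},j_{2}}$, the coefficient vector is nonzero, so its zero set inside the affine hull of $\Delta^{m-1}$ is either empty (when all coefficients agree and are nonzero) or an affine subspace of codimension one in that hull; intersecting with $\Delta^{m-1}$ gives a set of dimension at most $m-2$. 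Letting $\calF_{\bfr}$ be the union of these $\binom{d}{2}$ zero sets yields $\dimP \calF_{\bfr} \leq m-2$. For $\bfr \notin \calE_{\bft}$ and $p \notin \calF_{\bfr}$, the exponents $\chi_{1},\ldots,\chi_{d}$ are pairwise distinct and each $\Phi_{\bfr,j}$ is exponentially separated, so \autoref{thm:main-mu} (applied after relabelling coordinates in increasing order of $\chi_{j}$) gives $\dim \mu_{\bfr,p} = \min\{d,\dim_{L}(\Phi_{\bfr},p)\}$.

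The main obstacle is the packing-dimension bound $\dimP E_{j} \leq m-1$. One must verify that the generic exponential-separation results of \cite{Hochman2014} apply to the specific parametrization here---with translations held fixed and contraction ratios free---and that they deliver a packing-dimension bound rather than merely a zero-Lebesgue-measure statement. Once this is in hand, the remainder of the argument reduces to elementary linear algebra on $\Delta^{m-1}$.
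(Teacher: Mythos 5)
Your overall architecture is exactly that of the paper: first remove a packing-dimension-small set of $\bfr$ to guarantee exponential separation of each coordinate IFS, then (for fixed good $\bfr$) remove a packing-dimension-small set of $p$ to guarantee distinct Lyapunov exponents, and finally apply \autoref{thm:main-mu}. The linear-algebra on $\Delta^{m-1}$ (codimension-one hyperplanes, the vacuous case when $v_{j_1,j_2}$ is proportional to $(1,\ldots,1)$) matches the paper's argument almost word for word.

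The one genuine gap is the one you yourself flagged: the packing-dimension bound $\dimP E_{j}\leq m-1$. You invoke ``the techniques of \cite{Hochman2014},'' but the result actually needed is \cite[Theorem~1.10]{Hochman2017}, which directly gives a packing-dimension (indeed, Hausdorff plus packing) bound of $m-1$ for the exceptional set of a real-analytic $m$-parameter family, \emph{provided} one verifies a non-degeneracy hypothesis. This is the step you left open, and it is where the translation assumption is actually used. The paper closes it by writing out the coding-map difference
\begin{equation*}
\Delta_{x,y}(\bfr_{j}) = r_{x_{1},j}\cdots r_{x_{n-1},j}\left( (t_{x_{n},j}-t_{y_{n},j}) + \sum_{k\ge n}\bigl(r_{x_{n},j}\cdots r_{x_{k},j}\,t_{x_{k+1},j} - r_{y_{n},j}\cdots r_{y_{k},j}\,t_{y_{k+1},j}\bigr)\right),
\end{equation*}
where $n$ is the first index at which $x$ and $y$ differ, and observing that since $t_{x_{n},j}\neq t_{y_{n},j}$ the leading term dominates when $\norm{\bfr_{j}}$ is small, so $\Delta_{x,y}(\bfr_{j})$ is a nonzero real-analytic function of $\bfr_{j}$ on each connected component of $((-1,1)\setminus\{0\})^{m}$. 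That verification, plus citing the 2017 paper rather than the 2014 one, is what makes your $\dimP E_{j}\le m-1$ claim a theorem rather than a hope. Once you add this, your proof coincides with the paper's.
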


\begin{proof} 
	For $ \bfr = (r_{i,j})_{1 \leq i \leq m, 1 \leq  j \leq d } $ and $1 \leq j \leq d$, let $\bfr_{j} = (r_{i,j})_{i=1}^{m} $. Consider the IFS $\Phi(\bfr_{j}) = \left\{ x \mapsto r_{i,j}x + t_{i,j} \right\}_{i=1}^{m}$ on $\bbR$, with its coding map denoted by $\Pi_{\Phi(\bfr_{j})}$ (see \eqref{eq:def-coding-map}). For distinct sequences $x = (x_{k}), y = (y_{k}) \in \{1,\ldots, m\} ^{\bbN}$, there exists $n \in \bbN$ such that $x_{n} \neq y_{n}$ and $x_{k} = y_{k}$ for $k < n$. This gives:
	\begin{align*}
		\Delta_{x,y}(\bfr_{j}) & :=  \Pi_{\Phi(\bfr_{j})}(x) - \Pi_{\Phi(\bfr_{j})}(y) \\
		& = r_{x_{1},j} \cdots r_{x_{n-1},j} \left( (t_{x_{n},j} - t_{y_{n},j}) + \sum_{k=n}^{\infty} \left( r_{x_{n},j} \cdots r_{x_{k},j} t_{x_{k+1},j} - r_{y_{n},j} \cdots r_{y_{k},j} t_{y_{k+1},j} \right) \right).
	\end{align*}
	Since $t_{1,j}, \ldots, t_{m,j}$ are distinct, we have $t_{x_{n}} - t_{y_{n}} \neq 0$. Consequently, $\Delta_{x,y}(\bfr_{j}) \neq 0$ if the norm of $\bfr_{j}$ is sufficiently small, ensuring that the summation in the above expression is small, depending on $(t_{i,j})_{i=1}^{m}$. Thus, $\Delta_{x,y}(\bfr_{j})$ is a nonzero real analytic function of $\bfr_{j}$ on each connected component of $((-1,1) \setminus \{0\})^{m}$. By applying \cite[Theorem 1.10]{Hochman2017}, for each $1 \leq j \leq d$, there exists $\calE_{j} \subset ((-1,1) \setminus \{0\})^{m}$ with $\dimP \calE_{j} \leq m - 1$ such that $\Phi(\bfr_{j})$ is exponentially separated for $\bfr_{j} \notin \calE_{j}$. Define
	\begin{equation*}
		\calE_{\bft}' = \Union_{j=1}^{d} \left\{ \bfr \in ((-1,1) \setminus \{0\})^{dm} \colon \bfr_{j} \in \calE_{j} \right\},
	\end{equation*}
	and
	\begin{equation*}
		\calE =  \Union_{1 \leq j_{1} < j_{2} \leq d} \left\{ (r_{i,j})_{1 \leq i \leq m, 1 \leq j \leq d} \in ((-1,1) \setminus \{0\})^{dm} \colon \abs{r_{i, j_{1}}} = \abs{r_{i, j_{2}}} \text{ for } 1 \leq i \leq m  \right\}.
	\end{equation*}
	Set $ \calE_{\bft} := \calE_{\bft}' \union \calE $. Thus, for  $ \bfr \notin \calE_{\bft}$ and $ 1 \leq j \leq d $, $ \Phi(\bfr_{j} ) $ is exponentially separated. Since $ \dimP \calE_{\bft}'\leq dm -1 $ and $ \dimP \calE \leq dm - m$, we have $ \dimP \calE_{\bft} \leq dm - 1 $.
	
	For $ \bfr =  (r_{i,j})_{1 \leq i \leq m, 1 \leq  j \leq d } \in ((-1,1) \setminus \{0\})^{dm} \setminus \calE_{\bft} $ and $ 1 \leq j_{1} < j_{2} \leq d $, define a vector $ v_{j_{1}, j_{2}} := (\log \abs{r_{i,j_{1}}} - \log \abs{r_{i,j_{2}}})_{i=1}^{m} $. Then $  v_{j_{1}, j_{2}} \neq 0 $ by $ \bfr \notin \calE $. If $ v_{j_{1},j_{2}} $ is parallel to $ (1, \ldots, 1) $, then $ \Delta^{m-1} \intxn v_{j_{1},j_{2}}^{\perp} = \emptyset $, where $ v_{j_{1},j_{2}}^{\perp} $ denotes the orthogonal complement of $ v_{j_{1},j_{2}} $. Define
	\begin{equation*}
		\calF_{\bfr}' = \Union \left\{ v_{j_{1},j_{2}}^{\perp} \colon  1 \leq j_{1} < j_{2} \leq d \text{ and } v_{j_{1},j_{2}} \text{ is not parallel to } (1, \ldots, 1) \right\}.
	\end{equation*}
	 Set $ \calF_{\bfr} := \Delta^{m-1} \intxn \calF_{\bfr}' $. Then $ \dimP \calF_{\bfr} \leq m - 2 $. For $ p \notin \calF_{\bfr} $, the Lyapunov exponents of $ \mu_{\bfr,p}$ are distinct. The proof is finished by \autoref{thm:main-mu}.
\end{proof}

We determine the measures of full dimension on certain overlapping diagonal self-affine sets (see \cite{DasSimmons2017,GatzourasPeres1996,MorrisSert2022,BarralFeng2011,KenyonPeres1996a,Kaeenmaeki2004} for further discussion on this topic). A measure $ \nu $ on $ K_{\Phi} $ is called an \textit{ergodic measure of full dimension} if $ \dim \nu = \dimH K_{\Phi} $ and $ \nu = \Pi \bar{\nu} $, where $ \Pi $ is the coding map in \eqref{eq:def-coding-map}, and $ \bar{\nu} $ is an ergodic shift-invariant measure  on $ \Lambda^{\bbN} $. Let $ S_{d}$ denote the symmetric group over $ \{1, \ldots, d \}$. For $ \sigma \in S_{d} $, $ i \in \Sigma $ and $ s \geq 0 $, define
\begin{equation}\label{eq:SVF-sigma}
	\phi^{s}_{\sigma}(i) = \begin{cases}
		\abs{r_{i,\sigma(1)}} \cdots \abs{r_{i, \sigma(\flr{s})}} \cdot \abs{r_{i,\sigma(\flr{s}+1)}}^{s-\flr{s}} & \text{if } s < d;\\
		\abs{r_{i,1}\cdots r_{i,d}}^{s/d} & \text{if } s \geq d.
	\end{cases}
\end{equation}
By \cite[Theorem 2.1]{Fraser2015a}, the affinity dimension $ \dim_{A} \Phi $ is the unique $ s \geq 0 $ such that
\begin{equation}\label{eq:dimA-Sd}
	\max_{\sigma \in S_{d}} \sum_{i\in \Lambda} \phi_{\sigma}^{s}(i) = 1.
\end{equation}

\begin{corollary}\label{coro:MFD}
	Let $ \Phi $ be as in \eqref{eq:def-diagIFS} with $ d = 2 $. Suppose $ \abs{r_{i,1}} \neq \abs{r_{i,2}}$ for some $ i \in \Lambda $, and $ \Phi_{1}, \Phi_{2} $ are exponentially separated. Define $ \Sigma := \left\{ \sigma \in S_{2} \colon \sum_{i\in \Lambda} \phi^{\dim_{A} \Phi}_{\sigma}(i) = 1 \right\} $, which is nonempty by \eqref{eq:dimA-Sd}. If $ 0 < \dim_{A} \Phi < 2 $, then the ergodic measures of full dimension on $ K_{\Phi} $ are precisely the self\nobreakdash-affine measures associated with $\Phi $ and the probability vectors $ (\phi^{\dim_{A} \Phi}_{\sigma}(i))_{i\in\Lambda}$ for $ \sigma \in \Sigma $. In particular,  $ \Sigma = S_{2} $ when $ (\abs{r_{i,1}})_{i\in\Lambda} $ is a permutation of $ (\abs{r_{i,2}})_{i\in\Lambda}$.
\end{corollary}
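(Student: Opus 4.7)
The plan is to combine the set-dimension result of \cite{Rapaport2023} with \autoref{thm:main-mu} and a variational characterization of the Lyapunov dimension on the simplex of Bernoulli vectors. Under the hypotheses of the corollary, \cite[Theorem 1.3]{Rapaport2023} applies and gives $\dimH K_\Phi = \min\{2, \dim_A \Phi\} = s$, writing $s := \dim_A \Phi \in (0, 2)$. Thus an ergodic measure on $K_\Phi$ has full dimension precisely when its dimension equals $s$.

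I first show $\mu_{p_\sigma}$ has dimension $s$ for each $\sigma \in \Sigma$, where $p_\sigma(i) := \phi^s_\sigma(i)$. A direct computation from the explicit form of $p_\sigma$ gives $\Hof{p_\sigma} = s\chi_{\sigma(1)}(p_\sigma)$ when $s < 1$ and $\Hof{p_\sigma} = \chi_{\sigma(1)}(p_\sigma) + (s-1)\chi_{\sigma(2)}(p_\sigma)$ when $1 \leq s < 2$, yielding $\dim_L(\Phi, p_\sigma) = s$ provided $\chi_{\sigma(1)}(p_\sigma) \leq \chi_{\sigma(2)}(p_\sigma)$. To verify this ordering with strict inequality (so that \autoref{thm:main-mu} can be invoked), I analyze the log-convex function $g(t) := \sum_i \abs{r_{i,1}}^{s-t} \abs{r_{i,2}}^t$. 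Membership $\sigma \in \Sigma$ pins $g$ to the value $1$ at one specific point, while the definition of $\dim_A \Phi$ enforces $g \leq 1$ at the other relevant point; convexity of $g$ together with the hypothesis $\abs{r_{i,1}} \neq \abs{r_{i,2}}$ for some $i$ force $g$ to be strictly decreasing at the first point, which is precisely the statement $\chi_{\sigma(1)}(p_\sigma) < \chi_{\sigma(2)}(p_\sigma)$. \autoref{thm:main-mu} then yields $\dim\mu_{p_\sigma} = \min\{2, s\} = s$.

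For the converse, let $\nu = \Pi\bar\nu$ be an ergodic measure with $\dim\nu = s$, and let $p$ denote the one-step marginal of $\bar\nu$. A standard covering argument combining Shannon--McMillan--Breiman with Birkhoff applied to $\log\abs{r_{i,j}}$ gives $\dim\nu \leq f_{\Phi, p}(h(\bar\nu))$, where $f_{\Phi, p}$ is the function from \eqref{eq:f-LyaDim} built with the exponents of $p$. Since $h(\bar\nu) \leq \Hof{p}$ with equality iff $\bar\nu = p^{\bbN}$, and $f_{\Phi,p}$ is strictly increasing on its domain, we obtain
\begin{equation*}
    s = \dim\nu \leq f_{\Phi, p}(h(\bar\nu)) \leq f_{\Phi, p}(\Hof{p}) = \dim_L(\Phi, p) \leq \dim_A \Phi = s,
\end{equation*}
where the final inequality is a Kullback--Leibler consequence of Fraser's $\max_\sigma$-characterization of $\dim_A \Phi$. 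Equality throughout forces $\bar\nu = p^{\bbN}$ and $\dim_L(\Phi, p) = s$; rewriting the latter as $D_{\mathrm{KL}}(p \,\|\, \phi^s_\sigma/C) = \log C$ with $C = \sum_i \phi^s_\sigma(i)$ and $\sigma$ sorting the exponents of $p$, nonnegativity of $D_{\mathrm{KL}}$ forces $C \geq 1$, hence $C = 1$ (so $\sigma \in \Sigma$) and $p = p_\sigma$. The ``in particular'' statement follows because when $(\abs{r_{i,1}})_{i\in\Lambda}$ is a permutation of $(\abs{r_{i,2}})_{i\in\Lambda}$, the sums $\sum_i \phi^s_{\mathrm{id}}(i)$ and $\sum_i \phi^s_{(12)}(i)$ coincide by the resulting symmetry, so both elements of $S_2$ belong to $\Sigma$.

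The main obstacle is the convexity analysis of $g(t)$ securing distinct Lyapunov exponents for $p_\sigma$; without it, \autoref{thm:main-mu} cannot be applied to the candidate Bernoulli measures, and the hypothesis $\abs{r_{i,1}} \neq \abs{r_{i,2}}$ for some $i$ would not enter the proof in an essential way.
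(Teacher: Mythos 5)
Your proposal is correct and follows essentially the same strategy as the paper's proof: both establish that ergodic equilibrium states are Bernoulli via $h(\bar\nu) \leq H(p)$ and the diagonal Lyapunov-exponent computation, identify the candidate vectors via a Gibbs/Kullback--Leibler argument tied to Fraser's $\max_\sigma$-characterization of $\dim_A\Phi$, and then establish distinct Lyapunov exponents for $p_\sigma$ via a Jensen-type strict inequality so that \autoref{thm:main-mu} applies. The packaging differs slightly in two places worth noting: the paper shows $\dim_L(\Phi,p_\sigma)=\dim_A\Phi$ via Gibbs' inequality where you compute $H(p_\sigma)$ directly, and the paper splits the distinct-exponent argument into a constant-ratio case and a strict-Jensen case, whereas your single strictly log-convex $g(t)=\sum_i|r_{i,1}|^{s-t}|r_{i,2}|^t$ handles both uniformly (note that $g$ is strictly convex even when $|r_{i,1}|/|r_{i,2}|$ is constant, since each summand $\exp(c_i+t d_i)$ with $d_i\neq 0$ is strictly convex) --- a modest but genuine simplification. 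One caveat: your justification of the ``in particular'' clause via ``the resulting symmetry'' is too quick. For $1\leq s<2$ the identity $\sum_i|r_{i,1}||r_{i,2}|^{s-1}=\sum_i|r_{i,2}||r_{i,1}|^{s-1}$ does \emph{not} follow from $(|r_{i,1}|)_i$ being a permutation of $(|r_{i,2}|)_i$ alone (a $3$-cycle pairing already breaks it); it does follow if one assumes the stronger symmetry that the coordinate swap permutes the multiset of pairs $\{(|r_{i,1}|,|r_{i,2}|)\}_i$, or if $s\leq 1$. The paper's proof is silent on this clause, so the gap is shared with the source rather than introduced by you, but it would be worth stating the symmetry hypothesis precisely.
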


\begin{proof} 
	We first show that the ergodic equilibrium states for the singular value function of diagonal matrices are Bernoulli.
	 Let $ \nu $ be an ergodic shift-invariant measure  on $ \Lambda^{\bbN} $. The Lyapunov dimension $ \dim_{L} \nu $ is defined as the unique $ s \geq 0 $ satisfying
	\begin{equation}\label{eq:dimL-nu}
		h(\nu) + \lim_{n\to\infty} \frac{1}{n} \int \log \phi^{s}(A_{x|n}) \, \diff \nu(x) = 0,
	\end{equation}
	where $ h(\nu) $ denotes the measure-theoretic entropy (see \cite{Walters1982}), $ A_{x|n} = A_{x_{1}} \cdots A_{x_{n}} $ for $ x = (x_{n}) \in \Lambda^{\bbN}$, and $ \phi^{s}(A)$ is the singular value function of $ A $ (see \cite{Falconer1988}).  For $ k 
	\in \bbZ \intxn [0, d] $, it is well known~\cite{Falconer1988} that $ \phi^{k}(A) = \norm{A^{\wedge k}}$, where $ \wedge$ denotes the exterior product, $ A^{\wedge k} $ is the linear map induced by $ A $ on $ \wedge^{k} \bbR^{d} $ as $ A^{\wedge k}( v_{1} \wedge \cdots \wedge v_{k} ) := (Av_{1})\wedge \cdots \wedge (Av_{k}) $ for $ v_{1}, \ldots, v_{k} \in \bbR^{d} $, and $ \norm{\cdot}$ is the standard Euclidean operator norm on $ \wedge^{k} \euclid[d] $. Since $  \wedge^{k} \euclid[d] = \mathrm{span}\left\{e_{\sigma(1)} \wedge \cdots \wedge e_{\sigma(k)} \colon \sigma \in S_{d} \right\} $ is a finite-dimensional vector space, where $ e_{1}, \ldots, e_{d} $ denote the standard basis of $ \euclid[d]$, we have for $ k \in \bbZ \intxn [0, d] $,
	\begin{align*}
		\lim_{n\to\infty} \frac{1}{n} \int \log \phi^{k}(A_{x|n}) \, \diff \nu(x) & = \lim_{n\to\infty} \frac{1}{n} \int \log \norm{A^{\wedge k}_{x|n}} \, \diff \nu(x) \\ 
		& = \lim_{n\to\infty} \frac{1}{n} \int \max_{\sigma \in S_{d}} \log  \Norm{A^{\wedge k}_{x|n} \left (e_{\sigma(1)} \wedge \cdots \wedge e_{\sigma(k) }\right)} \, \diff \nu(x) \\
		& = \max_{\sigma \in S_{d}} \sum_{i\in \Lambda} \nu([i]) \log \phi_{\sigma}^{k}(i),
	\end{align*}
	where $ [i] := \{ (x_{n}) \in \Lambda^{\bbN} \colon x_{1} = i \} $, while in the last equality we have used that $ \{A_{i}\}_{i\in\Lambda} $ are diagonal, and $ \nu $ is shift-invariant. From this and $ \phi^{s}(A) = \left(\phi^{\flr{s}}(A)\right)^{\flr{s}+1-s} \left(\phi^{\flr{s}+1}(A)\right)^{s - \flr{s}} $ for $ s \geq 0 $, it follows that
	\begin{equation}\label{eq:LyExpDiag}
		\lim_{n\to\infty} \frac{1}{n} \int \log \phi^{s}(A_{x|n}) \, \diff \nu(x) = \max_{\sigma \in S_{d}} \sum_{i\in \Lambda} \nu([i]) \log \phi_{\sigma}^{s}(i).
	\end{equation}
 	Let $ \beta_{\nu}$ denote the Bernoulli measure on $ \Lambda^{\bbN} $ with marginal $ (\nu([i]))_{i\in\Lambda}$. It is well known (see e.g.\ \cite{Walters1982}) that $ h(\nu) \leq h(\beta_{\nu}) $, with equality if and only if $ \nu = \beta_{\nu} $. From this, \eqref{eq:dimL-nu} and \eqref{eq:LyExpDiag}, it follows that $ \dim_{L} \nu \leq \dim_{L} \beta_{\nu} $,
	with equality if and only if $ \nu = \beta_{\nu} $. Combining this with $ \dim \Pi \nu \leq \dim_{L} \nu \leq \dim_{A} \Phi $ (see \cite{JordanEtAl2007}), \cite[Theorem 1.3]{Rapaport2023}, and $ \dim_{A} \Phi < 2 $ yields
	\begin{equation}\label{eq:DimRel}
		\dim \Pi \nu \leq \dim_{L} \nu \leq \dim_{L} \beta_{\nu} \leq \dim_{A} \Phi = \dim_{H} K_{\Phi},
	\end{equation}
	where the second inequality is strict unless $ \nu = \beta_{\nu} $, that is, $ \nu $ is Bernoulli. 
	
	Write $ s_{0} := \dim_{A}\Phi $. By Gibbs' inequality (see e.g. \cite[Lemma 9.9]{Walters1982}) and \eqref{eq:dimA-Sd}, the probability vectors $ p_{\sigma} := (\phi_{\sigma}^{s_{0}}(i))_{i\in\Lambda} $ for $ \sigma \in \Sigma $ are precisely the probability vectors $ q = (q_{i})_{i\in\Lambda}$ satisfying
	\begin{equation*}
	\sum_{i\in\Lambda} - q_{i} \log q_{i} + \max_{\sigma \in S_{d}} \sum_{i\in \Lambda} q_{i} \log \phi_{\sigma}^{s_{0}}(i) = \max_{\sigma \in S_{d}} \log \sum_{i\in \Lambda}  \phi_{\sigma}^{s_{0}}(i) = 0.
	\end{equation*}
	By \eqref{eq:dimL-nu} and \eqref{eq:LyExpDiag}, this implies that $ \dim_{L} (\Phi, p_{\sigma}) = \dim_{A} \Phi $ for $ \sigma \in \Sigma $.
	
	Let $ \sigma \in \Sigma $, and let $ \mu_{\sigma} $ be the self-affine measure associated with $ \Phi $ and $ p_{\sigma}$. By \eqref{eq:DimRel} and $ \dim_{L} (\Phi, p_{\sigma}) = \dim_{A} \Phi $, it suffices to prove that
		$\dim \mu_{\sigma} = \dim_{L} (\Phi, p_{\sigma})$.
	From \autoref{thm:main-mu}, it remains to verify that $ \chi_{\sigma(1)}(p_{\sigma}) \neq \chi_{\sigma(2)}(p_{\sigma})$. If there exists $ \alpha > 0 $ such that $ \abs{r_{i,\sigma(1)}} / \abs{r_{i,\sigma(2)}} = \alpha $ for all $ i \in \Lambda $, then $ \alpha \neq 1 $ since $\abs{r_{i,\sigma(1)}} \neq \abs{r_{i,\sigma(2)}} $ for some $ i \in \Lambda $, implying $ \chi_{\sigma(1)}(p_{\sigma}) \neq \chi_{\sigma(2)}(p_{\sigma})$. Now suppose there exist some $ i_{1} \neq i_{2} \in \Lambda $ such that $  \abs{r_{i_{1},\sigma(1)}} / \abs{r_{i_{1},\sigma(2)}} \neq  \abs{r_{i_{2},\sigma(1)}} / \abs{r_{i_{2},\sigma(2)}} $. Define $ t := s_{0} $ if $ s_{0} \in (0,1] $ and $ t := 2 - s_{0} $ if $ s_{0} \in (1, 2)$. Then
	\begin{align*}
		t \left(\chi_{\sigma(2)}(p_{\sigma}) - \chi_{\sigma(1)}(p_{\sigma}) \right) & = \sum_{i\in\Lambda} p_{\sigma}(i) \log  \frac{\abs{r_{i,\sigma(2)}}^{t}}{\abs{r_{i,\sigma(1)}}^{t}} \\ & <  \log  \sum_{i\in\Lambda} p_{\sigma}(i)  \frac{\abs{r_{i,\sigma(2)}}^{t}}{\abs{r_{i,\sigma(1)}}^{t}} \\& \leq \log \sum_{i\in\Lambda} \phi^{s_{0}}_{\sigma}(i) = 0,
	\end{align*}
	where the strict inequality is by the concavity of $ \log(\cdot) $ and $ \abs{r_{i_{1},\sigma(1)}} / \abs{r_{i_{1},\sigma(2)}} \neq \abs{r_{i_{2},\sigma(1)}} / \abs{r_{i_{2},\sigma(2)}} $, while the last inequality follows from $ \max_{\sigma'\in S_{2}} \sum_{i\in\Lambda} \phi^{s_{0}}_{\sigma'}(i) = \sum_{i\in\Lambda} \phi^{s_{0}}_{\sigma}(i) = 1 $. Since $ t > 0 $, we conclude that $\chi_{\sigma(1)}(p_{\sigma}) \neq \chi_{\sigma(2)}(p_{\sigma}) $, completing the proof.
\end{proof}

Recently, Py\"{o}r\"{a}l\"{a}~\cite{Pyoeraelae2025} determined the dimension of orthogonal projections of planar diagonal self-affine measures under an irrationality condition (see \cite{PeresShmerkin2009,HochmanShmerkin2012,FalconerKempton2017,FergusonEtAl2015,BaranyEtAl2023} for earlier results). Building on this, we combine \cite[Theorem 1.1]{Pyoeraelae2025} with \autoref{coro:MFD} to obtain the dimension of orthogonal projections for a class of overlapping self-affine sets.

\begin{corollary}
	Let $ \Phi $ be as in \eqref{eq:def-diagIFS} with $ d = 2 $. Suppose $ \abs{r_{i,1}} \neq \abs{r_{i,2}}$ for some $ i \in \Lambda $, and $ \Phi_{1}, \Phi_{2} $ are exponentially separated. Suppose further that there exist $ (i_{1},i_{2}) \in \Lambda^{2} $ and $ (j_{1}, j_{2}) \in \{1,2\}^{2} $ such that $ \log \abs{r_{i_{1},j_{1}}} / \log \abs{r_{i_{2},j_{2}}}  \notin \bbQ  $. Then $ \dimH \pi(K_{\Phi}) = \min \{1, \dim_{A} \Phi \} $ for each orthogonal projection $ \pi $ onto a line not parallel to the coordinate axes. For the orthogonal projection $ \pi_{j} $ onto the $j$-th coordinate axis with $j = 1,2$, $ \dimH \pi_{j}(K_{\Phi}) = \min\{ 1, \dim_{A} \Phi_{j}\}$. 
\end{corollary}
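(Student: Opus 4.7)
The plan is to treat axis projections and non-axis projections separately. The axis case reduces to the one-dimensional Hochman dimension formula for the exponentially separated IFS $\Phi_j$; the non-axis case is handled by exhibiting a self-affine measure on $K_\Phi$ of maximal possible dimension and invoking Py\"or\"al\"a's projection theorem \cite[Theorem 1.1]{Pyoeraelae2025}. The axis case is immediate: $\pi_j(K_\Phi) = K_{\Phi_j}$, and since $\Phi_j$ is exponentially separated, Hochman's theorem \cite{Hochman2014} (equivalently, \autoref{thm:main-mu} specialised to dimension one with a Bernoulli measure of maximal dimension on $K_{\Phi_j}$) gives $\dimH K_{\Phi_j} = \min\{1, \dim_A \Phi_j\}$.

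For a projection $\pi$ onto a line not parallel to either coordinate axis, the upper bound $\dimH \pi(K_\Phi) \le \min\{1, \dim_A \Phi\}$ is trivial, since $\pi$ is $1$-Lipschitz into $\bbR$ and $\dimH K_\Phi \le \dim_A \Phi$. For the lower bound I would produce an ergodic self-affine measure $\mu$ on $K_\Phi$ satisfying $\dim \mu = \min\{2, \dim_A \Phi\}$ together with distinct Lyapunov exponents. When $0 < \dim_A \Phi < 2$, \autoref{coro:MFD} directly supplies such a measure of full dimension: take any $\sigma \in \Sigma$ and let $\mu$ be the self-affine measure for the probability vector $p_\sigma$, noting that the proof of \autoref{coro:MFD} already verifies $\chi_1(p_\sigma) \ne \chi_2(p_\sigma)$. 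When $\dim_A \Phi \ge 2$, I would instead choose a probability vector $p$ with $\dim_L(\Phi,p) \ge 2$ and distinct Lyapunov exponents -- which exists by the same concavity argument as in the closing lines of the proof of \autoref{coro:MFD}, together with the assumption that $\abs{r_{i,1}} \ne \abs{r_{i,2}}$ for some $i$ -- and apply \autoref{thm:main-mu}. Py\"or\"al\"a's theorem then yields $\dim \pi \mu = \min\{1, \dim \mu\}$, and consequently $\dimH \pi(K_\Phi) \ge \dim \pi\mu = \min\{1, \dim_A \Phi\}$.

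The main obstacle I anticipate is verifying the hypotheses of Py\"or\"al\"a's theorem for $\mu$. The separation conditions on $\Phi_1, \Phi_2$ and the distinctness of Lyapunov exponents are already in hand, so the substantive task is to extract Py\"or\"al\"a's irrationality assumption from the given condition $\log\abs{r_{i_1,j_1}} / \log\abs{r_{i_2,j_2}} \notin \bbQ$; I expect this to be a direct translation between two equivalent formulations of non-arithmeticity for the pair of Lyapunov exponents. A secondary technical point concerns the boundary case $\dim \mu = 2$ that arises when $\dim_A \Phi \ge 2$: if Py\"or\"al\"a's result is formulated only for $\dim \mu < 2$, one approximates $\mu$ by perturbed self-affine measures $\mu_\varepsilon$ of dimension $2 - \varepsilon$ satisfying the same irrationality and separation hypotheses, applies Py\"or\"al\"a to each, and lets $\varepsilon \to 0$ to conclude $\dimH \pi(K_\Phi) \ge 1$.
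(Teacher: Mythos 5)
Your proposal follows the paper's intended route exactly: the paper gives no separate proof for this corollary but states that it follows from combining Pyörälä's projection theorem with \autoref{coro:MFD}, and your elaboration (axis projections via one-dimensional Hochman, non-axis projections by exhibiting a self-affine measure of full dimension and distinct Lyapunov exponents and applying Pyörälä) is precisely that combination.

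One remark on the case $\dim_A\Phi \geq 2$, which \autoref{coro:MFD} does not cover. You propose to find a probability vector $p$ with $\dim_L(\Phi,p)\geq 2$ and distinct Lyapunov exponents ``by the same concavity argument as in the closing lines of the proof of \autoref{coro:MFD}'', but that argument sets $t = 2 - s_0$ when $s_0 \in (1,2)$ and breaks down when $s_0 = \dim_A\Phi \geq 2$; in general one cannot rule out that the equilibrium state $p = (\lvert r_{i,1}r_{i,2}\rvert^{s_0/2})_i$ has $\chi_1(p)=\chi_2(p)$. Fortunately the requirement is milder than you state: since $\min\{1,\dim\mu\}=1$ as soon as $\dim\mu\geq 1$, it suffices to find any $p$ with $\dim_L(\Phi,p)\geq 1$ and $\chi_1(p)\neq\chi_2(p)$, and this follows from a small perturbation of the equilibrium state toward $\delta_{i_0}$ (where $i_0$ satisfies $\lvert r_{i_0,1}\rvert\neq\lvert r_{i_0,2}\rvert$), using that $\dim_L$ is continuous in $p$ and $\dim_L(\Phi,p^*)\geq 2 > 1$. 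This also renders your final $\varepsilon$-approximation step about the boundary $\dim\mu = 2$ unnecessary, since one never needs $\dim\mu$ to be close to $2$, only $\geq 1$.
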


\subsection{About the proof} \label{subsec:AboutPf}
\autoref{thm:main-mu} is reduced from \autoref{thm:main-A} which concerns the dimension of a disintegration of the measure $ \mu $. This disintegration is defined as follows. For any partition $ \calE $ of a set $ X $, let $ \calE(x)$ denote the unique element of $ \calE $ containing $ x \in X $. Given $ u = u_{1} \cdots u_{n} \in \Lambda^{n}$, define $ \varphi_{u} = \varphi_{u_{1}} \circ \cdots \circ \varphi_{u_{n}}$. Fix $ N \in \bbN $. Define the partition $ \Gamma $ of $ \Lambda^{\bbN}$ by
\begin{equation}\label{eq:def-gamma}
	\Gamma(x) = \Gamma(y) \text{ if and only if } A_{\varphi_{x|N}} = A_{\varphi_{y|N}} \mFor x,y \in \Lambda^{\bbN},
\end{equation}
where $ A_{\psi}$ denotes the linear part of an affine map $ \psi$, and $ x | N $ represents the first $ N $ digits of $ x \in \Lambda^{\bbN} $. Endow $ \Lambda^{\bbN}$ with the product topology, and let $ \sigma $ be the shift map defined by $ \sigma((x_{k})_{k=1}^{\infty}) = (x_{k+1})_{k=1}^{\infty}$. Set $ T = \sigma^{N}$ and $\calA = \vee_{n=0}^{\infty} T^{-n} \Gamma $. Let $ \{ \beta^{\calA}_{x}\}_{x\in\Lambda^{\bbN}}$ be the disintegration of the Bernoulli measure $ \beta := p^{\bbN} $ on $ \Lambda^{\bbN}$ with respect to $ \calA $; see \autoref{subsec:some-disint} for further details. Define the quotient space $ \Omega = \Lambda^{\bbN} /\calA \cong \{1, \ldots, \abs{\Gamma} \}^{\bbN} $, and  endow it with the pushforward measure $ \bfP $ of $ \beta $ under the natural projection $ x \mapsto \calA(x)$. For $ \omega \in \Omega $, define $ \beta^{\omega} = \beta^{\calA}_{x}$ whenever $ \omega = \calA(x)$ for some $ x \in \Lambda^{\bbN}$. Then
\begin{equation}\label{eq:SymDisint}
	\beta = \int_{\Lambda^{\bbN}} \beta^{\calA}_{x} \, \diff \beta(x) = \int_{\Omega} \beta^{\omega} \, \diff \bfP(\omega).
\end{equation}
Let $ \Pi \colon \Lambda^{\bbN} \to \euclid $ be the coding map associated with $ \Phi $, defined by,
\begin{equation}\label{eq:def-coding-map}
	\Pi(x) = \lim_{n\to\infty} \varphi_{x_{1}} \circ \cdots \circ \varphi_{x_{n}}(0) \mFor x = (x_{n})_{n=1}^{\infty} \in \Lambda^{\bbN}.
\end{equation}
It is well known that $ \mu = \Pi \beta$.  For $ \omega \in \Omega $, define $ \mu^{\omega} = \Pi \beta^{\omega}$. Applying $ \Pi $ to \eqref{eq:SymDisint} yields the desired disintegration:
\begin{equation}\label{eq:Rd-disint}
	\mu = \int_{\Omega} \mu^{\omega} \, \diff \bfP(\omega).
\end{equation}

	Recently, similar disintegration techniques have been widely applied to study various properties of self-conformal measures; see e.g.\ \cite{GalicerEtAl2016,SagliettiEtAl2018,AlgomEtAl2022,BakerBanaji2024,KaeenmaekiOrponen2023,SolomyakSpiewak2023}. Notably, Saglietti, Shmerkin and Solomyak~\cite{SagliettiEtAl2018} established the typical absolute continuity of self-similar measures on the line. From this, \autoref{prop:typical-Dim} and \cite{Solomyak2022}, it seems possible to show the typical absolute continuity of diagonal self-affine measures, but we do not pursue this here. The idea of disintegrating stationary measures into well-behaved random measures was introduced by Galicer, Saglietti, Shmerkin and Yavicoli~\cite{GalicerEtAl2016}.
	
	While many prior works are motivated by the infinite convolution structure of random measures, our primary goal is to construct minimal cut-sets $ \calU_{n}$ of the finite words over $ \Lambda $. These cut-sets ensure that the cylinder sets $ \{\Pi([u])\}_{u\in\calU_{n}} $ have comparable diameters respectively along each coordinate. Such minimal cut-sets are naturally found in conformal settings (see \cite{Hochman2014,Rapaport2024a}) or under the specific assumptions on the linear parts of $ \Phi $ (see \cite{Rapaport2023}). However,  achieving this in general non\nobreakdash-homogeneous affine settings is almost impossible. Consequently, the additional assumption is crucial in \cite{Rapaport2023}. Later in this subsection, we further illustrate how the disintegration method underpins our approach.


	
	As a starting point, we establish the exact dimensionality of $ \mu^{\omega} $ for $ \bfP \aev \omega $; see \autoref{thm:L-Y-formula} for a detailed statement. \autoref{thm:L-Y-formula} is a version of \cite[Theorem 2.11]{FengHu2009} (see also \cite[Theorem 1.4]{Feng2023a}) in the context of disintegrations.
 
 \begin{theorem}\label{thm:ExactDim-Intro}
	There exists $ \dim \calA \geq 0 $ such that for $ \bfP \aev \omega $, $ \mu^{\omega}$ is exact dimensional with dimension given by $\dim \calA$. Furthermore, $ \dim \calA $ satisfies a Ledrappier-Young type formula \eqref{eq:LY-formula}.
\end{theorem}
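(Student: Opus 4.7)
The strategy is to mirror the proof of exact dimensionality for self-affine measures in \cite{FengHu2009}, replacing the role of the Bernoulli measure $\beta$ by the conditional measures $\beta^{\omega}$ and working over the measure-preserving factor $(\Omega, \bfP, T)$. The key feature that makes this adaptation feasible is the $T$-invariance of $\calA$ (that is, $T^{-1}\calA \subseteq \calA$), which yields an infinite-convolution decomposition of $\mu^{\omega}$ along the $T$-orbit of $\omega$.

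Since $\beta = p^{\bbN}$ is Bernoulli and the atoms of $\calA$ are cut out by prescribing the linear parts $A_{\varphi_{(T^{n}x)|N}}$ independently for each $n \geq 0$, the conditional measure $\beta^{\omega}$ factors as an infinite product of measures over blocks of length $N$. Consequently, for $y$ in the atom corresponding to $\omega$,
\begin{equation*}
\Pi(y) = \sum_{n=0}^{\infty} C_{\omega,n}\, S_{n}(y), \qquad C_{\omega,n} := B_{\omega,0} B_{\omega,1} \cdots B_{\omega,n-1},
\end{equation*}
where $B_{\omega,n}$ is the (deterministic given $\omega$) diagonal linear part of block $n$ and $S_{n}(y)$ is the random translation produced by block $n$. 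Thus $\mu^{\omega}$ is an infinite convolution of explicit dilated measures, satisfying the basic recursion $\mu^{\omega} = \nu_{\omega,0} * (C_{\omega,1})_{*}\mu^{T\omega}$ obtained by separating off the first block.

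With this structure in hand, I would follow the Feng--Hu induction on coordinate projections. For $0 \leq j \leq d$, let $\pi^{(j)} \colon \euclid[d] \to \euclid[j]$ denote the projection onto the first $j$ coordinates, and let $\calD_{k}^{(j)}$ be the level-$k$ dyadic partition of $\euclid[j]$. Applying the Shannon--McMillan--Breiman theorem to the ergodic factor $(\Omega, \bfP, T)$ together with Maker's ergodic theorem, I would first show that, for $\bfP$-a.e.\ $\omega$ and each $j$, the limit
\begin{equation*}
h^{(j)} := \lim_{k \to \infty} -\frac{1}{k} \sum_{D \in \calD_{k}^{(j)}} \pi^{(j)}_{*}\mu^{\omega}(D) \log \pi^{(j)}_{*}\mu^{\omega}(D)
\end{equation*}
exists and is $\bfP$-a.e.\ constant. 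Then, by induction on $j$, I would prove that $\pi^{(j)}_{*}\mu^{\omega}$ is exact dimensional with dimension $\sum_{i=1}^{j} (h^{(i)} - h^{(i-1)})/\chi_{i}$. The base case $j = 1$ reduces to exact dimensionality of a self-similar measure on $\bbR$; the inductive step disintegrates $\pi^{(j)}_{*}\mu^{\omega}$ over $\pi^{(j-1)}_{*}\mu^{\omega}$ and controls the local dimension of the vertical fibers via the convolution decomposition and the new Lyapunov exponent $\chi_{j}$. Setting $\dim \calA := \sum_{j=1}^{d}(h^{(j)} - h^{(j-1)})/\chi_{j}$ then yields the Ledrappier--Young type formula \eqref{eq:LY-formula}.

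The main obstacle is establishing the existence and $\bfP$-a.e.\ constancy of the conditional projection entropies $h^{(j)}$, and linking them to the pointwise local dimensions of $\mu^{\omega}$. Unlike in the classical self-affine setup, where the shift is measure-preserving and $\mu$ is self-conformal with respect to it, here the block scales $C_{\omega,n}$ along different coordinates desynchronize as $n \to \infty$: the ratio of scales in coordinates $j_{1} \neq j_{2}$ is unbounded along a typical $T$-orbit. (This is precisely the obstruction that forced the $1$-dimensional subgroup assumption in \cite{Rapaport2023}.) To overcome it, one must invoke Maker's ergodic theorem conditionally on $\calA$ and carefully adapt the partition scales to each coordinate's Lyapunov exponent, so that SMB-type convergence at the right rate can be extracted along each direction simultaneously.
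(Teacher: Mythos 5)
Your high-level plan is headed in the right direction and matches the paper's approach on several points: the paper does establish exact dimensionality by adapting the Ledrappier--Young/Feng framework (from~\cite{Feng2023a}) to the disintegration, does work over the ergodic factor $(\Omega, \bfP, T)$ with a Peyri\`ere-type measure, does use the dynamical self-affinity $\mu^{\omega} = \nu^{\omega}_{n} * A^{\omega|n}\mu^{T^{n}\omega}$, and does invoke Maker's ergodic theorem together with SMB-type arguments. However, there is a concrete gap in the key definition, and the ``main obstacle'' you identify is actually not the obstacle for this particular theorem.

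The gap is in your definition of $h^{(j)}$. As written, $h^{(j)} = \lim_{k} \frac{1}{k}\Hof{\pi^{(j)}_{*}\mu^{\omega}, \calD^{(j)}_{k}}$ is the entropy dimension of the projected measure. For an exact-dimensional measure this limit equals the dimension, so $h^{(j)} = \dim\pi^{(j)}_{*}\mu^{\omega}$, and the claimed formula $\dim\pi^{(j)}_{*}\mu^{\omega} = \sum_{i=1}^{j}(h^{(i)}-h^{(i-1)})/\chi_{i}$ becomes circular and in fact false (it would force $\chi_{i}=1$ for all $i$). The correct quantity in the Ledrappier--Young formula is not a dimension of a pushforward but a \emph{conditional symbolic entropy rate}: the entropy of the conditional measure $\beta^{\omega}$ per $N$-block, conditioned on the fiber $\sigma$-algebra $\wh{\xi_{[j]}}$ of the projection $\pi_{[j]}\Pi$. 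After averaging over $\bfP$ this is exactly $\HcalA{[j]} = \frac{1}{N}\Hof{\beta, \calC \mid \wh{\calA}\vee\wh{\xi_{[j]}}}$ (or the rate $\hcalCA{[j]}$). These quantities are measured in bits per symbol step (roughly of size $H(p)$), not dimensions (size $\le d$), and they are \emph{decreasing} in $j$ because of the extra conditioning — the opposite monotonicity from your $h^{(j)}$. The numerator $\HcalA{[j-1]}-\HcalA{[j]}$ is the entropy ``used up'' by revealing the $j$-th coordinate, and dividing by $\chi_{j}$ converts it to a transverse dimension; this is the quantity $\vartheta_{j}$ the paper computes via the conditional-measure ball $\beta^{\omega,\xi_{j-1}}_{x}(\PiBall[j]{x}{r})$. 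That conditional-measure disintegration (the analogue of Feng--Hu's projection entropy, not of projection dimension) is the missing technical layer in your sketch.

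On the ``main obstacle'': the desynchronization of scales across coordinates along a typical $T$-orbit is indeed the obstruction that forced the $1$-dimensional subgroup assumption in~\cite{Rapaport2023}, but that obstruction concerns the entropy-increase argument needed to prove the \emph{formula} $\dim\calA = \min\{d, f_{\Phi}(\hRWphiA)\}$. The exact dimensionality statement you are proving here holds for \emph{every} diagonal system and requires no separation hypothesis at all: the telescoping of $\lambda_{j}^{x|nN}$-balls along the $T$-orbit, which you note is nonconformal, is absorbed coordinate-by-coordinate precisely because the relevant conditional-measure balls are anisotropic (each coordinate is examined at its own natural scale $\lambda_{j}^{x|nN}$). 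So no scale synchronization is needed for Theorem~\ref{thm:ExactDim-Intro}; the cited difficulty belongs to the later sections. Finally, the base case $j=1$ does not reduce to a classical self-similar measure: $\pi^{(1)}_{*}\mu^{\omega}$ is a random (non-stationary) infinite convolution indexed by $\omega$, and its exact dimensionality itself requires the SMB/Maker machinery over the factor $(\Omega,\bfP,T)$, which is what the paper supplies.
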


It is well known \cite{Young1982} that for an exact dimensional measure $ \theta $, commonly used notions of dimension coincide. In particular, $ \dim \theta = \lim_{n\to\infty} \frac{1}{n} \Hof{ \theta, \calD_{n}} $, where $ \calD_{n} $ denotes the dyadic partition of $ \euclid[d] $. For the basics of entropy, please refer to \autoref{subsec:cond-e}. By \eqref{eq:Rd-disint} and the concavity of entropy, we obtain
\begin{equation}\label{eq:mu>A}
	\dim \mu = \lim_{n\to\infty} \frac{1}{n} \Hof{\int \mu_{\omega}\, \diff \bfP(\omega), \calD_{n}} \geq \lim_{n\to\infty} \int \frac{1}{n} \Hof{\mu^{\omega}, \calD_{n}} \, \diff \bfP(\omega) = \dim \calA.
\end{equation}

We are now ready to state the main theorem regarding the dimension of $ \mu^{\omega}$. For $ 1\leq j \leq d$, let $ \pi_{j} $ denote the orthogonal projection from $ \euclid $ to the $ j $-th coordinate axis. For $ n \in \bbN $, let $ \calC_{n} $ be the partition of $ \Lambda^{\bbN} $ such that $ \calC_{n}(x) = \calC_{n}(y) $ if and only if $  \varphi_{x|n} = \varphi_{y|n} $ for $ x, y \in \Lambda^{\bbN}$. The conditional entropy $ \Hof{\cdot, \cdot \mid \cdot }$ is defined in \eqref{eq:def-Hof}. 

\begin{theorem} \label{thm:main-A}
	Suppose $ \chi_{1} < \cdots < \chi_{d}$, and $ \Phi_{j} $ is Diophantine and for $ 1 \leq j \leq d$. Suppose further that for $ x, y \in \Lambda^{\bbN}, n \in \bbN $ and $ 1 \leq j \leq d $, $ \pi_{j} \varphi_{x|n} = \pi_{j} \varphi_{y|n} $ implies $ \varphi_{x|n} = \varphi_{y|n}$. Then
	\begin{equation*}
		\dim \calA = \min \{d, \fof[\Phi]{h_{RW}(\Phi,\calA)}\},
	\end{equation*}
	 where $ \fof[\Phi]{\cdot} $ is as defined in \eqref{eq:f-LyaDim}, and
		\begin{equation}\label{eq:def-hRW-calA}
			h_{RW}(\Phi, \calA) = \lim_{n\to\infty} \frac{1}{nN} \Hof{ \beta, \calC_{nN} \mid \wh{\calA} } = \inf_{n} \frac{1}{nN} \Hof{ \beta, \calC_{nN} \mid \wh{\calA} }.
		\end{equation}
	The limit exists by subadditivity (see \eqref{eq:subadd}).
\end{theorem}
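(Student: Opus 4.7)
The plan is to establish the two bounds $\dim \calA \leq \min\{d, f_\Phi(h_{RW}(\Phi, \calA))\}$ and $\dim \calA \geq \min\{d, f_\Phi(h_{RW}(\Phi, \calA))\}$ separately; the lower bound is the harder direction. For the upper bound, $\dim \calA \leq d$ is trivial since $\mu^\omega$ is supported on $\mathbb{R}^d$. The companion inequality $\dim \calA \leq f_\Phi(h_{RW}(\Phi, \calA))$ should drop out of the Ledrappier-Young formula \eqref{eq:LY-formula}: that identity expresses $\dim \calA$ as a sum $\sum_j \gamma_j$ with $\gamma_j \in [0,1]$ and $\sum_j \chi_j \gamma_j = h_{RW}(\Phi, \calA)$, so that maximizing $\sum_j \gamma_j$ subject to these constraints (using $\chi_1 < \cdots < \chi_d$) recovers exactly $f_\Phi(h_{RW}(\Phi, \calA))$.

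For the lower bound I would induct on the coordinate index $k \in \{1,\ldots,d\}$, showing that the projection onto the first $k$ coordinates of $\mu^\omega$ has the dimension predicted by $f_{\Phi_{[k]}}$ applied to the appropriate conditional entropy, where $\Phi_{[k]}$ is the natural $k$-dimensional diagonal IFS induced by $\Phi$. The base case $k=1$ is a random-walk version of Hochman's theorem on self-similar measures: the hypothesis that $\Phi_1$ is Diophantine, combined with the strong no-overlap assumption (which excludes exact overlaps on each coordinate), should yield the correct dimension of $\pi_1 \mu^\omega$. The inductive step leverages the strict gap $\chi_{k+1} > \chi_k$: at scale $e^{-n\chi_{k+1}}$ the first $k$ coordinates of $\mu^\omega$ have already saturated in the sense of the inductive hypothesis, and the conditional fiber along the $(k{+}1)$-st axis inherits an approximate self-similar convolution structure whose dimension can be bootstrapped by the argument of the base case.

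The main obstacle, and where the disintegration framework is genuinely needed, is proving an entropy-increase theorem for the random measures $\mu^\omega$. Rapaport's handling of the $1$-dimensional subgroup case exploited the existence of minimal cut-sets $\calU_n \subset \Lambda^\ast$ for which the cylinders $\{\Pi([u])\}_{u\in\calU_n}$ have comparable diameters along each coordinate; this is no longer possible in the general diagonal setting because the ratios $\log |r_{i,j}|/\log |r_{i,j'}|$ vary with $i$. Conditioning on $\calA$ sidesteps this obstruction: the partition $\Gamma$ identifies words sharing the same linear part over $N$-blocks and $\calA = \bigvee_{n=0}^\infty T^{-n} \Gamma$ records the full history of such blocks, so $\beta^\omega$ behaves conditionally like a homogeneous random measure with a prescribed contraction schedule along the $T$-orbit of $\omega$. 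In this conditional setup one can build the needed cut-sets block-by-block and run a Hochman-type inverse-theorem argument fiberwise in $\omega$, using the Diophantine assumption on each $\Phi_j$ to rule out the saturation alternative in the resulting dichotomy. Iterating the entropy-increase inequality and passing to the limit yields the claimed identity. The hard part will be formulating and proving the random-measure analogue of Hochman's entropy-increase statement, which the abstract flags as one of the main new contributions of the paper.
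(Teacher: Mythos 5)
Your upper bound sketch is essentially the paper's: the Ledrappier--Young formula (Theorem~\ref{thm:L-Y-formula}) writes $\dim\calA$ as a weighted sum of conditional entropy differences, and Lemma~\ref{lem:fJ-maximum} is precisely the constrained-optimization statement you invoke, so $\dim\calA \le \min\{d, f_\Phi(h_{RW}(\Phi,\calA))\}$ falls out directly. Your intuition that conditioning on $\calA$ makes the random measures behave like homogeneous ones with a prescribed contraction schedule, so that scale-adapted nonconformal partitions $\calE^\omega_n$ can replace the cut-sets $\calU_n$, is also the key insight of the paper.

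However, the lower bound you describe diverges from the paper in ways that would likely cause trouble if fleshed out. First, the paper does \emph{not} run an ``inverse theorem'' dichotomy \`a la Hochman's additive-combinatorics result; the entropy increase (Theorem~\ref{thm:omega-e-increase}) is proved following Rapaport via repeated self-convolutions, a Berry--Esseen estimate, and a variance argument (Proposition~\ref{prop:conv-e} and Lemma~\ref{lem:j-1Dim}). There is no saturation-vs.-structure dichotomy to invoke. Second, the induction is on the ambient dimension $d$, applied to the IFSs $\Phi_J$ for \emph{all} proper $J\subsetneq[d]$, not a coordinate-by-coordinate bootstrap in which a conditional fiber along the $(k{+}1)$-st axis is analyzed on top of saturated lower coordinates. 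The case analysis is essential: if some proper projection fails to be saturated ($\dim\pi_J\calA < |J|$), the Ledrappier--Young identities force $\hcalCA{[d]}=0$ and the conclusion follows algebraically; the convolution machinery is only invoked when \emph{every} proper projection is saturated, and then it is applied to the full $d$-dimensional measure, not to a one-dimensional fiber. Third, you omit the intermediate step linking entropy increase to the conclusion: one first derives super-exponential concentration (Theorem~\ref{thm:super-exp}, showing $\frac{1}{n}\Hof{\nu^\omega_n,\calE^\omega_{Mn}\mid\calE^\omega_n}$ is small), and then the Diophantine hypothesis is used not to ``rule out an alternative'' but to show that at suitable scales the nonconformal partition entropy of $\nu^\omega_n$ equals the Shannon entropy $\Hof{\beta^\omega,\calC_{nN}}$ (equation~\eqref{eq:part->shannon}); integrating and invoking Lemma~\ref{lem:asym-nu-n} and Lemma~\ref{lem:rand-asym-e} then gives $\kappa_\calA\ge h_{RW}(\Phi,\calA)$, which combined with Lemma~\ref{lem:fJ-UB} closes the argument. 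Without these pieces, ``iterating the entropy-increase inequality and passing to the limit'' does not obviously yield the claimed identity.
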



\begin{proof}[Reduction of \autoref{thm:main-mu} from \autoref{thm:main-A}] Since $ \Phi_{j}$ is exponentially separated for $ 1  \leq j \leq d $, the assumptions of the theorem are satisfied, and $ \calC_{nN} = \vee_{i=0}^{nN-1} \sigma^{-i} \calP $, where $ \calP $ denotes the partition of $ \Lambda^{\bbN} $ based on the first digit. Note that
	$ \wh{\calA} = (\vee_{i=0}^{n-1} T^{-i}\wh{\Gamma}) \vee T^{-n} \wh{\calA} $, and $ \beta $ is Bernoulli. Then
	\begin{align*}
		\Hof{ \beta, \vee_{i=0}^{nN-1} \sigma^{-i} \calP \mid \wh{\calA} } & = \Hof{ \beta, \vee_{i=0}^{nN-1} \sigma^{-i} \calP \mid \vee_{i=0}^{n-1} T^{-i}\Gamma} & \by{\autoref{lem:list-identities}\ref{itm:condindp-info}} \\
		& = \Hof{\beta,\vee_{i=0}^{nN-1} \sigma^{-i} \calP } - \Hof{\beta, \vee_{i=0}^{n-1} T^{-i}\Gamma }  & \by{\autoref{lem:list-identities}\ref{itm:ChainRule-H}} \\
		& = (nN)\Hof{p} - \Hof{\beta, \vee_{i=0}^{n-1} T^{-i}\Gamma }.
	\end{align*}
	Since $ \{ A_{\varphi_{i}}\}_{i\in\Lambda}$ are commutative, by \eqref{eq:def-gamma} we have $ \Hof{\beta, \vee_{i=0}^{n-1} T^{-i}\Gamma } \leq n \log \Abs{\Gamma} \leq 2 n \abs{\Lambda} \log N $. From this, \eqref{eq:def-hRW-calA} and the above equation it follows that
	\begin{equation*}
	\Abs{\hRWphiA - \Hof{p}}\leq 2\abs{\Lambda} \frac{\log N}{N}.
	\end{equation*}
 	From this, \eqref{eq:mu>A}, \autoref{thm:main-A},  and \eqref{eq:def-LyDim}, letting $ N \to \infty $ yields that
	\begin{equation*}
		\dim \mu \geq \dim \calA = \min\left\{d, f_{\Phi}(\hRWphiA)\right\} \rightarrow \min\left\{d, \dim_{L}(\Phi, p) \right\}.
	\end{equation*}
	This completes the proof since $ \dim \mu \leq \min\left\{d, \dim_{L}(\Phi, p) \right\} $ always holds.
\end{proof}

We prove \autoref{thm:main-A} by following the approach of Rapaport~\cite{Rapaport2023}. The proof relies on two key ingredients: a Ledrappier-Young type formula and an entropy increase result. For the first ingredient, we establish a Ledrappier-Young type formula for certain disintegrations of self-affine measures in \autoref{thm:L-Y-formula}, a result may be of independent interest. Based on an argument inspired by ideas from~\cite{BaranyEtAl2016}, this formula reduces the general case to the one where  the entropy increase result can be applied.

The proof of the entropy increase result involves analyzing the multi-scale entropy of repeated self-convolutions of a measure with nonnegligible entropy, as well as the component measures of $ \mu $, along certain nonconformal partitions. In \cite{Rapaport2023}, the assumption that the linear parts of $ \Phi $ stay in a 1-dimensional subgroup is used to find minimal cut-sets $ \calU_{n}, \, n \in \bbN $ of $ \Lambda^{\ast} $ such that
\begin{equation}\label{eq:1Dim-result}
	A_{\varphi_{u}} \approx A_{\varphi_{v}} \mFor u, v \in \calU_{n},
\end{equation}
where $\approx $ means being entrywise comparable. These cut-sets are essential for estimating the asymptotic entropies of components of $ \mu $ within the desired error (see~\cite[Section 4]{Rapaport2023}). For each $ \mu^{\omega} $, there are natural partitions $ \calE^{\omega}_{n}, n \in\bbN $ (see \eqref{eq:def-E-omega}). Motivated by this and \eqref{eq:1Dim-result}, we consider the random measures $ \mu^{\omega}$ and establish the entropy increase result accordingly. However, difficulties arise because $ \mu^{\omega}$ is only dynamically self-affine (see \eqref{eq:dyn-selfaff}), and the partitions $ \calE^{\omega}_{n} $ depend on $ \omega $. To address this, we utilize the dynamics on $ (\Omega, \bfP) $ to prove appropriate modifications of the required lemmas. Based these lemmas, it is not difficult to adapt the arguments in \cite{Rapaport2023} to derive \autoref{thm:omega-e-increase}, a version of the entropy increase result for random measures.

\subsection{Structure of the article} In \autoref{sec:prelim}, we introduce the basics of the conditional entropies and disintegrations. \autoref{sec:L-Y} is devoted to proving the Ledrappier-Young type formula for random measures, thereby showing \autoref{thm:ExactDim-Intro}. In \autoref{sec:dis-linear}, we define the disintegrations with respect to the linear parts of the IFS. Sections \ref{sec:self-conv} and \ref{sec:components} are prepared for the entropy increase result which itself  is proved in \autoref{sec:EntropyIncrease}. Finally, \autoref{thm:main-A} is proved in \autoref{sec:pf-main-A}.

\subsection{Acknowledgement}

I would like to thank Ariel Rapaport for suggesting the problem, pointing out the useful references~\cite{Rapaport2023,SagliettiEtAl2018}, and providing helpful comments on an early version of this paper.


\section{Preliminaries}\label{sec:prelim}

In this section, we introduce the necessary notations and setup, present the basics of conditional information theory, and discuss key properties of specific disintegrations.

\subsection{Notations} 

Throughout this paper, the base of $ \log(\cdot) $ and $ \exp(\cdot) $ is 2. For $ n \in \bbN $, we define $ [n] = \{1 , \ldots, n\}$, with convention $ [0] = \emptyset $. The normalized counting measure on $ [n]$ is denoted by $ \#_{n} $, that is, $ \#_{n}(\{k\}) = 1/n $ for $ k \in [n]$. For a finite set $ \calE $,  we use $ \# \calE $ or $ \abs{\calE}$ to represent its cardinality.  By $ E \subsetneq F $ we mean that $ E $ is a proper subset of $ F $.

For a metric space $ X $, let $ \calB(X)$ denote the Borel $\sigma$-algebra on $ X $, and $ \calM(X) $ the set of all Borel probability measures on $ X $. By $ \calM_{c}(X)$ we denote the members of $ \calM(X)$ with compact support. For $ \theta \in \calM(X) $ and $ E \subset X $, the restriction of $ \theta $ to $ E $ is written as $ \theta|_{E} $, and the normalized restriction is $ \theta_{E} = \theta|_{E} / \theta(E) $ if $ \theta(E) > 0 $.

Following \cite[Section 2.1]{Rapaport2023}, we use the convenient notation $ \ll $. Given $ R_{1}, R_{2} \geq 1 $, we write $ R_{1} \ll R_{2}$ to indicate that $ R_{2} $ is large with respect to (w.r.t.)\ $ R_{1}$. Similarly, given $ \varepsilon_{1}, \varepsilon_{2} \in (0,1) $, we write $ R_{1} \ll \varepsilon_{1}^{-1} $, $ \varepsilon_{2}^{-1} \ll R_{2}$ and $ \varepsilon_{1}^{-1} \ll \varepsilon_{2}^{-1}$ to respectively indicate $ \varepsilon_{1}$ is small \wrt $ R_{1}$, $ R_{2}$ is large \wrt $ \varepsilon_{2} $, and $ \varepsilon_{2} $ is small w.r.t.\@ $ \varepsilon_{1}$. The relation $ \ll $ is clearly transitive. For example, the statement ``Let $ m \geq 1 $, $ \ell \geq L(m) \geq 1$, $ k \geq K(m,\ell) \geq 1 $ and $ \varepsilon \leq \varepsilon_{0}(m, \ell, k)$ be given.'' is equivalent to ``Let $ \varepsilon \in (0, 1)$ and $ m, \ell, k \geq 1$ be with $ m \ll \ell \ll k \ll \varepsilon^{-1}$.''

\subsection{The setup} We fix a diagonal affine IFS $ \Phi =  \{ \varphi_{i}(x) = A_{i}x + t_{i}\}_{i\in\Lambda}$ on $ \euclid $, where $ A_{i} = \diag(r_{i,1}, \ldots, r_{i,d})$ with $ r_{i,j} \in (-1,1) \setminus \{0\} $, and $ t_{i} = (t_{i,j})_{j=1}^{d} \in \euclid $. The associated self-affine set is $ K_{\Phi}$. We fix a probability vector $ p = (p_{i})_{i \in \Lambda}$, and $ \mu $ is the corresponding self-affine measure. Let $ \Pi \colon \Lambda^{\bbN} \to K_{\Phi} $ denote the coding map defined as in \eqref{eq:def-coding-map}. It is well known that $ \mu = \Pi \beta $, where $ \beta := p^{\bbN} $ is the Bernoulli measure on $ \Lambda^{\bbN} $.  For $ 1 \leq j \leq d $, the $ j $-th Lyapunov exponent is $ \chi_{j} := \sum_{i\in\Lambda} - p_{i} \log \abs{r_{i,j}} $. As explained in \autoref{rmk:LyExp-Distinct}, we always assume $ \chi_{1} < \cdots < \chi_{d} $. Without loss of generality, we also assume $ \diam( K_{\Phi}) \leq 1 $, where $ \diam(\cdot)$ denotes the diameter in Euclidean metric.

For $ i \in \Lambda$ and $ j \in [d]$, define $ \varphi_{i,j} \colon \bbR \to \bbR $ by $ \varphi_{i,j}(x) = r_{i,j}x + t_{i,j}$. For $ \emptyset \neq J \subset [d] $, the IFS induced by $ \Phi $ on $ \bbR^{J} $ is defined as
\begin{equation}\label{eq:def-Phi-J}
	\Phi_{J} = \{ \varphi_{i,J}\}_{i\in\Lambda}, \text{ where }	\varphi_{i,J}\left( (x_{j})_{j\in J} \right) = (\varphi_{i,j}(x_{j}))_{j\in J} \text{ for } i \in \Lambda.
\end{equation}
For $ 1 \leq j \leq d $, we write $ \Phi_{j}$ in place of $ \Phi_{[j]}$. It follows that $ \Phi = \Phi_{[d]} $ and $ \varphi_{i} = \varphi_{i,[d]}$ for $ i \in \Lambda $.

The collection of all finite words over $ \Lambda $ is denoted by $ \Lambda^{\ast}$, including the empty word $ \varnothing $. Write $ \abs{I} := n $ if $ I \in \Lambda^{n} $ and $ \abs{\varnothing} := 0 $. For $ x = (x_{i})_{i=1}^{\infty} \in \Lambda^{\bbN} $ and $ n \in \bbN $, let $ x|n = x_{1}\cdots x_{n} $ and $ x|0 = \varnothing $. For $ I \in \Lambda^{\ast}$, the cylinder set is $ [I] :=\{x\in \Lambda^{\bbN} \colon x|\abs{I} = I \}$ . For $ I = i_{1} \cdots i_{n} \in \Lambda^{n}$ and $ 1 \leq j \leq d $, define
\begin{equation}\label{eq:def-AIj}
	\varphi_{I} = \varphi_{i_{1}} \circ \cdots \varphi_{i_{n}},\quad A^{I} = A_{i_{1}} \cdots A_{i_{n}},\quad A_{j}^{I} = r_{i_{1}, j} \cdots r_{i_{n}, j},
\end{equation}
and
\begin{equation*}
	\lambda^{I}_{j} := \Abs{A_{j}^{I}} \mAnd \chi^{I}_{j} := - \log \lambda^{I}_{j}.
\end{equation*}
Let $ \{e_{1}, \ldots, e_{d}\}$ be the standard basis of $ \euclid[d]$. For $ J \subset [d]$, let $ \pi_{J} $ denote the orthogonal projection onto $ \mathrm{span} \{e_{j}\}_{j\in J} $, that is,
\begin{equation*}
	\pi_{J}(x) = \sum_{j\in J} \innp{e_{j}}{x} e_{j} \mFor x \in \euclid,
\end{equation*}
where $ \innp{\cdot}{\cdot}$ is the standard inner product on $ \euclid $. In particular, $ \pi_{\emptyset} $ is the zero map and $ \pi_{[d]}$ is the identity map on $ \euclid $.

\subsection{Conditional expectation, information and entropy} \label{subsec:cond-e}

Let $ (X, \calB, \theta)$ be a probability space. For a sub-$\sigma$-algebra $ \calF $ of $\calB$, the \textit{conditional expectation} of an integrable function $ f $ given $ \calF $ is denoted by $ \Eof{\theta, f \mid \calF}$. For a countable ($ \calB $-measurable) partition $ \xi $ of $ X $, the \textit{conditional information} of $ \xi $ given $ \calF$ is defined as
\begin{equation}\label{eq:def-Iof}
	\Iof{\theta, \xi \mid \calF} = \sum_{A \in \xi} - \indicator{A} \log \Eof{\theta, \indicator{A} \mid \calF},
\end{equation}
where $ \indicator{S}$ denotes the indicator function of a set $ S $. The \textit{conditional entropy} of $ \xi $ given $ \calF $ is 
\begin{equation}\label{eq:def-Hof}
	\Hof{\theta, \xi \mid \calF } := \int \Iof{\theta, \xi \mid \calF } \, \diff \theta = \int \sum_{A\in\xi} - \Eof{\theta, \indicator{A} \mid \calF} \log \Eof{\theta, \indicator{A} \mid \calF} \, \diff \theta.
\end{equation}
If $ \calF = \calN $, the trivial $\sigma$-algebra consisting of sets of  $\theta$\nobreakdash-measure 0 or 1, the above quantities reduce to their unconditional counterparts:
\begin{equation*}
	\Iof{\theta, \xi} = \Iof{\theta, \xi \mid \calN } \mAnd \Hof{\theta, \xi} = \Hof{\theta, \xi \mid \calN }.
\end{equation*}
For $ S \subset \calB $, let $ \wh{S}$ denote the $\sigma$-algebra generated by $ S $. Given a countable partition $ \eta $, we write
\begin{equation}
	\Iof{\theta, \xi\mid \eta} = \Iof{\theta, \xi \mid \wh{\eta}} \mAnd \Hof{\theta, \xi \mid \eta } = \Hof{\theta, \xi \mid \wh{\eta}}.
\end{equation}
In this case, the conditional entropy satisfies
\begin{equation*}
	\Hof{\theta, \xi \mid \eta } = \sum_{A\in\eta} \theta(A) \cdot \Hof{\theta_{A}, \xi },
\end{equation*}
where $ \theta_{A} := \theta(A)^{-1} \theta|_{A} $ for $ A \in \eta $ with $ \theta(A) > 0 $.

The following lemma summarizes key identities and properties of conditional information; see  \cite{Parry1981,Walters1982} for details. For countable partitions $ \eta_{1}, \ldots, \eta_{n} $, let $ \eta_{1} \vee \cdots \vee \eta_{n} = \vee_{i=1}^{n} \eta_{i}  = \left\{ \intxn_{i=1}^{n} A_{i} \colon A_{i} \in \eta_{i}, \, 1 \leq i \leq n\right\}$. For $ \sigma$-algebras $ \calF_{1}, \calF_{2}, \ldots$, let $ \calF_{1} \vee \calF_{2} \vee \cdots $ or $ \vee_{i} \calF_{i} $ denote the $\sigma$-algebra generated by $\union_{i}\calF_{i}$. Below we take the convention $ 0 / 0 = 0 $.

\begin{lemma}\label{lem:list-identities}
	Let $ T $ be a measurable map from a separable probability space $ (X, \calB, \theta)$ to another measurable space $ (Y, \calB')$. Let $ A \in \calB $. Let $ \xi, \eta, \zeta $ be countable partitions of $ X $, and let $ \calE $ be a countable partition of $ Y $, such that $ \Hof{\theta, \xi }, \Hof{\theta,\eta}, \Hof{\theta, \zeta}, \Hof{T\theta,\calE} < \infty $. Let $ \calF, \calF_{1},\calF_{2},\ldots $ be sub-$\sigma$-algebras of $ \calB $, and let $ \calG $ be a sub-$\sigma$-algebra of $ \calB'$. Then the following hold.
	
	\begin{enumerate}[(i)]
		\item \label{itm:list-pushfoward-E}  $ \Eof{ T\theta, g \mid \calG } \circ T = \Eof{ \theta, g \circ T \mid T^{-1} \calG }$ for $ g \in L^{1}(Y,\calB', T\theta)$.
		
		\item \label{itm:list-pushfoward-I} $ \Iof{T\theta, \calE \mid \calG} \circ T = \Iof{\theta, T^{-1}\calE \mid T^{-1} \calG} $.
		
		\item \label{itm:list-pushfoward-H} $ \Hof{T\theta, \calE \mid \calG} = \Hof{\theta, T^{-1}\calE \mid T^{-1} \calG} $.
		
		\item \label{itm:ChainRule-I}  $ \Iof{\theta, \xi \vee \eta \mid \calF } = \Iof{ \theta, \xi \mid \calF} + \Iof{\theta, \eta \mid \calF \vee \wh{\xi}} $.
		
		\item \label{itm:ChainRule-H} $ \Hof{\theta, \xi \vee \eta \mid \calF } = \Hof{ \theta, \xi \mid \calF} + \Hof{\theta, \eta \mid \calF \vee \wh{\xi}} $.
		
		\item \label{itm:condindp-exp} If $ \theta(A\intxn F_{1} \intxn F_{2}) /\theta(F_{1}\intxn F_{2}) = \theta(A\intxn F_{1}) /\theta(F_{1}) $ for $ F_{1} \in \calF, F_{2} \in \calF_{2}$, then
		\begin{equation*}
			\Eof{ \theta, \indicator{A} \mid \calF_{1} \vee \calF_{2}} = \Eof{ \theta, \indicator{A} \mid \calF_{1} }.
		\end{equation*}
		
		\item \label{itm:condindp-info} If  $ \theta(A\intxn F_{1} \intxn F_{2}) /\theta(F_{1}\intxn F_{2}) = \theta(A\intxn F_{1}) /\theta(F_{1}) $ for $ A \in \xi, F_{1} \in \calF, F_{2} \in \calF_{2}$, then
		\begin{equation*}
			\Iof{\theta, \xi \mid \calF_{1} \vee \calF_{2}} = \Iof{\theta, \xi \mid \calF_{1}} \mAnd \Hof{\theta, \xi \mid \calF_{1} \vee \calF_{2}} = \Hof{\theta, \xi \mid \calF_{1}}.
		\end{equation*} 
		
		\item \label{itm:increasing}
		If $ \calF_{n} \subset \calF_{n+1} $ for $ n \in \bbN $ and $ \calF_{n} \uparrow \calF $, then $ \sup_{n} \Iof{ \theta, \xi \mid \calF_{n}} \in L^{1}(\theta)$, and $ \Iof{ \theta, \xi, \calF_{n}}$ converges $ \theta $ a.e.\@ and in $ L^{1}(\theta) $ to $ \Iof{\theta, \xi\mid \calF}$. In particular, $ \lim_{n\to\infty} \Hof{\theta, \xi \mid \calF_{n}} = \Hof{\theta, \xi \mid \calF}$.
	\end{enumerate}
\end{lemma}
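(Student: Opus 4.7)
The plan is to treat this as a collection of standard facts from conditional information theory, each derived from the defining property of conditional expectation and in some cases from Doob's martingale convergence theorem. The items split into three natural groups, which I would handle in turn.

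First I would establish the pushforward identities (i)--(iii). Item (i) is the change-of-variables formula: I would verify that the function $\Eof{\theta, g\circ T \mid T^{-1}\calG}$ is $T^{-1}\calG$-measurable and satisfies the defining integral identity $\int_{T^{-1}B} g\circ T\,\diff\theta = \int_{T^{-1}B} \Eof{T\theta, g\mid\calG}\circ T\,\diff\theta$ for every $B\in\calG$, which follows from the abstract change-of-variables formula $\int_{T^{-1}B} f\circ T\,\diff\theta = \int_{B} f\,\diff(T\theta)$. Item (ii) then follows by applying (i) to $g=\indicator{A}$ for each $A\in\calE$, noting $\indicator{A}\circ T = \indicator{T^{-1}A}$, substituting into the definition \eqref{eq:def-Iof}, and using monotone convergence to pass the sum inside. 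Integrating (ii) against $\theta$ and recognizing the left side as $\int \Iof{T\theta,\calE\mid\calG}\,\diff(T\theta)$ gives (iii).

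Next I would handle the chain rules (iv)--(v) and the conditional independence statements (vi)--(vii). For (iv), the key identity is that on each atom $A\in\xi$, the $\calF\vee\wh\xi$-measurable function $B\mapsto \Eof{\theta,\indicator{A\cap B}\mid\calF}/\Eof{\theta,\indicator{A}\mid\calF}$ computed on $A$ coincides with $\Eof{\theta,\indicator{B}\mid \calF\vee\wh\xi}$; this is the defining property, since both sides integrate to $\theta(A\cap B\cap F)$ over $A\cap F$ for $F\in\calF$. Taking $-\log$, summing over $A\in\xi$ and $B\in\eta$, and regrouping yields (iv); integrating gives (v). For (vi), I would observe that $\Eof{\theta,\indicator{A}\mid\calF_1}$ is already $\calF_1\vee\calF_2$-measurable and check that it integrates correctly on rectangles $F_1\cap F_2$: the hypothesis rewrites as $\theta(A\cap F_1\cap F_2) = \theta(F_1\cap F_2)\cdot \theta(A\cap F_1)/\theta(F_1)$, and integrating $\Eof{\theta,\indicator{A}\mid\calF_1}$ over $F_1\cap F_2$ reproduces exactly this quantity by a monotone class argument. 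Item (vii) is then immediate from (vi) by substitution into \eqref{eq:def-Iof} and \eqref{eq:def-Hof}.

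Finally, for (viii) I would appeal to Doob's martingale convergence theorem applied to $\Eof{\theta,\indicator{A}\mid\calF_n}$ for each $A\in\xi$, which converges a.s.\ and in $L^1$ to $\Eof{\theta,\indicator{A}\mid\calF}$. The delicate point is the uniform domination $\sup_n \Iof{\theta,\xi\mid\calF_n}\in L^1(\theta)$, which I would obtain via Chung's inequality: for each $A\in\xi$, the process $-\log\Eof{\theta,\indicator{A}\mid\calF_n}$ is dominated in $L^1$ uniformly in $n$ by a standard Doob maximal inequality for conditional probability martingales, and summing the tail bounds over the partition $\xi$ (using $\Hof{\theta,\xi}<\infty$) gives integrability of the supremum. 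Dominated convergence then yields the a.s.\ and $L^1$ convergence of $\Iof{\theta,\xi\mid\calF_n}$, and integrating gives the entropy statement.

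There is no significant obstacle here beyond careful bookkeeping; the only nontrivial input is the $L^1$ domination in (viii), which I would cite directly from Parry~\cite{Parry1981} rather than reprove.
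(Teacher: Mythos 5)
The paper does not prove this lemma at all; it states it and cites Parry and Walters. Your sketch reconstructs the standard arguments from those references correctly, including correctly isolating the one nontrivial point (the $L^1$ domination of $\sup_n\Iof{\theta,\xi\mid\calF_n}$ via Chung's maximal inequality for the martingale increasing-information theorem).

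One small gloss on your argument for (iv): when you say the function $\Eof{\theta,\indicator{A\cap B}\mid\calF}/\Eof{\theta,\indicator{A}\mid\calF}$ "computed on $A$" coincides with $\Eof{\theta,\indicator{B}\mid\calF\vee\wh\xi}$, the cleanest way to make this precise is to verify directly that $\sum_{A\in\xi}\indicator{A}\,\Eof{\theta,\indicator{A\cap B}\mid\calF}/\Eof{\theta,\indicator{A}\mid\calF}$ is $\calF\vee\wh\xi$-measurable and integrates to $\theta(A'\cap B\cap F)$ over $A'\cap F$ for $A'\in\xi$, $F\in\calF$; that integral computation uses that the ratio is $\calF$-measurable and then the tower property. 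You have the right idea, but the measurability-plus-defining-integral verification is what actually closes it. Everything else is fine.
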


Next, we present several useful inequalities for estimating conditional entropy. For partitions $ \xi$ and $\eta $, we say $ \eta $ refines $ \xi $, denoted by $ \xi \prec \eta $, if each member of $ \eta $ is a subset of some member of $ \xi $.

\begin{lemma}\label{lem:list-ests}
	Let $ (X, \calB) $ be a measurable space, and let $ \theta, \theta_{1}, \ldots, \theta_{n}$ be probability measures on $(X, \calB)$. Let $\xi, \eta$ be countable partitions of $X$, and let $ \calF_{1},\calF_{2}$ be sub-$\sigma$-algebras of $\calB$. Then the following hold.
	\begin{enumerate}[(i)]
		\item \label{itm:est-triv-UB} $ \Hof{\theta, \xi} \leq \log \# \{ A \in \xi \colon \theta(A) > 0 \} $.
		
		\item \label{itm:est-monotone} If $ \xi \prec \eta $ and $ \calF_{1} \subset \calF_{2} $, then $ \Hof{\theta, \xi \mid \calF_{2}} \leq \Hof{\theta, \xi \mid \calF_{1}} \leq \Hof{\theta,\eta \mid \calF_{1}}$.
		
		\item \label{itm:concav-aconvex} If $ q = (q_{i})_{i=1}^{n}$ is a probability vector and $ \theta = \sum_{i=1}^{n} q_{i} \theta_{i} $, then
			\begin{equation*}
			 \sum_{i=1}^{n} q_{i} \Hof{\theta_{i}, \xi \mid \eta } \leq \Hof{\theta, \xi\mid \eta}  \leq \sum_{i=1}^{n} q_{i} \Hof{\theta_{i}, \xi \mid \eta } + H(q). 
			\end{equation*}
			
		\item \label{itm:commensure} Given $ C \geq 1$, we say that $ \xi $ and $ \eta$ are $ C $-commensurable if for each $ A \in \xi$ and $ B \in \eta $,
		\begin{equation*}
			\#\{ A' \in \xi \colon A' \intxn B \neq \emptyset \} \leq C \mAnd	 \#\{ B' \in \xi \colon B' \intxn A \neq \emptyset \} \leq C.
		\end{equation*}
		If $ \xi $ and $ \eta$ are $ C $-commensurable, then $ \Abs{ \Hof{\theta, \xi} - \Hof{\theta, \eta} } \leq \log C $.
		
	\end{enumerate}
	
\end{lemma}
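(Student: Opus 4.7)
The four items are standard consequences of the concavity of $\phi(x) = -x \log x$ on $[0,1]$ together with the identities collected in \autoref{lem:list-identities}; I would prove them in order. For \ref{itm:est-triv-UB}, set $N = \#\{A \in \xi \colon \theta(A) > 0\}$; Jensen's inequality (equivalently, Gibbs' inequality against the uniform distribution on the positive-mass atoms) gives $\Hof{\theta, \xi} = \sum_{A} \theta(A) \log (1/\theta(A)) \leq \log N$.

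For \ref{itm:est-monotone}, the first inequality $\Hof{\theta, \xi \mid \calF_{2}} \leq \Hof{\theta, \xi \mid \calF_{1}}$ follows from the tower property $\Eof{\theta, \indicator{A} \mid \calF_{1}} = \Eof{\theta, \Eof{\theta, \indicator{A} \mid \calF_{2}} \mid \calF_{1}}$ combined with the conditional Jensen inequality applied to the convex function $x \mapsto x \log x$; integrating and negating yields the claim. The second inequality uses \autoref{lem:list-identities}\ref{itm:ChainRule-H}: since $\xi \prec \eta$ one has $\xi \vee \eta = \eta$, whence
\begin{equation*}
	\Hof{\theta, \eta \mid \calF_{1}} = \Hof{\theta, \xi \mid \calF_{1}} + \Hof{\theta, \eta \mid \calF_{1} \vee \wh{\xi}} \geq \Hof{\theta, \xi \mid \calF_{1}}.
\end{equation*}

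For \ref{itm:concav-aconvex} I would work atom by atom in $\eta$. Fix $B \in \eta$ with $\theta(B) > 0$; for each $A \in \xi$ the identity
\begin{equation*}
	\frac{\theta(A \cap B)}{\theta(B)} = \sum_{i=1}^{n} \frac{q_{i} \theta_{i}(B)}{\theta(B)} \cdot \frac{\theta_{i}(A \cap B)}{\theta_{i}(B)}
\end{equation*}
expresses the conditional probability on the left as a convex combination of those on the right (with the convention $0/0 = 0$). Applying concavity of $\phi$, multiplying through by $\theta(B)$, and summing over $A \in \xi$ and then $B \in \eta$ yields the lower bound $\sum_{i} q_{i} \Hof{\theta_{i}, \xi \mid \eta} \leq \Hof{\theta, \xi \mid \eta}$. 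For the upper bound I would lift to $Y := X \times [n]$ equipped with $\tau := \sum_{i} q_{i} (\theta_{i} \otimes \delta_{i})$; let $\zeta$ be the partition of $Y$ by second coordinate and pull back $\xi, \eta$ to $\xi', \eta'$ along the first projection. The chain rule and \autoref{lem:list-identities}\ref{itm:list-pushfoward-H} give
\begin{equation*}
	\Hof{\theta, \xi \mid \eta} = \Hof{\tau, \xi' \mid \eta'} \leq \Hof{\tau, \xi' \vee \zeta \mid \eta'} = \Hof{\tau, \zeta \mid \eta'} + \Hof{\tau, \xi' \mid \eta' \vee \zeta},
\end{equation*}
and the two right-hand terms are bounded respectively by $\Hof{q}$ (via \ref{itm:est-triv-UB}) and by $\sum_{i} q_{i} \Hof{\theta_{i}, \xi \mid \eta}$ (directly from the definition, since conditioning on $\zeta$ restricts $\tau$ to $\theta_{i}$ on the $i$-th slice).

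For \ref{itm:commensure}, observe that $\xi \vee \eta$ refines both $\xi$ and $\eta$, so by \ref{itm:est-monotone} and the chain rule,
\begin{equation*}
	\Hof{\theta, \xi} \leq \Hof{\theta, \xi \vee \eta} = \Hof{\theta, \eta} + \sum_{B \in \eta} \theta(B) \Hof{\theta_{B}, \xi}.
\end{equation*}
Each $\theta_{B}$ is supported in $B$, which meets at most $C$ atoms of $\xi$, so part \ref{itm:est-triv-UB} bounds every inner entropy by $\log C$; summing produces $\Hof{\theta, \xi} - \Hof{\theta, \eta} \leq \log C$, and the reverse inequality follows by swapping the roles of $\xi$ and $\eta$. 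None of these steps presents a genuine obstacle; the only mild care required is in \ref{itm:concav-aconvex}, where vanishing conditional masses and the $0/0$ convention must be tracked through the concavity estimate without altering the inequality.
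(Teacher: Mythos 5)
The paper itself gives no proof of this lemma; it is stated as a collection of standard entropy inequalities immediately after \autoref{lem:list-identities}, with no argument or citation supplied. Your derivations are all correct and follow the usual textbook route (Jensen/concavity for \ref{itm:est-triv-UB} and the first half of \ref{itm:est-monotone}, the chain rule for the second half, atom-by-atom convexity plus a product-space lift for \ref{itm:concav-aconvex}, and a refinement-plus-chain-rule argument for \ref{itm:commensure}).

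One small misattribution: in the upper bound of \ref{itm:concav-aconvex} you write that $\Hof{\tau,\zeta\mid\eta'}$ is bounded by $\Hof{q}$ \emph{via} part \ref{itm:est-triv-UB}. Part \ref{itm:est-triv-UB} would only give $\log\#\{i:q_i>0\}$, which is in general strictly larger than $\Hof{q}$. The correct justification is that $\Hof{\tau,\zeta\mid\eta'}\le\Hof{\tau,\zeta}$ by the monotonicity in \ref{itm:est-monotone}, and $\Hof{\tau,\zeta}=\Hof{q}$ holds by definition, since the atoms of $\zeta$ have $\tau$-masses exactly $q_1,\dots,q_n$. This is a citation slip, not a gap; the inequality you need is true and follows in one line. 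The rest of the argument, including the handling of the $0/0$ convention in the concavity step and the $C$-commensurability count in \ref{itm:commensure}, is sound.
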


\subsection{Conditional measures and some disintegrations} \label{subsec:some-disint}

We begin with a foundational result from Rohlin's theory of conditional measures; for further details, refer to \cite{Rohlin1952, EinsiedlerWard2011}.

\begin{theorem}[{Rohlin~\cite{Rohlin1952}}] \label{thm:rohlin}
	Let $ X , Y $ be Euclidean spaces or product spaces of countably many finite sets. Let $ \eta $ be a partition induced by a Borel measurable map $\pi \colon X \to Y $, that is, $ \eta = \{ \pi^{-1}(y) \colon y\in Y \}$. Let $ \theta $ be a Borel probability measure on $ X $. Then for $ \theta \aev x $ there exists a probability measure $ \theta^{\eta}_{x}$ supported on $ \eta(x)$. These measures are uniquely determined up zero $ \theta $-measure by the properties: if $ A \subset X $ is Borel measurable, then $ x \mapsto \theta^{\eta}_{x}(A)$ is $ \wh{\eta}$-measurable, and $ \theta(A) = \int \theta^{\eta}_{x}(A) \, \diff \theta(x)$. This means $ \theta = \int \theta^{\eta}_{x} \, \diff \theta(x) $ in the sense that $ \int\int f(y) \, \diff \theta^{\eta}_{x}(y) \diff \theta(x) $ for $ f \in L^{1}(X, \calB(X), \theta )$.
\end{theorem}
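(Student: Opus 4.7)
The plan is to follow the classical construction based on conditional expectations and the Carathéodory extension theorem, exploiting that under either structural hypothesis on $ X $ the space is standard Borel, so $ \calB(X) $ is countably generated. First I would fix a countable algebra $ \calR = \{R_{n} : n \in \bbN\} $ generating $ \calB(X) $ and, for each $ n $, choose a version of the conditional expectation
\[
	f_{n} := \Eof{\theta, \indicator{R_{n}} \mid \wh{\eta}}.
\]
Each $ f_{n} $ is $ \wh{\eta} $-measurable, and the standard a.e.\ identities for conditional expectations (finite additivity on disjoint pairs drawn from $ \calR $, the normalization $ f_{n} \equiv 1 $ when $ R_{n} = X $, monotonicity, and $ f_{n_{k}}(x) \downarrow 0 $ whenever $ R_{n_{k}} \downarrow \emptyset $ within $ \calR $) each hold off a $ \theta $-null, $ \wh{\eta} $-measurable set.

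Since there are only countably many such relations among members of $ \calR $, their exceptional null sets can be combined into a single $ \theta $-null set $ N \in \wh{\eta} $. For $ x \notin N $ the assignment $ R_{n} \mapsto f_{n}(x) $ is a countably additive premeasure of total mass $ 1 $ on $ \calR $, which the Carathéodory extension theorem upgrades uniquely to a Borel probability measure $ \theta^{\eta}_{x} $ on $ X $; on $ N $ I would set $ \theta^{\eta}_{x} $ equal to an arbitrarily chosen but fixed probability measure, so that $ x \mapsto \theta^{\eta}_{x} $ is globally defined. By construction, $ x \mapsto \theta^{\eta}_{x}(R_{n}) = f_{n}(x) $ is $ \wh{\eta} $-measurable for every $ n $, and $ \theta(R_{n}) = \int \theta^{\eta}_{x}(R_{n}) \, \diff \theta(x) $; a standard monotone class / Dynkin argument then propagates both properties from $ \calR $ to all of $ \calB(X) $, which yields the integral identity for any $ f \in L^{1}(X, \calB(X), \theta) $ by the usual simple-function approximation.

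To verify that $ \theta^{\eta}_{x} $ is supported on $ \eta(x) $ for $ \theta $-a.e.\ $ x $, I would pick a countable algebra $ \calS = \{S_{m}\} $ on $ Y $ that generates $ \calB(Y) $ and separates points; this is where the structural assumption on $ Y $ enters, since a Polish / countable-product-of-finite-sets target admits such a separating algebra. For each $ m $ the preimage $ \pi^{-1}(S_{m}) $ is $ \wh{\eta} $-measurable, so
\[
	\Eof{\theta, \indicator{\pi^{-1}(S_{m})} \mid \wh{\eta}} = \indicator{\pi^{-1}(S_{m})} \quad \text{$ \theta $-a.e.},
\]
and hence off a single $ \theta $-null set one has $ \theta^{\eta}_{x}(\pi^{-1}(S_{m})) = \indicator{\pi^{-1}(S_{m})}(x) \in \{0,1\} $ for every $ m $ simultaneously. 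Because $ \calS $ separates points of $ Y $, this forces $ \theta^{\eta}_{x} $ to concentrate on $ \pi^{-1}(\pi(x)) = \eta(x) $ almost surely. Uniqueness up to a $ \theta $-null set is immediate from the uniqueness of conditional expectations applied to each $ R_{n} \in \calR $, followed by the same monotone class argument.

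The main obstacle is the familiar one in disintegration statements: a priori, the conditional expectations $ \Eof{\theta, \indicator{A} \mid \wh{\eta}} $ are defined only up to $ \theta $-null sets that depend on $ A \in \calB(X) $, so there is no automatic way to glue them into a probability measure in $ A $ for a fixed $ x $. Circumventing this requires the countable generation of $ \calB(X) $ — which can fail in general measurable spaces — and explains exactly why the hypothesis that $ X $ is Euclidean or a countable product of finite sets is invoked.
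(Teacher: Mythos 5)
The paper does not give a proof of \autoref{thm:rohlin}; it cites Rohlin~\cite{Rohlin1952} and Einsiedler--Ward~\cite{EinsiedlerWard2011}, so there is no ``paper's proof'' to compare against. Your outline follows the standard construction via conditional expectations on a countable generating algebra, which is indeed the classical route, and the part about verifying that $\theta^{\eta}_{x}$ sits on $\eta(x)$ via a countable separating algebra on $Y$ is fine, as is the monotone class upgrade of measurability and the integral identity from $\calR$ to $\calB(X)$.

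There is, however, a genuine gap at the countable additivity step. You assert that the relations ``finite additivity, normalization, monotonicity, and $f_{n_k}(x)\downarrow 0$ whenever $R_{n_k}\downarrow\emptyset$'' are ``only countably many,'' so their exceptional null sets can be collected into one. Finite additivity and monotonicity really are countably many relations, but continuity at the empty set is not: a countable algebra admits uncountably many sequences decreasing to $\emptyset$, and you cannot discard a null set for each of them and intersect. This is precisely the classical obstacle to promoting a collection of conditional-expectation versions into a bona fide transition kernel, and it is the reason the structural hypothesis on $X$ is needed beyond mere countable generation of $\calB(X)$. The standard repair is to choose $\calR$ to be a compact class, or at least to approximate its elements from inside by a compact class: for $X$ a countable product of finite sets, take $\calR$ to be the algebra of finite unions of cylinders, which are compact, so $R_{n_k}\downarrow\emptyset$ with $R_{n_k}\in\calR$ forces some $R_{n_K}=\emptyset$ and continuity at $\emptyset$ is vacuous; for $X=\euclid$, take $\calR$ to be the algebra generated by dyadic half-open boxes and argue via inner regularity by compact sets (or reduce, as Rohlin does, to $[0,1]$ with a left-continuous distribution function, where left-continuity at the countably many rationals suffices). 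Without such a choice, the Carath\'eodory step does not go through, and the premeasure $R_n\mapsto f_n(x)$ need not extend. So the approach is right but the justification that ``only countably many relations'' are in play must be replaced by a compactness (or regularity) argument tailored to the chosen $\calR$.
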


The family of measures $\{\theta^{\eta}_{x}\}_{x\in X} $ is called the \textit{system of conditional measures of $ \theta $ associated with $ \eta $} or the \textit{disintegration of $ \theta $ with respect to $ \pi $}.

Next, we introduce certain disintegrations and present some of their properties. Fix $ N \in \bbN $. Let $ \Gamma $ be a partition of $ \Lambda^{\bbN} $ such that for $ x ,y \in \Lambda^{\bbN}$, $ x|N = y|N $ implies $ \Gamma(x) = \Gamma(y) $. Set $ T = \sigma^{N} $ and $ \calA = \vee_{i=0}^{\infty} T^{-i} \Gamma $. Define the quotient space $ \Omega := \Lambda^{\bbN} / \calA \cong  \Gamma^{\bbN}$. Let $ \bfP $ be the Bernoulli measure on $ \Omega = \Gamma^{\bbN} $ with marginal $ (\beta(\omega_{1}))_{\omega_{1}\in\Gamma}$. Specifically, for $ \omega_{1} \cdots \omega_{n} \in \Gamma^{n}, n \geq 1 $,
\begin{equation}\label{eq:def-bfP}
	\bfP([\omega_{1}\cdots\omega_{n}]) = \prod_{k=1}^{n}\beta(\omega_{k}) = \beta \left\{ x \in \Lambda^{\bbN} \colon \calA(x) \in [\omega_{1}\cdots\omega_{n}]\right\}.
\end{equation}
This shows that $ \bfP = \beta \circ \calA^{-1} $, that is, $ \bfP $ is the pushforward of $ \beta $ under $ \calA $. Here, we slightly abuse the notation by using $ \calA(x)$ to denote both a set in $ \Lambda^{\bbN} $ and a sequence in $ \Omega = \Gamma^{\bbN}$.  

For $\omega_{1} \in \Gamma$, define a measure $ p^{\omega_{1}} $ on $ \Lambda^{\bbN}$ by $ p^{\omega_{1}} := \beta_{\omega_{1}} $ if $ \beta(\omega_{1}) > 0 $, and let $ p^{\omega_{1}} $ be the zero measure if $ \beta(\omega_{1}) = 0 $. For $ \omega = (\omega_{n})_{n=1}^{\infty} \in \Omega $, define a product measure $ \beta^{\omega} $ on $ \Lambda^{\bbN} $ via the identification $ \Lambda^{\bbN} = (\Lambda^{N})^{\bbN}$ as
\begin{equation}\label{eq:def-beta-omega}
	\beta^{\omega}([I]) = \prod_{k=1}^{n} p^{\omega_{k}}([I_{k}]) \mFor I = I_{1} \cdots I_{n} \in (\Lambda^{N})^{n}, n \geq 1.
\end{equation}
Then $\beta^{\omega}$ is supported on $ \calA(x) $ whenever $ \omega = \calA(x)$ for some $ x \in \Lambda^{\bbN}$. On the other hand, let $ \{ \beta^{\calA}_{x}\}_{x\in\Lambda^{\bbN}} $ be the disintegration of $ \beta$ with respect to $ \calA $. It follows from \autoref{thm:rohlin}, $ \wh{\calA}  = (\vee_{i=0}^{n-1}T^{-i}\wh{\Gamma}) \vee \wh{\calA} $ and \autoref{lem:list-identities}\ref{itm:condindp-exp} that for $ \beta \aev x $ and $  I = I_{1} \cdots I_{n} \in (\Lambda^{N})^{n}, n \geq 1 $,
\begin{align*}
	\beta^{\calA}_{x}([I]) & = \Eof{\beta, \indicator{[I]} \mid \wh{\calA} }(x) = \Eof{ \beta, \indicator{[I]} \mid \vee_{i=0}^{n-1} T^{-i}\Gamma }(x) \\ 
	& = \sum_{A \in \vee_{i=0}^{n-1} T^{-i}\Gamma} \indicator{A}(x) \frac{\beta([I] \intxn A)}{\beta(A)} \\
	& = \sum_{(\omega_{k})_{k=1}^{n} \in \Gamma^{n} } \indicator{[\omega_{1}\cdots\omega_{n}]}(\calA(x)) \prod_{k=1}^{n} p^{\omega_{k}}([I_{k}]) \\
	& = \beta^{\calA(x)}([I]),
\end{align*}
where the last equality is by \eqref{eq:def-beta-omega}.
Hence $ \beta^{\calA}_{x} = \beta^{\calA(x)}$ for $ \beta \aev x $. Combining this, \autoref{thm:rohlin} and $ \bfP = \beta \circ \calA^{-1} $, we obtain
\begin{equation}\label{eq:Sym-Disint}
	\beta = \int_{\Lambda^{\bbN}} \beta^{\calA}_{x} \, \diff \beta(x) = \int_{\Lambda^{\bbN}} \beta^{\calA(x)} \, \diff \beta(x) = \int_{\Omega} \beta^{\omega} \, \diff \bfP(\omega).
\end{equation}

Recall the coding map $ \Pi $ from \eqref{eq:def-coding-map}. For $ \omega \in \Omega $, define $ \mu^{\omega} := \Pi \beta^{\omega} $. Applying $ \Pi $ to \eqref{eq:Sym-Disint} yields a disintegration of $ \mu $ as
\begin{equation}
	\mu = \int_{\Omega} \mu^{\omega} \, \diff \bfP(\omega).
\end{equation}

For $ \omega \in \Omega $, the random measure $ \mu^{\omega}$ satisfies the \textit{dynamical self-affinity}. By abuse of notation, let $ T $ be the shift map on $ \Omega $, defined by $ T((\omega_{n})_{n=1}^{\infty}) = (\omega_{n+1})_{n=1}^{\infty}$. Using \eqref{eq:def-beta-omega}, we have, for $ \omega \in \Omega $,
\begin{equation}\label{eq:Tbeta-omega}
	T\beta^{\omega} = \beta^{T\omega},
\end{equation}
and so for $ u \in \Lambda^{N} $,
\begin{equation}\label{eq:push-restrict}
	T(\beta^{\omega}|_{[u]}) = \beta^{\omega}([u]) \beta^{T\omega}.
\end{equation}
From \eqref{eq:def-coding-map} it follows that for $ u \in \Lambda^{\ast} $,
\begin{equation}\label{eq:coding-property}
	\varphi_{u} \circ \Pi \circ \sigma^{\abs{u}} = \Pi \quad \text{ on } [u].
\end{equation}
Thus, $ \mu^{\omega}$ satisfies the dynamical self-affinity:
\begin{equation}\label{eq:dyn-selfaff-general}
	\begin{aligned}
	\mu^{\omega} &  = \Pi\beta^{\omega} = \sum_{u \in \Lambda^{N}} \Pi \beta^{\omega}|_{[u]} \\ 
	& = \sum_{u \in \Lambda^{N}} (\varphi_{u} \Pi T) \beta^{\omega}|_{[u]} & \by{\eqref{eq:coding-property}} \\
	& = \sum_{u \in \Lambda^{N}} (\varphi_{u} \Pi )\left( \beta^{\omega}([u]) \beta^{T\omega} \right) & \by{\eqref{eq:push-restrict}} \\
	& = \sum_{u \in \Lambda^{N}} \beta^{\omega}([u]) \cdot \varphi_{u} \mu^{T\omega}. & \by{$ \mu^{T\omega} = \Pi \beta^{T\omega} $}
\end{aligned}
\end{equation}

\section{Exact dimensionality for disintegrations}
\label{sec:L-Y}

In this section, we establish the exact dimensionality of certain random measures and show that their dimension satisfies a Ledrappier-Young type formula. To prove these results, we adapt the approach from deterministic case of Feng~\cite{Feng2023a}.

For $ J \subset [d] $, define the partition  $ \xi_{J} $ of $ \Lambda^{\bbN} $ as
\begin{equation}\label{eq:def-xi-J}
	 \xi_{J}(x) = \xi_{J}(y) \quad\text{ if and only if }\quad \pi_{J}\Pi(x) = \pi_{J} \Pi(y) \mFor x, y \in \Lambda^{\bbN}.
\end{equation}
Note that $ \wh{\xi_{J}} = \Pi^{-1} \pi_{J}^{-1} \calB(\euclid) \pmod 0 $.

\begin{theorem}\label{thm:exact-dim-general}
	 Let $ N \in \bbN $. Let $ \calC $ be a partition of $\Lambda^{\bbN}$ such that for $ x, y \in \Lambda^{\bbN}$, $ \calC(x) = \calC(y)$ implies $ \varphi_{x|N} = \varphi_{y|N}$. Let $ \Gamma $ be a partition of $ \Lambda^{\bbN} $ such that for $ x, y \in \Lambda^{\bbN}$, $ x|N = y|N $ implies $ \Gamma(x) = \Gamma(y) $. Set $ T = \sigma^{N}$ and $ \calA = \vee_{i=0}^{\infty} T^{-i} \Gamma $. Let $ 1\leq j_{1} < \cdots < j_{s} \leq d $ and write $ J  = \{j_{1}, \ldots, j_{s}\}$. For $ 0 \leq b \leq s $, set $ J_{b} = \{ j_{1}, \ldots, j_{b}\}$. Then for $ \beta \aev y $, $ \beta^{\calA}_{y} \aev x $ and $ 0 \leq k \leq l \leq s $, the measure $ \pi_{J_{l}} \Pi \beta^{\calA,\xi_{J_{k}}}_{y,x} := \pi_{J_{l}} \Pi (\beta^{\calA}_{y})^{\xi_{J_{k}}}_{x} $ is exact dimensional with
	\begin{equation}\label{eq:LY-general}
		\dim \pi_{J_{l}} \Pi \beta^{\calA,\xi_{J_{k}}}_{y,x}  = \sum_{b = k + 1}^{l}  \frac{ \HcalA{J_{b-1}} - \HcalA{J_{b}} }{ \chi_{j_{b}}},
	\end{equation}
	where for $ I \subset [d] $,
	\begin{equation}\label{eq:def-HcalA}
		\HcalA{I} = \frac{1}{N} \Hof{ \beta, \calC \mid \wh{\calA} \vee \wh{\xi_{I}} }.
	\end{equation}
\end{theorem}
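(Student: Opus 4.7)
The plan is to follow Feng's strategy for the Ledrappier--Young type formula for diagonal self-affine measures \cite{FengHu2009,Feng2023a}, transposed to the present disintegrated setting by exploiting the dynamical self-affinity \eqref{eq:dyn-selfaff-general} together with the fact that the shift $T = \sigma^{N}$ on $(\Omega,\bfP)$ is a Bernoulli shift, hence ergodic. I would first show that for $\beta$-a.e.\ $y$ and $\beta^{\calA}_{y}$-a.e.\ $x$ the asymptotic entropy
\begin{equation*}
	D_{l,k}(y,x) := \lim_{n\to\infty} \tfrac{1}{n}\Hof{\pi_{J_{l}}\Pi\beta^{\calA,\xi_{J_{k}}}_{y,x},\calD_{n}}
\end{equation*}
exists and is $\bfP$-a.s.\ constant in $\omega = \calA(y)$ by ergodicity of $T$. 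This follows from subadditivity along a geometric scale sequence combined with Maker's ergodic theorem on $(\Omega,\bfP,T)$, applied to the random entropies generated via the nested Rohlin disintegrations.

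Next, I would establish exact dimensionality with local dimension equal to $D_{l,k}$ by adapting the Feng--Hu dimension-conservation argument \cite[Sec.~4]{FengHu2009}. The upper bound $\overline{\dim} \leq D_{l,k}$ is automatic from the entropy-side comparison. For the matching lower bound, one iterates \eqref{eq:dyn-selfaff-general} $n$ times to obtain
\begin{equation*}
	\mu^{\omega} = \sum_{I \in \Lambda^{nN}} \beta^{\omega}([I])\, \varphi_{I}\mu^{T^{n}\omega},
\end{equation*}
and runs the Feng--Hu concavity scheme (\autoref{lem:list-ests}\ref{itm:concav-aconvex}) on this random convex decomposition, with the asymptotics of the components $\varphi_{I}\mu^{T^{n}\omega}$ controlled on the base $(\Omega,\bfP,T)$ by Birkhoff's theorem.

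To identify $D_{l,k}$ with the right-hand side of \eqref{eq:LY-general}, it suffices by telescoping to prove the single-coordinate increment
\begin{equation*}
	D_{l,k}(y,x) - D_{l-1,k}(y,x) = \frac{\HcalA{J_{l-1}} - \HcalA{J_{l}}}{\chi_{j_{l}}}.
\end{equation*}
The $\pi_{J_{l-1}}$-fibers of $\pi_{J_{l}}\Pi\beta^{\calA,\xi_{J_{k}}}_{y,x}$ are supported along lines parallel to the $j_{l}$-axis, and their typical dimension is given by a Rohlin--Ledrappier fiber formula: the conditional information gain $\HcalA{J_{l-1}} - \HcalA{J_{l}}$ (the extra entropy of $\calC$ captured by $\pi_{J_{l}}\Pi$ beyond $\pi_{J_{l-1}}\Pi$, given $\calA$) divided by the logarithmic contraction rate $\chi_{j_{l}}$. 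Concretely, I would combine Birkhoff on $(\Omega,\bfP,T)$ (giving $-\log|A_{j_{l}}^{I}| \approx nN\chi_{j_{l}}$ for $\beta^{\omega}$-typical $I \in \Lambda^{nN}$) with the Shannon--McMillan-type limit $\tfrac{1}{nN}\Hof{\beta,\calC_{nN}\mid \wh{\calA}\vee\wh{\xi_{J_{l}}}} \to \HcalA{J_{l}}$, matching the affine scales $\exp(-nN\chi_{j_{l}})$ to dyadic scales $2^{-m}$ and using $\chi_{1} < \cdots < \chi_{d}$ to ensure the scale along $j_{l}$ governs the fiber geometry.

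The principal obstacle is the \emph{random, non-stationary} nature of the fiber measures: one has $T\beta^{\omega} = \beta^{T\omega}$ but $\beta^{\omega}$ is not itself $T$-invariant, so the Shannon--McMillan--Breiman theorem cannot be invoked directly on individual fibers. The remedy is to run all ergodic-theoretic estimates on the base $(\Omega,\bfP,T)$, where $T$ is Bernoulli, and then transfer averaged statements back to individual fibers through the chain rules \autoref{lem:list-identities}\ref{itm:ChainRule-H}, \ref{itm:condindp-info} and the Rohlin disintegration theorem. A secondary but non-trivial point is ensuring that the nested disintegration $(\beta^{\calA}_{y})^{\xi_{J_{k}}}_{x}$ interacts compatibly with iterated Rohlin systems under $\pi_{J_{l}}$; this is handled by repeated use of \autoref{lem:list-identities}\ref{itm:list-pushfoward-H} together with the measurability content of \autoref{thm:rohlin}.
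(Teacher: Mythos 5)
The broad outline you describe — running ergodic-theoretic estimates on a stationary base system, computing a transverse (fiber) dimension, and telescoping to get the Ledrappier--Young sum — is in the spirit of the paper's actual argument, which follows Feng~\cite{Feng2023a}. However, there are two substantive problems with the proposal as written.

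First, you omit the construction that makes the ergodic theory actually work: the Peyri\`ere measure $\bfQ$ on $\Omega \times \Lambda^{\bbN}$ defined by $\int f\, \diff\bfQ = \int\int f(\omega,x)\,\diff\beta^\omega(x)\,\diff\bfP(\omega)$. You propose to apply Maker's theorem ``on $(\Omega,\bfP,T)$'', but the quantities that must be averaged — the conditional information $\Iof{\beta^\omega,\calC\mid\wh{\xi_j}}(x)$ and the conditional-measure ratios controlling $\beta^{\omega,\xi_{j-1}}_x(\PiBall[j]{x}{r})$ — depend jointly on $(\omega,x)$ and are not $T$-invariant functions of $\omega$ alone. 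The paper first proves $(\Omega\times\Lambda^{\bbN},\bfQ,T)$ is measure-preserving and mixing, then applies both Birkhoff's and Maker's theorems on that product system (Proposition 3.13 and its auxiliary lemmas), transferring back via the identity $\int \Iof{\beta^\omega,\calE\mid\calF}\,\diff\bfQ = \Hof{\beta,\calE\mid\wh\calA\vee\calF}$. Working only on $(\Omega,\bfP)$ loses the ability to handle this $(\omega,x)$-dependence, and the ``transfer averaged statements back to individual fibers'' step you gesture at is precisely what $\bfQ$ accomplishes.

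Second, the claim that ``the upper bound $\overline{\dim}\leq D_{l,k}$ is automatic from the entropy-side comparison'' is false, and this is not a minor elision. Upper local dimension being controlled by entropy dimension is not a general fact; it is one of the two nontrivial halves of the Ledrappier--Young argument. In the paper this is the inequality labeled (D2), whose proof requires a cylinder-counting estimate combined with the positive-conditional-density lemma (Lemma 3.16, imported from Feng~\cite{Feng2023a}). Likewise (D3), which you fold into a vague ``Feng--Hu concavity scheme on a random convex decomposition,'' is a delicate density argument on the symbolic level, not an application of entropy concavity. The paper does not first establish existence of the entropy dimension $D_{l,k}$ and then compare; it proves exact dimensionality directly by squeezing $\ul\gamma^\omega_{i,j}$ and $\ol\gamma^\omega_{i,j}$ together through (D1)--(D3), telescoping in the \emph{conditioning} index $i$ (your $k$) from the trivial base case $\gamma^\omega_{j,j}=0$, rather than in the projection index $l$ as you propose; the latter choice would additionally require a dimension-conservation result for $\pi_{J_{l-1}}$ restricted to $\pi_{J_l}\euclid$, which you would then need to supply.

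You should flesh out the product-space Peyri\`ere measure, the Maker-theorem setup for the transverse dimension, and the two density lemmas that actually drive the upper and lower bounds; without them the argument has the right shape but missing load-bearing steps.
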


\autoref{thm:exact-dim-general} has following consequence which is a general and detailed version of \autoref{thm:ExactDim-Intro}.

\begin{theorem}\label{thm:L-Y-formula}
For $ n \in \bbN $, let $ \calC_{n}$ be the partition of $ \Lambda^{\bbN} $ defined by $ \calC_{n}(x) =  \calC_{n}(y)$ if and only if $ \varphi_{x|n} = \varphi_{y|n}$ for $ x, y\in \Lambda^{\bbN}$. Let $ N \in \bbN $. Let $ \Gamma $ be a partition of $ \Lambda^{\bbN} $ such that for $ x, y \in \Lambda^{\bbN}$, $ x | N = y | N $ implies $ \Gamma(x) = \Gamma(y)$. Set $ \calA = \vee_{i=0}^{\infty} \sigma^{-iN} \Gamma $. Let $ 1\leq j_{1} < \cdots < j_{s} \leq d $ and write $ J  = \{j_{1}, \ldots, j_{s}\}$. For $ 0 \leq b \leq s $, set $ J_{b} = \{ j_{1}, \ldots, j_{b}\}$. Then for $ \beta \aev y $, the measure $ \pi_{J}\Pi\beta^{\calA}_{y}  $ is exact dimensional with dimension given by
\begin{equation}\label{eq:LY-formula}
	\dim \pi_{J} \calA = \sum_{b = 1}^{s}  \frac{ \hcalCA{J_{b-1}} - \hcalCA{J_{b}} }{ \chi_{j_{b}}},
\end{equation}
where for $ I \subset [d] $,
\begin{equation}\label{eq:def-hCalCA}
	\hcalCA{I} = \lim_{n\to\infty} \frac{1}{nN} \Hof{ \beta, \calC_{nN} \mid \wh{\calA} \vee \wh{\xi_{I}} } = \inf_{n} \frac{1}{nN} \Hof{ \beta, \calC_{nN} \mid \wh{\calA} \vee \wh{\xi_{I}} },
\end{equation}
and $ \hcalCA{J_{b-1}} - \hcalCA{J_{b}} \leq \chi_{j_{b}}$ for $ 1 \leq b \leq s $.
\end{theorem}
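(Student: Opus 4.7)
The strategy is to reduce Theorem \ref{thm:L-Y-formula} to Theorem \ref{thm:exact-dim-general} by lifting the base level from $N$ to $nN$, applying that theorem for each $n$, and passing to the limit $n \to \infty$.

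Concretely, for each $n \geq 1$, set $\Gamma'_n := \bigvee_{i=0}^{n-1}\sigma^{-iN}\Gamma$. Since $\Gamma$ depends only on the first $N$ digits, $\Gamma'_n$ depends only on the first $nN$ digits, verifying the base-$nN$ hypothesis. With $T_n := \sigma^{nN}$,
\begin{equation*}
\bigvee_{i=0}^{\infty} T_n^{-i}\Gamma'_n = \bigvee_{j=0}^{\infty} \sigma^{-jN}\Gamma = \calA,
\end{equation*}
so this lift produces the same $\sigma$-algebra $\wh\calA$. The partition $\calC_{nN}$ satisfies the hypothesis on $\calC$ at base $nN$ by its defining property. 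Applying Theorem \ref{thm:exact-dim-general} with $k = 0$, $l = s$ (so $\xi_{J_0} = \xi_{\emptyset}$ is trivial and $\beta^{\calA,\xi_{\emptyset}}_{y,x} = \beta^\calA_y$), I conclude that for $\beta$\aev $y$, $\pi_J\Pi\beta^\calA_y$ is exact dimensional with
\begin{equation*}
\dim\pi_J\Pi\beta^\calA_y = \sum_{b=1}^{s} \frac{H^{(n)}_{J_{b-1}} - H^{(n)}_{J_b}}{\chi_{j_b}}, \qquad H^{(n)}_I := \tfrac{1}{nN}\Hof{\beta, \calC_{nN} \mid \wh\calA \vee \wh{\xi_I}}.
\end{equation*}
Since the left-hand side is independent of $n$, this linear combination of $H^{(n)}_I$'s is constant in $n$.

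Next I establish subadditivity $a_{m+n} \leq a_m + a_n$ for $a_n := \Hof{\beta, \calC_{nN} \mid \wh\calA \vee \wh{\xi_I}}$, which forces $H^{(n)}_I$ to converge to $\hcalCA I = \inf_n H^{(n)}_I$. The key observations are that $\varphi_{x|(m+n)N} = \varphi_{x|mN}\circ\varphi_{\sigma^{mN}x|nN}$ implies $\calC_{mN} \vee \sigma^{-mN}\calC_{nN}$ refines $\calC_{(m+n)N}$; and one has $\sigma^{-mN}\wh\calA \subset \wh\calA$ (from $T$-invariance of $\calA$) together with $\sigma^{-mN}\wh{\xi_I} \subset \wh{\xi_I} \vee \wh{\calC_{mN}}$ (since on each cylinder $[u]$ of length $mN$, the diagonal relation $\pi_I\Pi\sigma^{mN} = \varphi_{u,I}^{-1}\pi_I\Pi$ holds, and $\wh{\calC_{mN}}$ determines $\varphi_u$). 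Chain rule and the pushforward identity (Lemma \ref{lem:list-identities}\ref{itm:list-pushfoward-H}) then give the subadditive estimate. Passing to the limit $n \to \infty$ in the displayed equation for $\dim\pi_J\Pi\beta^\calA_y$ produces the formula \eqref{eq:LY-formula}.

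Finally, the bound $\hcalCA{J_{b-1}} - \hcalCA{J_b} \leq \chi_{j_b}$ is obtained by running the same argument with $k = b-1$ and $l = b$: Theorem \ref{thm:exact-dim-general} then identifies $(\hcalCA{J_{b-1}} - \hcalCA{J_b})/\chi_{j_b}$ as $\dim\pi_{J_b}\Pi\beta^{\calA,\xi_{J_{b-1}}}_{y,x}$, a measure supported on the one-dimensional affine fiber along $e_{j_b}$ once $\pi_{J_{b-1}}\Pi x$ is fixed, hence of dimension at most $1$. The most delicate step in the plan is the subadditivity argument, particularly the inclusion $\sigma^{-mN}\wh{\xi_I} \subset \wh{\xi_I} \vee \wh{\calC_{mN}}$, which crucially exploits the diagonal structure of $\Phi$ to convert the shift's action on $\xi_I$ into a coordinatewise rescaling determined by the affine partition $\calC_{mN}$.
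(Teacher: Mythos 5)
Your proposal is correct and follows essentially the same route as the paper: apply Theorem~\ref{thm:exact-dim-general} at base $nN$ with $\Gamma'_n := \vee_{i=0}^{n-1}\sigma^{-iN}\Gamma$ (which leaves $\wh{\calA}$ unchanged), establish subadditivity of $n\mapsto\Hof{\beta,\calC_{nN}\mid\wh{\calA}\vee\wh{\xi_I}}$ via the chain rule and the identity $\wh{\xi_I}\vee\wh{\calC_{mN}}=\sigma^{-mN}\wh{\xi_I}\vee\wh{\calC_{mN}}$ (the paper packages this as Lemma~\ref{lem:set-relations}\ref{itm:set-shift}), and obtain the bound $\hcalCA{J_{b-1}}-\hcalCA{J_b}\leq\chi_{j_b}$ by taking $k=b-1,\,l=b$ and observing the conditional measure lives on a one-dimensional fiber. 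The only presentational difference is that you unpack the set-theoretic identity directly rather than citing the lemma.
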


We write $ \dim \calA := \dim \pi_{[d]} \calA $ by convention.

\begin{proof}[Proof of \autoref{thm:L-Y-formula} assuming \autoref{thm:exact-dim-general}]
	For $ n \in \bbN$ write $ \Gamma_{n} = \vee_{i=0}^{n-1} \sigma^{-i N} \Gamma $. Note that $ \calA = \vee_{i=0}^{\infty} \sigma^{-i(nN)}\Gamma_{n} $ for all $ n \in \bbN $. Applying \autoref{thm:exact-dim-general} with $ nN, \calC_{nN}, \Gamma_{n}$ in place of $ N, \calC, \Gamma$, and taking $ k = 0,\, l = s,\, J = J_{s} $, it follows that for $ \beta \aev y $, the measure $ \pi_{J}\Pi\beta^{\calA}_{y}  $ is exact dimensional with 
	\begin{equation*}
		\dim  \pi_{J}\Pi\beta^{\calA}_{y}  = \sum_{b = 1}^{s}  \frac{ \HcalCAn{J_{b-1}} - \HcalCAn{J_{b}} }{ \chi_{j_{b}}} \quad\text{ for all } n \in \bbN,
	\end{equation*}
	where for $ I \subset [d]$,
	\begin{equation*}
		\HcalCAn{I} = \frac{1}{nN} \Hof{ \beta, \calC_{nN} \mid \wh{\calA} \vee \wh{\xi_{I}}}.
	\end{equation*}
	
	For $ 1\leq b \leq s$, applying \autoref{thm:exact-dim-general} with $ k=b-1, l = b$, we have
	\begin{equation*}
		\HcalCAn{J_{b-1}} - \HcalCAn{J_{b}} \leq \chi_{j_{b}},
	\end{equation*}
	 since $ \pi_{J_{b}} \Pi \beta^{\calA,\xi_{J_{b-1}}}_{y,x} $ is supported on $ \Pi(x) +  \pi_{j_{b}} \euclid $ for $ \beta \aev y$ and $ \beta^{\calA}_{y}\aev x $.
	
	 For $ m, n \in \bbN $, it follows from $ \calC_{(m+n)N} \prec \calC_{mN} \vee T^{-m}\calC_{nN} $, $ \wh{\calA} = \left(\vee_{i=0}^{m-1}T^{-i} \wh{\Gamma}\right) \vee T^{-m} \wh{\calA} $, Lemmas \ref{lem:list-identities}, \ref{lem:list-ests} and \ref{lem:set-relations}\ref{itm:set-shift} that,
\begin{equation}\label{eq:subadd}
		\begin{aligned}
		& \hspace{-2em}\Hof{ \beta, \calC_{(m+n)N} \mid \wh{\calA} \vee \wh{\xi_{I}}} \\
		& \leq \Hof{ \beta, \calC_{mN} \vee T^{-m}\calC_{nN} \mid \wh{\calA} \vee \wh{\xi_{I}}}  \\ 
		& = \Hof{ \beta, \calC_{mN} \mid \wh{\calA} \vee \wh{\xi_{I}}} + \Hof{ \beta, T^{-m}\calC_{nN} \mid \wh{\calA} \vee \wh{\xi_{I}} \vee \wh{\calC_{mN}}} \\
		& = \Hof{ \beta, \calC_{mN} \mid \wh{\calA} \vee \wh{\xi_{I}}} + \Hof{ \beta, T^{-m}\calC_{nN} \mid \left(\vee_{i=0}^{m-1}T^{-i} \wh{\Gamma}\right) \vee T^{-m} \wh{\calA} \vee T^{-m} \wh{\xi_{I}} \vee \wh{\calC_{mN}}} \\
		& \leq \Hof{ \beta, \calC_{mN} \mid \wh{\calA} \vee \wh{\xi_{I}}} + \Hof{ \beta, T^{-m}\calC_{nN} \mid T^{-m}\left( \wh{\calA} \vee \wh{\xi_{I}} \right)} \\
		& = \Hof{ \beta, \calC_{mN} \mid \wh{\calA} \vee \wh{\xi_{I}}} + \Hof{ \beta,\calC_{nN} \mid  \wh{\calA} \vee \wh{\xi_{I}} }.
	\end{aligned}
\end{equation}
	This shows the subadditivity and justifies the limit in \eqref{eq:def-hCalCA}. The proof is finished by letting $ n \to \infty $ in the above equations.
\end{proof}


The rest of this section is devoted to the proof of \autoref{thm:exact-dim-general}. For the remainder of this section, we fix $ N, \calC, \Gamma, T, \calA $ as in \autoref{thm:exact-dim-general}. Without loss of generality, we assume $ J = [d] $, since the general case can be reduced to this one by considering the IFS $ \Phi_{J} $ as defined in \eqref{eq:def-Phi-J}.

\subsection{The Peyri\`ere measure}
We begin by introducing a useful measure on $ \Omega \times \Lambda^{\bbN} $. Recall the definitions of $ \Omega, \bfP, \beta^{\omega}, \mu^{\omega} $ from \autoref{subsec:some-disint}. Define a Borel probability measure $ \bfQ $ on $ \Omega \times \Lambda^{\bbN} $ by
\begin{equation}\label{eq:def-bfQ}
	\int_{\Omega\times\Lambda^{\bbN}} f(\omega,x) \, \diff \bfQ = \int_{\Omega} \int_{\Lambda^{\bbN}} f(\omega, x) \, \diff \beta^{\omega}(x) \diff \bfP(\omega), 
\end{equation}
for every bounded Borel measurable function $ f $ on $ \Omega\times\Lambda^{\bbN}$. Under this definition, the phrase ``for $ \bfQ\aev (\omega,  x)$'' is equivalent to ``for $ \bfP \aev \omega$ and $ \beta^{\omega} \aev x$''. The measure $ \bfQ $ serves a role analogous to the Peyri\`ere measure used in~\cite{FalconerJin2014}. Next, define a transformation on $ \Omega \times \Lambda^{\bbN} $ by
\begin{equation*}
	T(\omega, x) := (T\omega, Tx),
\end{equation*}
for $ (\omega, x) \in \Omega \times \Lambda^{\bbN} $.

\begin{lemma}
	The system $ (\Omega\times\Lambda^{\bbN}, \bfQ, T)  $ is measure-preserving and mixing.
\end{lemma}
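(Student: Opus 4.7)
The plan is to verify both conclusions directly, exploiting the product structure of $\bfP$ on $\Omega = \Gamma^{\bbN}$ and of each $\beta^{\omega}$ on $(\Lambda^{N})^{\bbN}$, together with the equivariance identity $T\beta^{\omega} = \beta^{T\omega}$ from \eqref{eq:Tbeta-omega}. Since $\bfP$ is the Bernoulli measure on $\Omega$, it is automatically $T$-invariant, a fact I will use repeatedly.

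For invariance of $\bfQ$, I would take a bounded Borel function $f$ on $\Omega \times \Lambda^{\bbN}$ and unfold the definition \eqref{eq:def-bfQ}:
\begin{equation*}
\int f \circ T \, \diff \bfQ = \int_{\Omega} \int_{\Lambda^{\bbN}} f(T\omega, Tx) \, \diff \beta^{\omega}(x) \, \diff \bfP(\omega) = \int_{\Omega} \int_{\Lambda^{\bbN}} f(T\omega, y) \, \diff \beta^{T\omega}(y) \, \diff \bfP(\omega),
\end{equation*}
where the inner integral is rewritten via \eqref{eq:Tbeta-omega}. The change of variables $\omega' = T\omega$ and the $T$-invariance of $\bfP$ then identify this with $\int f \, \diff \bfQ$, giving $T\bfQ = \bfQ$.

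For mixing, I would check the factorization $\bfQ(A \cap T^{-n}B) \to \bfQ(A)\bfQ(B)$ on a generating algebra of product cylinder sets $A = [\omega^{0}_{1:m}] \times [I_{1} \cdots I_{k}]$ and $B = [\omega^{1}_{1:m'}] \times [J_{1} \cdots J_{k'}]$, where $[\omega^{0}_{1:m}] \subset \Omega$ and $[I_{1} \cdots I_{k}] \subset (\Lambda^{N})^{\bbN}$ are standard cylinders. By enlarging $m$ and $m'$ if necessary we may assume $k \leq m$ and $k' \leq m'$, so that by \eqref{eq:def-beta-omega} the function $\omega \mapsto \beta^{\omega}([I_{1}\cdots I_{k}]) = \prod_{j=1}^{k} p^{\omega_{j}}([I_{j}])$ is constant on $[\omega^{0}_{1:m}]$, and hence
\begin{equation*}
\bfQ(A) = \Bigl(\prod_{j=1}^{m} \beta(\omega^{0}_{j})\Bigr) \prod_{j=1}^{k} p^{\omega^{0}_{j}}([I_{j}]),
\end{equation*}
with an analogous formula for $\bfQ(B)$. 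For $n \geq m$, the constraints that define $A \cap T^{-n}B$ fix the $\omega$-coordinates on disjoint finite index sets and the $N$-blocks of $x$ on disjoint index sets, so the product structure of $\bfP$ (Bernoulli) and of $\beta^{\omega}$ on $(\Lambda^{N})^{\bbN}$ yields the \emph{exact} equality $\bfQ(A \cap T^{-n}B) = \bfQ(A)\bfQ(B)$ for all sufficiently large $n$. Since such cylinders generate $\calB(\Omega \times \Lambda^{\bbN})$, mixing on the full $\sigma$-algebra follows by a standard approximation argument.

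The argument is essentially bookkeeping; I do not anticipate a serious obstacle. The only delicate point is aligning the block lengths in the two cylinders so that the inner integrand $\beta^{\omega}([I])$ becomes a constant function on the $\Omega$-cylinder $[\omega^{0}_{1:m}]$, which is ensured by taking $m \geq k$ and $m' \geq k'$ from the outset.
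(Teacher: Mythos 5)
Your proposal is correct and follows essentially the same route as the paper: invariance is verified by unfolding \eqref{eq:def-bfQ}, applying \eqref{eq:Tbeta-omega}, and invoking $T$-invariance of $\bfP$, while mixing is established by exhibiting exact independence $\bfQ(A\cap T^{-n}B)=\bfQ(A)\bfQ(B)$ for large $n$ on a generating family of product cylinders, exploiting the Bernoulli structure of $\bfP$ and the product structure of $\beta^{\omega}$ on $(\Lambda^{N})^{\bbN}$. The only cosmetic difference is that the paper takes its $\Omega$-cylinder and $\Lambda^{\bbN}$-cylinder to have matching block lengths from the outset, whereas you allow general lengths and then enlarge the $\Omega$-cylinder so that $\beta^{\omega}([I_1\cdots I_k])$ becomes constant on it; both reductions achieve the same thing.
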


\begin{proof}
	For $ A \in \calB(\Omega \times \Lambda^{\bbN} )$,
	\begin{align*}
		\bfQ(T^{-1}A) & = \int_{\Omega} \int_{\Lambda^{\bbN}} \indicator{A}(T\omega,Tx) \, \diff \beta^{\omega}(x)\diff \bfP(\omega) & \by{\eqref{eq:def-bfQ}}\\ & = \int_{\Omega} \int_{\Lambda^{\bbN}} \indicator{A}(T\omega,x) \, \diff \beta^{T\omega}(x)\diff \bfP(\omega) & \by{\eqref{eq:Tbeta-omega}} \\
		& = \int_{\Omega} \int_{\Lambda^{\bbN}} \indicator{A}(\omega,x) \, \diff \beta^{\omega}(x)\diff \bfP(\omega) & \by{$T\bfP = \bfP $} \\
		& = \bfQ(A). & \by{\eqref{eq:def-bfQ}}
	\end{align*}
	Thus $ \bfQ $ is $T$-invariant.
	
	For $ U \times I \in \Gamma^{m_{1}}\times (\Lambda^{N})^{m_{1}} $, $ V \times J \in \Gamma^{m_{2}} \times (\Lambda^{N})^{m_{2}}, m_{1}, m_{2} \geq 1  $ and $ n \geq 2Nm_{1}$, we have
	\begin{align*}
		& \hspace{-2em}\bfQ\left(( [U] \times [I]) \intxn T^{-n}([V]\times[J])\right) \\ & = \bfQ\left( ([U] \intxn T^{-n}[V]) \times ([I] \intxn T^{-n}[J]) \right) \\
		& = \int_{[U] \intxn T^{-n}[V]} \beta^{\omega}([I] \intxn T^{-n}[J]) \, \diff \bfP(\omega)  & \by{\eqref{eq:def-bfQ}}\\
		& = \int_{[U] \intxn T^{-n}[V]} \beta^{\omega}([I]) \beta^{T^{n}\omega}([J]) \, \diff \bfP(\omega) & \by{\eqref{eq:def-beta-omega} and \eqref{eq:Tbeta-omega}} \\
		& = \int_{[U]} \beta^{\omega}([I]) \, \diff \bfP(\omega) \int_{[V]} \beta^{\omega}([J]) \, \diff \bfP(\omega) & \by{\eqref{eq:def-bfP}} \\
		& = \bfQ\left([U] \times [I] \right) \bfQ\left([V]\times[J]\right). & \by{\eqref{eq:def-bfQ}}  
	\end{align*}
	This implies that $ T $ is mixing with respect to $ \bfQ $.
\end{proof}

Below is a direct consequence of Birkhoff's ergodic theorem applied to $ (\Omega\times \Lambda^{\bbN}, \bfQ, T)$.

\begin{lemma}\label{lem:lambda-speed}
	For $ \bfQ \aev (\omega, x)$ and $ 1 \leq j \leq d $, $ \lim_{n\to\infty} - (1/n)\log \lambda^{x|nN}_{j} = N \chi_{j} $.
\end{lemma}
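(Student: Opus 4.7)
The plan is to apply Birkhoff's ergodic theorem to the system $(\Omega \times \Lambda^{\bbN}, \bfQ, T)$, which was just shown to be mixing (hence ergodic). The key observation is that $\lambda^{x|nN}_{j}$ decomposes multiplicatively along the $T$-orbit: since $T = \sigma^{N}$ and $A^{x|nN}_{j} = \prod_{k=0}^{n-1} A^{(T^{k}x)|N}_{j}$, taking logarithms gives
\[
-\log \lambda^{x|nN}_{j} = \sum_{k=0}^{n-1} f_{j}\bigl(T^{k}(\omega,x)\bigr),
\]
where $f_{j}(\omega,x) := -\log \lambda^{x|N}_{j}$ depends only on the first $N$ coordinates of $x$ (and not on $\omega$).

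The function $f_{j}$ is bounded, hence in $L^{1}(\bfQ)$, because $\Lambda$ is finite and $|r_{i,j}| \in (0,1)$ for each $i \in \Lambda$, so $f_{j}(\omega,x) = \sum_{k=1}^{N}-\log|r_{x_{k},j}|$ is uniformly bounded. By the Birkhoff ergodic theorem applied to the ergodic system $(\Omega \times \Lambda^{\bbN}, \bfQ, T)$, for $\bfQ$-a.e.\ $(\omega,x)$,
\[
\lim_{n\to\infty} -\frac{1}{n}\log \lambda^{x|nN}_{j} = \lim_{n\to\infty} \frac{1}{n}\sum_{k=0}^{n-1} f_{j}\bigl(T^{k}(\omega,x)\bigr) = \int f_{j}\, d\bfQ.
\]

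It remains to compute $\int f_{j}\, d\bfQ$. Using the definition \eqref{eq:def-bfQ} of $\bfQ$ together with \eqref{eq:Sym-Disint}, and the fact that $f_{j}$ does not depend on $\omega$,
\[
\int f_{j}\, d\bfQ = \int_{\Omega}\int_{\Lambda^{\bbN}} -\log \lambda^{x|N}_{j}\, d\beta^{\omega}(x)\, d\bfP(\omega) = \int_{\Lambda^{\bbN}} -\log \lambda^{x|N}_{j}\, d\beta(x).
\]
Since $\beta = p^{\bbN}$ is Bernoulli, the right-hand side evaluates to $\sum_{k=1}^{N} \sum_{i\in\Lambda} -p_{i}\log|r_{i,j}| = N\chi_{j}$, completing the argument.

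There is no genuine obstacle here: the proof is mechanical once one notes the multiplicative decomposition along $T$-orbits and invokes the mixing established in the preceding lemma. The only things to verify carefully are the cocycle identity for $-\log \lambda^{x|nN}_{j}$ and the disintegration identity $\int_{\Omega}\beta^{\omega}\,d\bfP(\omega) = \beta$ when reducing the integral over $\bfQ$ to one over $\beta$.
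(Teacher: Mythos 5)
Your proof is correct and follows exactly the route the paper intends: the paper itself states the lemma as ``a direct consequence of Birkhoff's ergodic theorem applied to $(\Omega\times\Lambda^{\bbN},\bfQ,T)$'' immediately after establishing that this system is mixing, and your write-up simply supplies the details (the cocycle identity $-\log\lambda^{x|nN}_j = \sum_{k=0}^{n-1} f_j(T^k(\omega,x))$, integrability of $f_j$, reduction of $\int f_j\,d\bfQ$ to $\int f_j\,d\beta$ via \eqref{eq:Sym-Disint}, and the Bernoulli computation giving $N\chi_j$). No gaps.
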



\subsection{Some measurable partitions}

In this subsection we explore the properties of $ \xi_{[j]}, \calA $ and their associated conditional measures.

For $ 0 \leq j \leq d $, we denote $ \xi_{j} = \xi_{[j]}$, $ \Pi_{j} = \pi_{[j]}\Pi $, and for $ x \in \Lambda^{\bbN}$, $ r > 0 $, define
\begin{equation*}
	\PiBall[j]{x}{r} = \left\{ y \in \Lambda^{\bbN} \colon \Abs{\Pi_{j}(x) - \Pi_{j}(y)} \leq r \right\} = \Pi_{j}^{-1} B\left( \Pi_{j} x, r \right).
\end{equation*}
For $ n \in \bbN $, let $ \calC^{n-1}_{0} := \vee^{n-1}_{i=0} T^{-i}\calC $.

We begin with a lemma connecting $ \xi_{j}$, $ \calC $ and $ \PiBall[j]{x}{r}$.

\begin{lemma}\label{lem:set-relations}
	For $ \bfQ \aev (\omega, x)$ and $ 1 \leq i \leq j \leq d $, the following holds.
	\begin{enumerate}[(i)]
			\item \label{itm:set-shift} $ \xi_{j}(x) \intxn \calC(x) = T^{-1}\xi_{j}(Tx) \intxn \calC(x) $, and so $ \xi_{j} \vee \calC = T^{-1} \xi_{j} \vee \calC $.
			
			\item  \label{itm:set-PiBall-shift} $ \xi_{j-1}(x) \intxn \PiBall[j]{x}{\lambda^{x|nN}_{j}} \intxn \calC(x) = T^{-1}\left(\xi_{j-1}(Tx)\intxn \PiBall[j]{Tx}{\lambda^{Tx|(n-1)N}_{j}}\right) \intxn \calC(x) $.
			
			\item \label{itm:SliceElipse} For $ \varepsilon \in (0, 1)$ and $ n \in \bbN $ with $ \varepsilon^{-1} \ll n $, $ \xi_{i-1}(x) \intxn \calC^{n-1}_{0}(x) \subset \PiBall[j]{x}{ \exp(-n(N\chi_{i}- \varepsilon)) } $.
	
	\end{enumerate}
\end{lemma}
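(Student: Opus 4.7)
All three parts exploit the diagonal structure of $\Phi$, which gives the crucial intertwining $\pi_J \circ \varphi_i = \varphi_{i,J} \circ \pi_J$ for every $J \subset [d]$ and $i \in \Lambda$, together with the coding identity $\Pi = \varphi_{x|N} \circ \Pi \circ T$ on $\calC(x)$ (which holds because membership in $\calC(x)$ forces $\varphi_{x|N} = \varphi_{y|N}$, followed by \eqref{eq:coding-property}). The strategy is to reduce every statement about $\xi_j$ and $\PiBall[j]{\cdot}{\cdot}$ to an explicit computation of the first $j$ coordinates of $\Pi(x) - \Pi(y)$ under the assumption $y \in \calC(x)$, using that the maps $\varphi_{i,[j]}$ are diagonal and invertible.

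For \emph{(i)}, fix $y \in \calC(x)$, so $\varphi_{x|N} = \varphi_{y|N}$ and hence $\varphi_{x|N,[j]} = \varphi_{y|N,[j]}$. Then
\[
\pi_j \Pi(x) = \varphi_{x|N,[j]}(\pi_j \Pi(Tx)) \quad\text{and}\quad \pi_j \Pi(y) = \varphi_{x|N,[j]}(\pi_j \Pi(Ty)),
\]
so the injectivity of $\varphi_{x|N,[j]}$ gives $\pi_j\Pi(x) = \pi_j \Pi(y)$ iff $\pi_j \Pi(Tx) = \pi_j \Pi(Ty)$, i.e.\ $y \in \xi_j(x)$ iff $y \in T^{-1}\xi_j(Tx)$. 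Intersecting with $\calC(x)$ proves the first assertion; the second follows by taking the generated $\sigma$-algebras.

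For \emph{(ii)}, fix $y \in \calC(x)$. By (i) applied to $\xi_{j-1}$, the condition $y \in \xi_{j-1}(x) \cap \calC(x)$ is equivalent to $Ty \in \xi_{j-1}(Tx)$ and $y \in \calC(x)$. Under this condition the first $j-1$ coordinates of $\Pi(x) - \Pi(y)$ vanish, and for the $j$-th coordinate the diagonal structure yields
\[
\Pi(x)_j - \Pi(y)_j = r_{x_1,j} \cdots r_{x_N,j} \bigl(\Pi(Tx)_j - \Pi(Ty)_j\bigr),
\]
so $|\pi_j\Pi(x) - \pi_j\Pi(y)| = \lambda^{x|N}_j \cdot |\pi_j\Pi(Tx) - \pi_j\Pi(Ty)|$. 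Combining this with the multiplicative identity $\lambda^{x|nN}_j = \lambda^{x|N}_j \cdot \lambda^{Tx|(n-1)N}_j$ shows that the ball condition in the two sides of the claimed identity are equivalent.

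For \emph{(iii)}, let $y \in \xi_{i-1}(x) \cap \calC^{n-1}_0(x)$. Iterating the argument of (i) along the first $n$ iterates of $T$ gives $\varphi_{x|nN} = \varphi_{y|nN}$, hence for each coordinate $b$,
\[
|\Pi(x)_b - \Pi(y)_b| = \lambda^{x|nN}_b \cdot |\Pi(T^n x)_b - \Pi(T^n y)_b| \leq \lambda^{x|nN}_b,
\]
using $\diam K_\Phi \leq 1$. The condition $y \in \xi_{i-1}(x)$ kills coordinates $1, \ldots, i-1$, so
\[
|\pi_j \Pi(x) - \pi_j\Pi(y)|^2 \leq \sum_{b=i}^{j} (\lambda^{x|nN}_b)^2 \leq d \cdot \max_{i \leq b \leq d} (\lambda^{x|nN}_b)^2.
\]
By \autoref{lem:lambda-speed} and $\chi_i \leq \chi_b$ for $b \geq i$, for $\bfQ\aev (\omega,x)$ and $n$ large enough (depending on $\varepsilon$) one has $\lambda^{x|nN}_b \leq \exp(-n(N\chi_i - \varepsilon/2))$ simultaneously for all $b \geq i$, and the factor $\sqrt{d}$ is absorbed by replacing $\varepsilon/2$ with $\varepsilon$. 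The only mild obstacle is that the threshold for $n$ depends on $(\omega,x)$ via the a.e.\ convergence in \autoref{lem:lambda-speed}, which is precisely why the conclusion holds for $\bfQ\aev (\omega,x)$ rather than everywhere.
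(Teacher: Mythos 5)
Your proposal is correct and follows essentially the same route as the paper: using the diagonal intertwining $\pi_J \varphi_{x|N} = \varphi_{x|N}\pi_J$, the coding identity $\Pi = \varphi_{x|N}\circ\Pi\circ T$ on $\calC(x)$, and the scaling $\lambda^{x|nN}_j = \lambda^{x|N}_j\lambda^{Tx|(n-1)N}_j$ to establish (i) and (ii) via coordinatewise equivalences, and Lemma~\ref{lem:lambda-speed} for the size estimate in (iii). The only cosmetic deviation is your use of an $\ell^2$ bound with a $\sqrt{d}$ factor in (iii) where the paper bounds by the sum $\sum_{\ell=i}^j \lambda^{x|nN}_\ell$; both absorb the constant into the $\varepsilon$ slack for $n$ large.
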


\begin{proof}
	By \eqref{eq:def-coding-map},
	\begin{equation}\label{eq:coding-shift}
		\varphi_{x|nN}\left( \Pi(T^{n}x) \right) = \Pi(x) \mFor x\in \Lambda^{\bbN}, n \in \bbN.
	\end{equation}
	For $ x \in \Lambda^{\bbN} $, $ n \in \bbN $, $ a , b \in \euclid $ and $ J \subset [d]$, since $ A_{\varphi_{x|n}}$ is a diagonal matrix, we have
	\begin{equation}\label{eq:pi-phi-switch}
		\begin{aligned}
		\pi_{J} ( \varphi_{x|n}(a) - \varphi_{x|n}(b)) = \varphi_{x|n}(\pi_{J}a)- \varphi_{x|n}(\pi_{J}b).
 	\end{aligned}
	\end{equation}
	
	Then for $ y \in \calC(x)$, we have $ \varphi_{x|N} = \varphi_{y|N}$, and so
	\begin{align*}
		y \in \xi_{j}(x) & \iff \pi_{[j]}\Pi(x) = \pi_{[j]}\Pi(y) & \by{\eqref{eq:def-xi-J}} \\
		& \iff \pi_{[j]}\varphi_{x|N}\left( \Pi(Tx) \right) = \pi_{[j]}\varphi_{y|N}\left( \Pi(Ty) \right) & \by{\eqref{eq:coding-shift}}\\
		& \iff \pi_{[j]}\varphi_{x|N}\left( \Pi(Tx) \right) = \pi_{[j]}\varphi_{x|N}\left( \Pi(Ty) \right) & \by{$ \varphi_{x|N} = \varphi_{y|N}$} \\
		& \iff \varphi_{x|N}\left( \pi_{[j]} \Pi(Tx) \right) = \varphi_{x|N}\left( \pi_{[j]} \Pi(Ty) \right) & \by{\eqref{eq:pi-phi-switch}} \\
		& \iff  \pi_{[j]} \Pi(Tx)  =  \pi_{[j]} \Pi(Ty) & \by{$\varphi_{x|N}$ being invertible} \\
		& \iff y \in T^{-1} \xi_{j}(Tx).  & \by{\eqref{eq:def-xi-J}}
	\end{align*}
	This proves \ref{itm:set-shift}.
	
	For $ y \in \calC(x)$, we have $ \varphi_{x|N} = \varphi_{y|N}$, and so 
	\begin{align*}
		& y \in \xi_{j-1}(x) \intxn \PiBall[j]{x}{\lambda^{x|nN}_{j}}  \\
		& \iff \Abs{\pi_{[j]}\Pi(x)-\pi_{[j]}\Pi(y)} \leq \lambda^{x|nN}_{j} ,\, \pi_{[j-1]}\Pi(x) = \pi_{[j-1]}\Pi(y) \\
		& \iff \Abs{\pi_{j}\Pi(x)-\pi_{j}\Pi(y)} \leq \lambda^{x|nN}_{j} ,\, \pi_{[j-1]}\Pi(x) = \pi_{[j-1]}\Pi(y) \quad \by{$ \pi_{[j]} = \pi_{[j-1]} + \pi_{j}$}\\
		& \iff \Abs{\pi_{j}\varphi_{x|N}(\Pi(Tx))-\pi_{j}\varphi_{x|N}(\Pi(Ty))} \leq \lambda^{x|nN}_{j}, \hspace{3em} \by{\eqref{eq:coding-shift} and $ \varphi_{x|N} = \varphi_{y|N}$} \\ 
		& \hspace{10em} \pi_{[j-1]}\Pi(Tx) = \pi_{[j-1]}\Pi(Ty)  \hspace{3.6em}\by{\ref{itm:set-shift}} \\
		& \iff \lambda_{j}^{x|N}\Abs{\pi_{j}\Pi(Tx)-\pi_{j}\Pi(Ty)} \leq \lambda^{x|nN}_{j} ,\, \pi_{[j-1]}\Pi(Tx) = \pi_{[j-1]}\Pi(Ty) \quad\by{\eqref{eq:pi-phi-switch}} \\
		& \iff \Abs{\pi_{j}\Pi(Tx)-\pi_{j}\Pi(Ty)} \leq \lambda^{Tx|(n-1)N}_{j} , \, \pi_{[j-1]}\Pi(Tx) = \pi_{[j-1]}\Pi(Ty)\\
		& \iff \Abs{\pi_{[j]}\Pi(Tx)-\pi_{[j]}\Pi(Ty)} \leq \lambda^{Tx|(n-1)N}_{j} , \, \pi_{[j-1]}\Pi(Tx) = \pi_{[j-1]}\Pi(Ty)\\
		& \iff y \in T^{-1} \PiBall[j]{Tx}{\lambda^{Tx|(n-1)N}_{j}} \intxn T^{-1}\xi_{j-1}(Tx).
	\end{align*}
	This gives \ref{itm:set-PiBall-shift}.
	
	Finally, we prove \ref{itm:SliceElipse}. By \autoref{lem:lambda-speed} and $ \chi_{\ell} \geq \chi_{i}$, we have for $ \bfQ \aev (\omega, x) $ and $ i \leq \ell \leq j $,
\begin{equation}\label{eq:chi-ell-speed}
	\lambda^{x|nN}_{\ell} \leq \exp\left(-n(N\chi_{\ell}-\varepsilon/4)\right) \leq \exp\left(-n(N\chi_{i}-\varepsilon/2)\right).
\end{equation} 
Let $ y \in \calC_{0}^{n-1} \intxn \xi_{i-1}(x)$. Then $ \varphi_{y|nN} = \varphi_{x|nN}$ and $ \pi_{[i-1]}\Pi(x) = \pi_{[i-1]}\Pi(y)$. Hence
\begin{align*}
	& \hspace{-2em}\Abs{ \pi_{[j]} \Pi(x) - \pi_{[j]}\Pi(y)} \\ & = \Abs{ \sum_{\ell=i}^{j} \pi_{\ell} \Pi(x) - \pi_{\ell} \Pi(y) } & \by{$ \pi_{[i-1]}\Pi(x) = \pi_{[i-1]}\Pi(y)$} \\
	& = \Abs{ \sum_{\ell=i}^{j} \pi_{\ell} \left( \varphi_{x|nN}  \Pi(T^{n}x) -  \varphi_{x|nN} \Pi(T^{n}y) \right) } & \by{\eqref{eq:coding-shift} and $ \varphi_{y|nN} = \varphi_{x|nN}$} \\
	&  \leq  \sum_{\ell=i}^{j} \lambda^{x|nN}_{\ell}  & \by{$\diam(K_{\Phi}) \leq 1 $} \\
	& \leq \exp\left(-n(N\chi_{i}-\varepsilon)\right). & \by{\eqref{eq:chi-ell-speed}}
\end{align*}
This shows that $ y \in \PiBall[j]{x}{ \exp(-n(N\chi_{i}- \varepsilon))} $.
\end{proof}

Next, we establish the relation between the conditional measures $ \beta^{\omega,\xi_{j}}_{x} := (\beta^{\omega})^{\xi_{j}}_{x}$ and $ \beta^{T\omega,\xi_{j}}_{Tx} $.

\begin{lemma}\label{lem:measure-relations}
	For $ \bfQ \aev (\omega, x)$, $ 1 \leq j \leq d $ and $ A \subset \calB(\Lambda^{\bbN})$,
	\begin{equation*}
		\beta^{T\omega,\xi_{j}}_{Tx}(A) =  \frac{\beta_{x}^{\omega,\xi_{j}}(T^{-1}A \intxn \calC(x) )  }{ \beta^{\omega, \xi_{j}}_{x}( \calC(x))}. 
	\end{equation*}
\end{lemma}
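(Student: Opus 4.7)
The plan is to reformulate both sides of the identity as conditional expectations of $\beta^{\omega}$ with respect to appropriate $\sigma$-algebras on $\Lambda^{\bbN}$, and then combine the chain rule for disintegrations, the shift relation $\xi_{j} \vee \calC = T^{-1}\xi_{j} \vee \calC$ from \autoref{lem:set-relations}\ref{itm:set-shift}, a conditional independence coming from the product structure of $\beta^{\omega}$, and the pushforward identity for conditional expectations in \autoref{lem:list-identities}\ref{itm:list-pushfoward-E}.

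First I would apply the chain rule for conditional measures to the refinement $\xi_{j} \prec \xi_{j} \vee \calC$: since $\beta^{\omega,\xi_{j}}_{x}$ is concentrated on $\xi_{j}(x)$, this yields, for $\bfQ$-a.e.\ $(\omega, x)$,
\[ \beta^{\omega, \xi_{j} \vee \calC}_{x} = \beta^{\omega,\xi_{j}}_{x}|_{\calC(x)} / \beta^{\omega,\xi_{j}}_{x}(\calC(x)). \]
Evaluating on $T^{-1}A$ rewrites the right-hand side of the lemma as $\beta^{\omega, \xi_{j} \vee \calC}_{x}(T^{-1}A)$. By the shift relation this equals $\beta^{\omega, T^{-1}\xi_{j} \vee \calC}_{x}(T^{-1}A) = \Eof{\beta^{\omega}, \indicator{T^{-1}A} \mid \wh{T^{-1}\xi_{j}} \vee \wh{\calC}}(x)$.

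Next, from the product structure \eqref{eq:def-beta-omega} of $\beta^{\omega}$ on $(\Lambda^{N})^{\bbN}$, the partition $\calC$ depends only on the first $N$ coordinates (the hypothesis forces $\calC$ to refine the $\varphi_{x|N}$-partition; in the applications of interest, e.g.\ $\calC = \calC_{nN}$, $\calC$ is genuinely determined by $x|N$), whereas $T^{-1}A$ and $T^{-1}\xi_{j}$ depend only on coordinates after position $N$. Hence $\calC$ is independent of $(T^{-1}A, T^{-1}\xi_{j})$ under $\beta^{\omega}$, and \autoref{lem:list-identities}\ref{itm:condindp-exp} allows me to drop the $\wh{\calC}$-conditioning:
\[ \Eof{\beta^{\omega}, \indicator{T^{-1}A} \mid \wh{T^{-1}\xi_{j}} \vee \wh{\calC}} = \Eof{\beta^{\omega}, \indicator{T^{-1}A} \mid \wh{T^{-1}\xi_{j}}}. \]

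Finally, applying \autoref{lem:list-identities}\ref{itm:list-pushfoward-E} with $g = \indicator{A}$ and $\calG = \wh{\xi_{j}}$, together with $T\beta^{\omega} = \beta^{T\omega}$ from \eqref{eq:Tbeta-omega}, gives
\[ \Eof{\beta^{\omega}, \indicator{T^{-1}A} \mid \wh{T^{-1}\xi_{j}}}(x) = \Eof{\beta^{T\omega}, \indicator{A} \mid \wh{\xi_{j}}}(Tx) = \beta^{T\omega, \xi_{j}}_{Tx}(A), \]
which chains to the desired identity. A minor technicality is to assemble the $\bfQ$-null exceptional set uniformly in $A$, which is handled by restricting to a countable generating algebra of Borel sets and extending by monotone class. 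The step I expect to be the main obstacle is the conditional independence, as it tacitly requires $\calC$ to be $\sigma(x_{1}, \ldots, x_{N})$-measurable — slightly stronger than the stated hypothesis — though this holds in all natural applications of the lemma.
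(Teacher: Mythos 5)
Your proof is correct and follows essentially the same route as the paper: both derive the identity by rewriting both sides as conditional expectations, and both hinge on the shift relation $\xi_{j}\vee\calC = T^{-1}\xi_{j}\vee\calC$, the conditional independence of $\wh{\calC}$ from $T^{-1}\wh{\xi_{j}}$ under the product measure $\beta^{\omega}$ (via \autoref{lem:list-identities}\ref{itm:condindp-exp}), and the pushforward identity \autoref{lem:list-identities}\ref{itm:list-pushfoward-E} with $T\beta^{\omega}=\beta^{T\omega}$. The only organizational difference is that you invoke the standard chain rule for disintegrations at the outset (identifying $\beta^{\omega,\xi_{j}\vee\calC}_{x}$ with the normalized restriction $(\beta^{\omega,\xi_{j}}_{x})_{\calC(x)}$), whereas the paper re-derives precisely this fact by hand: it writes $\nu_{x}(T^{-1}A)=\sum_{B\in\calC}\indicator{B}(x)h_{B}(x)$, checks $\wh{\xi_{j}}\vee\wh{\calC}$-measurability, verifies $\int\nu_{x}\,\diff\beta^{\omega}=\beta^{\omega}(T^{-1}A)$, and appeals to uniqueness of the conditional expectation. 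Your version is a touch more compact but proves the same equalities in the same logical order.

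Your closing caveat is well spotted and worth taking seriously: the conditional-independence step does tacitly require $\wh{\calC}$ to be $\sigma(x_{1},\ldots,x_{N})$-measurable (equivalently, $\calC\prec\{[u]\colon u\in\Lambda^{N}\}$), so that under the product measure $\beta^{\omega}$ on $(\Lambda^{N})^{\bbN}$ the factor $\wh{\calC}$ decouples from $T^{-1}\calB(\Lambda^{\bbN})$. The hypothesis of \autoref{thm:exact-dim-general} as literally stated only asserts that $\calC$ refines the $\varphi_{\cdot|N}$-partition, which is the opposite inclusion and on its own would not rule out, say, the point partition, for which the claimed conditional independence plainly fails. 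The paper's own proof of \autoref{lem:measure-relations} relies on the same unstated assumption at exactly the same spot, and it is satisfied in every application (there $\calC=\calC_{nN}$ is the $\varphi_{\cdot|nN}$-partition, which is both finer than $L$ and coarser than $\{[u]\}$). So this is a minor imprecision in the theorem's hypotheses rather than a gap in your argument; you have correctly localized it.
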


\begin{proof}
	First we show that
	\begin{equation}\label{eq:mass-rel-first}
		\begin{aligned}
		\beta^{\omega,T^{-1}\xi_{j} \vee \calC }_{x}(T^{-1}A) & = \Eof{ \beta^{\omega}, \indicator{T^{-1}A} \mid T^{-1}\wh{\xi_{j}} \vee \calC }(x) & \by{\autoref{thm:rohlin}} \\ 
		 & = \Eof{ \beta^{\omega}, \indicator{T^{-1}A} \mid T^{-1}\wh{\xi_{j}} }(x) & \by{\autoref{lem:list-identities}\ref{itm:condindp-exp}} \\
		 & = \Eof{\beta^{T\omega}, \indicator{A} \mid \wh{\xi_{j}} }(Tx)  & \by{\autoref{lem:list-identities}\ref{itm:list-pushfoward-E} and \eqref{eq:Tbeta-omega}} \\
		 & =  \beta^{T\omega,\xi_{j}}_{Tx}(A). & \by{\autoref{thm:rohlin}}
	\end{aligned}
	\end{equation}
	
	By \autoref{thm:rohlin}, for $ \beta \aev x $ we define
	\begin{equation*}
		\nu_{x}(T^{-1}A) = \frac{\beta_{x}^{\omega,\xi_{j}}(T^{-1}A \intxn \calC(x) )  }{ \beta^{\omega, \xi_{j}}_{x}( \calC(x))} = \sum_{B\in \calC} \indicator{B}(x) \cdot h_{B}(x),
	\end{equation*}
	where $ h_{B} := \Eof{\beta^{\omega}, \indicator{T^{-1}A\intxn B} \mid \wh{\xi_{j}}} / \Eof{\beta^{\omega}, \indicator{B} \mid \wh{\xi_{j}}}$. Since $ h_{B}$ is $ \wh{\xi_{j}}$ measurable, the function $ x\mapsto \nu_{x}(T^{-1}A)$ is $ \wh{\xi_{j}} \vee \wh{\calC}$-measurable. Moreover,
	\begin{equation}\label{eq:cond-exp-compute}
		\begin{aligned}
		\int \nu_{x}(T^{-1}A) \, \diff \beta^{\omega} & = \sum_{B\in\calC} \int \indicator{B}  h_{B} \, \diff \beta^{\omega} \\
		 & = \sum_{B \in\calC} \int \Eof{\beta^{\omega}, \indicator{B} h_{B} \mid \wh{\xi_{j}} } \, \diff \beta^{\omega} \\
		 & = \sum_{B \in\calC} \int \Eof{\beta^{\omega}, \indicator{B}  \mid \wh{\xi_{j}} } h_{B} \, \diff \beta^{\omega} & \by{$h_{B}$ being $\wh{\xi_{j}}$-measurable} \\
		 & = \sum_{B \in\calC} \int \Eof{\beta^{\omega}, \indicator{T^{-1}A\intxn B}  \mid \wh{\xi_{j}} }  \, \diff \beta^{\omega} & \by{the definition of $ h_{B}$} \\
		 & = \sum_{B\in\calC} \beta^{\omega}(T^{-1}A\intxn B) = \beta^{\omega}(T^{-1}A). 
	\end{aligned}
	\end{equation}
	Hence, the uniqueness of conditional expectation implies that
	\begin{align*}
		\nu_{x}(T^{-1}A) & = \Eof{ \beta^{\omega}, \indicator{T^{-1}A} \mid \wh{\xi_{j}} \vee \wh{\calC} } \\
		& = \Eof{ \beta^{\omega}, \indicator{T^{-1}A} \mid T^{-1}\wh{\xi_{j}} \vee \wh{\calC} } & \by{\autoref{lem:set-relations}\ref{itm:set-shift}}\\
		 & = \beta^{\omega, T^{-1}\xi_{j} \vee \calC}_{x}(T^{-1}A). & \by{\autoref{thm:rohlin}}
	\end{align*}
	 This together with \eqref{eq:mass-rel-first} finishes the proof.
\end{proof}

Then we compute some useful integrals related to the conditional information and entropy.

\begin{lemma}\label{lem:integral-entropies}
	Let $ \calE $ be a finite partition of $ \Lambda^{\bbN} $, and let $ \calF $ be a sub-$\sigma$-algebra of $ \calB(\Lambda^{\bbN})$. Then
	\begin{equation}\label{eq:Int-Info}
		\int_{\Omega\times\Lambda^{\bbN}} \Iof{\beta^{\omega}, \calE \mid \calF }(x) \, \diff \bfQ(\omega, x) = \Hof{ \beta, \calE \mid \wh{\calA} \vee \calF},
	\end{equation}
	and
	\begin{equation}\label{eq:int-omega-e}
		\int_{\Omega} \Hof{ \beta^{\omega}, \calE \mid \calF} \, \diff \bfP(\omega) = \Hof{ \beta, \calE \mid \wh{\calA} \vee \calF }.
	\end{equation}
\end{lemma}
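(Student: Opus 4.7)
The plan is to reduce both formulas to the pointwise identity
\begin{equation}\label{eq:planpt}
\Iof{\beta, \calE \mid \wh{\calA} \vee \calF}(x) = \Iof{\beta^{\calA(x)}, \calE \mid \calF}(x) \quad \text{for } \beta \aev x,
\end{equation}
and then integrate using the definition \eqref{eq:def-bfQ} of $\bfQ$ together with the identity $\beta = \int_\Omega \beta^\omega \, \diff \bfP(\omega)$ from \eqref{eq:Sym-Disint}. The key input is the following observation about conditional expectations: for any $A \in \calB(\Lambda^{\bbN})$,
\begin{equation}\label{eq:planexp}
\Eof{\beta, \indicator{A} \mid \wh{\calA} \vee \calF}(x) = \Eof{\beta^{\calA(x)}, \indicator{A} \mid \calF}(x) \quad \text{for } \beta \aev x.
\end{equation}
To verify \eqref{eq:planexp}, I would check the two defining properties: first, the right-hand side is $\wh{\calA} \vee \calF$-measurable because $x \mapsto \calA(x)$ is $\wh{\calA}$-measurable and for each fixed $\omega$ the conditional expectation $\Eof{\beta^\omega, \indicator{A} \mid \calF}$ is $\calF$-measurable; second, for every $B \in \wh{\calA} \vee \calF$, integrating against $\beta$ and disintegrating by \eqref{eq:Sym-Disint} reduces the integral over $B$ to $\int \int \indicator{B}(x) \Eof{\beta^\omega, \indicator{A} \mid \calF}(x) \, \diff \beta^\omega(x) \, \diff \bfP(\omega) = \int \beta^\omega(A \cap B) \, \diff \bfP(\omega) = \beta(A \cap B)$, where the crucial point is that when $B \in \wh{\calA} \vee \calF$ decomposes into a piece depending only on $\omega$ and a $\calF$-piece, the $\calF$-piece is absorbed by the conditional expectation and the $\wh{\calA}$-piece becomes the indicator in the outer integral; this needs to be made precise using a generating $\pi$-system for $\wh{\calA} \vee \calF$ of sets of the form $A_0 \cap F_0$ with $A_0 \in \wh{\calA}$, $F_0 \in \calF$, and a monotone class argument.

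Once \eqref{eq:planexp} is established, applying it with $A$ ranging over $\calE$ and taking $-\log$ gives \eqref{eq:planpt} immediately from the definition \eqref{eq:def-Iof} of conditional information (here I use that $\calE$ is a partition, so the indicator $\indicator{A}(x)$ with $A = \calE(x)$ picks out the right term in the sum). Now for \eqref{eq:Int-Info}, I integrate \eqref{eq:planpt} against $\beta$, then apply \eqref{eq:Sym-Disint} to obtain
\begin{equation*}
\Hof{\beta, \calE \mid \wh{\calA} \vee \calF} = \int_{\Lambda^{\bbN}} \Iof{\beta^{\calA(x)}, \calE \mid \calF}(x) \, \diff \beta(x) = \int_\Omega \int_{\Lambda^{\bbN}} \Iof{\beta^\omega, \calE \mid \calF}(x) \, \diff \beta^\omega(x) \, \diff \bfP(\omega),
\end{equation*}
where the last step uses that $\beta^\omega$ is supported where $\calA(x) = \omega$, so $\calA(x)$ may be replaced by $\omega$ inside the inner integral. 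By \eqref{eq:def-bfQ}, the double integral equals $\int \Iof{\beta^\omega, \calE \mid \calF}(x) \, \diff \bfQ(\omega, x)$, giving \eqref{eq:Int-Info}. For \eqref{eq:int-omega-e}, I simply observe that $\int \Iof{\beta^\omega, \calE \mid \calF}(x) \, \diff \beta^\omega(x) = \Hof{\beta^\omega, \calE \mid \calF}$ by definition \eqref{eq:def-Hof}, and then integrating over $\bfP$ and using \eqref{eq:Int-Info} yields the claim.

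The only genuine obstacle is the careful verification of \eqref{eq:planexp}, since it requires recognizing that the sub-$\sigma$-algebra $\wh{\calA}$ essentially labels the fiber $\beta^{\calA(x)}$ and that conditioning further by $\calF$ on the full measure $\beta$ agrees $\beta$-a.e.\ with conditioning by $\calF$ on the fiber measure; this is a standard but slightly delicate property of Rohlin disintegrations and should be handled by uniqueness of conditional expectation after checking the defining integral identity on a $\pi$-system for $\wh{\calA} \vee \calF$. The remainder of the argument is a bookkeeping application of Fubini and the definition of $\bfQ$.
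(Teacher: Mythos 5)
Your plan is structurally close to the paper's proof: both arguments reduce to the pointwise identity $\Eof{\beta^{\calA}_{x}, \indicator{E}\mid\cdot}(x) = \Eof{\beta, \indicator{E}\mid\wh{\calA}\vee\cdot}(x)$, and both then integrate using \eqref{eq:Sym-Disint} and \eqref{eq:def-bfQ}. The difference is that the paper proves this identity only for countable partitions $\calF_n$ refining to $\calF$ (its equation \eqref{eq:cond-join}), where the conditional expectation has the explicit formula $\sum_{F\in\calF_n}\indicator{F}(x)h_F(x)$ so that $\wh{\calA}\vee\wh{\calF_n}$-measurability is manifest, and then passes to the limit via the martingale convergence statement in Lemma \ref{lem:list-identities}\ref{itm:increasing} (relying crucially on $\#\calE < \infty$ to exchange limit and integral). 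You instead attempt to prove the identity \eqref{eq:planexp} for a general sub-$\sigma$-algebra $\calF$ in one step.

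The gap in your version is the measurability of $x\mapsto \Eof{\beta^{\calA(x)}, \indicator{A}\mid\calF}(x)$ with respect to $\wh{\calA}\vee\calF$. You justify this by saying that $x\mapsto\calA(x)$ is $\wh{\calA}$-measurable and that for each fixed $\omega$ the function $\Eof{\beta^\omega,\indicator{A}\mid\calF}$ is $\calF$-measurable, but this does not combine into joint measurability: for a general $\calF$, the conditional expectation is only defined up to $\beta^\omega$-null sets, and you need to select versions that vary measurably in $\omega$ before the composition $x\mapsto\Eof{\beta^{\calA(x)},\indicator{A}\mid\calF}(x)$ makes sense as a $\wh{\calA}\vee\calF$-measurable function. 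Your subsequent $\pi$-system/monotone class argument verifies the defining integral identity of conditional expectation, but that only identifies a candidate once measurability is in hand; it does not produce the measurable selection. This is exactly what the paper's countable-partition approximation is for, and it is the step your proposal needs to supply. Everything else in your plan — the reduction from information to expectation, the disintegration bookkeeping, the substitution $\calA(x)\leftrightarrow\omega$ inside the inner integral — is correct and matches the paper.
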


\begin{proof}
	Since $ (\Lambda^{\bbN}, \calB(\Lambda^{\bbN}), \beta )$ is a separable probability space, there exists a sequence of countable partitions $ (\calF_{n})_{n=1}^{\infty} $ of $ \Lambda^{\bbN} $ so that $ \wh{\calF_{n}} \uparrow \calF $. Note that for any sub-$\sigma$-algebra $ \calG $ of $ \calB(\Lambda^{\bbN})$,
	\begin{align}
	\nonumber	\int_{\Omega\times\Lambda^{\bbN}} & \Iof{\beta^{\omega}, \calE \mid \calG }(x) \, \diff \bfQ(\omega, x) \\
	\label{eq:omega-e-int}	& = \int_{\Omega} \Hof{\beta^{\omega}, \calE \mid \calG} \, \diff \bfP(\omega) & \by{\eqref{eq:def-bfQ}}\\
	\label{eq:entropy-int}	& = \int_{\Lambda^{\bbN}} \Hof{\beta^{\calA}_{y}, \calE \mid \calG} \, \diff\beta(y) & \by{\eqref{eq:Sym-Disint}} \\
	\nonumber & = \int_{\Lambda^{\bbN}}\int_{\Lambda^{\bbN}} \Iof{ \beta^{\calA}_{y}, \calE \mid \calG  }(x) \,\diff \beta^{\calA}_{y}(x) \diff \beta(y) & \by{\eqref{eq:def-Hof}}\\
	\nonumber	& = \int_{\Lambda^{\bbN}}\int_{\Lambda^{\bbN}} \Iof{ \beta^{\calA}_{x}, \calE \mid \calG  }(x) \,\diff \beta^{\calA}_{y}(x) \diff \beta(y)  & \by{$\beta^{\calA}_{x} = \beta^{\calA}_{y}$ if $ x\in\calA(y)$} \\
	\label{eq:info-int}	& = \int_{\Lambda^{\bbN}} \Iof{\beta^{\calA}_{x} , \calE \mid \calG }(x) \, \diff \beta(x). & \by{\eqref{eq:Sym-Disint}}
	\end{align}
	Since \eqref{eq:int-omega-e} follows from \eqref{eq:Int-Info} and \eqref{eq:omega-e-int}, it suffices to prove \eqref{eq:Int-Info}.
	
	For each $ E \in \calE $, $ n \in \bbN $, $ \beta \aev x $, by \autoref{thm:rohlin} we have
	\begin{align*}
		\Eof{\beta^{\calA}_{x}, \indicator{E} \mid \wh{\calF_{n}} }(x) = \frac{\beta^{\calA}_{x}(E\intxn \calF_{n}(x))}{ \beta^{\calA}_{x}(\calF_{n}(x)) } = \sum_{F\in\calF_{n}} \indicator{F}(x) h_{F}(x),
	\end{align*}
	where $ h_{F}(x) = \Eof{ \beta, \indicator{E\intxn F} \mid \wh{\calA}} / \Eof{ \beta, \indicator{F} \mid \wh{\calA}} $. Then $ x \mapsto \Eof{\beta^{\calA}_{x}, \indicator{E} \mid \wh{\calF_{n}} }(x) $ is $ \wh{\calA} \vee \wh{\calF_{n}}$ measurable. This together with the computation in \eqref{eq:cond-exp-compute} shows that
	\begin{equation}\label{eq:cond-join}
		\Eof{\beta^{\calA}_{x}, \indicator{E} \mid \wh{\calF_{n}} }(x) = \Eof{\beta, \indicator{E} \mid \wh{\calA} \vee \wh{\calF_{n}}}(x).
	\end{equation}
	
	Hence 
	\begin{align*}
		\int_{\Omega\times\Lambda^{\bbN}} & \Iof{\beta^{\omega}, \calE \mid \calF }(x) \, \diff \bfQ(\omega, x) \\
		& = \int_{\Lambda^{\bbN}} \Hof{\beta^{\calA}_{y}, \calE \mid \calF} \, \diff\beta(y) & \by{\eqref{eq:entropy-int}} \\
		& = \int_{\Lambda^{\bbN}} \lim_{n\to\infty}  \Hof{\beta^{\calA}_{y}, \calE \mid \wh{\calF_{n}}} \, \diff \beta(y) & \by{\autoref{lem:list-identities}\ref{itm:increasing} and $\#\calE < \infty $} \\
		& = \lim_{n\to\infty}  \int_{\Lambda^{\bbN}}  \Hof{\beta^{\calA}_{y}, \calE \mid \wh{\calF_{n}}} \, \diff \beta(y) & \by{$ \# \calE < \infty $} \\
		& = \lim_{n\to\infty}  \int_{\Lambda^{\bbN}} \Iof{\beta^{\calA}_{x} , \calE \mid \wh{\calF_{n}} }(x) \, \diff \beta(x) & \by{\eqref{eq:info-int}} \\
		& = \lim_{n\to\infty} \Hof{\beta, \calE \mid \wh{\calA} \vee \wh{\calF_{n}}} & \by{\eqref{eq:cond-join}} \\
		& = \Hof{\beta, \calE \mid \wh{\calA} \vee \calF }, & \by{\autoref{lem:list-identities}\ref{itm:increasing} and $\#\calE < \infty $}
  	\end{align*}
	which finishes the proof.
\end{proof}

%

We finish this subsection with the a version of Shannon-McMillan-Breiman theorem.

\begin{lemma}\label{lem:Cn-S-M-B} 
	For $ \bfQ \aev (\omega, x)$ and $ 0 \leq j \leq d $, $ \lim_{n\to\infty} - (1/n) \log \beta^{\omega, \xi_{j}}_{x}(\calC_{0}^{n-1}(x)) = N \HcalA{[j]} $.
\end{lemma}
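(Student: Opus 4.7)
The plan is to rewrite $-\log \beta^{\omega,\xi_j}_x(\calC_0^{n-1}(x))$ as a Birkhoff average on the mixing measure-preserving system $(\Omega \times \Lambda^{\bbN}, \bfQ, T)$ and then invoke the pointwise ergodic theorem. By \autoref{thm:rohlin} and \eqref{eq:def-Iof}, this quantity equals the conditional information $\Iof{\beta^{\omega}, \calC_0^{n-1} \mid \wh{\xi_j}}(x)$, so my first step is to telescope it using the chain rule \autoref{lem:list-identities}\ref{itm:ChainRule-I} applied to the join $\calC_0^{n-1} = \vee_{i=0}^{n-1} T^{-i}\calC$, obtaining
\[
\Iof{\beta^{\omega}, \calC_0^{n-1} \mid \wh{\xi_j}}(x) = \sum_{i=0}^{n-1} \Iof{\beta^{\omega}, T^{-i}\calC \mid \wh{\xi_j} \vee \wh{\calC_0^{i-1}}}(x),
\]
with the convention that $\wh{\calC_0^{-1}}$ is trivial.

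The second step is to simplify each summand via a dynamical identity combined with the product structure of $\beta^{\omega}$. A short induction on $i$ using \autoref{lem:set-relations}\ref{itm:set-shift} gives $\wh{\xi_j} \vee \wh{\calC_0^{i-1}} = T^{-i}\wh{\xi_j} \vee \wh{\calC_0^{i-1}}$ modulo $\beta$-null sets. Since $\beta^{\omega}$ is a product over $N$\nobreakdash-blocks by \eqref{eq:def-beta-omega}, and since $\wh{\calC_0^{i-1}}$ depends only on coordinates $1,\ldots,iN$ while both $T^{-i}\calC$ and $T^{-i}\wh{\xi_j}$ depend only on coordinates past position $iN$, these are independent under $\beta^{\omega}$. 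Hence \autoref{lem:list-identities}\ref{itm:condindp-info} removes $\wh{\calC_0^{i-1}}$ from the conditioning, and the pushforward identity \autoref{lem:list-identities}\ref{itm:list-pushfoward-I} together with iteration of $T\beta^{\omega} = \beta^{T\omega}$ from \eqref{eq:Tbeta-omega} converts the $i$th summand into $g(T^i(\omega, x))$, where
\[
g(\omega, x) := \Iof{\beta^{\omega}, \calC \mid \wh{\xi_j}}(x).
\]

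For the final step, since $\calC$ has at most $\abs{\Lambda}^N$ atoms, identity \eqref{eq:Int-Info} of \autoref{lem:integral-entropies} gives
\[
\int g\, \diff\bfQ = \Hof{\beta, \calC \mid \wh{\calA} \vee \wh{\xi_j}} = N\, \HcalA{[j]} < \infty,
\]
so $g \in L^1(\bfQ)$. Since $(\Omega\times \Lambda^{\bbN}, \bfQ, T)$ is mixing, hence ergodic, Birkhoff's ergodic theorem yields
\[
\frac{1}{n}\Iof{\beta^{\omega}, \calC_0^{n-1} \mid \wh{\xi_j}}(x) = \frac{1}{n} \sum_{i=0}^{n-1} g(T^i(\omega, x)) \longrightarrow N\, \HcalA{[j]}
\]
for $\bfQ$-almost every $(\omega, x)$, which is the desired conclusion.

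The main obstacle is that $\wh{\xi_j}$ is not $T$-invariant, so one cannot directly apply a relative Shannon--McMillan--Breiman theorem on the skew-product with $\wh{\xi_j}$ in the conditioning. The workaround rests on \autoref{lem:set-relations}\ref{itm:set-shift}, which allows swapping $\wh{\xi_j}$ with $T^{-i}\wh{\xi_j}$ at the cost of enlarging the conditioning by the ``past'' $\wh{\calC_0^{i-1}}$; the Bernoulli structure of $\beta^{\omega}$ then renders this extra ingredient harmless, reducing the problem to the classical ergodic theorem on $\bfQ$.
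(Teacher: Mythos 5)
Your proof is correct and takes essentially the same approach as the paper: the key identity $-\log\beta^{\omega,\xi_j}_x(\calC_0^{n-1}(x))=\Iof{\beta^{\omega},\vee_{i=0}^{n-1}T^{-i}\calC\mid\wh{\xi_j}}(x)$ is rewritten as a Birkhoff sum of $\Iof{\beta^{T^k\omega},\calC\mid\wh{\xi_j}}(T^kx)$ via the chain rule, the set-shift identity of \autoref{lem:set-relations}\ref{itm:set-shift}, the conditional-independence lemma \autoref{lem:list-identities}\ref{itm:condindp-info}, and the pushforward identity, and then Birkhoff plus \eqref{eq:Int-Info} finish. The only cosmetic difference is that you telescope the full sum at once and handle each term, whereas the paper extracts a single term, establishes a one-step recursion, and iterates; the lemmas invoked and the substance of the argument are the same.
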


\begin{proof}
	For $ n \in \bbN $, we have
	\begin{align*}
		& \hspace{-2em} \Iof{ \beta^{\omega}, \vee_{i=0}^{n-1}T^{-i} \calC \mid \wh{\xi_{j}} }(x) \\ 
		& = \Iof{\beta^{\omega}, \calC \mid \wh{\xi_{j}}}(x) + \Iof{\beta^{\omega}, \vee_{i=1}^{n-1} T^{-i}\calC \mid \wh{\xi_{j}} \vee \wh{\calC} }(x)  \hspace{3.5em} \by{\autoref{lem:list-identities}\ref{itm:ChainRule-I}} \\
		& = \Iof{\beta^{\omega}, \calC \mid \wh{\xi_{j}}}(x) + \Iof{\beta^{\omega},  \vee_{i=1}^{n-1} T^{-i}\calC  \mid T^{-1}\wh{\xi_{j}} \vee \wh{\calC} }(x) \hspace{2em} \by{\autoref{lem:set-relations}\ref{itm:set-shift}} \\
		& = \Iof{\beta^{\omega}, \calC \mid \wh{\xi_{j}}}(x) + \Iof{\beta^{\omega}, \vee_{i=1}^{n-1} T^{-i}\calC \mid T^{-1}\wh{\xi_{j}} }(x) \hspace{2em} \by{\autoref{lem:list-identities}\ref{itm:condindp-info}} \\
		& = \Iof{\beta^{\omega}, \calC \mid \wh{\xi_{j}}}(x) + \Iof{\beta^{T\omega}, \vee_{i=0}^{n-2} T^{-i}\calC \mid \wh{\xi_{j}} }(Tx). \hspace{1.5em} \by{\autoref{lem:list-identities}\ref{itm:list-pushfoward-I} and \eqref{eq:Tbeta-omega}} 
	\end{align*}
	Then an induction shows that
	\begin{equation}\label{eq:I-C-n-Birk-Sum}
		\Iof{ \beta^{\omega}, \vee_{i=0}^{n-1}T^{-i} \calC \mid \wh{\xi_{j}} }(x) = \sum_{k=0}^{n-1} \Iof{\beta^{T^{k}\omega}, \calC \mid \wh{\xi_{j}}}(T^{k}x).
	\end{equation}
	
	On the other hand, it follows from \autoref{thm:rohlin} and \eqref{eq:def-Iof} that for $ \bfQ \aev (\omega, x)$,
	\begin{equation}\label{eq:cond-Cn}
		- \log \beta^{\omega,\xi_{j}}_{x}(\calC_{0}^{n-1}(x)) = \Iof{ \beta^{\omega}, \vee_{i=0}^{n-1}T^{-i} \calC \mid \wh{\xi_{j}} }(x).
	\end{equation}
	 By \eqref{eq:I-C-n-Birk-Sum}, \eqref{eq:cond-Cn} and \eqref{eq:Int-Info}, applying Birkhoff's ergodic theorem finishes the proof.
\end{proof} 


\subsection{Transverse dimensions}

The aim of this subsection is to prove \autoref{prop:LB-transDim}, which intuitively provides the local dimension of $ \mu^{\omega} $ along each coordinate.

\begin{proposition}\label{prop:LB-transDim}
	For $ \bfQ $-a.e.\ $ (\omega, x) $ and $ 1 \leq j \leq d $,
	\begin{equation*}
		\lim_{r\to 0} \frac{ \log \beta^{\omega,\xi_{j-1}}_{x}( \PiBall[j]{x}{r} ) }{\log r} = \frac{\HcalA{[j-1]} - \HcalA{[j]} }{ \chi_{j}},
	\end{equation*}
	where $ \HcalA{I} $ is defined in \eqref{eq:def-HcalA}.
\end{proposition}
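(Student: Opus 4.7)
The plan is to prove the local dimension first along the geometric sequence of scales $r_n := \lambda^{x|nN}_j$ (for which $(-\log r_n)/n \to N\chi_j$ by \autoref{lem:lambda-speed}), and then pass to arbitrary $r \to 0^+$ using the bounded one-step ratios $r_n/r_{n+1} \leq (\min_{i \in \Lambda}|r_{i,j}|)^{-N}$ together with monotonicity of the ball mass in $r$. Writing $B_n(x) := \PiBall[j]{x}{\lambda^{x|nN}_j}$, it suffices to show
\[
\lim_{n\to\infty} -\tfrac{1}{n} \log \beta^{\omega,\xi_{j-1}}_x\bigl(B_n(x)\bigr) = N\bigl(\HcalA{[j-1]} - \HcalA{[j]}\bigr) \quad \text{for } \bfQ\text{-a.e. } (\omega, x).
\]
The first ingredient is a dynamical cocycle. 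Combining \autoref{lem:set-relations}\ref{itm:set-PiBall-shift} with \autoref{lem:measure-relations} yields the identity
\[
\beta^{\omega,\xi_{j-1}}_x\bigl(B_n(x) \cap \calC(x)\bigr) = \beta^{\omega,\xi_{j-1}}_x(\calC(x)) \cdot \beta^{T\omega,\xi_{j-1}}_{Tx}\bigl(B_{n-1}(Tx)\bigr).
\]

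Setting $f(\omega, x) := -\log \beta^{\omega,\xi_{j-1}}_x(\calC(x))$ and $p_n(\omega, x) := -\log \beta^{\omega,\xi_{j-1}}_x(B_n(x))$, iterating this identity $n$ times (noting that $B_0 = \Lambda^{\bbN}$ since $\diam(K_\Phi) \leq 1$, hence $p_0 \equiv 0$) gives
\[
p_n(\omega, x) = \sum_{k=0}^{n-1} f(T^k\omega, T^k x) + \sum_{k=0}^{n-1} e_{n-k}(T^k\omega, T^k x),
\]
where $e_m(\omega', x') := \log\bigl(\beta^{\omega',\xi_{j-1}}_{x'}(B_m(x')) / \beta^{\omega',\xi_{j-1}}_{x'}(B_m(x') \cap \calC(x'))\bigr) \geq 0$ measures how many $\calC$-atoms the slab $B_m$ spreads across. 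By \autoref{lem:integral-entropies} with $\calE = \calC$ and $\calF = \wh{\xi_{j-1}}$, we have $\int f \, d\bfQ = N \HcalA{[j-1]}$, and Birkhoff's ergodic theorem on the mixing system $(\Omega \times \Lambda^{\bbN}, \bfQ, T)$ delivers $(1/n) \sum_{k=0}^{n-1} f(T^k \cdot) \to N\HcalA{[j-1]}$ almost surely. The remaining task is to show that $(1/n)\sum_{k=0}^{n-1} e_{n-k}(T^k \cdot) \to N\HcalA{[j]}$ a.s. For this I would further disintegrate $\beta^{\omega,\xi_{j-1}}_x = \int \beta^{\omega,\xi_j}_y \, d\beta^{\omega,\xi_{j-1}}_x(y)$ over the finer partition $\xi_j$ via \autoref{thm:rohlin}; since $B_n(x)$ is $\xi_j$-saturated, the slab $\xi_{j-1}(x) \cap B_n(x)$ is precisely the set of $y$'s whose $\xi_j$-fiber meets $\calC_0^{n-1}(x)$ (and these fibers lie fully inside $B_n(x)$, by the direction-$j$ diameter bound forced by $\varphi_{y|nN} = \varphi_{x|nN}$ on $\calC_0^{n-1}(x)$). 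Applying both the $j$- and $(j{-}1)$-versions of \autoref{lem:Cn-S-M-B}, the disintegration identifies $\beta^{\omega,\xi_{j-1}}_x(\calC_0^{n-1}(x))/\beta^{\omega,\xi_{j-1}}_x(B_n(x)) \asymp \exp(-nN\HcalA{[j]})$, which translates to the desired limit of the overflow sum. Adding the two Birkhoff-type limits gives $p_n/n \to N(\HcalA{[j-1]} - \HcalA{[j]})$, and dividing by $(-\log r_n)/n \to N\chi_j$ yields the claimed local dimension.

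\emph{Main obstacle.} The delicate point is identifying the asymptotics of the overflow sum without any separation assumption on $\Phi$. Distinct $\calC_0^{n-1}$-atoms can pile up arbitrarily within a single slab $B_n(x)$, and the pointwise identity $\beta^{\omega,\xi_j}_y(\calC_0^{n-1}(x)) = \beta^{\omega,\xi_j}_y(\calC_0^{n-1}(y))$ genuinely fails for $y$ slightly displaced from $x$ in the $\pi_j$ direction. One must therefore run the comparison in an averaged sense, using the integral form of the disintegration together with an Egorov-type uniformization of the $\xi_j$-version of \autoref{lem:Cn-S-M-B} on a set of nearly full $\bfQ$-measure; this is the random-measure adaptation of the deterministic scheme of~\cite{Feng2023a,FengHu2009}. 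Once this is in place, the interpolation from the subsequence $r_n$ to all $r$ is routine.
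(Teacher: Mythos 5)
Your cocycle decomposition is exactly the one the paper uses (your $f$ equals $Q_{j-1}$, your $e_n$ equals $-G_n$, and your iterated identity is, up to a sign, the paper's telescoped sum of $H_{n-k}$). Two issues, one minor and one genuine.

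\textbf{Sign slip.} From $\beta^{\omega,\xi_{j-1}}_x(B_n(x)\cap\calC(x)) = \beta^{\omega,\xi_{j-1}}_x(\calC(x))\,\beta^{T\omega,\xi_{j-1}}_{Tx}(B_{n-1}(Tx))$ and your definition of $e_n$, one gets $p_n = f + p_{n-1}\circ T - e_n$, hence $p_n = \sum_{k=0}^{n-1} f(T^k\cdot) - \sum_{k=0}^{n-1} e_{n-k}(T^k\cdot)$, not the sum with a plus sign. This matters because your subsequent target $\tfrac1n\sum e_{n-k}(T^k\cdot)\to N\HcalA{[j]}$ is only consistent with the minus sign; with the plus sign your formula would force $p_n/n\to N(\HcalA{[j-1]}+\HcalA{[j]})$, which is wrong.

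\textbf{The genuine gap is the overflow sum.} You correctly flag that $\tfrac1n\sum_{k=0}^{n-1} e_{n-k}(T^k\cdot)$ is the hard term, but your proposed mechanism for it does not close. This sum is not a Birkhoff sum: the function $e_{n-k}$ changes with $n$, so an Egorov-type uniformization of \autoref{lem:Cn-S-M-B} alone does not give the limit. The paper's proof supplies precisely the two tools you are missing. First, \autoref{lem:Cond-Density} (a conditional Besicovitch density theorem, imported from \cite[Lemma~2.5(2)]{Feng2023a}) gives pointwise $\bfQ$-a.e.\ convergence $G_n(\omega,x)\to -\Iof{\beta^\omega,\calC\mid\wh{\xi_j}}(x)$ together with the $L^1(\bfQ)$ domination $\sup_n|G_n|\in L^1$. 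Second, and crucially, Maker's ergodic theorem (\autoref{lem:Makers}) is exactly the device that converts a.e.\ $L^1$-dominated convergence of a \emph{sequence} $(g_n)$ into $\tfrac1n\sum_{k=0}^{n-1} g_{n-k}(T^k\cdot)\to\int g$ a.s.; applied with $g_n = G_n$ and combined with \autoref{lem:integral-entropies} it delivers $\tfrac1n\sum G_{n-k}(T^k\cdot)\to -N\HcalA{[j]}$, i.e.\ $\tfrac1n\sum e_{n-k}(T^k\cdot)\to N\HcalA{[j]}$. Your alternative route --- disintegrating over $\xi_j$ and trying to compare $\beta^{\omega,\xi_{j-1}}_x(\calC_0^{n-1}(x))$ directly to $\beta^{\omega,\xi_{j-1}}_x(B_n(x))$ --- is precisely what you acknowledge fails pointwise, and an Egorov-type estimate does not repair it because the comparison you would need must hold along the whole orbit $\{T^k(\omega,x)\}_{k<n}$ simultaneously at the scale $n-k$, which is what Maker's theorem is designed to circumvent. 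In short: replace the speculative Egorov/SMB comparison with \autoref{lem:Cond-Density} plus \autoref{lem:Makers} and the argument is complete.
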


The proof of \autoref{prop:LB-transDim} is inspired by \cite[Proposition 5.1]{Feng2023a}. The key idea is to reformulate the measures of small balls in terms of certain variants of Birkhoff sums. The proof is then completed by applying Birkhoff's and the following Maker's ergodic theorems~\cite{Maker1940}.


\begin{lemma}[{Maker~\cite{Maker1940}}]\label{lem:Makers}
	Let $ T $ be a measure-preserving transformation on a probability space $ (X, \calB, \theta)$. Let $ (g_{n})_{n=1}^{\infty} $ be a sequence of measurable functions converging $ \theta $-a.e.\@ to $ g $. Suppose $ \sup_{n} \abs{g_{n}} \leq f $ for some $ f \in L^{1}(X,\calB,\theta)$. Then both $\theta$-a.e.\@ and in $ L^{1}$,
	\begin{equation*}
		\lim_{n\to\infty} \frac{1}{n} \sum_{k=0}^{n-1} g_{n-k}(T^{k}x) = \Eof{\theta, g \mid \calI }(x),
	\end{equation*}
	 where $ \calI = \{ B \in \calB, T^{-1}B= B\} $.
\end{lemma}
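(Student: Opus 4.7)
The plan is to prove Maker's theorem by reducing it to Birkhoff's ergodic theorem applied separately to $g$ and to a monotone majorant of the error, combined with conditional dominated convergence. First I would introduce the measurable functions $G_N(x) := \sup_{n\geq N} \abs{g_n(x) - g(x)}$. These are well defined, $G_N \downarrow 0$ almost everywhere as $N \to \infty$ by the hypothesis $g_n \to g$ $\theta$-a.e., and $G_N \leq 2f \in L^1(\theta)$ by the domination assumption. The identity
\begin{equation*}
\frac{1}{n}\sum_{k=0}^{n-1} g_{n-k}(T^k x) \;=\; \frac{1}{n}\sum_{k=0}^{n-1} g(T^k x) \;+\; \frac{1}{n}\sum_{k=0}^{n-1} (g_{n-k}-g)(T^k x)
\end{equation*}
reduces the task to controlling the error sum, since Birkhoff's ergodic theorem applied to $g$ already gives $\theta$-a.e.\ and $L^1$ convergence of the first average to $\Eof[\theta]{g \mid \calI}(x)$.

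For the error term, fix $N \in \bbN$ and split the sum over $k$ at $k = n-N$. In the \emph{early} block $0 \leq k \leq n-N$ one has $n-k \geq N$, so $\abs{g_{n-k}-g}(T^k x) \leq G_N(T^k x)$ and therefore
\begin{equation*}
\frac{1}{n}\sum_{k=0}^{n-N} \abs{g_{n-k}-g}(T^k x) \;\leq\; \frac{1}{n}\sum_{k=0}^{n-1} G_N(T^k x) \;\xrightarrow[n\to\infty]{}\; \Eof[\theta]{G_N \mid \calI}(x),
\end{equation*}
both $\theta$-a.e.\ and in $L^1$, by Birkhoff applied to $G_N \in L^1$. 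In the \emph{late} block $n-N+1 \leq k \leq n-1$ there are only $N-1$ terms, each bounded by $2f(T^k x)$, and the telescoping
\begin{equation*}
\frac{1}{n}\sum_{k=n-N+1}^{n-1} 2 f(T^k x) \;=\; \frac{2}{n}\sum_{k=0}^{n-1} f(T^k x) - \frac{2}{n}\sum_{k=0}^{n-N} f(T^k x)
\end{equation*}
shows the late block tends to $0$ as $n \to \infty$ with $N$ fixed, both a.e.\ and in $L^1$, since both Birkhoff averages on the right share the common limit $\Eof[\theta]{f \mid \calI}(x)$. Combining the two bounds gives
\begin{equation*}
\limsup_{n\to\infty} \Abs{ \frac{1}{n}\sum_{k=0}^{n-1} (g_{n-k}-g)(T^k x) } \;\leq\; \Eof[\theta]{G_N \mid \calI}(x) \quad \text{a.e.,}
\end{equation*}
for every $N$. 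Letting $N \to \infty$ and invoking conditional dominated convergence (valid since $G_N \downarrow 0$ with $G_N \leq 2f \in L^1$) yields $\Eof[\theta]{G_N \mid \calI} \to 0$ both $\theta$-a.e.\ and in $L^1$, finishing the a.e.\ part.

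For $L^1$ convergence, each step above is already uniform in $L^1$: Birkhoff's $L^1$ theorem handles the main term and the early block, and the telescoping of Birkhoff averages for $f$ handles the late block in $L^1$ as well. Alternatively, note that $\Abs{\frac{1}{n}\sum_{k=0}^{n-1} g_{n-k}(T^k x)} \leq \frac{1}{n}\sum_{k=0}^{n-1} f(T^k x)$, and the right-hand side is uniformly integrable since it converges in $L^1$ by Birkhoff, so the a.e.\ convergence upgrades to $L^1$ convergence by Vitali. The main obstacle is precisely the late tail block, where $g_{n-k}$ is \emph{not} yet close to $g$ and the naive ``replace $g_{n-k}$ by $g$'' strategy fails; the telescoping identity above is what forces this tail to be $o(1)$ uniformly in the cutoff $N$, thereby making the two-parameter limit (first $n\to\infty$, then $N\to\infty$) interchange correctly.
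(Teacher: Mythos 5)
Your proof is correct: the truncation $G_N=\sup_{n\ge N}|g_n-g|$ with $G_N\le 2f$, the split of the averaged error at $k=n-N$, Birkhoff applied to $g$, $G_N$ and $f$, and the final passage $N\to\infty$ via conditional dominated convergence (plus uniform integrability for the $L^1$ statement) is the standard argument for Maker's theorem. The paper does not prove this lemma at all --- it simply cites Maker's original 1940 result --- so your self-contained derivation goes beyond what the paper records, and I see no gaps in it.
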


The following lemma is a preparation for applying \autoref{lem:Makers}.

\begin{lemma}\label{lem:Cond-Density}
	For  $ \bfQ $-a.e.\ $ (\omega, x)$ and $ 1 \leq j \leq d $,
	\begin{equation}\label{eq:cond-density}
		\lim_{r \to 0 } - \log \frac{ \betaOXj{\omega}{x}{j-1} \left( \PiBall[j]{x}{r} \intxn \calC(x)  \right)  }{\betaOXj{\omega}{x}{j-1} \left( \PiBall[j]{x}{r} \right) } = \Iof{ \beta^{\omega}, \calC \mid \wh{\xi_{j}} }(x).
	\end{equation}
	Furthermore, set
	\begin{equation*}
		g(\omega, x) = - \inf_{r>0} \log \frac{ \beta^{\omega,\xi_{j-1}}_{x} \left( \PiBall[j]{x}{r} \intxn \calC(x)  \right)  }{\beta^{\omega,\xi_{j-1}}_{x} \left( \PiBall[j]{x}{r} \right) }.
	\end{equation*}
	Then $ g \geq 0 $ and $ g \in L^{1}(\Omega\times \Lambda^{\bbN}, \bfQ)$.
\end{lemma}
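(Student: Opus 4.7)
The plan is to apply Lebesgue--Besicovitch differentiation on the real line to the pushforward of the fiber measure under $\pi_j \Pi$, identify the resulting Radon--Nikodym derivative with $\Eof{\beta^\omega, \indicator{\calC(x)} \mid \wh{\xi_j}}(x)$ via the tower property of disintegrations, and then obtain the $L^1$ bound for $g$ from a maximal information inequality applied to a dyadic filtration.

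For the pointwise limit: since $\beta^{\omega,\xi_{j-1}}_x$ is supported on $\xi_{j-1}(x)$ and $\pi_{[j-1]}\Pi$ is constant there, the intersection of $\PiBall[j]{x}{r}$ with the support equals $\{y : |\pi_j\Pi(y) - \pi_j\Pi(x)| \leq r\}$. Setting $\nu := (\pi_j\Pi)_*\beta^{\omega,\xi_{j-1}}_x$ and $\tau := (\pi_j\Pi)_*(\beta^{\omega,\xi_{j-1}}_x|_{\calC(x)})$, the ratio in \eqref{eq:cond-density} becomes $\tau(B_r)/\nu(B_r)$ with $B_r = B(\pi_j\Pi(x), r)$, which by Lebesgue--Besicovitch differentiation on $\bbR$ converges $\nu$-a.e.\ to $(d\tau/d\nu)(\pi_j\Pi(x))$. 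Since $\sigma(\pi_j\Pi)$ restricted to $\xi_{j-1}(x)$ coincides mod-$0$ with $\wh{\xi_j}$, this derivative equals $\Eof{\beta^{\omega,\xi_{j-1}}_x, \indicator{\calC(x)} \mid \wh{\xi_j}}(x)$. By the tower property of disintegrations---the $\wh{\xi_j}$-disintegrant of $\beta^{\omega,\xi_{j-1}}_x$ at $x$ equals that of $\beta^\omega$ at $x$ because $\wh{\xi_{j-1}} \subset \wh{\xi_j}$---this in turn equals $\Eof{\beta^\omega, \indicator{\calC(x)} \mid \wh{\xi_j}}(x)$. Taking $-\log$ and invoking \eqref{eq:def-Iof} (together with the finiteness of $\calC$, which allows passage from a.e.\ statements for each $A \in \calC$ to a $\bfQ$-a.e.\ statement with $A = \calC(x)$) yields \eqref{eq:cond-density}.

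For the $L^1$ bound on $g$: let $\calE_n$ be the partition of $\bbR$ into intervals of length $2^{-n}$, and set $\calF_n := \Pi^{-1}\pi_j^{-1}\wh{\calE_n} \vee \wh{\xi_{j-1}}$; then $\calF_n \uparrow \wh{\xi_j}$ mod $\beta$-null sets. A ball of radius $r \in [2^{-n-1}, 2^{-n})$ intersects at most two adjacent cells of $\calE_n$, so
\[
g(\omega, x) \leq \sup_n \Iof{\beta^\omega, \calC \mid \calF_n}(x) + C_0
\]
for a universal $C_0$. A Chung--Neveu-type maximal information inequality applied to the finite partition $\calC$ under $\beta^\omega$ (a quantitative strengthening of Lemma~\ref{lem:list-identities}\ref{itm:increasing}) yields $\int \sup_n \Iof{\beta^\omega, \calC \mid \calF_n}\, d\beta^\omega \leq C_1(1 + \log \#\calC)$ uniformly in $\omega$; integrating in $\omega$ via \eqref{eq:def-bfQ} gives $\int g\, d\bfQ < \infty$. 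The most delicate point is the second step: matching the differentiation-theorem limit, which is naturally a conditional expectation with respect to the fiber measure $\beta^{\omega,\xi_{j-1}}_x$, with the conditional expectation with respect to $\beta^\omega$ that appears in the definition of $\Iof{\beta^\omega, \calC \mid \wh{\xi_j}}$---and this reduces to the consistency of successive disintegrations along the refining chain $\wh{\xi_{j-1}} \subset \wh{\xi_j}$.
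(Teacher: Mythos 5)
The proposal takes a genuinely different route from the paper on the pointwise limit and, more importantly, has a real gap in the $L^1$ bound. The paper's own proof is a two-line application of \cite[Lemma~2.5(2)]{Feng2023a} (a ready-made conditional-density lemma in the spirit of \cite[Lemma~3.3]{FengHu2009}), plus the observation $\xi_{j-1}\prec\xi_{j}$ and the uniform bound $H(\beta^{\omega},\calC)\le N\log|\Lambda|$. Your reconstruction of the \emph{pointwise} limit --- pushing $\beta^{\omega,\xi_{j-1}}_{x}$ and its restriction to $\calC(x)$ forward under $\pi_{j}\Pi$, applying one-dimensional Lebesgue differentiation, identifying the density with a conditional expectation, and then using the tower property to pass from the fiber measure to $\beta^{\omega}$ --- is correct and is essentially the content of the first part of that cited lemma.

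The gap is in the $L^{1}$ step. You claim that since a ball of radius $r\in[2^{-n-1},2^{-n})$ meets boundedly many cells of $\calE_{n}$, one has
\[
	g(\omega,x) \;\le\; \sup_{n}\Iof{\beta^{\omega},\calC\mid\calF_{n}}(x) + C_{0},
\]
and then a Chung--Neveu maximal inequality finishes. But this pointwise domination is false with a single dyadic grid, and ``intersects few cells'' is not enough to establish it. The obstruction: a centered ball $B(t,r)$ need not be \emph{contained} in any dyadic cell of comparable scale containing $t$ (if $t$ sits arbitrarily close to a dyadic boundary), so $\nu(B(t,r))$ can vastly exceed $\nu(E_{n}(t))$ for every $n$ with $2^{-n}\approx r$, while the extra $\nu$-mass sits entirely in $\calC(x)^{c}$. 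Concretely, in the fiber picture take $\nu=10^{-k}\delta_{t}+\tfrac12\delta_{s}+(\tfrac12-10^{-k})\delta_{u}$ with $t=\tfrac12-\epsilon$, $s=\tfrac12+\epsilon$ in $A^{c}$, $u=0.1$ in $A$, and $A$ the image of $\calC(x)$. Then for intermediate $r$ the ball density is $\frac{10^{-k}}{10^{-k}+1/2}\approx 2\cdot 10^{-k}$, while every $E_{n}(t)$ either contains all the mass (giving ratio $\tfrac12$ at $n=0$) or avoids $s$ entirely (giving ratio $1$), so $\inf_{n}M_{n}=\tfrac12$. Thus $g\approx k\log_{2}10$ while $\sup_{n}\Iof{\cdot\mid\calF_{n}}\approx 1$, and no universal $C_{0}$ works. (The standard ``three shifted dyadic grids'' trick does not repair this either, since containment $B_{r}\subset E$ gives $\tau(B_{r})\le\tau(E)$, the wrong inequality for a \emph{minimum}-density bound.) The correct route --- and the one implicit in the cited \cite[Lemma~2.5(2)]{Feng2023a} --- is a weak-type estimate on the minimal density obtained from the Besicovitch covering lemma applied directly to the family of balls: one shows $\nu\{t\in A:\inf_{r}\nu(A\cap B(t,r))/\nu(B(t,r))<\epsilon\}\le C\,\epsilon(1-\epsilon)^{-1}\nu(A^{c})$, sums over $A\in\calC$, and integrates in $\lambda$ to get $\int g\,\diff\beta^{\omega}\lesssim 1+H(\beta^{\omega},\calC)$, which is uniform in $\omega$ and then integrates over $\bfP$ via \eqref{eq:def-bfQ}. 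You should either cite that lemma as the paper does, or carry out the Besicovitch argument; the dyadic martingale maximal inequality does not substitute for it here.
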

\begin{proof} 
	Applying \cite[Lemma 2.5(2)]{Feng2023a} with $ \Lambda^{\bbN}, \pi_{[j]}\euclid, \pi_{[j]}, \beta^{\omega}, \calC, \xi_{j-1} $ in place of $ X, Y, \pi, m, \alpha, \eta $ gives
	\begin{equation*}
		\lim_{r \to 0 } - \log \frac{ \beta^{\omega,\xi_{j-1}}_{x} \left( \PiBall[j]{x}{r} \intxn \calC(x)  \right)  }{\beta^{\omega,\xi_{j-1}}_{x} \left( \PiBall[j]{x}{r} \right) } = \Iof{ \beta^{\omega}, \calC \mid \wh{\xi_{j}} \vee \wh{\xi_{j-1}} }(x).
	\end{equation*} 
	This implies \eqref{eq:cond-density} since $ \xi_{j-1} \prec \xi_{j}  $. The last statement follows from the second part of \cite[Lemma 2.5(2)]{Feng2023a} and  $ \Hof{\beta^{\omega}, \calC} \leq N \log \abs{\Lambda}  $ for all $ \omega \in \Omega $.
\end{proof}

We are now ready to prove \autoref{prop:LB-transDim}.

\begin{proof}[Proof of \autoref{prop:LB-transDim}]
	The proof is adapted from \cite[Proposition 5.1]{Feng2023a}. For clarity and to account for the dependence on 
	$ \omega $, we provide the details in full.
	
	For $ n \in \bbN $, define
	\begin{equation}\label{eq:def-H}
		H_{n}(\omega, x)  = \log \frac{  \betaOXj{\omega}{x}{j-1} \left( \PiBall[j]{x}{\lambda^{x|nN}_{j}}  \right)   }{ \betaOXj{T\omega}{Tx}{j-1} \left( \PiBall[j]{Tx}{\lambda^{Tx|(n-1)N}_{j}} \right) }.
	\end{equation}
	Then by telescoping and $ \diam(\supp \mu) \leq 1 $,
\begin{equation}\label{eq:H-telescoped}
		\begin{aligned}
		\sum_{k=0}^{n-1} H_{n-k}( T^{k}(\omega, x)) = \log \beta^{\omega,\xi_{j-1}}_{x} \left( \PiBall[j]{x}{\lambda_{j}^{x|nN}} \right).
		\end{aligned}
\end{equation}

	
	For $ n \in \bbN $, define
	\begin{equation}\label{eq:def-G}
		G_{n}(\omega, x)  = \log \frac{  \betaOXj{\omega}{x}{j-1} \left( \PiBall[j]{x}{\lambda_{j}^{x|nN}} \intxn \calC(x) \right)   }{ \betaOXj{\omega}{x}{j-1} \left( \PiBall[j]{x}{\lambda_{j}^{x|nN}} \right)  }.
	\end{equation}
	For $ 1 \leq j \leq d $, write
	\begin{equation}\label{eq:def-Qj}
		Q_{j}(\omega,x) = \Iof{ \beta^{\omega}, \calC \mid \wh{\xi_{j}} }(x).
	\end{equation}
	Then \autoref{lem:Cond-Density} implies that $ \sup_{n} \abs{G_{n}} \in L^{1}(\bfQ)$ and for $ \bfQ \aev (\omega, x) $,
	\begin{equation*}
		\lim_{n\to \infty} G_{n} = - Q_{j}.
	\end{equation*}  
	Thus for $ \bfQ \aev (\omega, x) $, combining \autoref{lem:Makers} and \autoref{lem:integral-entropies} shows that
	\begin{equation}\label{eq:limit-G}
		\lim_{n \to \infty } \frac{1}{n} \sum_{k=0}^{n-1} G_{n-k}(T^{k}(\omega,x)) = - \int Q_{j} \, \diff \bfQ = - N \HcalA{[j]},
	\end{equation}
	and by Birkhoff's ergodic theorem,
	\begin{equation}\label{eq:lim-Qj}
		\lim_{n\to\infty} \frac{1}{n} \sum_{k=0}^{n-1} Q_{j-1}(T^{k}(\omega, x)) = N \HcalA{[j-1]}.
	\end{equation}

	
	Next, we show that for $ n \in \bbN $,
	\begin{equation}\label{eq:H+G=-Q_j-1}
		H_{n} = - Q_{j-1} - G_{n}.
	\end{equation}
	This is justified as follows,
	\begin{align*}
		H_{n}&(\omega, x)  + G_{n}(\omega, x) \\
		& = \log \frac{  \beta^{\omega,\xi_{j-1}}_{x} \left( B^{\Pi_{j}}( x, \lambda_{j}^{x|nN} ) \intxn \calC(x) \right)    }{ \beta^{T\omega,\xi_{j-1}}_{Tx} \left( B^{\Pi_{j}}( Tx, \lambda_{j}^{Tx | (n-1)N} ) \right) } \hspace{3em} \by{\eqref{eq:def-H} and \eqref{eq:def-G}} \\
		& = \log \frac{  \beta^{\omega,\xi_{j-1}}_{x} \left( \xi_{j-1}(x) \intxn B^{\Pi_{j}}( x, \lambda_{j}^{x|nN} ) \intxn \calC(x) \right)    }{ \beta^{T\omega,\xi_{j-1}}_{Tx} \left( B^{\Pi_{j}}( Tx, \lambda_{j}^{Tx | (n-1)N} ) \right) } \hspace{3em} \by{$ \betaOXj{\omega}{x}{j-1}(\xi_{j-1}(x)) =1 $} \\
		& = \log \frac{  \beta^{\omega,\xi_{j-1}}_{x} \left( T^{-1}\left(\xi_{j-1}(Tx)\intxn B^{\Pi_{[j]}}(Tx, \lambda_{j}^{Tx|(n-1)N})\right) \intxn \calC(x) \right)    }{ \beta^{T\omega,\xi_{j-1}}_{Tx} \left( B^{\Pi_{j}}( Tx, \lambda_{j}^{Tx | (n-1)N} ) \right) } \hspace{2em} \by{\autoref{lem:set-relations}\ref{itm:set-PiBall-shift}}\\
		& = \log \frac{  \beta^{\omega,\xi_{j-1}}_{x} \left( T^{-1} B^{\Pi_{j}}(Tx, \lambda_{j}^{Tx|(n-1)N})  \intxn T^{-1}\xi_{j-1}(Tx) \intxn \calC(x) \right)    }{ \beta^{T\omega,\xi_{j-1}}_{Tx} \left( B^{\Pi_{j}}( Tx, \lambda_{j}^{Tx | (n-1)N} ) \right) } \hspace{2em} \by{rearranging} \\
		& = \log \frac{  \beta^{\omega,\xi_{j-1}}_{x} \left( T^{-1} B^{\Pi_{j}}(Tx, \lambda_{j}^{Tx|(n-1)N})  \intxn \xi_{j-1}(x) \intxn \calC(x) \right)    }{ \beta^{T\omega,\xi_{j-1}}_{Tx} \left( B^{\Pi_{j}}( Tx, \lambda_{j}^{Tx | (n-1)N} ) \right) } \hspace{3em} \by{\autoref{lem:set-relations}\ref{itm:set-shift}} \\
		& = \log \frac{  \beta^{\omega,\xi_{j-1}}_{x} \left( T^{-1} B^{\Pi_{j}}(Tx, \lambda_{j}^{Tx|(n-1)N}) \intxn \calC(x) \right)    }{ \beta^{T\omega,\xi_{j-1}}_{Tx} \left( B^{\Pi_{j}}( Tx, \lambda_{j}^{Tx | (n-1)N} ) \right) } \hspace{3em} \by{$ \betaOXj{\omega}{x}{j-1}(\xi_{j-1}(x)) =1 $} \\
		& = \log \beta^{\omega,\xi_{j-1}}_{x}\left( \calC(x)\right) \hspace{7.5em} \by{\autoref{lem:measure-relations}}\\
		& = - \Iof{ \beta^{\omega}, \calC \mid \wh{\xi_{j-1}}}(x) \hspace{6em} \by{\autoref{thm:rohlin} and \eqref{eq:def-Iof}}\\
		&  = - Q_{j-1}(\omega, x). \hspace{9em} \by{\eqref{eq:def-Qj}}
	\end{align*}
	
	Finally, for $ \bfQ \aev (\omega, x)$, we have 
	\begin{align*}
		\lim_{r \to 0} & \frac{\log \beta^{\omega,\xi_{j-1}}_{x} \left( B^{\Pi_{j}}( x, r ) \right) }{ \log r} \\ & = \lim_{n\to\infty}  \frac{\log \beta^{\omega,\xi_{j-1}}_{x} \left( B^{\Pi_{j}}( x, \lambda_{j}^{x|nN}) \right) }{ \log \lambda_{j}^{x|nN}} \hspace{10em} \by{\autoref{lem:lambda-speed}}\\
		& = \lim_{n\to\infty}  \frac{  \sum_{k=0}^{n-1} H_{n-k}(T^{k}(\omega,x)) }{ \log \lambda_{j}^{x|nN}} \hspace{13em} \by{\eqref{eq:H-telescoped}}\\
		& = \lim_{n\to\infty}  \frac{  \sum_{k=0}^{n-1} Q_{j-1}(T^{k}(\omega, x))+ \sum_{k=0}^{n}G_{n-k}(T^{k}(\omega,x)) }{  - \log \lambda_{j}^{x|nN}} \hspace{3em} \by{\eqref{eq:H+G=-Q_j-1}} \\
		& = \lim_{n\to\infty}  \frac{ H^{\calA}_{[j-1]} - H^{\calA}_{[j]} }{  \chi_{j} }. \hspace{8em} \by{\eqref{eq:limit-G}, \eqref{eq:lim-Qj} and \autoref{lem:lambda-speed}}
	\end{align*}
	This finishes the proof.
\end{proof}

\subsection{Proof of \autoref{thm:exact-dim-general}}

In this subsection, we prove \autoref{thm:exact-dim-general} by adapting the arguments in \cite[Section 6]{Feng2023a}, which is itself inspired by ideas from Ledrappier and Young~\cite{LedrappierYoung1985}.

For $ 1 \leq i \leq d $, denote
\begin{equation} \label{eq:def-theta-i}
	\vartheta_{i} := \frac{ \HcalA{[i-1]} - \HcalA{[i]} }{ \chi_{i} }.
\end{equation}
Using \autoref{prop:LB-transDim}, it follows that for $ \bfQ \aev (\omega, x)$,
\begin{equation}\label{eq:TransDim}
	\vartheta_{i} = \lim_{r\to 0} \frac{ \log \beta^{\omega,\xi_{i-1}}_{x}( \PiBall[i]{x}{r} ) }{\log r}.
\end{equation}

For $ \bfQ $-a.e. $(\omega, x)$ and $ 0 \leq i \leq j \leq d $, define
\begin{equation}\label{eq:gamma-def}
	\ol{\gamma}_{i,j}^{\omega}(x) = \limsup_{r \to 0} \frac{ \log \betaOXj{\omega}{x}{i} \left(  \PiBall[j]{x}{r} \right)  }{ \log r } \mAnd   \ul{\gamma}_{i,j}^{\omega} = \liminf_{r\to 0} \frac{\log  \betaOXj{\omega}{x}{i} \left(  \PiBall[j]{x}{r} \right) }{ \log r}. 
\end{equation}

We claim that the following three statements hold for $ \bfQ $-a.e.\ $ (\omega, x)$:
	\begin{align} 
		& \tag*{(D1)}\label{itm:D1} \hspace{0em}  \ugamma{j}{j} = \lgamma{j}{j}  = 0 \\
		& \tag*{(D2)}\label{itm:D2} \hspace{0em} \chi_{i} \left( \ugamma{i-1}{j} - \ugamma{i}{j} \right) \leq \HcalA{[i-1]} - \HcalA{[i]} \mFor  1 \leq i \leq j . \\
		& \tag*{(D3)}\label{itm:D3} \hspace{0em} \lgamma{i}{j} + \vartheta_{i} \leq \lgamma{i-1}{j}  \mFor  1 \leq i \leq j.
	\end{align}

\begin{proof}[Proof of \autoref{thm:exact-dim-general} assuming \ref{itm:D1}--\ref{itm:D3}]
	Combining \eqref{eq:TransDim}, \ref{itm:D2} and \ref{itm:D3} shows that if $ \ugamma{i}{j} = \lgamma{i}{j} = \cgamma{i}{j}$ for some $ \cgamma{i}{j} \in \bbR $, then
	\begin{equation}\label{eq:InductGamas}
	\lgamma{i-1}{j} \leq \ugamma{i-1}{j} \leq \ugamma{i}{j} + \theta_{i} = \lgamma{i}{j} + \theta_{i} \leq \lgamma{i-1}{j}.
	\end{equation}
	Thus $\lgamma{i-1}{j} = \ugamma{i-1}{j} =  \cgamma{i-1}{j} $ for some $ \cgamma{i-1}{j} \in \bbR $, and so
	\begin{equation} \label{eq:EqGamas}
		\cgamma{i-1}{j} = \cgamma{i}{j} + \vartheta_{i}.
	\end{equation} 
	By \ref{itm:D1}, an induction from $ i = j$ shows that \eqref{eq:InductGamas} and \eqref{eq:EqGamas} hold for all $ 1 \leq i \leq j $. Hence
	\begin{equation}\label{eq:gamma-as-sum-theta}
		\cgamma{i}{j} = \sum_{\ell=i+1}^{j} \vartheta_{\ell} = \sum_{\ell=i+1}^{j} \frac{\HcalA{[\ell-1]} - \HcalA{[\ell]}}{\chi_{\ell}} \mFor 0 \leq i \leq j.
	\end{equation}
	
	Note that for $ \bfQ \aev (\omega, x)$ and $ r > 0 $,
	\begin{equation*}\label{eq:tube-as-ball}
		\betaOXj{\omega}{x}{i} \left(  \PiBall[j]{x}{r} \right) =  (\pi_{[j]}\Pi \betaOXj{\omega}{x}{i})\left( B\left (\pi_{[j]} \Pi(x), r \right)\right). 
	\end{equation*}
	This together with \eqref{eq:gamma-def} and \eqref{eq:gamma-as-sum-theta} shows that for $ \bfQ \aev (\omega, x)$ and $ 0 \leq i \leq j$, the measure $ \pi_{[j]} \Pi \betaOXj{\omega}{x}{i} $ is exact dimensional with
	\begin{equation}\label{eq:LY-[d]}
		\dim \pi_{[j]} \Pi \betaOXj{\omega}{x}{i} =  \sum_{\ell=i+1}^{j} \frac{\HcalA{[\ell-1]} - \HcalA{[\ell]}}{\chi_{\ell}}.
	\end{equation}
	This proves \autoref{thm:exact-dim-general} when $ J = [d]$. For general $ J \subset [d]$, the proof is finished by considering $ \Phi_{J}$ instead.
\end{proof}

It remains to prove \ref{itm:D1}--\ref{itm:D3}.

\begin{proof}[Proof of \ref{itm:D1}]
	Since $ \xi_{j}(x) = \Pi_{j}^{-1}(\Pi_{j}(x)) \subset \PiBall[j]{x}{r} $ for every $ x \in \Lambda^{\bbN} $ and $ r > 0 $, we have
	\begin{equation*}
		1 \geq \betaOXj{\omega}{x}{j}\left( \PiBall[j]{x}{r} \right) \geq \betaOXj{\omega}{x}{j}(\xi_{j}(x)) = 1.  
	\end{equation*}
	Thus $ \ugamma{j}{j} = \lgamma{j}{j} = 0 $ for $\bfQ$-a.e.\ $ (\omega, x)$.
\end{proof}

The proof of \ref{itm:D2} and \ref{itm:D3} relies on the next lemma showing that a set with positive measure has positive density with respect to conditional measures almost surely.

\begin{lemma}\label{lem:pos-CondExp}
	Let $ \omega \in \Omega $ and $ A \in \calB(\Lambda^{\bbN}) $ be with $ \beta^{\omega}(A) > 0 $. Then for $ 0 \leq i \leq j \leq d $ and $ \beta^{\omega } \aev x \in A $,
	\begin{equation*}
		\lim_{r\to 0} \frac{ \beta^{\omega,\xi_{i}}_{x}(A\intxn\PiBall[j]{x}{r}) }{ \beta^{\omega,\xi_{i}}_{x}(\PiBall[j]{x}{r}) } > 0.
	\end{equation*}
\end{lemma}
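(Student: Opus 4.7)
The plan is to push the conditional measure $\betaOXj{\omega}{x}{i}$ forward to $\bbR^j$ via $\Pi_{[j]}$, apply the Lebesgue--Besicovitch differentiation theorem to reduce the ratio to the value of a natural density function, and then observe that this density is strictly positive at almost every point of $A$. Since $\xi_j$ refines $\xi_i$, iterated disintegration gives, for $\beta^{\omega}$-a.e.\ $x$,
\[
\betaOXj{\omega}{x}{i} = \int \betaOXj{\omega}{y}{j} \, d\betaOXj{\omega}{x}{i}(y),
\]
and each $\betaOXj{\omega}{y}{j}$ is supported on the fibre $\xi_j(y) = \Pi_{[j]}^{-1}(\Pi_{[j]}(y))$. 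Setting $\nu_x := \Pi_{[j]} \betaOXj{\omega}{x}{i}$ and defining $g(z) := \betaOXj{\omega}{y}{j}(A)$ for any $y$ with $\Pi_{[j]}(y) = z$ (well defined $\nu_x$-a.e.\ since $\betaOXj{\omega}{y}{j}$ depends only on $\xi_j(y)$), a short computation will yield
\[
\frac{\betaOXj{\omega}{x}{i}(A \intxn \PiBall[j]{x}{r})}{\betaOXj{\omega}{x}{i}(\PiBall[j]{x}{r})} = \frac{1}{\nu_x(B(\Pi_{[j]}(x),r))} \int_{B(\Pi_{[j]}(x),r)} g \, d\nu_x.
\]

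Next, I would invoke the Lebesgue--Besicovitch differentiation theorem for the finite Borel measure $\nu_x$ on $\bbR^j$ and the bounded measurable function $g$: for $\nu_x$-a.e.\ $z$, the right-hand side converges to $g(z)$ as $r \to 0$. Pulling back through $\Pi_{[j]}$ and integrating in $x$ via $\beta^{\omega} = \int \betaOXj{\omega}{x}{i} \, d\beta^{\omega}(x)$, this translates into the statement that, for $\beta^{\omega}$-a.e.\ $y$,
\[
\lim_{r \to 0} \frac{\betaOXj{\omega}{y}{i}(A \intxn \PiBall[j]{y}{r})}{\betaOXj{\omega}{y}{i}(\PiBall[j]{y}{r})} = g(\Pi_{[j]}(y)) = \betaOXj{\omega}{y}{j}(A).
\]

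It then remains to verify that $\betaOXj{\omega}{y}{j}(A) > 0$ for $\beta^{\omega}$-a.e.\ $y \in A$. Setting $B := A \intxn \{y : \betaOXj{\omega}{y}{j}(A) = 0\}$, the second factor is $\xi_j$-saturated, so on any atom $\xi_j(y_0)$ one either has $B \intxn \xi_j(y_0) = \emptyset$ (when $\betaOXj{\omega}{y_0}{j}(A) > 0$) or $B \intxn \xi_j(y_0) \subset A \intxn \xi_j(y_0)$ with zero $\betaOXj{\omega}{y_0}{j}$-mass. Integrating against the $\xi_j$-disintegration of $\beta^{\omega}$ gives $\beta^{\omega}(B) = 0$. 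The only subtle point in the whole argument is exactly here: making sure that the conclusion holds for almost every $y \in A$ rather than merely for almost every $y \in \Lambda^{\bbN}$, since $A$ need not be $\xi_j$-saturated.
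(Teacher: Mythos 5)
Your proof is correct and follows the same underlying route as the paper's: first identify the limit as the conditional probability $\betaOXj{\omega}{x}{j}(A) = \Eof{\beta^{\omega},\indicator{A}\mid\wh{\xi_{j}}}(x)$ (recall $\wh{\xi_{i}}\subset\wh{\xi_{j}}$ since $i\le j$, so $\wh{\xi_{i}}\vee\wh{\xi_{j}}=\wh{\xi_{j}}$), then observe that this conditional expectation is positive $\beta^{\omega}$\aev on $A$. The paper handles both steps by citation (to \cite[Lemma~2.5(1)]{Feng2023a} for the differentiation identity and to \cite[Lemma~3.10]{FengHu2009} for the positivity), whereas you re-derive the first from scratch via transitivity of disintegration and the Lebesgue--Besicovitch differentiation theorem applied to $\nu_x=\pi_{[j]}\Pi\,\betaOXj{\omega}{x}{i}$ on $\pi_{[j]}\euclid$, and re-derive the second by the standard saturation argument. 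The mathematical content is therefore the same; your version is more self-contained at the cost of length. The one step worth spelling out fully when you write this up is the passage from ``for $\nu_x$-a.e.\ $z$'' to ``at $z=\pi_{[j]}\Pi(x)$ for $\beta^{\omega}$-a.e.\ $x$'': since the limsup and liminf of the ratio are measurable in $x$ and $\nu_x$ depends on $x$ only through $\xi_{i}(x)$, the $\nu_x$-null exceptional set integrates to a $\beta^{\omega}$-null set via $\beta^{\omega}=\int\betaOXj{\omega}{x}{i}\,\diff\beta^{\omega}(x)$, exactly as you indicate.
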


\begin{proof}
	Applying \cite[Lemma 2.5(1)]{Feng2023a} with $ \Lambda^{\bbN}, \pi_{[j]}\euclid, \pi_{[j]}, \beta^{\omega}, \calC, \xi_{i} $ in place of $ X, Y, \pi, m, \alpha, \eta $ shows that for $ \beta^{\omega} \aev x $,
	\begin{equation*}
		\lim_{r\to 0} \frac{ \beta^{\omega,\xi_{i}}_{x}(A\intxn\PiBall[j]{x}{r}) }{ \beta^{\omega,\xi_{i}}_{x}(\PiBall[j]{x}{r}) } = \Eof{ \beta^{\omega}, \indicator{A} \mid \wh{\xi_{i}} \vee \wh{\xi_{j}} }(x).
	\end{equation*}
	The proof is completed by an almost trivial property of conditional expectation that, for a probability space  $(X, \calB, \theta )$ and a sub-$\sigma$-algebra $\calF$ of $ \calB $, letting $ A \in \calB $ be with $ \theta(A) > 0 $, we have
	\begin{equation*}
		\Eof{\theta, \indicator{A} \mid \calF }(x) > 0 \mFor \theta \aev x \in A.
	\end{equation*}
	(See e.g.\@ \cite[Lemma 3.10]{FengHu2009} for a proof.)
\end{proof}

Now we are ready to prove \ref{itm:D2} and \ref{itm:D3}.

\begin{proof}[Proof of \ref{itm:D2}]
	
	For $ 0 \leq i \leq j $, write $ h_{i} := \HcalA{[i]}$ for short. Suppose on the contrary that \ref{itm:D2} is not true.  There exists $ 1 \leq i \leq j $ and $ U \subset \Omega \times \Lambda^{\bbN} $ with $ \bfQ(U) > 0 $ such that for $ (\omega, x) \in U $,
	\begin{equation} \label{eq:D2-false}
		\frac{ h_{i-1} - h_{i} }{\chi_{i}} < \ugamma{i-1}{j} - \ugamma{i}{j}.
	\end{equation}
	
	It follows from \eqref{eq:D2-false} and \eqref{eq:gamma-def} that $U$ is a subset of the following set, 
	\begin{align*}
		\Union_{\alpha \in \bbQ \intxn (0,\infty)}  \Union_{\ol{\gamma}_{i-1}, \ol{\gamma}_{i} \in \bbQ } \Intxn_{\varepsilon > 0 } \bigg\{(\omega,x) \colon & \frac{ h_{i-1} - h_{i} }{\chi_{i}} < \ol{\gamma}_{i-1} - \ol{\gamma}_{i} - \alpha , \\ &   \ugamma{i-1}{j} > \ol{\gamma}_{i-1} - \varepsilon/2, \,  \ugamma{i}{j} < \ol{\gamma}_{i} + \varepsilon/2   \bigg \}.
	\end{align*}
	Then there exist $ \alpha > 0 $ and real numbers $ \ol{\gamma}_{i-1}, \ol{\gamma_{i}} $ such that
	\begin{equation}\label{eq:D2-const-rel}
		\frac{ h_{i-1} - h_{i} }{\chi_{i}} < \ol{\gamma}_{i-1} - \ol{\gamma}_{i} - \alpha,
	\end{equation}
	and for $  \varepsilon > 0 $ there exists $ U_{\varepsilon} \subset U $ with $ \bfQ(U_{\varepsilon}) > 0 $ so that for $ x \in U_{\varepsilon}$, 
	\begin{equation}
		\ugamma{i-1}{j} > \ol{\gamma}_{i-1} - \varepsilon/2, \qquad \ugamma{i}{j} < \ol{\gamma}_{i} + \varepsilon/2.
	\end{equation}
	Fix $ \varepsilon \in (0, \chi_{i}/3)$. There exists $ n_{0} \colon U_{\varepsilon} \to \bbN $ such that for $ \bfQ $-a.e.\ $ (\omega, x) \in U_{\varepsilon}$ and $ n > n_{0}(x)$,
	\begin{enumerate}[(1)]
		\item\label{itm:D2-(1)} $ \betaOXj{\omega}{x}{i}\left(\PiBall[j]{x}{ \exp(-n(N\chi_{i}-2\varepsilon)) }\right)  > \exp\left( -n(N\chi_{i}-2\varepsilon)(\ol{\gamma}_{i} + \varepsilon)\right)   $; \quad \by{\eqref{eq:gamma-def}}
		
		\item \label{itm:D2-(2)} $ \betaOXj{\omega}{x}{i}\left( \calC_{0}^{n-1}(x) \right)  < \exp\left( - n( Nh_{i} - \varepsilon)\right)  $; \quad \by{\autoref{lem:Cn-S-M-B}}
		
		\item \label{itm:D2-(3)} $ \betaOXj{\omega}{x}{i-1}\left( \calC^{n-1}_{0}(x) \right)  > \exp\left(-n( Nh_{i-1} + \varepsilon)\right) $; \quad \by{\autoref{lem:Cn-S-M-B}}
		
		\item \label{itm:D2-(4)} $ \xi_{i-1}(x) \intxn \calC^{n-1}_{0}(x) \subset \PiBall[j]{x}{ \exp(-n(N\chi_{i}-2\varepsilon)) }  $. \quad \by{\autoref{lem:set-relations}\ref{itm:SliceElipse}}.
	\end{enumerate}
	
	Take $ N_{0} $ such that
	\begin{equation*}
		\Delta := \{ x\in U_{\varepsilon} \colon n_{0}(x) \leq N_{0} \}
	\end{equation*}
	satisfies $ \bfQ(\Delta) > 0 $. By \eqref{eq:def-bfQ} there exists $ \wt{\Omega} \in \Omega $ with $ \bfP(\wt{\Omega}) > 0 $ such that for each $ \omega \in \wt{\Omega}$ there exists $ X^{\omega} \subset \Lambda^{\bbN} $ satisfying $ \{\omega\} \times X^{\omega} \subset \Delta $ and $  \beta^{\omega}(X^{\omega}) > 0 $. \autoref{lem:pos-CondExp} implies that for some $ c > 0 $ and each $ \omega \in \wt{\Omega}$, there exists $ Y^{\omega} \subset X^{\omega} $ with $ \beta^{\omega} (Y^{\omega}) > 0 $ such that for $  x \in Y^{\omega} $ there exists $ n = n(\omega,x) \geq N_{0} $ satisfying,
	\begin{enumerate}[(1)]
		\setcounter{enumi}{4}
		\item \label{itm:D2-(5)} $ \betaOXj{\omega}{x}{i}\left( L \intxn X^{\omega} \right) > c \betaOXj{\omega}{x}{i}\left( L \right)  $, where $ L = \PiBall[j]{x}{\exp( - n(N\chi_{i} -  2\varepsilon) )}$;
		
		\item \label{itm:D2-(6)} $   \betaOXj{\omega}{x}{i-1}\left(\PiBall[j]{x}{ 2\exp(-n(N\chi_{i}-2\varepsilon)) }\right) < \exp\left(-n(N\chi_{i}-2\varepsilon)(\ol{\gamma}_{i-1} - \varepsilon)\right)    $; (by \eqref{eq:gamma-def})
		
		\item \label{itm:D2-(7)} $ \log(1/c) <  n\varepsilon$.
	\end{enumerate}
	
	Take $ \omega \in \wt{\Omega} $ and $ x \in Y^{\omega }$ such that \ref{itm:D2-(1)}--\ref{itm:D2-(7)} are satisfied with $ n = n(\omega, x)$.  By \ref{itm:D2-(5)} and \ref{itm:D2-(1)},
	\begin{equation*}
		\betaOXj{\omega}{x}{i} \left( L \intxn X^{\omega} \right) \geq c \betaOXj{\omega}{x}{i}(L)\geq c  \exp( -  n(N\chi_{i}\ol{\gamma}_{i} + O(\varepsilon) ) ).
	\end{equation*} 
	For each $ I \in \calC^{n-1}_{0}$ with $I \intxn \xi_{i}(x) \intxn L \intxn X^{\omega} \neq \emptyset$, there is $ y \in X^{\omega} $ such that $ I = \calC^{n-1}_{0}(y)$ and $ \xi_{i}(y) = \xi_{i}(x) $. Thus, \ref{itm:D2-(2)} implies
	\begin{equation*}
		\betaOXj{\omega}{x}{i}\left( I \right) = \betaOXj{\omega}{y}{i}\left( \calC^{n-1}_{0}(y) \right)  < \exp(-n(Nh_{i} - \varepsilon )).
	\end{equation*}
	Hence, by $ \xi_{i}(x) \subset \xi_{i-1}(x) $, combining the previous two equations gives
	\begin{align*}
		\# \bigg\{I & \in \calC_{0}^{n-1}  \colon  I \intxn \xi_{i-1}(x) \intxn L \intxn X^{\omega} \neq \emptyset \bigg\} \\ & \geq \# \bigg\{I \in \calC_{0}^{n-1} \colon I \intxn \xi_{i}(x) \intxn L \intxn X^{\omega} \neq \emptyset \bigg\} \\
		& \geq c \exp\left(n( N(h_{i}-\chi_{i}\ol{\gamma}_{i})-O(\varepsilon))\right).
	\end{align*}
	
	On the other hand, for each $ I \in \calC^{n-1}_{0} $ with $ I \intxn \xi_{i-1}(x) \intxn L \intxn X^{\omega} \neq \emptyset $, there exists $ z \in I \intxn \xi_{i-1}(x)  \intxn L \intxn X^{\omega} $. Thus,
	\begin{align*}
		\xi_{i-1}(x) \intxn I & = \xi_{i-1}(z) \intxn \calC^{n-1}_{0}(z) \\ & \subset \PiBall[j]{z}{\exp\left(-n(N\chi_{i}-2\varepsilon)\right) }  & \text{(by \ref{itm:D2-(4)})} \\
		& \subset \PiBall[j]{x}{2 \exp\left(-n(N\chi_{i}-2\varepsilon)\right) }. & \text{(by $ z \in L $)}
	\end{align*}
	It follows from \ref{itm:D2-(3)} that
	\begin{equation*}
		\betaOXj{\omega}{x}{i-1}(I) = \betaOXj{\omega}{z}{i-1}\left(\calC^{n-1}_{0}(z)\right) \geq \exp( -n(Nh_{i-1}+\varepsilon)).
	\end{equation*}
	Hence
	\begin{align*}
		\betaOXj{\omega}{x}{i-1} & \left( \PiBall[j]{x}{2\exp(-n(N\chi_{i}-2\varepsilon))} \right) \\ & \geq  \# \bigg\{I \in \calC_{0}^{n-1}  \colon  I \intxn \xi_{i-1}(x) \intxn L \intxn X^{\omega} \neq \emptyset \bigg\} \exp(-n(Nh_{i-1}+\varepsilon)) \\
		& \geq \exp\left(\log c + n\left (N(h_{i}-h_{i-1}-\chi_{i}\ol{\gamma_{i}}) - O(\varepsilon)\right )  \right).
	\end{align*}
	From this, \ref{itm:D2-(6)} and \ref{itm:D2-(7)} it follows that
	\begin{equation*}
		- N\chi_{i}\ol{\gamma}_{i-1} + O(\varepsilon) \geq N(h_{i}-h_{i-1}-\chi_{i}\ol{\gamma}_{i}) - O(\varepsilon).
	\end{equation*}
	Letting $ \varepsilon \to 0 $ and dividing by $ N $ give $ h_{i-1} - h_{i} \geq \chi_{i} (\ol{\gamma}_{i-1} - \ol{\gamma}_{i})$, a contradiction to \eqref{eq:D2-const-rel}.
\end{proof}

\begin{proof}[Proof of \ref{itm:D3}]
	
	Suppose on the contrary that \ref{itm:D3} is not true. Then there exists $ 1 \leq i \leq j $ and $ U \subset \Omega \times \Lambda^{\bbN} $ with $ \bfQ(U) > 0 $ such that for $ (\omega, x) \in U $,
	\begin{equation*}\label{eq:D3-false} 
		\lgamma{i}{j} + \vartheta_{i} > \lgamma{i-1}{j}.
	\end{equation*}
	Then there exist $ \alpha > 0 $ and real numbers $ \ul{\gamma}_{i-1}, \ul{\gamma}_{i-1} $ such that
	\begin{equation}\label{eq:D3-false-consts}
		\ul{\gamma}_{i} + \vartheta_{i} > \ul{\gamma}_{i-1} + \alpha,
	\end{equation} and for every $ \varepsilon > 0 $, there exists $ U_{\varepsilon} \subset U $ with $ \bfQ(U_{\varepsilon}) > 0 $ so that for $ (\omega, x) \in U_{\varepsilon}$,
	\begin{equation}\label{eq:D3-lgammas}
		\Abs{ \lgamma{i-1}{j} - \ul{\gamma}_{i-1} } < \varepsilon / 2 \mAnd \Abs{ \lgamma{i}{j} - \ul{\gamma}_{i} } < \varepsilon / 2.
	\end{equation}
	
	Let $ 0<  \varepsilon < \alpha / 4 $. By Egorov's theorem, there exists $ \Delta \subset U_{\varepsilon} $ with $ \bfQ(\Delta) > 0 $  and $ N_{0} \in \bbN$ such that for $ (\omega,x) \in \Delta $ and $ n > N_{0}$,
	\begin{equation}\label{eq:D3-i-UB}
		\betaOXj{\omega}{x}{i}\left( \PiBall[j]{x}{2\exp(-n)} \right) \leq \exp\left(-n\left (\ul{\gamma}_{i} - \varepsilon \right )\right).
	\end{equation}
	By \eqref{eq:def-bfQ}, there exists $ \wt{\Omega} \in \Omega $ with $ \bfP(\wt{\Omega}) > 0 $ so that for each $ \omega \in \wt{\Omega}$ there exists $ X^{\omega} \subset \Lambda^{\bbN} $ satisfying $ \{\omega\} \times X^{\omega} \subset \Delta $ and $  \beta^{\omega}(X^{\omega}) > 0 $. \autoref{lem:pos-CondExp} implies that for some $ c > 0 $ and each $ \omega \in \wt{\Omega}$, there exists $ Y^{\omega} \subset X^{\omega} $ with $ \beta^{\omega} (Y^{\omega}) > 0 $ such that for $  x \in Y^{\omega} $ there exists $ N_{1} \geq N_{0} $ so that for $ \omega \in \wt{\Omega} $, $ x \in Y^{\omega} $ and $ n \geq N_{1}$,
	\begin{equation}
		\betaOXj{\omega}{x}{i-1}\left( X^{\omega} \intxn \PiBall[j]{x}{\exp(-n)} \right) > c \betaOXj{\omega}{x}{i-1}\left( \PiBall[j]{x}{\exp(-n)} \right).  
	\end{equation}
	Then
	\begin{equation}\label{eq:D3-tower}
		\begin{aligned}
			& \hspace{-2em}\betaOXj{\omega}{x}{i-1} \left( \PiBall[j]{x}{\exp(-n)}\right)  \\ & \leq c^{-1}  \betaOXj{\omega}{x}{i-1}\left( X^{\omega} \intxn \PiBall[j]{x}{\exp(-n)} \right) \\
			& \leq c^{-1} \int_{\Lambda^{\bbN}} \betaOXj{\omega}{y}{i}\left( X^{\omega} \intxn \PiBall[j]{x}{\exp(-n)} \right) \, \diff \betaOXj{\omega}{x}{i-1}(y) & \text{(by $\xi_{i-1} \prec \xi_{i} $)}	\\
			& \leq c^{-1} \int_{\PiBall[i]{x}{\exp(-n)}} \betaOXj{\omega}{y}{i}\left( X^{\omega} \intxn \PiBall[j]{x}{\exp(-n)} \right) \, \diff \betaOXj{\omega}{x}{i-1}(y),
		\end{aligned}
	\end{equation}
	where the last inequality holds since combining $ y \in \xi_{i-1}(x)$ and $ \xi_{i}(y) \intxn X^{\omega} \intxn \PiBall[j]{x}{\exp(-n)} \neq \emptyset $ implies $ y \in \PiBall[i]{x}{\exp(-n)} $. To see that, take $ z \in \xi_{i}(y) \intxn X^{\omega} \intxn \PiBall[j]{x}{\exp(-n)}$. Since $ \Pi_{i}(z) = \Pi_{i}(y) $ and $ \pi_{[i]}\Pi_{j} = \Pi_{i} $ by $ i \leq j $, we have
	\begin{equation*}
		\norm{ \Pi_{i}(y) - \Pi_{i}(x) } = \norm{ \Pi_{i}(z)) - \Pi_{i}(x) } \leq \norm{\Pi_{j}(z) - \Pi_{j}(x)} \leq \exp(-n),
	\end{equation*}
	which implies $ y \in \PiBall[i]{x}{\exp(-n)}$. Moreover, it follows from $ z \in \PiBall[j]{x}{\exp(-n)}$ that
	\begin{equation*}
		\PiBall[j]{x}{\exp(-n)} \subset \PiBall[j]{z}{2\exp(-n)}.
	\end{equation*}
	 Hence,
	\begin{align*}
		\betaOXj{\omega}{y}{i}\left( X^{\omega} \intxn \PiBall[j]{x}{\exp(-n)} \right) & =  \betaOXj{\omega}{z}{i}\left( X^{\omega} \intxn \PiBall[j]{x}{\exp(-n)} \right) & \text{(by $ \xi_{i}(z) = \xi_{i}(y)$)} \\ & \leq \betaOXj{\omega}{z}{i}\left( \PiBall[j]{z}{2\exp(-n)} \right) \\
		& \leq \exp\left(-n(\ul{\gamma}_{i}-\varepsilon)\right). & \text{(by \eqref{eq:D3-i-UB} and $ z \in X^{\omega}$)}
	\end{align*}
	Combining this with \eqref{eq:D3-tower} shows that for $ \omega \in \wt{\Omega}$ and $ x \in Y^{\omega}$,
	\begin{equation*}
		\betaOXj{\omega}{x}{i-1}\left(\PiBall[j]{x}{\exp(-n)}\right)\leq \exp\left(-\log c - n(\ul{\gamma}_{i} -\varepsilon ) \right) \betaOXj{\omega}{x}{i-1}\left(\PiBall[i]{x}{\exp(-n)}\right).
	\end{equation*}
	By taking logarithm, dividing by $ n $ and letting $ n \to\infty$, we have $ \lgamma{i-1}{j} \geq \ul{\gamma}_{i} - \varepsilon + \vartheta_{i} $. Then applying \eqref{eq:D3-lgammas} shows
	\begin{equation*}
		\ul{\gamma}_{i-1} \geq \ul{\gamma}_{i} + \vartheta_{i} - 2 \varepsilon.
	\end{equation*}
	Letting $ \varepsilon \to 0 $ gives $ \ul{\gamma}_{i-1} \geq \ul{\gamma}_{i} + \vartheta_{i} $, a contradiction to \eqref{eq:D3-false-consts}.
\end{proof}

\section{The disintegrations with respect to linear parts}

\label{sec:dis-linear}

In this and all the subsequent sections, we fix $ N \in \bbN $ and let $ \Gamma $ be a partition of $ \Lambda^{\bbN} $ so that for $ x , y \in \Lambda^{\bbN}$, $ x|N = y|N $ implies $ \Gamma(x)= \Gamma(y)$, which in turn implies $ A_{\varphi_{x|N}} = A_{\varphi_{y|N}}$. Specifically,
\begin{equation}\label{eq:part-by-linear}
	L \prec \Gamma \prec \{ [I] \colon I \in \Lambda^{N}\},
\end{equation}
where $ L $ is the partition of $ \Lambda^{\bbN} $ defined by $ L(x) = L(y)$ if and only if $ A_{\varphi_{x|N}} = A_{\varphi_{y|N}}$ for $ x,y \in \Lambda^{\bbN}$. We set $ T = \sigma^{N} $ and $ \calA = \vee_{i=0}^{\infty} T^{-i} \Gamma$. Recall the definitions of $ \Omega, \bfP, \beta^{\omega}, \mu^{\omega} $ from \autoref{subsec:some-disint}. In this section we introduce some properties of $ \calA $ and the associated random measures.

We begin with some notations. For $ \omega \in \Omega $, where $ \omega = \calA(x)$ with $ x \in \Lambda^{\bbN} $, and $ n \geq 0 $, define 
\begin{equation*}
 	A^{\omega|n} := A_{\varphi_{x|nN}} \mAnd  A^{-\omega|n} := (A^{\omega|n})^{-1}.
\end{equation*}
This is well defined since, by \eqref{eq:part-by-linear}, it is independent of the choice of $ x $.
For $ 1 \leq j \leq d $, let the $j$-th entry on the diagonal of $ A^{\omega|n}$ be denoted by $ A^{\omega|n}_{j}$. Define
\begin{equation*}
	\lambda_{j}^{\omega|n} := \Abs{A^{\omega|n}_{j} } \mAnd \chi_{j}^{\omega|n} := - \log \lambda_{j}^{\omega|n}.
\end{equation*}
Let $ r_{\min} := \min \{ \abs{r_{i,j}} \colon 1 \leq i, j\leq d \} $ and $ r_{\max} := \max \{ \abs{r_{i,j}} \colon 1 \leq i, j \leq d \} $. Then
\begin{equation}\label{eq:MinMaxSize}
	r_{\min}^{Nn} \leq \lambda^{\omega|n}_{j} \leq r_{\max}^{Nn} \mFor 1 \leq j \leq d.
\end{equation}
The following lemma is a direct consequence of \autoref{lem:lambda-speed}, \eqref{eq:def-bfQ} and Egorov's theorem.
\begin{lemma}\label{lem:Aj-rate}
	For $ \eta \in (0, 1)$ there exists $ \ol{\Omega} \subset \Omega $ with $ \bfP(\ol{\Omega}) > 1 - \eta $ so that for $ \omega \in \ol{\Omega}$ and $ n \in \bbN $ with $ \eta^{-1} \ll n $, we have $ \Abs{ \chi^{\omega|n}_{j} - n N\chi_{j} } < n\eta $ for $ 1 \leq j \leq d $. 
\end{lemma}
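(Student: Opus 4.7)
The plan is essentially to transfer the $\bfQ$-a.e.\ pointwise convergence of \autoref{lem:lambda-speed} to a $\bfP$-a.e.\ statement on $\Omega$, then invoke Egorov's theorem.

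First I would observe that $\chi_j^{\omega|n}$ depends only on $\omega$, not on a choice of $x \in \calA^{-1}(\omega)$. Indeed, by \eqref{eq:part-by-linear}, $\Gamma$ refines the partition $L$ determined by the linear parts of $\varphi_{x|N}$, so if $\calA(x) = \omega$, then $A^{\omega|n} = A_{\varphi_{x|nN}}$ is well-defined, and consequently $\lambda_j^{\omega|n} = \lambda_j^{x|nN}$ and $\chi_j^{\omega|n} = -\log \lambda_j^{x|nN}$ for $1 \leq j \leq d$. Applying \autoref{lem:lambda-speed}, for $\bfQ$-a.e.\ $(\omega, x)$ and each $j$,
\begin{equation*}
	\lim_{n\to\infty} \frac{1}{n}\chi_j^{\omega|n} = \lim_{n\to\infty} -\frac{1}{n}\log \lambda_j^{x|nN} = N\chi_j.
\end{equation*}

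Next I would use \eqref{eq:def-bfQ}, which expresses $\bfQ$ as an integral of $\beta^{\omega}$ against $\bfP$. Since the quantity $\chi_j^{\omega|n}/n - N\chi_j$ is constant in $x$, the $\bfQ$-a.e.\ convergence above is equivalent to $\bfP$-a.e.\ convergence on $\Omega$ (the conditional probability $\beta^{\omega}$ assigns full mass to any $\omega$-measurable set of full conditional measure, which here is all of $\Lambda^{\bbN}$). Thus for each $j \in \{1,\ldots,d\}$, there is a $\bfP$-conull set on which $\chi_j^{\omega|n}/n \to N\chi_j$.

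Finally I would apply Egorov's theorem on the probability space $(\Omega, \bfP)$: for each $j$ and each $\eta \in (0,1)$, there exists $\ol{\Omega}_j \subset \Omega$ with $\bfP(\ol{\Omega}_j) > 1 - \eta/d$ such that the above convergence is uniform on $\ol{\Omega}_j$. Setting $\ol{\Omega} := \bigcap_{j=1}^{d} \ol{\Omega}_j$, we obtain $\bfP(\ol{\Omega}) > 1 - \eta$ and the existence of some $n_0(\eta)$ such that $|\chi_j^{\omega|n}/n - N\chi_j| < \eta$ for all $\omega \in \ol{\Omega}$, $n \geq n_0(\eta)$ and $1 \leq j \leq d$; multiplying by $n$ yields the claimed bound. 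The assumption $\eta^{-1} \ll n$ (interpreted via the $\ll$ convention) precisely absorbs the dependence $n \geq n_0(\eta)$, so no genuine obstacle arises — the proof is routine, with the only mild subtlety being the remark that $\chi_j^{\omega|n}$ is $\omega$-measurable, which is why the transfer from $\bfQ$ to $\bfP$ is immediate.
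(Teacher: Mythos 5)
Your proof is correct and matches the paper's intended argument exactly: the paper labels this as a direct consequence of \autoref{lem:lambda-speed}, \eqref{eq:def-bfQ} and Egorov's theorem, which is precisely the chain you carry out. The one detail worth stating explicitly, which you do, is that $\chi_j^{\omega|n}$ is $\omega$-measurable so the $\bfQ$-a.e.\ statement descends to a $\bfP$-a.e.\ statement on $\Omega$.
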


The random measure $ \mu^{\omega}$ exhibits a convolution structure. For $ \omega \in \Omega $ and $ n \geq 0 $, define
\begin{equation*}
	\nu^{\omega}_{n} =  \sum_{u\in \Lambda^{nN}} \beta^{\omega}([u]) \delta_{\varphi_{u}(0)}.
\end{equation*}
Since $ A_{\varphi_{u}} = A^{\omega|n} $ for $ u \in \Lambda^{nN}$ with $ \beta^{\omega}([u]) \neq  0 $, it follows from \eqref{eq:dyn-selfaff-general} that
\begin{equation}\label{eq:dyn-selfaff}
	\mu^{\omega} = \nu^{\omega}_{n} * A^{\omega|n} \mu^{T^{n}\omega},
\end{equation}
where $ A \theta $ denotes the pushforward of a measure $ \theta $ by a matrix $ A $.

\subsection{Nonconformal partition} Fix $ \omega \in \Omega $. Following \cite{Rapaport2023}, we define the nonconformal partitions used to analyze the entropy growth of $ \mu^{\omega} $. For $ n \in \bbZ $, let $ \calD_{n}^{d}$ be the $ n $-th level dyadic partition of $\euclid$, that is,
\begin{equation*}
	\calD_{n}^{d} = \left\{ \frac{k}{2^{n}} + \left[0, \frac{1}{2^{n}}\right)^{d}  \colon k \in \bbZ^{d} \right\}.
\end{equation*}
For $ t \in \bbR $, define $ \calD_{t}^{d} = \calD_{\flr{t}}^{d}$. We omit the superscript $ d $ when the ambient space is clear from the context. For $ \omega \in \Omega $ and $ n \geq 0 $, define
\begin{equation} \label{eq:def-E-omega}
	\calE^{\omega}_{n} := A^{\omega|n} \calD_{0}^{d} = \{ A^{\omega|n} D \colon D \in \calD_{0}^{d}\} = \bigtimes_{j=1}^{d} \lambda^{\omega|n}_{j}\calD_{0}^{1}.
\end{equation}
It follows that
\begin{equation}\label{eq:Aomega-shift-part}
	A^{\omega|b}\pi_{J}^{-1} \calE^{T^{b}\omega}_{n} = \pi_{J}^{-1} \calE^{\omega}_{n+b} \mFor b \geq 0 \text{ and } J \subset [d],
\end{equation}
and
\begin{equation}\label{eq:E-omega-D-chi}
	\calE^{\omega}_{n} \text{ and } \bigtimes_{j=1}^{d} \calD_{\chi^{\omega|n}_{j}}^{1} \text{ are $\bigO{1}$-commensurable.}
\end{equation}

For $ y \in \euclid $, we define the translation map $ T_{y}(x) = x + y, \, x\in \euclid$. It is readily checked that
\begin{equation}\label{eq:almost-transinv}
	\pi_{J}\calE^{\omega}_{n} \text{ and } T_{y}^{-1} \pi_{J}^{-1} \calE^{\omega}_{n} \text{ are $ \bigO{1}$-commensurable for } J \subset [d] \text{ and } y \in \euclid.
\end{equation}

Next, suppose $ f, g $ are two maps from a set $ X $ to $ \euclid $ such that for some $ C > 1 $,
\begin{equation*}
	\Abs{ \pi_{j}(f(x)-g(x)) } \leq C \lambda^{\omega|n}_{j} \mFor 1 \leq j \leq d \text{ and } x \in X.
\end{equation*}
Then
\begin{equation}\label{eq:fg-comm}
	f^{-1} \pi_{J}^{-1} \calE^{\omega}_{n} \text{ and } g^{-1} \pi_{J}^{-1}\calE^{\omega}_{n} \text{ are $ \bigO{C^{d}}$-commensurable for } J \subset [d].
\end{equation}

Combining \eqref{eq:dyn-selfaff}, \autoref{lem:list-ests}\ref{itm:concav-aconvex} and \eqref{eq:almost-transinv}, we obtain the following inequality for $ m, n \geq 0 $, 
\begin{equation} \label{eq:En-LB}
	\Hof{\mu^{\omega}, \calE^{\omega}_{m+n} \mid \calE^{\omega}_{n}} \geq \Hof{\mu^{T^{n}\omega}, \calE^{T^{n}\omega}_{m} } - \bigO{1}.
\end{equation}
This estimate is the major advantage of considering $ \mu^{\omega} $ and $ \calE^{\omega}_{n} $.



\subsection{Component measure}

Fix $ \omega \in \Omega $. We introduce the component measures along $ \calE^{\omega}_{n}$. Given $ \theta \in \calM(\euclid)$ and $ n \geq 0 $, let $ \theta^{\omega}_{x,n}$ be a measure-valued random element such that $ \theta^{\omega}_{x,n} = \theta_{\calE^{\omega}_{n}(x)}$ with probability $ \theta(\calE^{\medspace}_{n}(x))$ for $ x \in \euclid $. Thus, for a event $ \calU \subset \calM(\euclid)$,
\begin{equation*}
	\bbPof{ \theta^{\omega}_{x,n} \in \calU } =  \theta \left\{x \in \euclid \colon \theta_{\calE^{\omega}_{n}(x)} \in \calU \right\}.
\end{equation*}
We call $ \theta^{\omega}_{x,n}$ an \textit{$ n $-th level component} of $ \theta $ given $ \omega \in \Omega $ and $ x \in \euclid[d]$. For $ x \in \euclid $ with $ \theta(\calE^{\omega}_{n}(x)) > 0 $, we write $ \theta^{\omega}_{x,n}$ in place of $ \theta_{\calE^{\omega}_{n}(x)} $ even when no randomness is involved. Thus, for $ n \geq  0 $,
\begin{equation}\label{eq:theta-E-decomp}
	\theta = \int \theta^{\omega}_{x,n} \, \diff \theta(x).
\end{equation}
We can also choose a random scale $ n $ uniformly from a range. For example, for a finite set  $ I \subset \bbN $, define
\begin{equation*}
	\bbPof[i\in I]{ \theta^{\omega}_{x,i} \in \calU } := \frac{1}{\abs{I}} \sum_{i\in I} \bbPof{ \theta^{\omega}_{x,i} \in \calU }.
\end{equation*}
Let $ \bbE $ and $ \bbE_{i\in I}$ denote the corresponding expectation with respect to $ \bbP $ and $ \bbP_{i \in I} $. Thus, for each bounded measurable function $ f \colon \calM(\euclid) \to \bbR $ and $ n \geq 0 $,
\begin{equation*}
	\bbEof[i=n]{f(\theta^{\omega}_{x,i})} = \int f(\theta_{\calE^{\omega}_{n}(x)}) \, \diff \theta(x).
\end{equation*} 
In particular, for $ k, n \geq 0 $,
\begin{equation}\label{eq:entropy-int-comp}
	\Hof{ \theta, \calE^{\omega}_{n+k} \mid \calE^{\omega}_{n} } = \bbEof[i = n]{ \Hof{ \theta^{\omega}_{x,i}, \calE^{\omega}_{n+k} } }.
\end{equation}

We finish this section with the a useful lemma relating the entropies of a measure and its components. The proof is almost identical to \cite[Lemma 3.4]{Hochman2014} and is therefore omitted.

\begin{lemma} \label{lem:e-telescope}
	Let $ \theta \in \calM_{c}(\euclid[d])$ with $ \diam (\supp \theta) \leq R $ for some $ R \geq 1 $. Then for all $ \omega \in \Omega $ and every $ 1 \leq m \leq n$, 
	\begin{align*}
		\frac{1}{n}  \Hof{ \theta, \calE_{n}^{\omega} } &  = \bbEof[1 \leq q \leq n]{ \frac{1}{m} \Hof{ \theta_{x,q}^{\omega}, \calE_{q+m}^{\omega} } } + \bigO{ \frac{m + \log R}{n} } \\ & = \bbEof[1 \leq q \leq n]{ \frac{1}{m} \Hof{\theta, \calE^{\omega}_{q+m} \mid \calE^{\omega}_{q} } } + \bigO{ \frac{m + \log R}{n} } .
	\end{align*}
\end{lemma}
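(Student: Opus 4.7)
The plan is to adapt the standard telescoping-plus-averaging argument from \cite[Lemma 3.4]{Hochman2014} to the anisotropic partitions $\calE^{\omega}_n$. First, since \eqref{eq:entropy-int-comp} gives $\Hof{\theta, \calE^{\omega}_{q+m} \mid \calE^{\omega}_q} = \bbEof[i=q]{\Hof{\theta^{\omega}_{x,i}, \calE^{\omega}_{q+m}}}$ for each $q$, the two right-hand sides in the statement are literally equal, so I only need to establish the second equality.

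For each offset $j \in \{0, 1, \ldots, m-1\}$, set $K_j := \lfloor (n-j)/m \rfloor$. Observe that $\calE^{\omega}_{n+1} \prec \calE^{\omega}_n$ for all $n$, since by \eqref{eq:def-E-omega} each factor $\lambda^{\omega|n}_\ell \calD^1_0$ refines as $n$ grows. The chain rule (\autoref{lem:list-identities}\ref{itm:ChainRule-H}) then gives the telescoping identity
\begin{equation*}
\Hof{\theta, \calE^{\omega}_{j + K_j m}} = \Hof{\theta, \calE^{\omega}_j} + \sum_{k=0}^{K_j - 1} \Hof{\theta, \calE^{\omega}_{j + (k+1)m} \mid \calE^{\omega}_{j + km}}.
\end{equation*}
To compare this with $\Hof{\theta, \calE^{\omega}_n}$, I use $\diam(\supp \theta) \leq R$ together with \eqref{eq:MinMaxSize}: any $\calE^{\omega}_k$ with $0 \leq k \leq m$ partitions $\supp \theta$ into at most $O\bigl((R/r_{\min}^{Nm})^d\bigr)$ nonempty cells, so $\Hof{\theta, \calE^{\omega}_k} = O(m + \log R)$, where the implied constants depend only on $d$, $N$, $r_{\min}$. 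By the same reasoning $|\Hof{\theta, \calE^{\omega}_n} - \Hof{\theta, \calE^{\omega}_{j+K_j m}}| = O(m + \log R)$ since the two scales differ by at most $m$. Averaging over $j$ yields
\begin{equation*}
m \Hof{\theta, \calE^{\omega}_n} = \sum_{j=0}^{m-1} \sum_{k=0}^{K_j - 1} \Hof{\theta, \calE^{\omega}_{j + (k+1)m} \mid \calE^{\omega}_{j + km}} + O(m(m + \log R)).
\end{equation*}

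Finally, reindexing $q = j + km$, each $q \in \{0, 1, \ldots, n-1\}$ appears at most once in the double sum and all but $O(m)$ of them appear exactly once. Using $\Hof{\theta, \calE^{\omega}_{q+m} \mid \calE^{\omega}_q} = O(m)$ (as $\calE^{\omega}_{q+m}$ subdivides each cell of $\calE^{\omega}_q$ into at most $O(2^{dm})$ pieces by \eqref{eq:MinMaxSize}) to absorb the missing terms, we obtain
\begin{equation*}
m \Hof{\theta, \calE^{\omega}_n} = \sum_{q=0}^{n-1} \Hof{\theta, \calE^{\omega}_{q+m} \mid \calE^{\omega}_q} + O(m(m + \log R)).
\end{equation*}
Dividing by $mn$ and recognizing the sum as $n \cdot \bbEof[1 \leq q \leq n]{\frac{1}{m} \Hof{\theta, \calE^{\omega}_{q+m} \mid \calE^{\omega}_q}}$ yields the second equality, and hence the lemma.

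There is no real obstacle in this proof; it is essentially bookkeeping. The only thing to be mindful of is that the implicit constants in the $O(\cdot)$ notation must be uniform in $\omega$, which is guaranteed by the uniform bound \eqref{eq:MinMaxSize}, and that the $\log R$ dependence enters only through the coarsest scales via the trivial entropy bound \autoref{lem:list-ests}\ref{itm:est-triv-UB}.
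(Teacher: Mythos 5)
Your plan follows the right template (telescoping plus averaging over residues, \`a la Hochman), and the entropy bounds at coarse scales and the identification of the two right-hand sides via \eqref{eq:entropy-int-comp} are correct. However, the proof has a genuine gap at the claim that $\calE^{\omega}_{n+1}$ refines $\calE^{\omega}_n$. This is false. By \eqref{eq:def-E-omega}, $\calE^{\omega}_n = \bigtimes_{j} \lambda^{\omega|n}_j \calD^1_0$, and $\lambda^{\omega|n}_j \calD^1_0$ refines $\lambda^{\omega|n+1}_j \calD^1_0$ only when $\lambda^{\omega|n}_j/\lambda^{\omega|n+1}_j$ is an integer, which fails for a general diagonal contraction. (For example, with ratio $0.7$ the interval $[0.7,1.4)$ sits in no single cell of $\calD^1_0$.) Consequently the telescoping identity you wrote, $\Hof{\theta,\calE^{\omega}_{j+K_jm}} = \Hof{\theta,\calE^{\omega}_j} + \sum_{k}\Hof{\theta,\calE^{\omega}_{j+(k+1)m}\mid\calE^{\omega}_{j+km}}$, is not an identity: the chain rule gives $\Hof{\theta,\xi\vee\eta} = \Hof{\theta,\eta} + \Hof{\theta,\xi\mid\eta}$, and without refinement $\xi\vee\eta\neq\xi$.

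The correct repair is either to work with the truly nested dyadic proxies $\calD^{\omega}_q := \bigtimes_j \calD^1_{\chi^{\omega|q}_j}$ (which do refine, since $\chi^{\omega|q}_j$ is increasing in $q$) and transfer back to $\calE^{\omega}_q$ via the $\bigO{1}$-commensurability in \eqref{eq:E-omega-D-chi} and \autoref{lem:list-ests}\ref{itm:commensure}, or to note that $\Hof{\theta,\calE^{\omega}_q\mid\calE^{\omega}_{q+m}} \leq d$ (each cell of the finer grid meets at most $2^d$ cells of the coarser one) and hence $\Hof{\theta,\calE^{\omega}_{q+m}\mid\calE^{\omega}_q} = \Hof{\theta,\calE^{\omega}_{q+m}} - \Hof{\theta,\calE^{\omega}_q} + \bigO{1}$. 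Either way, the non-nesting introduces an extra $\bigO{1}$ per summand, and after summing over the roughly $n$ values of $q$ and normalizing by $mn$, this yields an additional $\bigO{1/m}$ in the error, i.e.\ the conclusion one actually obtains is $\frac{1}{n}\Hof{\theta,\calE^{\omega}_n} = \bbEof[1\le q\le n]{\frac{1}{m}\Hof{\theta,\calE^{\omega}_{q+m}\mid\calE^{\omega}_q}} + \bigO{\frac{m+\log R}{n} + \frac{1}{m}}$. This is slightly weaker than the paper's stated error term, but in every application in the paper one always has $\eta^{-1}\ll m \ll n$, so both terms are $\smlO{1}$ and nothing breaks. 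You should flag that the $\bigO{1/m}$ term is unavoidable for these non-nesting partitions and cannot be suppressed by the argument as you wrote it.
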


\section{Entropy of repeated self-convolutions} \label{sec:self-conv}

This section is devoted to proving the following proposition, which is analogous to \cite[Proposition 1.15]{Rapaport2023} for the random measures. It plays a crucial role in establishing the entropy increase result. The proof is adapted from \cite{Rapaport2023}. To account for the dependence on $ \omega $ and other additional parameters, based on the dynamics on $ (\Omega, \bfP)$ we adapt the arguments to prove the modified version of the statements. For clarity, we provide the necessary details.

\begin{proposition}\label{prop:conv-e}
	For $ \varepsilon \in (0,1 )$, there is $ \delta > 0 $ so that the following holds. Let $ \eta \in (0, 1)$ and $ m_{1} , \ldots, m_{d}, k_{1}, \ldots, k_{d} \in \bbN $ be with $ \varepsilon^{-1} \ll \eta^{-1} \ll m_{d} \ll k_{d} \ll m_{d-1} \ll \cdots \ll k_{2} \ll m_{1} \ll k_{1} $.  There exists $ \ol{\Omega} \subset \Omega $ with $ \bfP(\ol{\Omega}) \geq 1 - \eta $ so that for $ n\in \bbN $ with $ k_{1} \ll n $ and $ \omega \in \ol{\Omega}$ the following holds. Let $\theta \in \calM_{c}(\euclid[d])$ with $ \diam (\supp \theta) \leq \varepsilon^{-1}$ and $ \frac{1}{n} H(\theta, \calE_{n}^{\omega}) > \varepsilon $. Then there exist $ j\in [d] $ and $ Q^{\omega} \subset [n]$ with $ \#_{n}(Q^{\omega}) \geq \delta $ so that
	\begin{equation}\label{eq:conv-e}
		\frac{1}{m_{j}} H\left(\theta^{* k_{j}} , \calE_{q + m_{j}}^{\omega}  \mid \calE_{q}^{\omega} \vee \pi_{[d]\setminus \{j\}}^{-1} \calE_{q + m_{j}}^{\omega} \right) > N \chi_{j} - \varepsilon \mFor q \in Q^{\omega}.
	\end{equation}
\end{proposition}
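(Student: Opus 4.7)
The plan is to adapt the deterministic argument of Rapaport~\cite[Proposition 1.15]{Rapaport2023} to the random partitions $\calE^{\omega}_{n}$, exploiting a large-probability event on which the random scales $\chi^{\omega|n}_{j}$ are nearly deterministic.

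First, we choose the exceptional set. By \autoref{lem:Aj-rate}, pick $\ol{\Omega}\subset\Omega$ with $\bfP(\ol{\Omega})>1-\eta$ so that for $\omega\in\ol{\Omega}$, every $j\in[d]$, and all $n$ with $\eta^{-1}\ll n$, one has $\Abs{\chi^{\omega|n}_{j}-nN\chi_{j}}<n\eta$. By \eqref{eq:E-omega-D-chi}, this makes $\calE^{\omega}_{n}$ uniformly $\bigO{1}$-commensurable with a deterministic product of dyadic partitions at scales $\approx nN\chi_{j}$; the distinctness $\chi_{1}<\cdots<\chi_{d}$ guarantees these scales are separated. All subsequent entropy estimates on $\ol{\Omega}$ differ from their deterministic counterparts by at most $O(\eta n + 1)$.

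Next, we locate the coordinate $j$ and scale set $Q^{\omega}$. The hypothesis $\frac{1}{n}\Hof{\theta, \calE^{\omega}_{n}}>\varepsilon$ together with \autoref{lem:e-telescope} gives $\bbE_{1\leq q\leq n}\tfrac{1}{m_{d}}\Hof{\theta, \calE^{\omega}_{q+m_{d}}\mid\calE^{\omega}_{q}}>\varepsilon-O((m_{d}+\log\varepsilon^{-1})/n)$, so a positive-density set of scales $q\in[n]$ carries conditional entropy at least $\varepsilon/2$. The chain rule (\autoref{lem:list-identities}\ref{itm:ChainRule-H}), with coordinate $j$ placed last, decomposes each such conditional entropy as
\[
\Hof{\theta, \pi_{[d]\setminus\{j\}}^{-1}\calE^{\omega}_{q+m}\mid \calE^{\omega}_{q}}+\Hof{\theta, \pi_{j}^{-1}\calE^{\omega}_{q+m}\mid \calE^{\omega}_{q}\vee \pi_{[d]\setminus\{j\}}^{-1}\calE^{\omega}_{q+m}}.
\]
Following the dichotomy in~\cite[Section 4]{Rapaport2023}, an iterated pigeonhole over the $d$ coordinates produces a single coordinate $j\in[d]$ and a density-$\ge \delta(\varepsilon,d)$ set $Q^{\omega}\subset[n]$ on which the second summand (with $m=m_{j}$, applied to $\theta$) is either already within $\varepsilon$ of its maximum $N\chi_{j}$, or bounded in a range $[\varepsilon'(\varepsilon), N\chi_{j}-\varepsilon']$. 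In the first case there is nothing to do; in the second, we invoke the one-dimensional inverse theorem of Hochman, in the quantitative form of~\cite[Section 4]{Rapaport2023}, applied to the conditional measures of $\theta$ on the slabs $\calE^{\omega}_{q}\cap\pi_{[d]\setminus\{j\}}^{-1}\calE^{\omega}_{q+m_{j}}$ (which are essentially one-dimensional measures in direction $j$ at resolution $\approx m_{j}N\chi_{j}$). The theorem forces the $k_{j}$-fold self-convolution to push this one-dimensional conditional entropy to within $\varepsilon$ of $N\chi_{j}$, yielding \eqref{eq:conv-e}. The parameter nesting $m_{d}\ll k_{d}\ll m_{d-1}\ll\cdots\ll k_{1}$ is designed so that the convolution amplification for coordinate $j$ occurs at a scale much finer than any $j'<j$, keeping the estimates for different coordinates decoupled.

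The main obstacle will be transferring the one-dimensional inverse theorem through the random conditioning uniformly in $\omega\in\ol{\Omega}$. The slabs differ from translates of canonical dyadic slabs only up to $\bigO{1}$-commensurability, by \eqref{eq:almost-transinv} and \eqref{eq:fg-comm}, but we need the entropy bounds and the convolution-increase statement to be stable under such perturbations while holding for a positive-density set of $q$ \emph{simultaneously} for every $\omega\in\ol{\Omega}$ and every admissible $\theta$. Showing that $\theta^{*k_{j}}$ actually inherits the one-dimensional entropy gain in direction $j$ when all other coordinates are being averaged (rather than losing it to entropy dissipation transverse to $j$) is the technical heart of the proof, and mirrors the most delicate part of Section~4 of~\cite{Rapaport2023}.
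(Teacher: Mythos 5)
Your proposal diverges from the paper's actual argument at the crucial mechanism by which the $k_{j}$-fold self-convolution acquires near-maximal entropy, and the route you describe has a genuine gap.

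You propose to locate the coordinate $j$ via an entropy chain rule and ``iterated pigeonhole,'' and then to make $\theta^{*k_{j}}$ near-uniform in direction $j$ by ``invoking the one-dimensional inverse theorem of Hochman.'' The inverse theorem does not do this: it gives structural information about two measures when \emph{convolving one with the other fails to increase entropy}. It is not a tool that forces the $k$-fold self-convolution of a measure with positive entropy to have near-maximal entropy at shifted scales, and neither does Section~4 of \cite{Rapaport2023} use it for this purpose (there, as here, the inverse theorem enters only in the later entropy-increase argument, \autoref{thm:omega-e-increase}). The mechanism that actually drives \autoref{prop:conv-e} is the Berry--Esseen central limit theorem: if $\rho=\theta_{1}*\cdots*\theta_{k}$ with bounded-diameter factors, variance $\geq\eta k$ in coordinate $j$, and small variance in coordinates $j'<j$, then after the renormalization $A^{\omega|a}$ with $a\approx\log k/(2N\chi_{j})$, the marginal $\wt{\pi}_{j}A^{\omega|a}\rho$ is approximately Gaussian and hence has conditional entropy $\approx N\chi_{j}$ per scale, while the transverse coordinates are negligible (\autoref{lem:j-1Dim}).

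Because Berry--Esseen needs a \emph{variance} lower bound, the chain-rule decomposition of conditional entropy that you propose is not sufficient by itself: you would still need to convert ``positive conditional entropy at scale $q$'' into ``positive variance of the rescaled component along some coordinate,'' which is exactly \autoref{lem:e2var-large}. The paper's pigeonhole is therefore over coordinates classified by \emph{variance thresholds} $\eta_{j}$ on the component measures $A^{-\omega|b}\theta^{\omega}_{x,b}$, not over entropy decompositions, and the weak law of large numbers is then used to ensure that in a typical $k_{j}$-tuple drawn from $\theta^{\times k_{j}}$, at least $\wt{k}_{j}=\Flr{\delta k_{j}/(2d)}$ of the factors land in the good set $Y$, so that \autoref{lem:j-1Dim} applies to their convolution. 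None of these three steps (variance extraction, variance-based pigeonhole, LLN over tuples) appears in your proposal, and without them the argument does not close. Your opening step (using \autoref{lem:Aj-rate} to pass to a high-probability event where $\chi^{\omega|n}_{j}\approx nN\chi_{j}$) is correct and is indeed the first thing the paper does, but it only handles the $\omega$-dependence; it does not supply the convolution-entropy amplification.
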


\subsection{Entropy of self-convolutions under a condition on variance}

The purpose of this subsection is to prove the following lemma, which is analogous to \cite[Lemma 3.2]{Rapaport2023}.

\begin{lemma}\label{lem:j-1Dim}
	Let $ \eta \in (0,1) $ and $ m, \ell, k \in \bbN $ be with $ \eta^{-1} \ll m \ll \ell \ll  k $. There exists $ \ol{\Omega} \subset \Omega $ with $ \bfP(\ol{\Omega} ) > 1 - \eta $ so that, for  $ n \in \bbN $ with $ k \ll n $ and $ \omega \in \ol{\Omega}$, there is $ B^{\omega}\subset [n]$ with $ \#_{n}(B^{\omega}) > 1 - \eta $ so that the following holds. Let $ \theta_{1}, \ldots, \theta_{k} \in \calM_{c}(\euclid[d]) $ be with $ \diam(\supp \theta_{i}) \leq \eta^{-1}$ for $ 1 \leq i \leq k $. Set $ \rho := \theta_{1} * \cdots * \theta_{k} $. Suppose that there exists $ 1 \leq j \leq d $ so that $ \Var(\pi_{j}\rho) \geq \eta k $ and $ \Var(\pi_{j'}\rho) \leq \eta^{-1} $ for $ 1 \leq j' < j $. Then setting $ a : = \flr{\log k/(2N\chi_{j})} $, we have for $ \omega \in \ol{\Omega}$,
	\begin{equation*}
		\frac{1}{m} \Hof{ \rho, \calE_{\ell-a+m}^{T^{b}\omega} \mid \calE_{\ell-a}^{T^{b}\omega} \vee \pi_{[d]\setminus\{j\}}^{-1} \calE_{\ell-a+m}^{T^{b}\omega} } > N\chi_{j} - \eta \mFor b \in B^{\omega}.
	\end{equation*}
\end{lemma}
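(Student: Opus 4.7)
The plan is to combine ergodic-theoretic control of the random Lyapunov scales on $(\Omega, \bfP, T)$ with a quantitative local central limit theorem for the one-dimensional marginal $\pi_j \rho$, and then pass from the resulting unconditional $j$-th marginal entropy gain to the conditional entropy bound in the conclusion via an entropy chain-rule identity.

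First, I would construct the good event $\ol{\Omega}$ by applying \autoref{lem:Aj-rate} with an auxiliary parameter $\eta' \ll \eta$. This yields $\ol{\Omega} \subset \Omega$ with $\bfP(\ol{\Omega}) > 1 - \eta$ on which $\chi^{T^b\omega|r}_{j'} = rN\chi_{j'} + O(r\eta')$ for every $j' \in [d]$ and $r \in \{\ell-a,\,\ell-a+m\}$, uniformly over $b$ in a density-$(1-\eta)$ subset $B^\omega \subset [n]$ obtained via Birkhoff's theorem applied to the shift on $(\Omega, \bfP, T)$. For $b \in B^\omega$, the specific choice $a := \flr{\log k/(2N\chi_j)}$ forces both $\lambda^{T^b\omega|\ell-a}_j$ and $\lambda^{T^b\omega|\ell-a+m}_j$ to be much smaller than the effective width $\sqrt{\eta k}$ of $\pi_j\rho$, thanks to the hierarchy $\eta^{-1} \ll m \ll \ell \ll k$ (in particular $\log k \ll \ell N \chi_j$, so $\ell - a > 0$).

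The substantive input is then a quantitative local CLT: since $\pi_j\rho = \pi_j\theta_1 * \cdots * \pi_j\theta_k$ is a $k$-fold convolution of probability measures on $\bbR$ of diameter $\leq \eta^{-1}$ and total variance $\geq \eta k$, a Berry--Esseen argument shows that $\pi_j\rho$ has density comparable to that of a Gaussian of variance $\eta k$ at any scale $\ll \sqrt{\eta k}$, whence the unconditional one-dimensional bound
\begin{equation*}
H\!\left(\pi_j\rho, \pi_j\calE^{T^b\omega}_{\ell-a+m}\right) - H\!\left(\pi_j\rho, \pi_j\calE^{T^b\omega}_{\ell-a}\right) \geq m(N\chi_j - \eta/3).
\end{equation*}
Using $\calE^{T^b\omega}_{\ell-a+m} = \pi_j^{-1}\pi_j\calE^{T^b\omega}_{\ell-a+m} \vee \pi_{[d]\setminus\{j\}}^{-1}\calE^{T^b\omega}_{\ell-a+m}$ and the chain rule, the target conditional entropy equals this unconditional marginal gap minus a conditional mutual information between the $j$-th and non-$j$ coordinates of $\rho$ evaluated across the two scales, so the proof reduces to showing that this mutual information is at most $\eta m/3$.

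The hard part is precisely this last reduction, and it is where the variance hypotheses of the lemma enter. The assumption $\Var(\pi_{j'}\rho) \leq \eta^{-1}$ for $j' < j$ ensures that the lower coordinates of $\rho$ are so concentrated that a tilting / conditional change-of-measure argument (disintegrating $\rho$ along atoms of $\pi_{[d]\setminus\{j\}}^{-1}\calE^{T^b\omega}_{\ell-a+m}$ and bounding below the mass of $\rho$-typical tubes) allows the conditional $j$-th marginal to inherit the Gaussian smoothness of $\pi_j\rho$ on a $(1-\eta)$-fraction of such tubes. The higher coordinates $j' > j$ are handled by a multivariate Berry--Esseen estimate applied directly to $\rho$: at scales much smaller than $\sqrt{\eta k}$ the joint density is approximately a multivariate Gaussian with a well-conditioned covariance, so the cross-scale mutual information between $\pi_j$ and $\pi_{[d]\setminus\{j\}}$ is $o_{\eta}(m)$. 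Absorbing the resulting $O(\eta m)$ additive errors via $\eta^{-1} \ll m$ then delivers the claimed lower bound uniformly over $b \in B^\omega$.
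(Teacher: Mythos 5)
Your high-level skeleton (ergodic control of the scales via \autoref{lem:Aj-rate} and Birkhoff/Egorov, a one-dimensional Berry--Esseen bound for $\pi_j\rho$, and the observation that $a = \flr{\log k/(2N\chi_j)}$ puts the relevant scales well below the effective width $\sqrt{\eta k}$) matches the paper's. However, the paper does \emph{not} attack the problem by subtracting a conditional mutual-information term from the unconditional marginal entropy gap, and the way you propose to bound that mutual information has a genuine gap.

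The problematic step is your claim that for the coordinates $j' > j$ ``the joint density is approximately a multivariate Gaussian with a well-conditioned covariance, so the cross-scale mutual information is $o_\eta(m)$.'' There is \emph{no} hypothesis at all on $\Var(\pi_{j'}\rho)$ when $j' > j$, so the covariance of $\rho$ can be arbitrarily degenerate (e.g.\ the $j'$-th coordinate nearly a deterministic function of the $j$-th one), and a multivariate Berry--Esseen with well-conditioned covariance simply does not apply. Even if the covariance were bounded above and below, an axis-aligned partition of a correlated Gaussian has positive $j$-vs-non-$j$ mutual information of order $m$, so the conclusion $o_\eta(m)$ does not follow. The paper circumvents this entirely: it replaces $\rho$ by the projection $\pi_j A^{\omega|a}\rho$, which lives on the $j$-th coordinate axis so that conditioning on $\pi_{[d]\setminus\{j\}}^{-1}\calE^\omega_{\ell+m}$ is vacuous, runs the one-dimensional Berry--Esseen estimate for $\tilde\pi_j A^{\omega|a}\rho$, and then shows the replacement costs $O(\eta)$ by restricting to a set $S$ on which $A^{\omega|a}x$ is exponentially close to $\pi_j A^{\omega|a}x$. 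The measure of $S^c$ is controlled by Chebyshev: for $j' < j$ one uses $\Var(\pi_{j'}\rho)\leq\eta^{-1}$, and for $j' > j$ the strict ordering $\chi_{j'}>\chi_j$ forces $\Var(\tilde\pi_{j'}A^{\omega|a}\rho)=O(\eta^{-2}k^{1-\chi_{j'}/\chi_j+2\varepsilon})$, a negative power of $k$. In other words the mechanism that kills the upper coordinates is \emph{concentration to zero variance after the $A^{\omega|a}$ rescaling}, driven by distinct Lyapunov exponents, not a CLT for those coordinates. Your ``tilting / change-of-measure'' sketch for $j'<j$ is similarly undeveloped and is not needed once one projects onto the $j$-axis and controls the exceptional set by Chebyshev as the paper does.
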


%
%


\begin{proof}
	The proof is adapted from \cite[Lemma 3.2]{Rapaport2023}.  To account for the dependence on additional parameters, we include the details for clarity. For $ 1 \leq j \leq d $,  the coordinate map from $\euclid $ to $ \bbR $ is denoted as $ \wt{\pi}_{j}(x) = \innp{x}{e_{j}} $ for $ x \in \euclid[d] $. After a translation of $ \rho $, by \autoref{lem:list-identities}\ref{itm:commensure} we can assume that the mean $ \langle \wt{\pi_{j}} \rho \rangle = 0 $ for $ 1 \leq j' \leq d $ and $ \supp \theta_{i} \subset [-\eta, \eta]^{d}$ for $ 1 \leq i \leq k $.
	
	Let $ \varepsilon \in (0,1) $ be with $ \eta^{-1} \ll \varepsilon^{-1} \ll m $. By \autoref{lem:Aj-rate} and the $ T $-invariance of $ \bfP $, there exists $ \ol{\Omega} \subset \Omega $ with $ \bfP(\ol{\Omega}) > 1 - \varepsilon/2 $ so that for $ \omega \in \ol{\Omega} $ and $ 1 \leq j' \leq d $,
	\begin{equation} \label{eq:chi-a-j'}
	\Abs{ \frac{\chi^{\omega|a}_{j'}}{ \log k } - \frac{\chi_{j'}}{2\chi_{j}} } < \varepsilon ,\qquad  \Abs{\chi_{j'}^{\omega|(\ell+m)} - (\ell+m)N\chi_{j'}} < \varepsilon,
	\end{equation}
	and
	\begin{equation} \label{eq:chi-m-j}
 		\Abs{ \chi^{T^{\ell}\omega|m}_{j} - mN\chi_{j}} < m\varepsilon.
	\end{equation}
	In what follows we take $ \omega \in \ol{\Omega} $.

	We first show that 
	\begin{equation}\label{eq:j-proj-large}
		\frac{1}{m} \Hof{ \pi_{j} A^{\omega|a} \rho , \calE^{\omega}_{\ell+m} \mid \calC^{\omega} } \geq N\chi_{j} - \frac{\eta}{4},
	\end{equation}
	where $ \calC^{\omega} := \calE^{\omega}_{\ell} \vee \pi_{[d]\setminus\{j\}}^{-1} \calE^{\omega}_{\ell+m} $. The proof of \eqref{eq:j-proj-large} is based on the Berry-Essen theorem. Next, we estimate the moments of corresponding measures. For $ 1\leq i \leq k$ and $ s = 2, 3$, it follows from \eqref{eq:chi-a-j'} that
		\begin{equation}\label{eq:moments}
			\int \abs{t}^{s} \, \diff \wt{\pi}_{j} A^{\omega|a} \theta_{i}(t) = \exp\left( - s \chi^{\omega|a}_{j} \right) \int \abs{t}^{s} \, \diff \wt{\pi}_{j} \theta_{i}(t) = \bigO{ \eta^{-s} k^{-s/2 + s\varepsilon} }.
		\end{equation}
		Thus, the variance satisfies
		\begin{equation*}
			\Var(\wt{\pi}_{j}A^{\omega|a} \rho ) = \sum_{i=1}^{k} \Var( \wt{\pi}_{j} A^{\omega|a} \theta_{i} ) =  \bigO{ \eta^{-2} k^{2\varepsilon}} .
		\end{equation*}
		Moreover,
		\begin{equation*}
			\Var(\wt{\pi}_{j} A^{\omega|a} \rho ) = \exp\left( -2 \chi^{\omega|a}_{j} \right) \Var(\wt{\pi}_{j}\rho) \geq \eta k^{-2\varepsilon}.
		\end{equation*}
		Hence
		\begin{equation*}
			\frac{ \sum_{i=1}^{k}\int \abs{t}^{3} \, \diff \wt{\pi}_{j} A^{\omega|a} \theta_{i}(t) }{ \Var(\wt{\pi}_{j}A^{\omega|a} \rho )^{3/2} } = \bigO{ \eta^{-9/2} k^{-1/2 + 6\varepsilon} }.
		\end{equation*}
		Combining all above with $ \varepsilon^{-1} \ll m \ll \ell $ and \cite[Theorem 3.1 and Lemma 3.3]{Rapaport2023}, we conclude from \autoref{lem:list-ests}\ref{itm:commensure} and \eqref{eq:chi-m-j} that
		\begin{align*}
			\frac{1}{ mN\chi_{j} } & \Hof{ \wt{\pi}_{j} A^{\omega|a} \rho, \calD^{1}_{\chi_{j}^{\omega|\ell} + \chi^{T^{\ell}\omega|m}_{j} } \mid \calD^{1}_{\chi^{\omega|\ell}_{j}}} \\
			& \geq \frac{1}{ mN\chi_{j} } \Hof{ \wt{\pi}_{j} A^{\omega|a} \rho, \calD^{1}_{\chi_{j}^{\omega|\ell} + mN\chi_{j} } \mid \calD^{1}_{\chi^{\omega|\ell}_{j}}} - \bigO{\varepsilon} \\
			& \geq 1 - \varepsilon - \bigO{\varepsilon}  \geq 1 - \bigO{\varepsilon}.
		\end{align*}
		By \eqref{eq:E-omega-D-chi} and $ \eta^{-1} \ll \varepsilon^{-1} $, this proves \eqref{eq:j-proj-large}.
		
		We proceed to estimate the error caused by $ \pi_{j}$ in \eqref{eq:j-proj-large}. For $ j' \in [d] \setminus \{j\}$, set
		\begin{equation*}
			S_{j'} := \left\{ x \in \euclid \colon \Abs{\pi_{j'}A^{\omega|a}x} \leq  \exp\left(-2N \chi_{d}(\ell+m)\right) \right\},
		\end{equation*}
		and define $ S := \intxn_{j'\in [d] \setminus \{j\}} S_{j'} $. For $ x \in S$,
		\begin{equation*}
			\Abs{ A^{\omega|a} x - \pi_{j} A^{\omega|a} x } = \bigO{ \exp\left(-2 N \chi_{d}(\ell+m)\right) }.
		\end{equation*}
	Hence by \eqref{eq:chi-a-j'} and \eqref{eq:fg-comm},
		\begin{equation}\label{eq:S-proj-j}		\Hof{ A^{\omega|a} \rho_{S}, \calE^{\omega}_{\ell+m} \mid \calC^{\omega} }  = \Hof{ \pi_{j} A^{\omega|a} \rho_{S}, \calE^{\omega}_{\ell+m} \mid \calC^{\omega} } + O(1).
		\end{equation}
		For $ j < j' \leq d$, it follows from \eqref{eq:chi-a-j'} that
		\begin{equation*}
			\Var( \wt{\pi}_{j'} A^{\omega|a} \rho)  = \exp\left(-2\chi^{\omega|a}_{j'}\right)  \sum_{i=1}^{k} \Var(\wt{\pi}_{j'}\theta_{i}) = \bigO{ \eta^{-2}k^{1-\chi_{j'}/\chi_{j}+2\varepsilon} } .
		\end{equation*}
		For $ 1 \leq j' < j $, it follows from $ \Var(\pi_{j'}\rho) \leq \eta^{-1} $ and \eqref{eq:chi-a-j'} that
		\begin{equation*}
			\Var(\wt{\pi}_{j'}A^{\omega|a} \rho ) \leq \eta^{-1}\exp\left(-2\chi^{\omega|a}_{j'}\right) = O\left( \eta^{-1} k^{-\chi_{j'}/\chi_{j} + 2 \varepsilon }\right).
		\end{equation*}
		Recall that $ \chi_{1} < \cdots < \chi_{d}$. By $ \eta^{-1} \ll \varepsilon^{-1}$, there is $ \delta > 0 $ only depending on $ \chi_{1}, \ldots, \chi_{d}$ so that 
		\begin{equation*}
			\Var( \wt{\pi}_{j'} A^{\omega|a} \rho ) = O(\eta^{-2}k^{-\delta}) \mFor j' \in [d]\setminus \{j\}.
		\end{equation*}
		From this, since the mean $ \langle \wt{\pi}_{j'} \rho  \rangle = 0 $ for $ j' \in [d]$, and by Chebyshev's inequality,
	\begin{equation} \label{eq:rho-Sc}
			\begin{aligned}
			\rho(S^{c}) \leq \sum_{j'\in[d]\setminus\{j\}} \rho(S^{c}_{j'}) & \leq \sum_{j'\in [d]\setminus\{j\}} \exp\left( 4N\chi_{d} (\ell+m) \right)  \Var(\wt{\pi}_{j'} A^{\omega|a} \rho) \\
			& = \bigO{ \exp\left( 4N\chi_{d} (\ell+m) \right) \eta^{-2} k^{-\delta} }. 
		\end{aligned}
	\end{equation}
		By $ \supp \pi_{j}A^{\omega|a} \rho  \subset [-k\eta^{-1}, k\eta^{-1}]^{d}$ and \eqref{eq:chi-a-j'},
		\begin{equation*}
			\Hof{\pi_{j}A^{\omega|a}\rho_{S^{c}}, \calE^{\omega}_{\ell+m} } = O(\ell + m + \log (k\eta^{-1})).
		\end{equation*}
		From the above two equations, it follows from $ \eta^{-1} \ll m \ll \ell \ll k $ that
		\begin{equation}\label{eq:Sc-proj}
			\frac{\rho(S^{c})}{m} \Hof{\pi_{j}A^{\omega|a}\rho_{S^{c}}, \calE^{\omega}_{\ell+m} \mid \calC^{\omega} } \leq \frac{\eta}{4}.
		\end{equation}
		Hence
		\begin{align*}
			\frac{1}{m}  & \Hof{ A^{\omega|a}\rho, \calE^{\omega}_{\ell+m} \mid \calC^{\omega} }
			\\
			& \geq \frac{\rho(S)}{m} \Hof{A^{\omega|a}\rho_{S} , \calE^{\omega}_{\ell+m} \mid \calC^{\omega} } & \by{\autoref{lem:list-ests}\ref{itm:concav-aconvex}}
			\\
			& \geq \frac{\rho(S)}{m} \Hof{ \pi_{j}A^{\omega|a}\rho_{S} , \calE^{\omega}_{\ell+m} \mid \calC^{\omega} } - \bigO{\frac{1}{m}} & \by{\eqref{eq:S-proj-j}} \\
			& \geq \frac{\rho(S)}{m} \Hof{ \pi_{j} A^{\omega|a}\rho_{S} , \calE^{\omega}_{\ell+m} \mid \calC^{\omega} }  - \frac{\eta}{4} & \by{$ \eta^{-1} \ll m $}
			\\ & \hspace{2em} +  \frac{\rho(S^{c})}{m} \Hof{\pi_{j}A^{\omega|a}\rho_{S^{c}}, \calE^{\omega}_{\ell+m} \mid \calC^{\omega} } - \frac{\eta}{4} & \by{\eqref{eq:Sc-proj}} \\
			& \geq \frac{1}{m} \Hof{ \pi_{j}A^{\omega|a} \rho, \calE^{\omega}_{\ell+m} \mid \calC^{\omega}} - \frac{3}{4}\eta & \by{\autoref{lem:list-ests}\ref{itm:concav-aconvex} and \eqref{eq:rho-Sc}} \\
			& \geq N\chi_{j} - \eta. & \by{\eqref{eq:j-proj-large}}
		\end{align*}
		By \eqref{eq:def-E-omega} and \eqref{eq:Aomega-shift-part}, this implies that
			\begin{equation*}
				\frac{1}{m}  \Hof{ \rho, \calE^{T^{a}\omega}_{\ell-a+m} \mid \calE^{T^{a}\omega}_{\ell-a} \vee \pi_{[d]\setminus\{j\}}^{-1}\calE^{T^{a}\omega}_{\ell-a+m} } \geq N\chi_{j} - \eta.
			\end{equation*}
		Since $ \bfP(\ol{\Omega}) > 1 - \varepsilon / 2 >  1 - \eta/2 $ and $ a = \bigO{\log k} \ll n $, the proof is finished by applying Birkhoff's ergodic theorem and Egorov's theorem to $ \indicator{\ol{\Omega}} $.
	\end{proof}

	\subsection{Positive entropy implies nonnegligible variance}
	
	Based on Chebyshev's inequality and \eqref{eq:MinMaxSize}, the proof of the next lemma  is almost identical to \cite[Lemma 4.4]{Hochman2014} and so omitted.
	
	\begin{lemma} \label{lem:Var2E}
		Let $ \varepsilon, \delta \in (0, 1)$ and $ m \in \bbN $ be with $ \varepsilon^{-1} \ll m \ll \delta^{-1} $. Let $ \theta \in \calM_{c}(\euclid)$ such that $ \diam (\supp \theta)  \leq \varepsilon^{-1} $ and $ \Var(\pi_{j} \theta )  \leq \delta $ for each $ 1 \leq j \leq d $. Then $ \frac{1}{m}\Hof{\theta, \calE^{\omega}_{m} } < \varepsilon $  for $ \omega \in \Omega $. 
	\end{lemma}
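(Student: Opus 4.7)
The plan is to bound $H(\theta,\calE^{\omega}_m)$ using the combination of Chebyshev's inequality (to locate most of the mass in a very small box) and the fact that, for $\delta$ chosen sufficiently small relative to $m$, this box is dwarfed by the cell size $\lambda^{\omega|m}_j$ of $\calE^{\omega}_m$ in each coordinate direction. After a translation, assume the mean of $\pi_j\theta$ is $0$ for each $j$. Fix $K\geq 1$ and set
\[
B_K := \prod_{j=1}^{d}\bigl[-K\sqrt{\delta},\,K\sqrt{\delta}\bigr].
\]
By Chebyshev applied coordinate by coordinate, $\theta(B_K^{c})\leq d/K^{2}$.

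Next, apply the concavity inequality from \autoref{lem:list-ests}\ref{itm:concav-aconvex} to the decomposition $\theta=\theta(B_K)\theta_{B_K}+\theta(B_K^{c})\theta_{B_K^{c}}$, together with the trivial bound \autoref{lem:list-ests}\ref{itm:est-triv-UB}:
\[
H(\theta,\calE^{\omega}_m)\le \log N_{\text{in}} \;+\; \tfrac{d}{K^{2}}\log N_{\text{out}} \;+\; 1,
\]
where $N_{\text{in}}$ (resp.\ $N_{\text{out}}$) counts cells of $\calE^{\omega}_m$ meeting $B_K$ (resp.\ $\supp\theta$). By \eqref{eq:MinMaxSize},
\[
N_{\text{in}}\le\prod_{j=1}^{d}\Bigl(1+\tfrac{2K\sqrt{\delta}}{\lambda^{\omega|m}_{j}}\Bigr),\qquad N_{\text{out}}\le\prod_{j=1}^{d}\Bigl(1+\tfrac{2\varepsilon^{-1}}{\lambda^{\omega|m}_{j}}\Bigr)= 2^{O(mN\log(1/r_{\min})+\log(1/\varepsilon))}.
\]

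Now choose $K$ to balance the two terms. First, take $K$ large enough that $d/K^{2}\le \varepsilon/3$, i.e.\ $K=\Theta(\sqrt{d^{2}/\varepsilon})$; this is permissible since $\varepsilon^{-1}\ll m$, which leaves $K$ of bounded size relative to $m$. Then $(d/K^{2})\log N_{\text{out}}\le (\varepsilon/3)\cdot O(m)$, and dividing by $m$ gives a contribution of $O(\varepsilon)$. Because $m\ll\delta^{-1}$ allows us to assume $\delta\le \varepsilon\, r_{\min}^{2Nm}/(2d^{2}K^{2})$, we have $K\sqrt{\delta}\le \tfrac{1}{2}r_{\min}^{Nm}\le \tfrac{1}{2}\lambda^{\omega|m}_{j}$, so $N_{\text{in}}\le 2^{d}$ and $\log N_{\text{in}}=O(d)$. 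Combining,
\[
\tfrac{1}{m}H(\theta,\calE^{\omega}_m)\le \tfrac{O(d)+1}{m}+O(\varepsilon)<\varepsilon,
\]
where we used $\varepsilon^{-1}\ll m$ to absorb the first term. The main (and essentially only) obstacle is the bookkeeping around the scale hierarchy $\varepsilon^{-1}\ll m\ll\delta^{-1}$: the key point is that $\delta^{-1}\gg m$ allows $\sqrt{\delta}$ to be taken much smaller than $r_{\min}^{Nm}$, which is exactly what collapses the "in-box" entropy contribution to $O(1)$, while Chebyshev handles the "out-of-box" contribution uniformly in $\omega$.
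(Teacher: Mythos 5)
Your proof is correct and follows essentially the same route the paper takes: the paper omits the argument, citing that it is "almost identical to [Hochman, Lemma 4.4]" and explicitly flagging Chebyshev's inequality together with \eqref{eq:MinMaxSize} as the ingredients, which is precisely what you use (Chebyshev to trap most of the mass in a box of side $\bigO{K\sqrt\delta}$, then $\delta^{-1}\gg m$ to make that box fit in one cell of $\calE^{\omega}_m$ up to a factor $2^d$, with \eqref{eq:MinMaxSize} giving a uniform-in-$\omega$ lower bound $r_{\min}^{Nm}$ on cell widths and a crude $\bigO{m}$ upper bound on the total cell count). The bookkeeping is sound: $K$ depends only on $\varepsilon$, $d$, $N$, $r_{\min}$; since $\varepsilon^{-1}\ll m$ the out-of-box contribution $(d/K^2)\cdot\bigO{m}/m$ is $\bigO{\varepsilon}$ after tuning $K$; and since $m\ll\delta^{-1}$ one may take $\delta$ small enough that $K\sqrt\delta\le\tfrac12 r_{\min}^{Nm}$, collapsing the in-box entropy to $\bigO{d}$, which is then negligible after dividing by $m$.
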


	The following lemma is analogous to \cite[Lemma 3.5]{Rapaport2023}, providing a nonnegligible proportion of components with positive variance based on the assumption of positive entropy. The proof is nearly identical to that of \cite[Lemma 3.5]{Rapaport2023}, based on \autoref{lem:Var2E}, and is therefore omitted.
	
	\begin{lemma}\label{lem:e2var-large}
		For $ \varepsilon \in (0,1 )$ there exists $ \delta > 0 $ so that the following holds. Let $ n \in \bbN $ be with $ \varepsilon^{-1} \ll n $. Let $ \omega \in \Omega $ and $ \theta \in \calM_{c}(\euclid)$ be with $ \diam (\supp \theta ) \leq \varepsilon^{-1} $ and $ \frac{1}{n} \Hof{\theta, \calE_{n}^{\omega} } > \varepsilon $. Then there exists $ B^{\omega} \subset [n]$ with $ \#_{n}(B^{\omega}) \geq \delta $ so that
		\begin{equation*}
			\bbPof[i=b]{ \Var(\pi_{j} A^{-\omega|i} \theta^{\omega}_{x,i}) > \delta \text{ for some } 1\leq j \leq d } \geq \delta \mFor b \in B^{\omega}.
		\end{equation*}
	\end{lemma}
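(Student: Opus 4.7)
The plan is a contrapositive/averaging argument combining the telescoping identity \autoref{lem:e-telescope} with \autoref{lem:Var2E}. The intuition: if at most scales $i$ and most positions $x$ every coordinate projection of the normalized component $A^{-\omega|i}\theta^{\omega}_{x,i}$ had small variance, then by \autoref{lem:Var2E} each such component would contribute at most $\varepsilon/4$ to the one-block entropy $\frac{1}{m}H(\theta^{\omega}_{x,i}, \calE^{\omega}_{i+m})$; averaging these via \autoref{lem:e-telescope} would force $\frac{1}{n}H(\theta, \calE^{\omega}_n) \leq \varepsilon$, contradicting the hypothesis.

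Concretely, given $\varepsilon$ I would first apply \autoref{lem:Var2E} with entropy threshold $\varepsilon/4$ and diameter bound $C_0 := \max\{\varepsilon^{-1}, \sqrt{d}\}$ to extract $m_0 \in \bbN$ and $\delta_0 \in (0,1)$ such that for every $\omega' \in \Omega$ and every $\rho \in \calM_c(\euclid)$ with $\diam(\supp \rho) \leq C_0$ and $\Var(\pi_j \rho) \leq \delta_0$ for all $j \in [d]$, one has $\frac{1}{m_0}H(\rho, \calE^{\omega'}_{m_0}) < \varepsilon/4$. For $n$ large enough in $\varepsilon^{-1}$, \autoref{lem:e-telescope} (applied with $m = m_0$) combined with the hypothesis $\frac{1}{n}H(\theta, \calE^{\omega}_n) > \varepsilon$ yields
\begin{equation*}
\bbE_{1 \leq q \leq n}\!\left[\frac{1}{m_0} H(\theta^{\omega}_{x,q}, \calE^{\omega}_{q+m_0})\right] > \frac{\varepsilon}{2}.
\end{equation*}
By \eqref{eq:Aomega-shift-part}, the inner entropy equals $\frac{1}{m_0}H(A^{-\omega|q}\theta^{\omega}_{x,q}, \calE^{T^q\omega}_{m_0})$; moreover $A^{-\omega|q}\theta^{\omega}_{x,q}$ is supported in a translate of $[0,1]^d$, so its diameter is at most $\sqrt{d} \leq C_0$ and \autoref{lem:Var2E} is applicable at $\omega' = T^q\omega$.

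Now define $E_q := \{x \in \euclid : \Var(\pi_j A^{-\omega|q}\theta^{\omega}_{x,q}) > \delta_0 \text{ for some } j \in [d]\}$. For $x \notin E_q$, the choice of $m_0, \delta_0$ gives $\frac{1}{m_0}H(A^{-\omega|q}\theta^{\omega}_{x,q}, \calE^{T^q\omega}_{m_0}) < \varepsilon/4$; for $x \in E_q$ a trivial bound $\frac{1}{m_0}H(\cdot) \leq C_1$ with $C_1 = C_1(d, r_{\min}, N) < \infty$ follows from \eqref{eq:MinMaxSize}, since the support lies in a unit cube while cells of $\calE^{T^q\omega}_{m_0}$ have sides at least $r_{\min}^{Nm_0}$, capping the number of nonempty cells by $r_{\min}^{-Nm_0 d}$. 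Splitting the expectation accordingly yields $C_1 \cdot \bbE_{1\le q \le n}[\bbP(E_q)] + \varepsilon/4 \geq \varepsilon/2$, hence $\bbE_{1\le q\le n}[\bbP(E_q)] \geq \varepsilon/(4C_1)$. A standard Markov-type averaging argument then produces $B^{\omega} := \{q \in [n] : \bbP(E_q) \geq \varepsilon/(8C_1)\}$ with $\#_n(B^{\omega}) \geq \varepsilon/(8C_1)$, and one sets $\delta := \min\{\delta_0, \varepsilon/(8C_1)\}$.

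The only subtlety is the uniform-in-$\omega'$ bookkeeping when invoking \autoref{lem:Var2E} at $\omega' = T^q\omega$, which is automatic since the hypothesis and conclusion of \autoref{lem:Var2E} hold for all $\omega' \in \Omega$; everything else is essentially the deterministic averaging argument of \cite[Lemma 3.5]{Rapaport2023}, now applied to the nonconformal random partitions $\calE^{\omega}_n$ via the disintegration machinery built up in \autoref{sec:dis-linear}.
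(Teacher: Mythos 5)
Your argument is correct and is precisely the intended one: the paper omits the proof, remarking only that it is nearly identical to Rapaport's Lemma 3.5 and based on \autoref{lem:Var2E}, and your combination of the telescoping identity (\autoref{lem:e-telescope}), the rescaling via \eqref{eq:Aomega-shift-part} to components at base point $T^{q}\omega$ of diameter $\leq \sqrt d$, and the Markov extraction of $B^{\omega}$ carries that out faithfully. The only cosmetic point is that in \autoref{lem:Var2E} a single parameter controls both the diameter bound and the entropy conclusion, so it is cleaner to invoke it once at $\varepsilon'' := \min\{\varepsilon/4, 1/\sqrt d\}$ rather than introducing the auxiliary $C_{0}$; this does not affect the substance.
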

%


	\subsection{Proof of \autoref{prop:conv-e}} Now we are ready to prove \autoref{prop:conv-e}.
	
	\begin{proof}[Proof of \autoref{prop:conv-e}]
		The proof is adapted from \cite[Proposition 1.15]{Rapaport2023}, with Lemmas \ref{lem:j-1Dim} and \ref{lem:e2var-large} in roles of \cite[Lemmas 3.2 and 3.5]{Rapaport2023}, respectively. To account for the dependence on additional parameters and for clarity, we include the necessary details.
		
		 Let $ \delta \in (0, 1)$ and  $ \ell_{1}, \ldots , \ell_{d} \in \bbN $ be with
		 \begin{equation}\label{eq:conv-depends}
		 	\varepsilon^{-1} \ll \delta^{-1} \ll \eta^{-1} \ll m_{d} \ll \ell_{d} \ll k_{d} \ll m_{d-1} \ll \cdots \ll k_{2} \ll m_{1} \ll \ell_{1} \ll k_{1} \ll n.
		 	\end{equation}
		 Define $ \wt{k_{j}} = \Flr{\delta k_{j} /  (2d) } $ for $ 1 \leq j \leq d $. By $ \ell_{j} \ll k_{j}$ and $ \delta^{-1} \ll k_{j}$, we have $ \ell_{j} \ll \wt{k_{j}}$. Let $ \eta_{d} := \eta $ and $ \eta_{j} := k_{j+1}^{-1} $ for $ 1 \leq j < d $. Then $ \eta_{j} \leq \eta $ and $  \eta_{j}^{-1} \ll m_{j} \ll \ell_{j} \ll \wt{k_{j}} \ll n $ for $ 1 \leq j \leq d$.

		Let $ \ol{\Omega} $ be the intersection of the $ \ol{\Omega}$'s obtained by applying \autoref{lem:j-1Dim} repeatedly with $ \eta_{j}, m_{j}, \ell_{j}, k_{j}$ in place of $ \eta, m, \ell, k $ for $ 1 \leq j \leq d $. Note that $ k_{j} \ll n$ for $ 1 \leq j \leq d $. For $ \omega \in \ol{\Omega}$, let $ B^{\omega} $ be the intersection of corresponding $ B^{\omega}$'s obtained by applying \autoref{lem:j-1Dim} with $ n $ in place of $ n $. Then $ \bfP( \ol{\Omega}) > 1 - d\eta$ and  for $ \omega \in \ol{\Omega} $,  $ \#_{n}(B^{\omega}) > 1 - d\eta$. In what follows we take $ \omega \in \ol{\Omega} $, and let $ B^{\omega} \subset [n]$ accordingly. By \autoref{lem:e2var-large} and $ \varepsilon^{-1} \ll \delta^{-1} \ll \eta^{-1} $, there exists $ \ol{B}^{\omega} \subset B^{\omega}$ with $ \#_{n}(\ol{B}^{\omega}) > \delta - d \eta > \delta/2 $ so that for $ b \in \ol{B}^{\omega} $,
		\begin{equation*}
			\bbPof[i=b]{  \Var(\pi_{j} A^{-	\omega|b} \theta^{\omega}_{x,i} ) > \delta \text{ for some } 1 \leq j \leq d } > \delta.
		\end{equation*}
		 For $ 1 \leq j \leq d $, let $B_{j}^{\omega}$ be the set of all $ b \in \ol{B}^{\omega} $ so that
		\begin{equation*}
			\bbPof[i=b]{ \Var(\pi_{j} A^{-\omega|i}\theta^{\omega}_{x,i}) > \eta_{j}  \text{ and } \Var(\pi_{j'}A^{-\omega|i}\theta^{\omega}_{x,i}) \leq \eta_{j'} \text{ for } 1 \leq j' < j } > \delta / d.
		\end{equation*} 
		It is clear that $ \ol{B}^{\omega} \subset \union_{j=1}^{d}B_{j}^{\omega}$. Since $ \#_{n}(\ol{B}^{\omega}) > \delta/2  $, it follows that $ \#_{n}(B_{j}^{\omega}) >  \delta /(2d) $ for some $ 1 \leq j \leq d $. Fix such $ j $ until the end of the proof.
		
		Note that 
		\begin{equation*}
			\varepsilon^{-1} \ll \delta^{-1} \ll \eta_{j}^{-1} \ll m_{j} \ll \ell_{j} \ll k_{j} \ll n \mAnd \eta_{j'} \leq k_{j}^{-1} \text{ for } 1 \leq j' < j.
		\end{equation*}
		Let $ b \in B^{\omega}_{j} $ be given, and define
		\begin{equation*}
			Y := \{ x\in \euclid \colon \Var(\pi_{j}A^{-\omega|b}\theta^{\omega}_{x,b}) > \eta_{j} \text{ and } \Var(\pi_{j'}A^{-\omega|b}\theta^{\omega}_{x,b})  \leq \eta_{j'} \text{ for } 1 \leq j' < j \}.
		\end{equation*}
		Recall $ \wt{k_{j}} = \Flr{\delta k_{j}/(2d)} $, and write $ k = \wt{k_{j}} $ for short. Set
		\begin{equation*}
			Z := \left\{ (x_{1}, \ldots, x_{k_{j}}) \in (\euclid)^{k_{j}} \colon \# \{ 1 \leq s \leq k_{j} \colon x_{s} \in Y \} \geq k \right\}.
		\end{equation*}
		Since $ \theta(Y) > \delta /d$ and $ \delta^{-1} \ll k_{j} $, the weak law of large numbers implies $ \theta^{\times k _{j}} (Z) >  1 - \delta $.
		
		Let $ (x_{1}, \ldots, x_{k_{j}}) \in Z $ be given. Then there exist integers $ 1 \leq s_{1} < \cdots < s_{k} \leq k_{j} $ so that $ x_{s_{i}} \in Y$ for $ 1 \leq i \leq k $. Note that
		\begin{equation*}
			\diam \left( \supp A^{-\omega|b} \theta^{\omega}_{x_{s_{i},b}} \right) = O(1)  \mFor 1 \leq i \leq k.
		\end{equation*}
		Set
		\begin{equation*}
			\rho := A^{-\omega|b} \theta^{\omega}_{x_{s_{1}},b} * \cdots * A^{-\omega|b} \theta^{\omega}_{x_{s_{k}},b}.
		\end{equation*}
		We have
		\begin{equation*}
			\Var(\pi_{j} \rho) = \sum_{i=1}^{k} \Var(\pi_{j} A^{-\omega|b} \theta^{\omega}_{x_{s_{i}},b}) \geq k \eta_{j},
		\end{equation*}
		and for each $ 1 \leq j' < j $, recalling $ k = \wt{k_{j}} = \Flr{\delta k_{j} / (2d) } $,
		\begin{equation*}
			\Var(\pi_{j'}\rho) = \sum_{i=1}^{k}\Var(\pi_{j'}A^{-\omega|b} \theta^{\omega}_{x_{s_{i}},b}) \leq k \eta_{j'}  = \bigO{ \delta k_{j} \eta_{j'} } \leq 1.
		\end{equation*}
		Recall the definition of $ B^{\omega}$ and $ B_{j}^{\omega} $. Set $ a := \Flr{\log k / (2N\chi_{j})} $. It follows from \autoref{lem:j-1Dim} that
		\begin{equation}\label{eq:j-dim-large}
			\frac{1}{m_{j}} \Hof{ \rho, \calE_{\ell_{j}-a+m_{j}}^{T^{b}\omega} \mid \calE_{\ell_{j}-a}^{T^{b}\omega} \vee \pi_{[d]\setminus\{j\}}^{-1} \calE_{\ell_{j}-a+m_{j}}^{T^{b}\omega} } > N\chi_{j} - \delta \mFor b \in B^{\omega}_{j}.
		\end{equation}
		
		For $ s\in \bbZ $ and $ b \geq 0 $, write $ \calC_{s}^{T^{b}\omega} := \calE_{s+\ell_{j}-a}^{T^{b}\omega} \vee \pi_{[d]\setminus\{j\}}^{-1} \calE_{s+\ell_{j}-a+m_{j}}^{T^{b}\omega} $ for short. Since \eqref{eq:j-dim-large}, $ k \leq k_{j}$ and $ \delta^{-1} \ll m_{j} $, we conclude from \eqref{eq:almost-transinv} and the concavity of entropy that for $ b \in B^{\omega}_{j}$,
		\begin{equation*}
			\frac{1}{m_{j}} \Hof{ *_{s=1}^{k_{j}} A^{-\omega|b} \theta^{\omega}_{x_{s},b}, \calE_{\ell_{j}-a+m_{j}}^{T^{b}\omega} \mid \calC_{0}^{T^{b}\omega} } > N\chi_{j} - 2 \delta.
		\end{equation*}
		Then by \eqref{eq:Aomega-shift-part},
		\begin{equation}\label{eq:conv-large}
			\frac{1}{m_{j}} \Hof{ *_{s=1}^{k_{j}} \theta^{\omega}_{x_{s},b}, \calE_{b+\ell_{j}-a+m_{j}}^{\omega} \mid \calC_{b}^{\omega} } > N\chi_{j} - 3 \delta.
		\end{equation}
		Note that by \eqref{eq:theta-E-decomp},
		\begin{equation*}
			\theta^{*k_{j}} = \int *_{s=1}^{k_{j}} \theta^{\omega}_{x_{s},b} \, \diff \theta^{\times k_{j}}(x_{1}, \ldots, x_{k_{j}}).
		\end{equation*}
		From this, concavity of entropy, \eqref{eq:conv-large} and $ \theta^{\times k_{j}}(Z) > 1 - \delta $, it  follows that for $ b \in B_{j}^{\omega} $,
		\begin{equation} \label{eq:shifted}
			\begin{aligned}
				\frac{1}{m_{j}} & \Hof{  \theta^{*k_{j}}, \calE_{b+\ell_{j}-a+m_{j}}^{\omega} \mid \calE_{b+\ell_{j}-a}^{\omega} \vee \pi_{[d]\setminus\{j\}}^{-1} \calE_{b+\ell_{j}-a+m_{j}}^{\omega} } \geq \\
				& \int_{Z} \frac{1}{m_{j}} \Hof{ *_{s=1}^{k_{j}} \theta^{\omega}_{x_{s}, b}, \calE_{b+\ell_{j}-a+m_{j}}^{\omega} \mid \calC_{b}^{\omega} } \, \diff \theta^{\times k_{j}}(x_{1}, \ldots, x_{k_{j}}) \geq N\chi_{j} - O(\delta).
			\end{aligned}
		\end{equation}
		
		Finally, define $ Q^{\omega} := \left\{  q \in [n] \colon q - \ell_{j} + a \in B_{j}^{\omega} \right\} $. From $ \ell_{j}, a, \delta^{-1} \ll n $ and $ \#_{n}(B_{j}^{\omega}) > \delta/(2d) $, it follows that $ \#_{n}(Q^{\omega}) > \delta /(3d)$. The proof is finished by $ \varepsilon^{-1} \ll \delta^{-1} $ and \eqref{eq:shifted}.
	\end{proof}
	

\section{Entropy of component measures}	
\label{sec:components}

In this section, we prove three lemmas about the entropy of $ \mu^{\omega} $ across different scales, which will be applied in Sections \ref{sec:EntropyIncrease} and \ref{sec:pf-main-A}. \autoref{lem:rand-asym-e} is an analog of \cite[Lemma 4.1]{Rapaport2023}, while Lemmas \ref{lem:comp-e} and \ref{lem:comp-proj-e} replace \cite[Lemmas 1.13 and 1.14]{Rapaport2023} with analogous estimates for random measures at a large proportion of scales.


We begin with some notations. By \autoref{thm:L-Y-formula}, for $ \bfP \aev \omega $ and $ J \subset [d]$, $ \pi_{J}\mu^{\omega}$ is exact dimensional with dimension given by $ \dim \pi_{J}\calA $ as in \eqref{eq:LY-formula}. Inspired by \cite{Rapaport2023}, we define
\begin{equation}\label{eq:def-kappa-A}
	\kappa_{\calA} := \sum_{j=1}^{d-1} \chi_{j} + \chi_{d}( \dim \calA - (d-1)).
\end{equation}
Now, we are ready to state the three lemmas to be proved in this section.

\begin{lemma}\label{lem:rand-asym-e} Suppose $ \dim \pi_{[d-1]} \calA = d - 1$. For $ \eta \in (0,1)$ there exists $ \ol{\Omega} \subset \Omega $ with $ \bfP(\ol{\Omega}) > 1 - \eta $ such that for $ n \in \bbN $ with $ \eta^{-1} \ll n $, 
	\begin{equation*}\label{eq:rand-asym-e}
		\Abs{  \frac{1}{n}\Hof{ \mu^{\omega}, \calE^{\omega}_{n} } - N \kappa_{\calA}}  <  \eta \mFor  \omega \in \ol{\Omega}.
	\end{equation*}
\end{lemma}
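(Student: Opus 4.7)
The plan is to establish the pointwise convergence $\tfrac{1}{n}\Hof{\mu^{\omega}, \calE^{\omega}_{n}} \to N\kappa_{\calA}$ for $\bfP$-a.e.\ $\omega$ and then invoke Egorov's theorem to produce the uniform set $\ol{\Omega}$. The first observation is that the saturation hypothesis collapses the Ledrappier-Young data. By \autoref{thm:L-Y-formula}, $\dim \pi_{[s]}\calA = \sum_{j=1}^{s}(\hcalCA{[j-1]} - \hcalCA{[j]})/\chi_{j}$ with each summand bounded by $1$; the equality $\dim\pi_{[d-1]}\calA = d-1$ therefore forces $\hcalCA{[j-1]} - \hcalCA{[j]} = \chi_{j}$ for $1\leq j \leq d-1$. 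Applying the same formula at $J = [d]$ gives $\hcalCA{[d-1]} - \hcalCA{[d]} = \chi_{d}(\dim\calA - (d-1))$, so
\begin{equation*}
\kappa_{\calA} = \sum_{j=1}^{d-1}\chi_{j} + \chi_{d}\bigl(\dim\calA - (d-1)\bigr) = \sum_{j=1}^{d}\bigl(\hcalCA{[j-1]} - \hcalCA{[j]}\bigr).
\end{equation*}

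I would then telescope along the coordinate flag. Setting $\calF^{\omega}_{n,j} := \pi_{[j]}^{-1}\pi_{[j]}\calE^{\omega}_{n}$, the chain rule (\autoref{lem:list-identities}\ref{itm:ChainRule-H}) gives
\begin{equation*}
\Hof{\mu^{\omega}, \calE^{\omega}_{n}} = \sum_{j=1}^{d} \Hof{\mu^{\omega}, \calF^{\omega}_{n,j} \mid \calF^{\omega}_{n,j-1}}.
\end{equation*}
The central claim is that for each $j \in [d]$ and $\bfP$-a.e.\ $\omega$,
\begin{equation*}
\lim_{n\to\infty} \tfrac{1}{n}\Hof{\mu^{\omega}, \calF^{\omega}_{n,j} \mid \calF^{\omega}_{n,j-1}} = N\bigl(\hcalCA{[j-1]} - \hcalCA{[j]}\bigr).
\end{equation*}
Granting this, summation in $j$ yields the pointwise convergence, and Egorov's theorem furnishes $\ol{\Omega}$.

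To prove the central claim I would repurpose the machinery of \autoref{sec:L-Y}. The partition $\calF^{\omega}_{n,j}/\calF^{\omega}_{n,j-1}$ encodes the $j$-th coordinate at effective scale $\lambda^{\omega|n}_{j}$, which by \autoref{lem:Aj-rate} behaves like $2^{-nN\chi_{j}}$ on a $\bfP$-large set. The relevant disintegration is $\pi_{[j]}\Pi\beta^{\omega,\xi_{[j-1]}}_{y,x}$, a one-dimensional measure supported on a line parallel to the $j$-th axis, and \autoref{thm:exact-dim-general} identifies its exact dimension as $(\hcalCA{[j-1]} - \hcalCA{[j]})/\chi_{j}$. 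Combining \autoref{prop:LB-transDim} with the integral identity in \autoref{lem:integral-entropies} and the Shannon-McMillan-Breiman type statement of \autoref{lem:Cn-S-M-B} should convert this pointwise local dimension information into the desired conditional entropy convergence, following the telescoping scheme employed in proving the properties (D1)--(D3).

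The main obstacle is precisely this last conversion. Because $\calE^{\omega}_{n}$ is anisotropic and $\omega$-dependent, its per-direction scales $\lambda^{\omega|n}_{j}$ must be controlled uniformly in $j$, which requires integrating local dimension estimates against $\bfQ$ and using the mixing of $(\Omega\times\Lambda^{\bbN}, \bfQ, T)$ to pass from $\bfP$-a.s.\ convergence to the uniform-in-$n$ bound on $\ol{\Omega}$. The hypothesis $\dim\pi_{[d-1]}\calA = d-1$ is essential here: it pins each of the first $d-1$ directional gaps to exactly $\chi_{j}$, so no slack is lost in the telescoped sum, and the residual dimension $\dim\calA - (d-1)$ is confined to a single direction, where fluctuations in $\chi^{\omega|n}_{d}$ are most easily absorbed.
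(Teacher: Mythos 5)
Your opening computation is correct: the hypothesis $\dim\pi_{[d-1]}\calA = d-1$ together with \autoref{thm:L-Y-formula} forces $\hcalCA{[j-1]}-\hcalCA{[j]}=\chi_{j}$ for $j\leq d-1$, and unwinding \eqref{eq:def-kappa-A} gives $\kappa_{\calA}=\sum_{j=1}^{d}(\hcalCA{[j-1]}-\hcalCA{[j]})$. The telescoping identity $\Hof{\mu^{\omega},\calE^{\omega}_{n}}=\sum_{j=1}^{d}\Hof{\mu^{\omega},\calF^{\omega}_{n,j}\mid\calF^{\omega}_{n,j-1}}$ is also valid. But the ``central claim'' that $\tfrac{1}{n}\Hof{\mu^{\omega},\calF^{\omega}_{n,j}\mid\calF^{\omega}_{n,j-1}}\to N(\hcalCA{[j-1]}-\hcalCA{[j]})$ for each $j$ is stated without proof, and you explicitly flag the conversion as ``the main obstacle'' -- which means you have identified the hard step rather than carried it out. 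As written, this is not a proof.

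The gap is not merely cosmetic. \autoref{prop:LB-transDim} gives the local dimension of the transverse measures $\beta^{\omega,\xi_{[j-1]}}_{x}$, which are conditioned on the \emph{exact} fibre $\xi_{[j-1]}$; your target $\Hof{\mu^{\omega},\calF^{\omega}_{n,j}\mid\calF^{\omega}_{n,j-1}}$ conditions instead on the \emph{finite-resolution} anisotropic partition $\calF^{\omega}_{n,j-1}$, whose cells have $\omega$- and $j$-dependent side lengths $\lambda^{\omega|n}_{1},\ldots,\lambda^{\omega|n}_{j-1}$ that differ from one another and from the scale $\lambda^{\omega|n}_{j}$ in the $j$-th direction. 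Passing from one to the other is essentially the content of the paper's argument: for the lower bound one must invoke the dynamical self-affinity \eqref{eq:dyn-selfaff} to push the question to $\mu^{T^{n}\omega}$, compare against the cube partition $\calD_{\chi_{d}^{\omega|n}-\chi_{1}^{\omega|n}}$, and feed in \autoref{lem:Tn-Dn-rate}, which is precisely where the hypothesis $\dim\pi_{[d-1]}\calA=d-1$ enters quantitatively. Your appeal to ``combining \autoref{prop:LB-transDim} with \autoref{lem:integral-entropies} and \autoref{lem:Cn-S-M-B}'' omits all of this; those lemmas concern the partitions $\calC_{0}^{n-1}$ and the exact slices, not the anisotropic dyadic-type partitions $\calE^{\omega}_{n}$, and no mechanism is offered for bridging them. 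In effect, filling your gap would amount to re-running the paper's dyadic-versus-anisotropic argument once for each $j$, which is more work than the paper's single-pass decomposition $\Hof{\mu^{\omega},\calE^{\omega}_{n}}=\Hof{\mu^{\omega},\calD_{\chi_{d}^{\omega|n}}}-\Hof{\mu^{\omega},\calD_{\chi_{d}^{\omega|n}}\mid\calE^{\omega}_{n}}+\bigO{1}$, where the first term is controlled by \autoref{lem:Dn-rate} (exact dimensionality of $\mu^{\omega}$) and only the second requires the self-affinity and the $d-1$ hypothesis.
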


\begin{lemma} \label{lem:comp-e} 
	Suppose $ \dim \pi_{[d-1]} \calA = d - 1 $. For $\eta \in (0,1)$ there exists $ \ol{\Omega} \subset \Omega $ with $ \bfP(\ol{\Omega}) \geq 1 - \eta $ so that the following holds. Let $ m, n \in \bbN $ be with $ \eta^{-1} \ll m \ll n $. Then for $ \omega \in \ol{\Omega} $ there is $ Q^{\omega} \subset [n]$ with $ \#_{n}(Q^{\omega}) \geq 1 -\eta $ so that
	\begin{equation*}\label{eq:comp-e}
		\frac{1}{m} \Hof{ \mu^{\omega}, \calE_{q+m}^{\omega} \mid \calE_{q}^{\omega} } > N \kappa_{\calA} - \eta \mFor q \in Q^{\omega}.
	\end{equation*}
\end{lemma}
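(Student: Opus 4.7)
The plan is to combine \autoref{lem:rand-asym-e}, which gives $\tfrac{1}{m}\Hof{\mu^{\omega},\calE^{\omega}_{m}}\approx N\kappa_{\calA}$ for most $\omega$, with the dynamical lower bound \eqref{eq:En-LB},
\[
\Hof{\mu^{\omega},\calE^{\omega}_{q+m}\mid\calE^{\omega}_{q}}\ \ge\ \Hof{\mu^{T^{q}\omega},\calE^{T^{q}\omega}_{m}}-O(1),
\]
together with the ergodicity of the shift on $(\Omega,\bfP)$. The key observation is that \eqref{eq:En-LB} converts a conditional-entropy question at scale $q$ into an unconditional entropy question for the shifted random measure $\mu^{T^{q}\omega}$, so it suffices to show that $T^{q}\omega$ lies in the favourable set supplied by \autoref{lem:rand-asym-e} for a large proportion of $q\in[n]$.

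Concretely, I would first apply \autoref{lem:rand-asym-e} with tolerance $\eta/4$ to obtain a set $\ol{\Omega}_{1}\subset\Omega$ with $\bfP(\ol{\Omega}_{1})>1-\eta/4$ and a threshold $m_{1}=m_{1}(\eta)$ such that $\tfrac{1}{m}\Hof{\mu^{\omega},\calE^{\omega}_{m}}>N\kappa_{\calA}-\eta/4$ for all $\omega\in\ol{\Omega}_{1}$ and $m\ge m_{1}$. Since $\bfP$ is Bernoulli on $\Omega=\Gamma^{\bbN}$ by \eqref{eq:def-bfP}, the shift $T$ is ergodic with respect to $\bfP$; Birkhoff's theorem then yields $\tfrac{1}{n}\sum_{q=1}^{n}\indicator{\ol{\Omega}_{1}}(T^{q}\omega)\to\bfP(\ol{\Omega}_{1})$ for $\bfP$-a.e.\ $\omega$, and Egorov's theorem furnishes $\ol{\Omega}\subset\Omega$ with $\bfP(\ol{\Omega})\ge 1-\eta$ and a single scale threshold $n_{0}=n_{0}(\eta)$ such that for every $\omega\in\ol{\Omega}$ and $n\ge n_{0}$,
\[
\#_{n}\{q\in[n]:T^{q}\omega\in\ol{\Omega}_{1}\}>1-\eta/2.
\]
Setting $Q^{\omega}:=\{q\in[n]:T^{q}\omega\in\ol{\Omega}_{1}\}$ gives $\#_{n}(Q^{\omega})\ge 1-\eta$, and combining \eqref{eq:En-LB} with the lower bound on the full entropy of $\mu^{T^{q}\omega}$ supplies
\[
\tfrac{1}{m}\Hof{\mu^{\omega},\calE^{\omega}_{q+m}\mid\calE^{\omega}_{q}}\ \ge\ \tfrac{1}{m}\Hof{\mu^{T^{q}\omega},\calE^{T^{q}\omega}_{m}}-\tfrac{O(1)}{m}\ >\ N\kappa_{\calA}-\eta
\]
for $q\in Q^{\omega}$, provided $m\ge m_{1}$ and $m$ is large enough that the $O(1/m)$ term is $<\eta/4$. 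The regime $\eta^{-1}\ll m\ll n$ absorbs the thresholds $m_{1}(\eta)$ and $n_{0}(\eta)$ without trouble.

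The main obstacle is not computational but organisational: one must arrange the quantifiers so that the scale threshold $n_{0}$ depends only on $\eta$ and not on $\omega\in\ol{\Omega}$. This is exactly what Egorov's theorem supplies, and using the ergodicity of the Bernoulli measure $\bfP$ (rather than only its $T$-invariance) is essential here, since a naive Markov bound on the time average $\tfrac{1}{n}\sum_{q=1}^{n}\indicator{\ol{\Omega}_{1}^{c}}(T^{q}\omega)$ would give substantially weaker probability estimates. With the ergodic machinery in hand, the argument structurally mirrors the deterministic passage from asymptotic to scale-by-scale entropy used in \cite[Lemma~1.13]{Rapaport2023}, with \eqref{eq:En-LB} playing the role of the (otherwise unavailable) sub-additivity of entropy along the scale sequence.
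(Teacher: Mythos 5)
Your proposal is correct and follows essentially the same route as the paper: apply \autoref{lem:rand-asym-e} to get a good set $\Omega_{1}$ at a single scale $m$, then use Birkhoff's ergodic theorem together with Egorov's theorem to find $\ol{\Omega}$ on which $T^{q}\omega\in\Omega_{1}$ for a $(1-\eta)$-proportion of $q\in[n]$, and finally invoke \eqref{eq:En-LB} to transfer the entropy lower bound to the conditional entropy at scale $q$. The only differences from the paper's argument are the precise choices of tolerances (the paper uses $\eta/2$ rather than $\eta/4$), which are immaterial.
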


\begin{lemma} \label{lem:comp-proj-e}  
	Suppose $ \dim \pi_{[J]} \calA = \abs{J} $ for some $ J \subset [d]$. For $\eta \in (0,1)$ there exists $ \ol{\Omega} \subset \Omega $ with $ \bfP(\ol{\Omega}) \geq 1 - \eta $ so that the following holds. Let $ m, n \in \bbN $ be with $ \eta^{-1} \ll m \ll n $. Then for $ \omega \in \ol{\Omega} $ there is $ Q^{\omega} \subset [n]$ with $ \#_{n}(Q^{\omega}) \geq 1 -\eta $ so that
	\begin{equation*}\label{eq:comp-proj-e}
		\frac{1}{m} \Hof{ \mu^{\omega}, \pi_{J}^{-1}\calE_{q+m}^{\omega} \mid \calE_{q}^{\omega} } > N \sum_{j\in J} \chi_{j} - \eta \mFor q \in Q^{\omega}.
	\end{equation*}
\end{lemma}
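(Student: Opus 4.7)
The plan is to reduce the conditional entropy on the left of \eqref{eq:comp-proj-e} at scale $q$ to the absolute entropy $\frac{1}{m}\Hof{\mu^{T^q\omega},\pi_J^{-1}\calE_m^{T^q\omega}}$ of the shifted random measure at the relative scale $m$, by exploiting the dynamical self-affinity. Combined with an asymptotic entropy formula for $\pi_J\mu^\omega$, Birkhoff's ergodic theorem then produces the conclusion for a density-$(1-\eta)$ subset $Q^\omega\subset[n]$.

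To establish the asymptotic formula, I apply \autoref{lem:rand-asym-e} to the sub-IFS $\Phi_J$ on $\bbR^J$ and its associated random measure $\pi_J\mu^\omega=\Pi_J\beta^\omega$, where $\Pi_J:=\pi_J\Pi$ is the coding map of $\Phi_J$. The partition $\Gamma$, which keeps the full diagonal matrix $A_{\varphi_{x|N}}$ constant, also keeps its $J$-block constant, so $\Gamma$ and $\calA$ remain valid inputs for $\Phi_J$ with the same Bernoulli disintegration $\beta^\omega$. The hypothesis of \autoref{lem:rand-asym-e} for $\Phi_J$ becomes $\dim\pi_{J\setminus\{\max J\}}\calA=|J|-1$, which I derive from the given $\dim\pi_J\calA=|J|$ via \eqref{eq:LY-formula}: the bound $\hcalCA{J_{b-1}}-\hcalCA{J_b}\leq\chi_{j_b}$ from \autoref{thm:L-Y-formula} forces each of the $|J|$ summands to equal $1$, and truncating to the first $|J|-1$ summands gives the desired equality. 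The analog of $\kappa_\calA$ simplifies to $\sum_{j\in J}\chi_j$, so via the pushforward identity \autoref{lem:list-identities}\ref{itm:list-pushfoward-H} we obtain $\ol\Omega_0\subset\Omega$ with $\bfP(\ol\Omega_0)>1-\eta/4$ such that
\begin{equation*}
\Abs{\frac{1}{n}\Hof{\mu^\omega,\pi_J^{-1}\calE_n^\omega}-N\sum_{j\in J}\chi_j}<\eta/4\mFor \omega\in\ol\Omega_0 \text{ and } \eta^{-1}\ll n.
\end{equation*}

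Next, iterating \eqref{eq:dyn-selfaff-general} using \eqref{eq:def-beta-omega} gives $\mu^\omega=\sum_{u\in\Lambda^{qN}}\beta^\omega([u])\varphi_u\mu^{T^q\omega}$; concavity of conditional entropy (\autoref{lem:list-ests}\ref{itm:concav-aconvex}) yields
\begin{equation*}
\Hof{\mu^\omega,\pi_J^{-1}\calE_{q+m}^\omega\mid\calE_q^\omega}\geq\sum_u\beta^\omega([u])\Hof{\varphi_u\mu^{T^q\omega},\pi_J^{-1}\calE_{q+m}^\omega\mid\calE_q^\omega}.
\end{equation*}
I analyze each summand pointwise. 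Since $\varphi_u(K_\Phi)$ is a box with side lengths $O(\lambda_j^{\omega|q})$ matching the atoms of $\calE_q^\omega$, it meets $O(1)$ such atoms, so $\Hof{\varphi_u\mu^{T^q\omega},\calE_q^\omega}=O(1)$; combined with the chain rule \autoref{lem:list-identities}\ref{itm:ChainRule-H}, the pushforward identity, and the observation from \eqref{eq:Aomega-shift-part}, \eqref{eq:almost-transinv} and \autoref{lem:list-ests}\ref{itm:commensure} that $\varphi_u^{-1}\pi_J^{-1}\calE_{q+m}^\omega$ is $O(1)$-commensurable with $\pi_J^{-1}\calE_m^{T^q\omega}$, this shows
\begin{equation*}
\Hof{\varphi_u\mu^{T^q\omega},\pi_J^{-1}\calE_{q+m}^\omega\mid\calE_q^\omega}=\Hof{\mu^{T^q\omega},\pi_J^{-1}\calE_m^{T^q\omega}}+O(1).
\end{equation*}
Summing in $u$ with $\beta^\omega([u])$-weights produces the pointwise bound
\begin{equation*}
\Hof{\mu^\omega,\pi_J^{-1}\calE_{q+m}^\omega\mid\calE_q^\omega}\geq\Hof{\mu^{T^q\omega},\pi_J^{-1}\calE_m^{T^q\omega}}-O(1),
\end{equation*}
valid for every $\omega\in\Omega$ and $q\in\bbN$.

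Finally, by Birkhoff's ergodic theorem applied to $\indicator{\ol\Omega_0}$ (using shift-invariance of $\bfP$) and Egorov's theorem, there exists $\ol\Omega\subset\Omega$ with $\bfP(\ol\Omega)>1-\eta$ such that for $\omega\in\ol\Omega$ and $n$ with $\eta^{-1}\ll m\ll n$, the set $Q^\omega:=\{q\in[n]:T^q\omega\in\ol\Omega_0\}$ satisfies $\#_n(Q^\omega)>1-\eta$. For $q\in Q^\omega$, the first step applied at scale $m$ gives $\frac{1}{m}\Hof{\mu^{T^q\omega},\pi_J^{-1}\calE_m^{T^q\omega}}>N\sum_{j\in J}\chi_j-\eta/4$, and dividing the pointwise bound from the previous paragraph by $m$ absorbs the $O(1)$ error, producing \eqref{eq:comp-proj-e}. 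The main subtlety I anticipate is the first step: one must re-run the proof of \autoref{lem:rand-asym-e} in the sub-IFS setting and confirm that the Ledrappier-Young machinery of \autoref{sec:L-Y} transfers faithfully, in particular that $\pi_J\mu^\omega$ is the correct disintegration of $\pi_J\mu$ with respect to the same $\calA$.
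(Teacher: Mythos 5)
Your proof is correct but takes a genuinely different route from the paper's. The paper's proof of this lemma is more economical: it never invokes \autoref{lem:rand-asym-e}. Instead, it works directly at scale $m$ for $\mu^{T^q\omega}$: using the same convolution/concavity reduction you use, it arrives at $\Hof{\pi_J\mu^{T^q\omega}, \calE_m^{T^q\omega}}$, splits via the chain rule into $\Hof{\pi_J\mu^{T^q\omega},\calD_{\chi_d^{T^q\omega|m}}} - \Hof{\pi_J\mu^{T^q\omega},\calD_{\chi_d^{T^q\omega|m}}\mid\calE_m^{T^q\omega}}$, then lower-bounds the first term to within $\bigO{\varepsilon}$ of $mN\chi_d\abs{J}$ by \autoref{lem:Dn-rate} (using $\dim\pi_J\calA = \abs{J}$ directly), and upper-bounds the second term by $mN\sum_{j\in J}(\chi_d-\chi_j) + \bigO{m\varepsilon}$ by a simple counting argument via \autoref{lem:Aj-rate}. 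Your route instead treats $\Phi_J$ as its own IFS and applies \autoref{lem:rand-asym-e} to it as a black box. This requires verifying the side hypothesis $\dim\pi_{J\setminus\{\max J\}}\calA = \abs{J}-1$, which you derive correctly from the saturation of the Ledrappier--Young formula (each summand $\frac{\hcalCA{J_{b-1}}-\hcalCA{J_b}}{\chi_{j_b}}$ lies in $[0,1]$, so equaling $\abs{J}$ forces all of them to be $1$). Your approach is conceptually cleaner in packaging the entropy asymptotics, but at the cost of pulling in \autoref{lem:Tn-Dn-rate} (which \autoref{lem:rand-asym-e} uses internally) and the machinery transfer to $\Phi_J$ that you flag as the main subtlety. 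The paper's version avoids both that subtlety and the dimension derivation by proving only the one-sided inequality it actually needs, from simpler ingredients.
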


\subsection{Entropy growth along dyadic partitions} In this subsection, we explore the entropy growth of the random measures along dyadic partitions.

\begin{lemma}\label{lem:Dn-rate}
	For $ \eta \in (0, 1) $ there exists $ \ol{\Omega} \subset \Omega $ with $ \bfP(\ol{\Omega}) > 1 - \eta $ so that for $ n \in \bbN $ with $ \eta^{-1} \ll n $ and $ J \subset [d]$,
	\begin{equation*}
		\Abs{ \frac{1}{n} \Hof{ \pi_{J}\mu^{\omega}, \calD_{\chi^{\omega|n}_{d}} } - N \chi_{d}\dim  \pi_{J}\calA} < \eta \mFor \omega \in \ol{\Omega}.
	\end{equation*}
\end{lemma}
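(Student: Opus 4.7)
The plan is to reduce this to the exact dimensionality of $\pi_J\mu^\omega$ established in \autoref{thm:L-Y-formula} combined with the Lyapunov rate control from \autoref{lem:Aj-rate}, uniformized via Egorov's theorem. Recall that by \autoref{thm:L-Y-formula}, applied to the sub-index set $J$, for $\bfP$-a.e.\ $\omega$ the projected measure $\pi_J \mu^\omega$ is exact dimensional with dimension $\dim \pi_J \calA$, a $\bfP$-a.s.\ constant. By Young's theorem~\cite{Young1982}, exact dimensionality implies convergence of the dyadic entropy: for $\bfP$-a.e.\ $\omega$,
\begin{equation*}
	\lim_{k\to\infty} \frac{1}{k} \Hof{ \pi_J \mu^\omega, \calD_k } = \dim \pi_J \calA.
\end{equation*}
Since $\mu^\omega$ is supported on the compact set $K_\Phi$ uniformly in $\omega$, the functions $\omega \mapsto \frac{1}{k}\Hof{\pi_J\mu^\omega, \calD_k}$ are measurable and uniformly bounded.

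First I would fix a small $\delta > 0 $ (to be chosen in terms of $ \eta, N, \chi_d, d $). For each of the $2^d$ subsets $J \subset [d]$, Egorov's theorem gives $\ol{\Omega}_J^{(1)} \subset \Omega$ with $\bfP(\ol{\Omega}_J^{(1)}) > 1 - \eta/2^{d+1}$ and $K_0 \in \bbN$ such that for $\omega \in \ol{\Omega}_J^{(1)}$ and $k \geq K_0$,
\begin{equation*}
	\Abs{ \frac{1}{k} \Hof{\pi_J \mu^\omega, \calD_k} - \dim \pi_J \calA } < \delta.
\end{equation*}
Next, by \autoref{lem:Aj-rate} applied with a small parameter $\eta' \ll \eta$, there exists $\ol{\Omega}^{(2)} \subset \Omega$ with $\bfP(\ol{\Omega}^{(2)}) > 1 - \eta/2$ such that $\abs{\chi^{\omega|n}_d - nN\chi_d} < n\eta'$ for $\omega \in \ol{\Omega}^{(2)}$ and $\eta'^{-1} \ll n$. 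Setting $\ol{\Omega} := \ol{\Omega}^{(2)} \cap \bigcap_{J \subset [d]} \ol{\Omega}_J^{(1)}$ yields $\bfP(\ol{\Omega}) > 1 - \eta$.

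For $\omega \in \ol{\Omega}$ and $n$ with $\eta^{-1} \ll n$, note that $\chi^{\omega|n}_d \geq nN|\log r_{\max}|$ by \eqref{eq:MinMaxSize}, hence $\chi^{\omega|n}_d \geq K_0$ for $n$ sufficiently large. Writing $k = \flr{\chi^{\omega|n}_d}$ (which is how $\calD_{\chi^{\omega|n}_d}$ is interpreted), we decompose
\begin{equation*}
	\frac{1}{n} \Hof{\pi_J\mu^\omega, \calD_{\chi^{\omega|n}_d}} = \frac{k}{n} \cdot \frac{1}{k} \Hof{\pi_J\mu^\omega, \calD_k}.
\end{equation*}
The first factor is within $\eta' + 1/n$ of $N\chi_d$; the second is within $\delta$ of $\dim \pi_J \calA \leq d$. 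A triangle-inequality estimate then bounds the difference by $(\eta' + 1/n) d + (N\chi_d)\delta + O(\eta'\delta)$, which is less than $\eta$ for $\delta, \eta'$ chosen small enough (depending only on $\eta, d, N\chi_d$).

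The argument is essentially routine: the only delicate point is ensuring uniformity in $\omega$, which is handled by Egorov's theorem and the finiteness of the collection $\{J \subset [d]\}$. Measurability of $\omega \mapsto \Hof{\pi_J \mu^\omega, \calD_k}$ follows from the measurable dependence of $\beta^\omega$ on $\omega$ (through \eqref{eq:def-beta-omega}) and the fact that only finitely many cells of $\calD_k$ meet the bounded support $\pi_J K_\Phi$.
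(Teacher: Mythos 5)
Your proof is correct and takes essentially the same approach as the paper: both combine Egorov's theorem (applied to the entropy convergence coming from exact dimensionality and Young's theorem) with \autoref{lem:Aj-rate} to control $\chi_d^{\omega|n}$, then transfer between scales $nN\chi_d$ and $\chi_d^{\omega|n}$. The only cosmetic difference is bookkeeping: the paper invokes the commensurability estimate \autoref{lem:list-ests}\ref{itm:commensure} to pass between the two dyadic partitions, while you achieve the same effect by factoring $\frac{1}{n}\Hof{\cdot,\calD_k} = \frac{k}{n}\cdot\frac{1}{k}\Hof{\cdot,\calD_k}$ and applying the triangle inequality to the product.
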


\begin{proof}
	By Egorov's theorem, there is $ \Omega_{1} \subset \Omega $ with $ \bfP(\Omega_{1}) > 1 - \eta / 2 $ so that for $ \omega \in \Omega_{1} $
	\begin{equation*}
		\Abs{ \frac{1}{n} \Hof{ \pi_{J}\mu^{\omega}, \calD_{ nN\chi_{d} } } - N \chi_{d}\dim  \pi_{J}\calA} < \eta/2.
	\end{equation*}
	On the other hand, by \autoref{lem:Aj-rate} there exists $ \ol{\Omega} \subset \Omega_{1} $ with $ \bfP(\ol{\Omega}) > 1 - \eta $ so that for $ \omega \in \ol{\Omega}$
	\begin{equation*}
		\Abs{ \chi^{\omega|n}_{d} - n N \chi_{d}} < n\eta/2.
	\end{equation*}
	The proof is finished by combining the above two equations with \autoref{lem:list-ests}\ref{itm:commensure}.
\end{proof}

\begin{lemma}\label{lem:Tn-Dn-rate}
	Suppose $ \dim \pi_{J} \calA = \abs{J}$ for some $ J \subset [d]$. For $ \eta \in (0, 1) $ there exists $ \ol{\Omega} \subset \Omega $ with $ \bfP(\ol{\Omega}) > 1 - \eta $ so that for $ n \in \bbN $ with $ \eta^{-1} \ll n $,
	\begin{equation}
		\Abs{ \frac{1}{n} \Hof{\pi_{J}\mu^{T^{n}\omega}, \calD_{\chi_{d}^{\omega|n}-\chi^{\omega|n}_{1}}} - \abs{J}N(\chi_{d}-\chi_{1})} < \eta \mFor \omega \in \ol{\Omega}.
	\end{equation}
\end{lemma}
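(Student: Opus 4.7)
The plan is to deduce \autoref{lem:Tn-Dn-rate} from \autoref{lem:Dn-rate} applied to $T^n\omega$ at a suitably chosen auxiliary scale, combined with \autoref{lem:Aj-rate} to align the two scales. The assumption $\dim\pi_J\calA = \abs{J}$ together with \autoref{lem:Dn-rate} gives asymptotic entropy rate $N\chi_d\abs{J}$ for $\pi_J\mu^{T^n\omega}$ at the canonical dyadic scale $\chi_d^{T^n\omega|\cdot}$; the task is to transfer this to the scale $\chi_d^{\omega|n}-\chi_1^{\omega|n}$, which is tied to $\omega$ rather than to $T^n\omega$. There is no real obstacle beyond careful bookkeeping; the only mild subtleties are that the good set is for $\omega$ but we must also control $T^n\omega$ (resolved by $T$-invariance of $\bfP$), and that the commensurability error between the two dyadic scales must be kept linear in $n\eta'$ for a sufficiently small auxiliary parameter $\eta' \ll \eta$.

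First I would fix $\eta' \in (0,1)$ with $\eta' \ll \eta$ and set $n' := \flr{n(\chi_d-\chi_1)/\chi_d}$, so that $n'N\chi_d = nN(\chi_d-\chi_1) + O(1)$ and $(\eta')^{-1} \ll n'$ whenever $n$ is sufficiently large. Next I would apply \autoref{lem:Aj-rate} with parameter $\eta'$ twice---at scale $n$ for $\omega$ and at scale $n'$ for $T^n\omega$---using $T$-invariance of $\bfP$ to pull the second application back to a condition on $\omega$. Intersecting yields $\ol{\Omega}_1$ with $\bfP(\ol{\Omega}_1) > 1-3\eta'$ on which
\[
	\abs{\chi_d^{\omega|n} - \chi_1^{\omega|n} - nN(\chi_d - \chi_1)} < 2n\eta' \mAnd \abs{\chi_d^{T^n\omega|n'} - n'N\chi_d} < n'\eta',
\]
so that $\abs{\chi_d^{T^n\omega|n'} - (\chi_d^{\omega|n} - \chi_1^{\omega|n})} = O(n\eta')$.

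Then I would invoke \autoref{lem:Dn-rate} with parameter $\eta'$, again via $T$-invariance, obtaining $\ol{\Omega}_2$ with $\bfP(\ol{\Omega}_2) > 1-\eta'$ such that for $\omega \in \ol{\Omega}_2$,
\[
	\Abs{ \frac{1}{n'} \Hof{\pi_J \mu^{T^n\omega}, \calD_{\chi_d^{T^n\omega|n'}}} - N\chi_d\abs{J} } < \eta'.
\]
On $\ol{\Omega} := \ol{\Omega}_1 \cap \ol{\Omega}_2$, the partitions $\calD_{\chi_d^{T^n\omega|n'}}$ and $\calD_{\chi_d^{\omega|n}-\chi_1^{\omega|n}}$ of the $\abs{J}$-dimensional subspace supporting $\pi_J\mu^{T^n\omega}$ differ by $O(n\eta')$ in level, hence are $\exp(O(n\eta'))$-commensurable; by \autoref{lem:list-ests}\ref{itm:commensure}, the corresponding entropies of $\pi_J\mu^{T^n\omega}$ differ by $O(n\eta')$. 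Dividing by $n$ and using $n'/n = (\chi_d-\chi_1)/\chi_d + O(1/n)$, the target quantity lies within $O(\eta')$ of $\abs{J}N(\chi_d-\chi_1)$; choosing $\eta'$ small enough relative to $\eta$ completes the proof.
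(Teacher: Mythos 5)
The scaling relationship you set up is correct: choosing $n' = \flr{n(\chi_d-\chi_1)/\chi_d}$ does align the auxiliary scale $\chi_d^{T^n\omega|n'}$ with the target level $\chi_d^{\omega|n}-\chi_1^{\omega|n}$ up to $O(n\eta')$, and the commensurability bookkeeping gives the right numerical conclusion. But there is a genuine gap in the construction of the good set, and it is precisely the point the paper's own proof works hard to handle.

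When you write ``invoke \autoref{lem:Dn-rate} with parameter $\eta'$, again via $T$-invariance, obtaining $\ol{\Omega}_2$,'' what $T$-invariance actually gives is this: \autoref{lem:Dn-rate} produces a base set $\ol{\Omega}'$ with $\bfP(\ol{\Omega}')>1-\eta'$ on which the estimate holds for $\mu^{\omega'}$; the estimate for $\mu^{T^n\omega}$ then holds precisely when $T^n\omega\in\ol{\Omega}'$, i.e.\ on the set $T^{-n}\ol{\Omega}'$. This set has measure $>1-\eta'$ by invariance, but it \emph{depends on $n$}. The same defect appears in your second application of \autoref{lem:Aj-rate} (the one controlling $\chi_d^{T^n\omega|n'}$): it yields $T^{-n}$ of a base set. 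Hence your $\ol{\Omega}=\ol{\Omega}_1\cap\ol{\Omega}_2$ is actually a family $\ol{\Omega}(n)$ with no control over the intersection $\bigcap_n \ol{\Omega}(n)$, whereas the lemma demands a single $\ol{\Omega}$, independent of $n$, that works simultaneously for all $n$ with $\eta^{-1}\ll n$. Nothing in your argument supplies this uniformity.

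The paper's proof is organized around exactly this obstacle and takes a different route. It first applies Egorov to the a.e.\ convergence $\frac{1}{n}\Hof{\pi_J\mu^\omega,\calD_n}\to\abs{J}$, producing a single $\Omega_1$ and a threshold $n_0$ so that $\frac{1}{n}\Hof{\pi_J\mu^\omega,\calD_n}\geq\abs{J}-\varepsilon$ holds for all $\omega\in\Omega_1$ and all $n\geq n_0$. It then bounds $\int\inf_{n\geq n_0}\frac{1}{n}\Hof{\pi_J\mu^{T^n\omega},\calD_n}\,\diff\bfP(\omega)$ from below using $T$-invariance, and combines this with the trivial pointwise upper bound $\abs{J}$ via Markov's inequality to extract a single set $\Omega_2$ of large measure on which the estimate holds for every $n\geq n_0$. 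The infimum over $n$, placed inside the integrand before any good set is declared, is what converts a one-scale-at-a-time bound into a uniform one. To repair your proposal you would need to replace the naive $T$-invariance pull-back with an argument of this averaging/Markov type (or some other device producing an $n$-uniform set), rather than intersecting $n$-dependent preimages.
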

\begin{proof}
	Let $ \varepsilon \in (0,1)$ be with $ \eta^{-1} \ll \varepsilon^{-1} $. By Egorov's theorem, there is $ \Omega_{1} \subset \Omega  $ with $ \bfP(\Omega_{1}) > 1 - \varepsilon $ and $ n_{0} \in \bbN $ so that for $ \omega \in \Omega_{1} $ and $ n \geq n_{0} $,
	\begin{equation*}
		\frac{1}{n} \Hof{\pi_{J} \mu^{\omega}, \calD_{n}} \geq \abs{J} - \varepsilon.
	\end{equation*}
	Then by $ \bfP $ being $ T $-invariant, we have
	\begin{align*}
		\int \inf_{n\geq n_{0}}\frac{1}{n} \Hof{ \pi_{[d-1]} \mu^{T^{n}\omega},  \calD_{n} } \, \diff \bfP( \omega ) & = \int \inf_{n\geq n_{0}} \frac{1}{n} \Hof{ \pi_{[d-1]} \mu^{\omega},  \calD_{n} } \, \diff \bfP( \omega ) \\
		& \geq \int_{\Omega_{1}} \inf_{n\geq n_{0}} \frac{1}{n} \Hof{ \pi_{[d-1]} \mu^{\omega},  \calD_{n} } \, \diff \bfP( \omega ) \\
		 & \geq \abs{J} - \bigO{\varepsilon}.
	\end{align*}
	
	On the other hand, we have $ (1/n) \Hof{ \pi_{J} \mu^{T^{n}\omega},\calD_{n}} \leq \abs{J} $ for $ n  \in \bbN $. From this and above, it follows that there exists $ \Omega_{2} \subset \Omega $ with $ \bfP(\Omega_{2}) > 1 - \bigO{\varepsilon^{1/3}}$ so that for $ \omega \in \Omega_{2} $ and $ n \geq n_{0} $,
	\begin{equation}\label{eq:Tn-omega-J}
		\Abs{ \frac{1}{n} \Hof{ \pi_{J} \mu^{T^{n}\omega}, \calD_{n}} - \abs{J} } \leq \bigO{\varepsilon^{1/3}}.
	\end{equation}
	By \autoref{lem:Aj-rate} there is $ \ol{\Omega } \subset \Omega_{2}$ with $ \bfP(\ol{\Omega}) > 1 - \bigO{\varepsilon^{1/3}} $ so that for $ \omega \in \ol{\Omega}$ and $ \varepsilon^{-1} \ll n $,
	\begin{equation}\label{eq:chi-d-1-speed}
		\Abs{\chi^{\omega|n}_{d}-\chi^{\omega|n}_{1} - nN(\chi_{d}-\chi_{1})} \leq n\varepsilon. 
	\end{equation}
	Combining \eqref{eq:Tn-omega-J} and \eqref{eq:chi-d-1-speed}, we conclude from \autoref{lem:list-ests}\ref{itm:commensure} that for $ \omega \in \ol{\Omega} $ and $ \varepsilon^{-1} \ll n $,
	\begin{equation*}
		\Abs{ \frac{1}{n} \Hof{\pi_{J}\mu^{T^{n}\omega}, \calD_{\chi_{d}^{\omega|n}-\chi^{\omega|n}_{1}}} - \abs{J}N(\chi_{d}-\chi_{1})} < \bigO{\varepsilon^{1/3}}.
	\end{equation*}
	This finishes the proof since $\eta^{-1} \ll \varepsilon^{-1}$.
\end{proof}

\subsection{Proof of Lemmas \ref{lem:rand-asym-e}--\ref{lem:comp-proj-e}} In this subsection, we prove the lemmas in the beginning of this section. First we prove \autoref{lem:rand-asym-e}.

\begin{proof}[Proof of \autoref{lem:rand-asym-e}]
	Let $ \varepsilon \in (0,1)$ be with $ \eta^{-1} \ll \varepsilon^{-1} \ll n $.
	Let $ \ol{\Omega}$ be the intersection of the $ \ol{\Omega}$'s obtained by applying Lemmas \ref{lem:Aj-rate}, \ref{lem:Dn-rate} and \ref{lem:Tn-Dn-rate} with $ \varepsilon, n $ in place of $ \eta,n $. Then $ \bfP(\ol{\Omega}) >  1 - 3 \varepsilon $. In what follows we take $ \omega \in \ol{\Omega}$. Note that by \eqref{eq:E-omega-D-chi},
	\begin{equation*}
		\Hof{ \mu^{\omega}, \calE^{\omega}_{n} } = \Hof{ \mu^{\omega}, \calD_{\chi_{d}^{\omega|n}}} - \Hof{ \mu^{\omega}, \calD_{\chi_{d}^{\omega|n}} \mid \calE^{\omega}_{n}} + \bigO{1}.
	\end{equation*}
	From this, \autoref{lem:Dn-rate}, \eqref{eq:def-kappa-A} and $ \eta^{-1} \ll \varepsilon^{-1} $, it suffices to show
	\begin{equation}\label{eq:asym-e-goal}
		\frac{1}{n}\Hof{ \mu^{\omega}, \calD_{\chi_{d}^{\omega|n}} \mid \calE^{\omega}_{n}} = N\sum_{j=1}^{d-1} (\chi_{d} - \chi_{j}) + \bigO{\varepsilon}.
	\end{equation}
	
	First we show the upper bound. It follows from \autoref{lem:Aj-rate} that for each $ E \in \calE^{\omega}_{n}$,
	\begin{align*}
		\log \# \left\{  D \in \calD_{\chi_{d}^{\omega|n}} \colon D \intxn E \neq \emptyset \right\} & \leq \sum_{j=1}^{d}( \chi^{\omega|n}_{d} - \chi^{\omega|n}_{j} ) + \bigO{1} \leq nN\sum_{j=1}^{d} (\chi_{d} -\chi_{j}) + \bigO{n\varepsilon}.
	\end{align*}
	Thus,
	\begin{equation}\label{eq:asym-e-UB}
		\frac{1}{n}\Hof{ \mu^{\omega}, \calD_{\chi_{d}^{\omega|n}} \mid \calE^{\omega}_{n}} \leq N\sum_{j=1}^{d-1} (\chi_{d} - \chi_{j}) + \bigO{\varepsilon}.
	\end{equation}
	
	Next, we prove the lower bound in \eqref{eq:asym-e-goal}. Since $ A^{-\omega|n} \calD_{\chi^{\omega|n}_{d}} $ and $ \pi_{[d-1]}^{-1}\left(\bigtimes_{j=1}^{d-1} \calD_{\chi^{\omega|n}_{d}-\chi^{\omega|n}_{j}}\right) $ are $ O(1)$-commensurable, it follows from \eqref{eq:dyn-selfaff}, \autoref{lem:list-ests}\ref{itm:concav-aconvex} and \eqref{eq:almost-transinv} that
	\begin{equation}\label{eq:mu-pi-d-1}
		\begin{aligned}
	\Hof{ \mu^{\omega}, \calD_{\chi_{d}^{\omega|n}} \mid \calE^{\omega}_{n}} & = \Hof{ \nu^{\omega}_{n} * A^{\omega|n} \mu^{T^{n}\omega}, \calD_{\chi_{d}^{\omega|n}} \mid \calE^{\omega}_{n}} \\
		& \geq  \Hof{ \mu^{T^{n}\omega}, (A^{\omega|n})^{-1} \calD_{\chi^{\omega|n}_{d}} }  - \bigO{1} \\ & \geq \Hof{ \mu^{T^{n}\omega}, \pi_{[d-1]}^{-1}\left(\bigtimes_{j=1}^{d-1} \calD_{\chi^{\omega|n}_{d}-\chi^{\omega|n}_{j}}\right) }  - \bigO{1} \\
		& = \Hof{ \pi_{[d-1]} \mu^{T^{n}\omega}, \bigtimes_{j=1}^{d-1} \calD_{\chi^{\omega|n}_{d}-\chi^{\omega|n}_{j}} }  - \bigO{1}.
	\end{aligned}
	\end{equation}
	For each $ E \in \bigtimes_{j=1}^{d-1} \calD_{\chi^{\omega|n}_{d}-\chi^{\omega|n}_{j}}$, by \autoref{lem:Aj-rate} we have
	\begin{align*}
		\log \# \left\{  F \in \calD^{d-1}_{\chi_{d}^{\omega|n}-\chi_{1}^{\omega|n}}  \colon F \intxn E \neq \emptyset \right\} & \leq \sum_{j=1}^{d-1}( \chi^{\omega|n}_{j} - \chi^{\omega|n}_{1} ) + \bigO{1} \leq nN\sum_{j=1}^{d-1} (\chi_{j} - \chi_{1}) + \bigO{n\varepsilon}.
	\end{align*} 
	Thus,
	\begin{equation}\label{eq:pi-d-1-UB}
		\frac{1}{n}\Hof{ \pi_{[d-1]} \mu^{T^{n}\omega}, \calD^{d-1}_{\chi_{d}^{\omega|n}-\chi_{1}^{\omega|n}}  \mid \bigtimes_{j=1}^{d-1} \calD_{\chi^{\omega|n}_{d}-\chi^{\omega|n}_{j}} } \leq N \sum_{j=1}^{d-1} (\chi_{j} - \chi_{1}) + \bigO{\varepsilon}.
	\end{equation}
	Applying \autoref{lem:list-identities}\ref{itm:ChainRule-H}, we conclude from \eqref{eq:mu-pi-d-1}, \eqref{eq:pi-d-1-UB} and \autoref{lem:Tn-Dn-rate} that
	\begin{equation*}\label{eq:asym-e-LB}
		\begin{aligned}
			\frac{1}{n}\Hof{ \mu^{\omega}, \calD_{\chi_{d}^{\omega|n}} \mid \calE^{\omega}_{n}} & \geq (d-1)N(\chi_{d}-\chi_{1}) - N\sum_{j=1}^{d-1} (\chi_{j}- \chi_{1}) - \bigO{\varepsilon} \\
			& = N \sum_{j=1}^{d-1}(\chi_{d}-\chi_{j}) - \bigO{\varepsilon}.
		\end{aligned}
	\end{equation*}
	This, together with \eqref{eq:asym-e-UB}, finishes the proof of \eqref{eq:asym-e-goal}.
\end{proof}

Next, we prove \autoref{lem:comp-e}.

\begin{proof}[Proof of \autoref{lem:comp-e}]
	
	By applying \autoref{lem:rand-asym-e} with $ \eta/2, m $ in place of $ \eta, n $, there exists $ \Omega_{1} \subset \Omega $ with $ \bfP(\Omega_{1}) > 1 - \eta / 2 $ so that for $ \omega \in \Omega_{1}$,
	\begin{equation*}
		\frac{1}{m} \Hof{\mu^{\omega},\calE^{\omega}_{m}} > N\kappa_{\calA} - \frac{\eta}{2}.
	\end{equation*}
	By applying Birkhoff's ergodic theorem and Egorov's theorem to $ \indicator{\Omega_{1}} $, we find $ \ol{\Omega} \subset \Omega $ with $ \bfP(\ol{\Omega}) > 1 - \eta $ such that for $ \omega \in \ol{\Omega}$ there is $ Q^{\omega} \subset [n] $ with $ \#_{n}(Q^{\omega}) > 1 - \eta $ and $ T^{q}\omega \in \Omega_{1}$ for $ q \in Q^{\omega} $. From the above inequality, $ T^{q}\omega \in \Omega_{1}$, $ \eta^{-1} \ll m $ and \eqref{eq:En-LB}, it follows that
	\begin{equation*}
		\frac{1}{m}\Hof{ \mu^{\omega}, \calE_{q+m}^{\omega} \mid \calE^{\omega}_{q} } \geq \frac{1}{m} \Hof{ \mu^{T^{q}\omega}, \calE_{m}^{T^{q}\omega} } - \bigO{\frac{1}{m}} > N\kappa_{\calA} - \eta.
	\end{equation*}
	This finishes the proof.
\end{proof}

Finally, we prove \autoref{lem:comp-proj-e}.

\begin{proof}[Proof of \autoref{lem:comp-proj-e}]
	Let $ \varepsilon \in (0,1)$ be with $ \eta^{-1} \ll \varepsilon^{-1} \ll m $. Let $ \Omega_{1}$ be the intersection of the $ \ol{\Omega}$'s obtained from applying  \autoref{lem:Aj-rate} and \autoref{lem:Dn-rate} with $ \varepsilon, m $ in place of  $ \eta, n $.  Then $ \bfP(\Omega_{1}) > 1 - 2 \varepsilon $. By applying Birkhoff's ergodic theorem and Egorov's theorem to $ \indicator{\Omega_{1}} $, we find $ \ol{\Omega} \subset \Omega $ with $ \bfP(\ol{\Omega}) > 1 - \eta $ so that for $ \omega \in \ol{\Omega}$ there is $ Q^{\omega} \subset [n] $ with $ \#_{n}(Q^{\omega}) > 1 - \eta $ and $ T^{q}\omega \in \Omega_{1}$ for $ q \in Q^{\omega} $. In what follows we take $ \omega \in \Omega $ and let $ q \in Q^{\omega}$. Then $ T^{q} \omega \in \Omega_{1}$.
	
	For $ E \in \calE_{m}^{T^{q}\omega} $ with $ E \intxn \pi_{J}\euclid \neq \emptyset $, by \autoref{lem:Aj-rate} we have
	\begin{equation*}
		\log \#\left \{ D \in \calD_{\chi_{d}^{T^{q}\omega|m}} \colon D \intxn E \intxn \pi_{J} \euclid \neq \emptyset \right\} \leq m N\sum_{j\in J}( \chi_{d} - \chi_{j}) + \bigO{m\varepsilon}.
	\end{equation*}
	Thus,
	\begin{equation}\label{eq:piJ-cond-UB}
		\frac{1}{m}\Hof{\pi_{J}\mu^{T^{q}\omega}, \calD_{\chi_{d}^{T^{q}\omega|m}} \mid \calE_{m}^{T^{q}\omega} } \leq N\sum_{j\in J}( \chi_{d} - \chi_{j}) + \bigO{\varepsilon}.
	\end{equation}
	
	Next we estimate that
\begin{equation*}\label{eq:concavity-piJ-cond-q}
	\begin{aligned}
		& \hspace{-2em} \Hof{ \mu^{\omega}, \pi_{J}^{-1}\calE_{q+m}^{\omega} \mid \calE_{q}^{\omega} } \\ & \geq \Hof{ A^{\omega|q}\mu^{T^{q}\omega}, \pi_{J}^{-1}\calE_{q+m}^{\omega} \mid \calE_{q}^{\omega} } \hspace{4em} \by{\eqref{eq:dyn-selfaff} and concavity of entropy} \\
		& \geq \Hof{ \mu^{T^{q}\omega}, \pi_{J}^{-1}\calE_{m}^{T^{q}\omega} } - \bigO{1} \hspace{5em} \by{\eqref{eq:Aomega-shift-part}} \\ 
		 & =  \Hof{ \pi_{J}\mu^{T^{q}\omega}, \calE_{m}^{T^{q}\omega} } - \bigO{1} \hspace{5.5em} \by{\autoref{lem:list-identities}\ref{itm:list-pushfoward-H}}\\
		 & = \Hof{\pi_{J}\mu^{T^{q}\omega}, \calD_{\chi_{d}^{T^{q}\omega|m}}  } -  \Hof{\pi_{J}\mu^{T^{q}\omega}, \calD_{\chi_{d}^{T^{q}\omega|m}} \mid \calE_{m}^{T^{q}\omega} } - \bigO{1},
	\end{aligned}
\end{equation*}
where the last equality is by \autoref{lem:list-identities}\ref{itm:ChainRule-H}. 
Since $ T^{q} \omega \in \Omega_{1}$ and $ \eta^{-1} \ll m $, combining the above with \autoref{lem:Dn-rate} and \eqref{eq:piJ-cond-UB} yields that
\begin{equation*}
	\frac{1}{m}\Hof{ \mu^{\omega}, \pi_{J}^{-1}\calE_{q+m}^{\omega} \mid \calE_{q}^{\omega} } \geq N \sum_{j\in J} \chi_{j} - \bigO{\frac{1}{m}} - \bigO{\varepsilon}.
\end{equation*}
This finishes the proof since $ \eta^{-1} \ll \varepsilon^{-1} \ll m $.
\end{proof}

\section{Proof of the entropy increase result} \label{sec:EntropyIncrease}

In this section, we prove the following entropy increase result for random measures, which serves as an analog to \cite[Theorem 1.12]{Rapaport2023}. This result is a crucial ingredient in the proof of \autoref{thm:main-A}.

\begin{theorem}\label{thm:omega-e-increase}
	Suppose $ \dim \calA < d $ and $ \dim \pi_{J} \calA = \abs{J} $ for each $ J \subsetneq [d]$. For $ \varepsilon \in (0,1)$ there exists $ \delta = \delta(\varepsilon) > 0 $ so that the following holds. Let $ \eta \in (0,1)$ be with $ \varepsilon^{-1} \ll \eta^{-1} $. There exists $ \ol{\Omega} \subset \Omega $ with $ \bfP(\ol{\Omega}) > 1 - \eta $ so that for $ n \in \bbN $ with $ \eta^{-1} \ll n $ and $ \omega \in \ol{\Omega}$ the following holds. Let $ \theta \in \calM_{c}(\euclid[d]) $ with $ \diam( \supp \theta) \leq 1/\varepsilon $ and $ \frac{1}{n} H(\theta, \calE_{n}^{\omega}) > \varepsilon $. Then $ \frac{1}{n} \Hof{ \theta * \mu^{\omega}, \calE_{n}^{\omega} } \geq N\kappa_{\calA} + \delta $.
\end{theorem}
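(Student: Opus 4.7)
I will follow Rapaport's strategy for \cite[Theorem 1.12]{Rapaport2023} adapted to $\mu^{\omega}$, combining the three preparatory inputs developed in the paper: the self-convolution entropy bound (\autoref{prop:conv-e}), the local entropies of $\mu^{\omega}$ (Lemmas \ref{lem:rand-asym-e}, \ref{lem:comp-e}), and the projected component entropies (\autoref{lem:comp-proj-e}). The strict positivity of
$c_{0} := N\chi_{d}(d - \dim \calA) > 0$, guaranteed by the hypothesis $\dim \calA < d$, is what produces the definite gain.

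\textbf{Setup.} Fix cascading parameters $\varepsilon^{-1} \ll \eta^{-1} \ll m_{d} \ll k_{d} \ll \cdots \ll m_{1} \ll k_{1} \ll n$ matching \autoref{prop:conv-e}. That proposition yields $\ol{\Omega}_{1}$ with $\bfP(\ol{\Omega}_{1}) \geq 1 - \eta/3$ such that for each $\omega \in \ol{\Omega}_{1}$ there is a direction $j = j(\omega, \theta) \in [d]$ and $Q^{\omega}_{1} \subset [n]$ of density $\geq \delta_{1}(\varepsilon) > 0$ on which $(1/m_{j})\Hof{\theta^{*k_{j}}, \calE^{\omega}_{q+m_{j}} \mid R_{q}} > N\chi_{j} - \varepsilon$, where $R_{q} := \calE^{\omega}_{q} \vee \pi^{-1}_{[d]\setminus\{j\}}\calE^{\omega}_{q+m_{j}}$. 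Since $[d]\setminus\{j\} \subsetneq [d]$, the hypothesis $\dim \pi_{J}\calA = |J|$ activates \autoref{lem:comp-proj-e}; pairing it with Lemmas \ref{lem:comp-e} and \ref{lem:rand-asym-e} (the latter giving the matching upper bound $(1/n)\Hof{\mu^{\omega}, \calE^{\omega}_{n}} \leq N\kappa_{\calA} + \eta$, from which an averaging/Markov argument extracts a two-sided per-scale bound on a large proportion of scales) produces $\ol{\Omega}_{2}$ with $\bfP(\ol{\Omega}_{2}) \geq 1 - \eta/3$ and $Q^{\omega}_{2} \subset [n]$ of density $\geq 1 - \eta^{1/2}$ on which both $(1/m_{j})\Hof{\mu^{\omega}, \calE^{\omega}_{q+m_{j}} \mid \calE^{\omega}_{q}} = N\kappa_{\calA} + O(\eta^{1/2})$ and $(1/m_{j})\Hof{\mu^{\omega}, \pi^{-1}_{[d]\setminus\{j\}}\calE^{\omega}_{q+m_{j}} \mid \calE^{\omega}_{q}} = N\sum_{j'\neq j}\chi_{j'} + O(\eta^{1/2})$. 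The chain rule (\autoref{lem:list-identities}\ref{itm:ChainRule-H}) then gives $(1/m_{j})\Hof{\mu^{\omega}, \calE^{\omega}_{q+m_{j}} \mid R_{q}} = N\chi_{j} - c_{0} + O(\eta^{1/2})$, i.e., $\mu^{\omega}$ leaves a directional gap of size $c_{0}$ below the trivial maximum $N\chi_{j}$.

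\textbf{Key step.} On $\ol{\Omega} := \ol{\Omega}_{1} \cap \ol{\Omega}_{2}$ and $q \in Q^{\omega} := Q^{\omega}_{1} \cap Q^{\omega}_{2}$ (density $\geq \delta_{1}/2$), apply a Kaimanovich--Vershik type inequality for conditional partition entropy, valid up to $O(1)$ errors via translation commensurability \eqref{eq:almost-transinv}, \eqref{eq:fg-comm} and concavity of entropy under averaging of translates:
\[
	\Hof{\theta * \mu^{\omega}, P \mid R_{q}} - \Hof{\mu^{\omega}, P \mid R_{q}} \geq \tfrac{1}{k_{j}}\left(\Hof{\theta^{*k_{j}} * \mu^{\omega}, P \mid R_{q}} - \Hof{\mu^{\omega}, P \mid R_{q}}\right) - O(1),
\]
where $P := \calE^{\omega}_{q+m_{j}}$. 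Combined with the concavity bound $\Hof{\theta^{*k_{j}} * \mu^{\omega}, P \mid R_{q}} \geq \Hof{\theta^{*k_{j}}, P \mid R_{q}} - O(1)$ (from $\theta^{*k_{j}} * \mu^{\omega} = \int T_{y}(\theta^{*k_{j}}) \, d\mu^{\omega}(y)$) and the estimates above, this produces a conditional directional increment of order $c_{0}/k_{j}$. Adding back the projection entropy, which $\theta * \mu^{\omega}$ inherits from $\mu^{\omega}$ via the same concavity argument, and applying the chain rule yields
\[
	\tfrac{1}{m_{j}}\Hof{\theta * \mu^{\omega}, \calE^{\omega}_{q+m_{j}} \mid \calE^{\omega}_{q}} \geq N\kappa_{\calA} + c_{0}/(2 k_{j}) - O(\eta^{1/2} + 1/m_{j}), \qquad q \in Q^{\omega}.
\]

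\textbf{Conclusion and obstacle.} Telescoping via \autoref{lem:e-telescope} at scale $m_{j}$ and averaging over $q \in [n]$ (using the baseline $N\kappa_{\calA} - O(\eta^{1/2})$ on $Q^{\omega}_{2} \setminus Q^{\omega}$ and the extra gain $c_{0}/(2k_{j})$ on $Q^{\omega}$ of density $\geq \delta_{1}/2$) gives $(1/n)\Hof{\theta * \mu^{\omega}, \calE^{\omega}_{n}} \geq N\kappa_{\calA} + \delta$ with $\delta = \delta(\varepsilon) := \delta_{1}(\varepsilon)\, c_{0}/(16 k_{1}(\varepsilon)) > 0$, provided $\eta$ is sufficiently small and $n$ sufficiently large. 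The main obstacle is justifying the Kaimanovich--Vershik step in the setting of conditional partition entropy with the random nonconformal partitions $R_{q}$; this requires careful bookkeeping with the translation commensurability properties of \autoref{sec:dis-linear} but is otherwise standard submodularity. All other steps are routine combinations of the preparatory lemmas, in line with the remark in \autoref{subsec:AboutPf} that adapting \cite{Rapaport2023} here is not difficult.
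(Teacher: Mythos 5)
The ingredients you identify are the right ones (\autoref{prop:conv-e}, Lemmas \ref{lem:rand-asym-e}--\ref{lem:comp-proj-e}, and a Kaimanovich--Vershik lemma, \autoref{lem:KV}), and the general shape of your argument matches the paper's. The problem lies in your ``key step'': you apply a Kaimanovich--Vershik inequality \emph{conditionally, at each component scale $q$}, which dilutes the directional gain by a factor of $1/k_j$ \emph{before} you average over scales. This fails quantitatively. Since the parameter cascade forces $k_j \gg \eta^{-1}$, the per-scale gain you extract is
\[
	\tfrac{c_0 - \varepsilon}{k_j} - O\!\left(\eta^{1/2} + \tfrac{1}{m_j}\right),
\]
and $\tfrac{1}{k_j} \ll \eta \ll \eta^{1/2}$, so this is negative: the Markov-type error $O(\eta^{1/2})$ that you incur when converting the lower bounds of Lemmas \ref{lem:comp-e}--\ref{lem:comp-proj-e} into two-sided bounds for $\mu^{\omega}$ swamps the gain. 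The paper avoids this entirely. It never converts $\theta^{*k_j}$ back to $\theta$ at the component level. Instead, on $q \in Q^{\omega}$ it uses the chain rule and concavity \emph{directly on $\theta^{*k_j} * \mu^{\omega}$}: the directional part $\Hof{\theta^{*k_j}*\mu^{\omega}, \calE^{\omega}_{q+m_j} \mid R_q}$ is bounded below by $\Hof{\theta^{*k_j}, \calE^{\omega}_{q+m_j} \mid R_q} - O(1)$ (concavity over translates by $\mu^{\omega}$), and the projection part $\Hof{\theta^{*k_j}*\mu^{\omega}, \pi_{[d]\setminus\{j\}}^{-1}\calE^{\omega}_{q+m_j} \mid \calE^{\omega}_q}$ is bounded below by the corresponding quantity for $\mu^{\omega}$ alone (concavity over translates by $\theta^{*k_j}$). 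These are one-sided lower bounds, so no Markov argument and no $\eta^{1/2}$ losses arise. Averaging over $q$ gives $\frac{1}{n}\Hof{\theta^{*k_j}*\mu^{\omega}, \calE^{\omega}_n} \geq N\kappa_{\calA} + \delta_0 N\Delta/2 - O(\eta) \geq \frac{1}{n}\Hof{\mu^{\omega}, \calE^{\omega}_n} + \delta_0^2$, where the \emph{undiluted} gain $\delta_0 N\Delta/2$ easily absorbs $O(\eta)$ because $\delta_0 \gg \eta$. Only then is the (unconditional) \autoref{lem:KV} applied, at the global scale $n$, where the $1/k_j$ dilution competes only with the truly negligible error $O(1/n)$.

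A secondary point: the conditional Kaimanovich--Vershik inequality you flag as ``the main obstacle'' is not just an obstacle but is unnecessary --- the unconditional \autoref{lem:KV}, applied once at the end, does the job. And even the $O(1)$ error in your conditional version, when normalized by $m_j$, would be $O(1/m_j) \gg (N\Delta)/k_j$ since $m_j < k_j$, which is a second independent reason the per-scale gain would be lost. In short, the ordering matters: accumulate the $\theta^{*k_j}$ gain across scales first, then pay the $1/k_j$ factor.
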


To prove \autoref{thm:omega-e-increase}, we need the following version of the Kaimanovich-Vershik lemma. Its proof follows a similar approach of \cite[Corollary 5.2]{Rapaport2023} and is therefore omitted.

\begin{lemma}\label{lem:KV}
	Let $ \omega \in \Omega$, $ \theta, \rho \in \calM_{c}(\euclid)$ and $ n \in \bbN $ be given. Then for $ k \in \bbN $,
	\begin{equation*}
		\Hof{ \theta^{*k} * \rho, \calE^{\omega}_{n} } - \Hof{ \rho, \calE^{\omega}_{n} } \leq k \left( \Hof{ \theta * \rho, \calE^{\omega}_{n} } - \Hof{ \rho, \calE^{\omega}_{n}} \right) + \bigO{k}. 
	\end{equation*}
\end{lemma}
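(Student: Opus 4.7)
The claim is a Kaimanovich--Vershik style inequality: the entropy gain $H(\theta^{*(j+1)}*\rho, \calE^{\omega}_{n}) - H(\theta^{*j}*\rho, \calE^{\omega}_{n})$ should be monotone non-increasing in $j$ up to a universal $\bigO{1}$ error; summing over $j = 0, \ldots, k-1$ then yields the lemma. The proof reduces first to the standard dyadic partition and then establishes a partition-entropy analogue of the data-processing inequality.

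\emph{Reduction to dyadic.} Since $\calE^{\omega}_{n} = A^{\omega|n}\calD^{d}_{0}$ and $A^{\omega|n}$ is linear, convolution is equivariant: $(A^{\omega|n})^{-1}(\theta^{*k}*\rho) = ((A^{\omega|n})^{-1}\theta)^{*k}*(A^{\omega|n})^{-1}\rho$, and $H(\mu, \calE^{\omega}_{n}) = H((A^{\omega|n})^{-1}\mu, \calD^{d}_{0})$. Hence we may assume the partition is $\calD^{d}_{0}$ throughout. The key property we shall use, via \eqref{eq:almost-transinv}, is $H(T_{y}\nu, \calD^{d}_{0}) = H(\nu, \calD^{d}_{0}) + \bigO{1}$ uniformly in $y \in \euclid$ and $\nu \in \calM_{c}(\euclid)$.

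\emph{Telescoping, realization, and chain rule.} Set $\rho_{j} := \theta^{*j}*\rho$ and $\Delta_{j} := H(\rho_{j+1}, \calD^{d}_{0}) - H(\rho_{j}, \calD^{d}_{0})$; the target becomes $\sum_{j=0}^{k-1}\Delta_{j} \leq k\Delta_{0} + \bigO{k}$, which will follow from the stepwise estimate $\Delta_{j} \leq \Delta_{0} + \bigO{1}$ with $\bigO{1}$ uniform in $j$. Realize everything on a common probability space: let $X, X_{2}, \ldots, X_{j+1}$ be iid of law $\theta$, $Y \sim \rho$, all independent, and put $W := X_{2} + \cdots + X_{j+1} + Y$, so $\rho_{j}$ and $\rho_{j+1}$ are the laws of $W$ and $X+W$. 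Applying the chain rule to the joint partition entropy on $\calD^{d}_{0} \otimes \calD^{d}_{0}$, and using that conditional on $X = x$ the variable $X+W$ has law $T_{x}W$ with entropy $H(W, \calD^{d}_{0}) + \bigO{1}$, yields
\begin{equation*}
H(X+W, \calD^{d}_{0}) - H(W, \calD^{d}_{0}) = H(X, \calD^{d}_{0}) - H(X \mid X+W, \calD^{d}_{0}) + \bigO{1},
\end{equation*}
and analogously with $Y$ in place of $W$. The stepwise bound $\Delta_{j} \leq \Delta_{0} + \bigO{1}$ thereby reduces to the conditional-entropy comparison $H(X \mid X+W, \calD^{d}_{0}) \geq H(X \mid X+Y, \calD^{d}_{0}) - \bigO{1}$.

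\emph{Data-processing step and main obstacle.} Write $W = Y + Z$ with $Z := X_{2} + \cdots + X_{j+1}$ independent of $(X, Y)$. Denote by $\mu_{s}$ and $\mu'_{t}$ the conditional laws of $X$ given $X+W = s$ and given $X+Y = t$ respectively. Conditioning first on $Z$ and using that $Z$ is independent of $(X, Y)$ gives $\mu_{s} = \int \mu'_{s-z}\, d\pi_{s}(z)$, where $\pi_{s}$ is the conditional law of $Z$ given $X+W = s$. By concavity of partition entropy, $H(\mu_{s}, \calD^{d}_{0}) \geq \int H(\mu'_{s-z}, \calD^{d}_{0})\, d\pi_{s}(z)$. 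Integrating against the law of $X+W$ and applying Fubini with the substitution $t = s - z$ (valid since the joint law of $(X+W, Z) = (X+Y+Z, Z)$ disintegrates through the independent factor $Z$) delivers the required inequality. The main obstacle is to verify that the $\bigO{1}$ errors arising from the near-translation invariance of $\calD^{d}_{0}$ in each chain-rule and conditional-entropy manipulation remain uniform in $j$, independent of the measures $\theta, \rho$; this uniformity is guaranteed by the $\bigO{1}$-commensurability constant in \eqref{eq:almost-transinv}, which does not depend on the translation or on $\nu$.
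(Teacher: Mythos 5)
Your overall strategy is the right one and matches the standard route (the paper omits the proof, deferring to \cite{Rapaport2023}): telescope the increments $\Delta_{j}=H(\theta^{*(j+1)}*\rho,\calE^{\omega}_{n})-H(\theta^{*j}*\rho,\calE^{\omega}_{n})$, reduce to $\calD^{d}_{0}$ via $A^{\omega|n}$, and prove the stepwise submodularity bound $\Delta_{j}\leq\Delta_{0}+\bigO{1}$. The reduction and the chain-rule step are fine (the identity $H(X+W,\calD^{d}_{0}\mid \calD^{d}_{0}(X))=H(W,\calD^{d}_{0})+\bigO{1}$ does hold, since conditioned on a cell of $X$ the law of $X+W$ is a smearing of the law of $W$ by a measure of diameter $\bigO{1}$). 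The gap is at the junction between the chain rule and the data-processing step: they control two \emph{different} conditional entropies. The chain rule on the product partition produces $H(\calD^{d}_{0}(X)\mid \calD^{d}_{0}(X+W))$, conditioned on the dyadic \emph{cell} of $X+W$, and your stepwise bound requires $H(\calD^{d}_{0}(X)\mid \calD^{d}_{0}(X+W))\geq H(\calD^{d}_{0}(X)\mid \calD^{d}_{0}(X+Y))-\bigO{1}$. Your disintegration $\mu_{s}=\int\mu'_{s-z}\,d\pi_{s}(z)$ and the concavity argument instead prove $H(\calD^{d}_{0}(X)\mid \sigma(X+W))\geq H(\calD^{d}_{0}(X)\mid \sigma(X+Y))$, with conditioning on exact \emph{values}. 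Since exact conditioning is finer, this gives a lower bound by the wrong (smaller) quantity: $H(\calD^{d}_{0}(X)\mid\sigma(X+Y))$ can be much smaller than $H(\calD^{d}_{0}(X)\mid\calD^{d}_{0}(X+Y))$, and not by $\bigO{1}$. For example, in $d=1$ let $X$ be uniform on $\{j+j/(2m)\colon 0\leq j<m\}$ and $Y$ uniform on $\{0,-1,\dots,-(m-1)\}$: the fractional part of $X+Y$ equals $j/(2m)$ and hence determines $\lfloor X\rfloor$, so $H(\lfloor X\rfloor\mid\sigma(X+Y))=0$, while $H(\lfloor X\rfloor\mid\lfloor X+Y\rfloor)\geq \log m-\bigO{1}$. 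So the last sentence of your data-processing step does not deliver the inequality your chain-rule reduction requires.

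Two nearby repairs exist. The standard one is to discretize first: push $\theta,\rho$ onto the lattice $A^{\omega|n}\bbZ^{d}$ (each point to a corner of its $\calE^{\omega}_{n}$-cell); for lattice measures cell-conditioning coincides with exact conditioning, so the classical inequality $H(A+B+C)+H(C)\leq H(A+C)+H(B+C)$ for independent discrete variables (proved exactly by your Markov-chain argument) applies verbatim, and undoing the discretization costs $\bigO{1}$ for $\rho$ and $\theta*\rho$ and $\bigO{\log k}$ for $\theta^{*k}*\rho$, all absorbed into $\bigO{k}$. Alternatively, keep cell-conditioning throughout but condition additionally on $\sigma(Z)$ rather than disintegrating over the value of $X+W$: since conditioning on more decreases entropy, $H(\calD^{d}_{0}(X)\mid\calD^{d}_{0}(X+W))\geq H(\calD^{d}_{0}(X)\mid\calD^{d}_{0}(X+W)\vee\sigma(Z))$, and given $Z=z$ the event $\{X+W\in D\}$ is $\{X+Y\in D-z\}$, so the right-hand side equals $H(\calD^{d}_{0}(X)\mid\calD^{d}_{0}(X+Y))-\bigO{1}$ by the $\bigO{1}$-commensurability of $\calD^{d}_{0}$ with its translates.
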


Now we are ready to prove \autoref{thm:omega-e-increase}.

\begin{proof}[Proof of \autoref{thm:omega-e-increase}]
	The proof is adapted from \cite[Theorem 1.12]{Rapaport2023}, with \autoref{prop:conv-e}, Lemmas \ref{lem:comp-e} and \ref{lem:comp-proj-e} respectively in place of \cite[Proposition 1.15, Lemmas 1.13 and 1.14]{Rapaport2023}. To account for the dependence on additional parameters and for clarity, we provide the necessary details.
	
	Let $ \delta_{0}, \varepsilon_{1} \in (0,1)$ and $ m_{1}, \ldots, m_{d}, k_{1}, \ldots, k_{d} \in \bbN $ be with
	\begin{equation}\label{eq:depends}
		\varepsilon^{-1} \ll \delta_{0}^{-1} \ll \eta^{-1} \ll m_{d} \ll k_{d} \ll \cdots \ll m_{1} \ll k_{1} \ll \varepsilon_{1}^{-1} \ll n.
	\end{equation} 
	Let $ \ol{\Omega}$ be the intersection of the $ \ol{\Omega}$'s obtained by applying \autoref{prop:conv-e} with $ \varepsilon, \delta_{0}, \eta, m_{j}, k_{j} $ in place of $ \varepsilon, \delta, \eta, m_{j}, k_{j} $, Lemmas \ref{lem:comp-e} with $ \eta $ in place of $ \eta $, \autoref{lem:comp-proj-e} repeatedly for $ J \subsetneq [d]$ with $ J , \eta $ in place of $ J, \eta $, and \autoref{lem:rand-asym-e} with $ \varepsilon_{1} $ in place of $ \eta $. Then $ \bfP(\ol{\Omega}) > 1 - \bigO{\eta} $. Note that $ \eta^{-1} \ll m_{j} \ll k_{j} \ll n $ for $ 1 \leq j \leq d $. For $ \omega \in \ol{\Omega} $, let $ Q^{\omega}_{1}, Q^{\omega}_{2}, Q^{\omega}_{3} $ be respectively the $ Q^{\omega}$ obtained from \autoref{prop:conv-e}, Lemmas \ref{lem:comp-e} and \ref{lem:comp-proj-e}. Then $ \#_{n}(Q^{\omega}_{1}) > \delta_{0}$, $ \#_{n}(Q^{\omega}_{2})> 1 - \eta/4$ and $ \#_{n}(Q^{\omega}_{3}) > 1 - \eta / 4 $. Define $ Q^{\omega} := Q^{\omega}_{1} \intxn Q^{\omega}_{2} \intxn Q^{\omega}_{3}$. From $ \delta_{0}^{-1} \ll \eta^{-1} $, it follows that $ \#_{n}(Q^{\omega}) > \delta_{0} - \eta/2 > \delta_{0} / 2 $. Let $ 1 \leq j \leq d $ be the integer obtained along with $ Q_{1}^{\omega}$ in the application of \autoref{prop:conv-e}. In what follows we take $ \omega \in \ol{\Omega}$, and let $ Q^{\omega} \subset [n] $ accordingly.
	 
	
	 Note that $ \diam (\supp \theta^{\ast k_{j}}) \leq k_{j}/\varepsilon $ and  $ \varepsilon^{-1} \ll \eta^{-1} \ll m_{j} \ll k_{j} \ll n $. Using \autoref{lem:e-telescope}, it follows that
	\begin{equation}\label{eq:telescope}
		\begin{aligned}
	\frac{1}{n} & \Hof{ \theta^{*k_{j}} * \mu^{\omega}, \calE^{\omega}_{n} } \\ & \geq \bbEof[1\leq q \leq n]{ \frac{1}{m_{j}} \Hof{ \theta^{*k_{j}} * \mu^{\omega}, \calE^{\omega}_{q + m_{j}} \mid \calE^{\omega}_{q}}} - \bigO{\eta}  \\
		& \geq \#_{n}(Q^{\omega})   \bbEof[q\in Q^{\omega}]{ \frac{1}{m_{j}} \Hof{ \theta^{*k_{j}} * \mu^{\omega}, \calE^{\omega}_{q + m_{j}} \mid \calE^{\omega}_{q}}}  \\
		& \qquad + \#_{n}(Q^{\omega}_{2}\setminus Q^{\omega}) \bbEof[{q\in Q_{2}^{\omega} \setminus Q^{\omega}}]{ \frac{1}{m_{j}} \Hof{ \theta^{*k_{j}}* \mu^{\omega}, \calE^{\omega}_{q + m_{j}} \mid \calE^{\omega}_{q} } } - \bigO{\eta}.
	\end{aligned}
	\end{equation}
	
	By $ \dim\calA < d$, we have $ \Delta := \sum_{j=1}^{d} \chi_{j} - \kappa_{\calA} > 0 $. By \autoref{lem:list-identities}\ref{itm:ChainRule-H}, concavity of entropy, \eqref{eq:almost-transinv} and $ \eta^{-1} \ll m_{j} $, we conclude from \autoref{prop:conv-e} and \autoref{lem:comp-proj-e} that for $ q \in Q^{\omega}$,
	\begin{equation} \label{eq:Q-part}
		\begin{aligned}
			\frac{1}{m_{j}} \Hof{ \theta^{*k_{j}} * \mu^{\omega}, \calE^{\omega}_{q + m_{j}} \mid \calE^{\omega}_{q}} & \geq \frac{1}{m_{j}} \Hof{ \theta^{*k_{j}} , \calE^{\omega}_{q + m_{j}} \mid \calE^{\omega}_{q}\vee \pi_{[d]\setminus\{j\}}^{-1}\calE^{\omega}_{q + m_{j}} } \\
			& \qquad + \frac{1}{m_{j}} \Hof{ \mu^{\omega} , \pi_{[d]\setminus\{j\}}^{-1}\calE^{\omega}_{q + m_{j}} \mid \calE^{\omega}_{q}} - \bigO{\frac{1}{m_{j}}} \\
			& \geq  N\chi_{j} + N \sum_{j' \neq j }\chi_{j'} - \bigO{\eta} \\
			& = N\kappa_{\calA} + N\Delta - \bigO{\eta}.
		\end{aligned} 
	\end{equation}
	
	For $ q \in Q^{\omega}_{2}$, by concavity of entropy and $ \eta^{-1} \ll m_{j} $, it follows from \autoref{lem:comp-e} that
	\begin{equation}\label{eq:Q2-part}
		\frac{1}{m_{j}} \Hof{ \theta^{*k_{j}}* \mu^{\omega}, \calE^{\omega}_{q + m_{j}} \mid \calE^{\omega}_{q} } \geq 	\frac{1}{m_{j}} \Hof{ \mu^{\omega}, \calE^{\omega}_{q + m_{j}} \mid \calE^{\omega}_{q} } - \bigO{\frac{1}{m_{j}}} >  N\kappa_{\calA} - \bigO{\eta}.
	\end{equation}
	
	
	Combining \eqref{eq:telescope}, \eqref{eq:Q-part}, \eqref{eq:Q2-part}, $ \#_{n}(Q^{\omega}) > \delta_{0}/2$ and $ \#_{n}(Q^{\omega}_{2}) > 1 - \eta/4 $ shows that 
	\begin{align*}
	 \frac{1}{n} \Hof{ \theta^{*k_{j}} * \mu^{\omega}, \calE^{\omega}_{n} }
		&  \geq N\kappa_{\calA} + \frac{ \delta_{0} N \Delta}{2} - \bigO{\eta} \\
		& \geq \frac{1}{n} \Hof{\mu^{\omega}, \calE^{\omega}_{n} } + \frac{\delta_{0} N\Delta}{2} - \bigO{\eta} & \by{\autoref{lem:rand-asym-e}}\\
		& \geq \frac{1}{n} \Hof{\mu^{\omega}, \calE^{\omega}_{n} } + \delta_{0}^{2}. & \by{$ \delta_{0}^{-1} \ll \eta^{-1}$}
	\end{align*}
	By a rearrangement,
	\begin{equation*}
		\frac{1}{n} \left( \Hof{ \theta^{* k_{j}} * \mu^{\omega}, \calE^{\omega}_{n} } - \Hof{\mu^{\omega}, \calE^{\omega}_{n} } \right)  \geq \delta_{0}^{2}.
	\end{equation*}
	 By \autoref{lem:KV} and $ \delta_{0}^{-1} \ll k_{j} \ll n $,
	 \begin{equation*}
	 	\frac{1}{n} \left( \Hof{\theta* \mu^{\omega}, \calE^{\omega}_{n} } - \Hof{ \mu^{\omega}, \calE^{\omega}_{n} } \right) \geq \frac{\delta_{0}^{2}}{2k_{j}}.
	 \end{equation*}
	 By \autoref{lem:rand-asym-e} and $ \delta_{0}^{-1} \ll k_{j} \ll \varepsilon_{1}^{-1} $, this completes the proof with $ \delta = \delta_{0}^{2}/4k_{j}$.
\end{proof}

\section{Proof of \autoref{thm:main-A}} \label{sec:pf-main-A}

In this section, we establish the following theorem, which directly implies \autoref{thm:main-A}.

For $ n \in \bbN $, let $ \calC_{n}$ be the partition of $ \Lambda^{\bbN} $ defined by that $ \calC_{n}(x) = \calC_{n}(y) $ if and only if $ \varphi_{x|n} = \varphi_{y|n}$ for $ x, y\in \Lambda^{\bbN}$.

\begin{theorem}\label{thm:main-final}
	Fix $ N \in \bbN $. Let $ \Gamma $ be a partition of $ \Lambda^{\bbN}$ satisfying \eqref{eq:part-by-linear}. Set $ \calA = \vee_{i=0}^{\infty} \sigma^{-iN} \Gamma $. Suppose $ \chi_{1} < \cdots < \chi_{d}$, and $\Phi_{j}$  is Diophantine for $ 1 \leq j \leq d $. Suppose further that for $ x, y \in \Lambda^{\bbN}, n \in \bbN $ and $ 1 \leq j \leq d $, $ \pi_{j} \varphi_{x|n} = \pi_{j} \varphi_{y|n} $ implies $ \varphi_{x|n} = \varphi_{y|n}$. Then
	\begin{equation}\label{eq:main-A-RW}
		\dim \calA  = \min \left\{d, f_{\Phi}( \hRWphiA ) \right\}, 
	\end{equation}
	where $\dim \calA $ is from \autoref{thm:L-Y-formula}, $ f_{\Phi} $ is as in \eqref{eq:f-LyaDim}, and $ \hRWphiA $ is as in \eqref{eq:def-hRW-calA}.
\end{theorem}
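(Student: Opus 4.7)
My approach is to combine the Ledrappier--Young formula (\autoref{thm:L-Y-formula}) for an upper bound with the entropy-increase result (\autoref{thm:omega-e-increase}) for the matching lower bound, inducting on $d$ to verify the projection hypothesis of the latter, and exploiting the self-affinity $\mu^\omega = \nu^\omega_n * A^{\omega|n}\mu^{T^n\omega}$ to convert entropy increase into the desired dimension identity.

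\emph{Upper bound.} Writing $\Delta_j := \hcalCA{[j-1]} - \hcalCA{[j]}$, \autoref{thm:L-Y-formula} gives $\dim\calA = \sum_{j=1}^d \Delta_j/\chi_j$ with $\Delta_j \in [0,\chi_j]$ and $\sum_j \Delta_j = \hcalCA{\emptyset} - \hcalCA{[d]} \leq \hcalCA{\emptyset} = \hRWphiA$ (the last equality holding because $\wh{\xi_\emptyset}$ is the trivial $\sigma$-algebra). Since $\chi_1 < \cdots < \chi_d$, a standard rearrangement/greedy argument shows that $\sum_j \Delta_j/\chi_j \leq f_\Phi(\sum_j \Delta_j) \leq f_\Phi(\hRWphiA)$, so $\dim\calA \leq \min\{d, f_\Phi(\hRWphiA)\}$ after combining with the trivial $\dim\calA \leq d$.

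\emph{Lower bound, by induction on $d$.} When $d=1$ the projection hypothesis of \autoref{thm:omega-e-increase} is vacuous, and combining entropy increase with the asymptotic entropy rate (\autoref{lem:rand-asym-e}) and the Kaimanovich--Vershik iteration (\autoref{lem:KV}) applied to the self-affinity forces $\dim\calA = \min\{1, \hRWphiA/\chi_1\}$. For $d \geq 2$, I apply the theorem inductively to each sub-IFS $\Phi_J$ with $J \subsetneq [d]$ (which inherits all the hypotheses) and use exact dimensionality of disintegrations (\autoref{thm:ExactDim-Intro}) to identify $\dim \pi_J \calA$. If some $\dim\pi_J\calA$ is strictly less than $|J|$, then the piecewise-linear structure of $f_\Phi$ already forces $\min\{d, f_\Phi(\hRWphiA)\} = \dim \pi_J\calA \leq \dim\calA$, closing the case via the upper bound. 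Otherwise $\dim\pi_J\calA = |J|$ for every $J \subsetneq [d]$, putting us in the hypothesis of \autoref{thm:omega-e-increase} applied to $\mu^\omega$.

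\emph{Main obstacle.} The concluding step is: assuming for contradiction that $\dim\calA < f_\Phi(\hRWphiA)$, construct a measure $\theta \in \calM_c(\euclid[d])$ with $\diam(\supp\theta) \leq 1/\varepsilon$ and $\frac{1}{n}\Hof{\theta,\calE^\omega_n} > \varepsilon$ on a positive-density set of scales $n$ and positive-$\bfP$-measure set of $\omega$, obtained as a rescaled piece of $\nu^\omega_n$ via the identity $A^{-\omega|n}\mu^\omega = (A^{-\omega|n}\nu^\omega_n) * \mu^{T^n\omega}$. The required nondegenerate scale-$n$ entropy of $\theta$ is supplied by the Diophantine hypothesis on each $\Phi_j$, lifted to $\Phi$ via the separation assumption $\pi_j\varphi_{x|n} = \pi_j\varphi_{y|n} \Rightarrow \varphi_{x|n} = \varphi_{y|n}$. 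Feeding $\theta$ into \autoref{thm:omega-e-increase} and iterating via \autoref{lem:KV} then yields a strict improvement $\frac{1}{n}\Hof{\mu^\omega, \calE^\omega_n} \geq N\kappa_\calA + \delta$ over the asymptotic rate from \autoref{lem:rand-asym-e}, the desired contradiction that pins $\dim\calA = \min\{d, f_\Phi(\hRWphiA)\}$. Uniformizing Egorov-type exceptional sets across iterations of $T$ on $(\Omega,\bfP)$, and reconciling the scale-$n$ entropies of $\theta * \mu^\omega$ and (a rescaled copy of) $\mu^\omega$, is where most of the technical effort resides.
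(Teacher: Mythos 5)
Your overall architecture matches the paper's: induction on $d$, the Ledrappier--Young formula for the upper bound, and the entropy-increase/super-exponential-concentration machine for the hard case where every proper projection has full dimension.

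However, there is a genuine gap in the intermediate case. You claim that whenever $\dim\pi_J\calA < \abs{J}$ for some $\emptyset\neq J\subsetneq[d]$, ``the piecewise-linear structure of $f_\Phi$ already forces $\min\{d, f_\Phi(\hRWphiA)\} = \dim\pi_J\calA$.'' This is false unless $J$ is an initial segment. Take $d=2$, $J=\{2\}$, with $\chi_1 \leq \hRWphiA < \chi_2$. The inductive hypothesis gives $\dim\pi_1\calA=1$ and $\dim\pi_2\calA = \hRWphiA/\chi_2<1$, so $J=\{2\}$ is the deficient projection. But $f_\Phi(\hRWphiA) = 1+\frac{\hRWphiA-\chi_1}{\chi_2}$, which differs from $\hRWphiA/\chi_2$ by exactly $(\chi_2-\chi_1)/\chi_2>0$. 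Your step would therefore give only the strictly weaker bound $\dim\calA\geq \hRWphiA/\chi_2$. The issue is that $f_{\Phi_J}\leq f_\Phi$ pointwise (because the Lyapunov exponents $\chi_{j_b}$ are larger than $\chi_b$), so $\dim\pi_J\calA = f_{\Phi_J}(\hRWphiA)$ can fall strictly below $f_\Phi(\hRWphiA)$.

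The paper repairs this by splitting into two subcases. If $\dim\pi_{[d-1]}\calA<d-1$, then indeed $f_{\Phi_{[d-1]}}(\hRWphiA)=f_\Phi(\hRWphiA)$ (this is the one case where your argument goes through, because $[d-1]$ is an initial segment so the two piecewise-linear functions agree on the relevant range). If instead $\dim\pi_{[d-1]}\calA=d-1$ but some other $\pi_J\calA$ is deficient, one cannot compare dimensions directly. Instead, the equality $\dim\pi_J\calA = f_{\Phi_J}(\hRWphiA)$ together with the uniqueness of the maximizer in \autoref{lem:fJ-maximum} forces $\sum_b\bigl(\hcalCA{J_{b-1}}-\hcalCA{J_b}\bigr)=\hRWphiA$, hence $\hcalCA{J}=0$, hence (via $\xi_J\prec\xi_{[d]}$) $\hcalCA{[d]}=0$. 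Combined with $\dim\pi_{[d-1]}\calA=d-1$ (which forces $\hcalCA{[j-1]}-\hcalCA{[j]}=\chi_j$ for $j<d$), the Ledrappier--Young formula then yields $\dim\calA = d-1+\frac{\hRWphiA-\sum_{j<d}\chi_j}{\chi_d}\geq\min\{d,f_\Phi(\hRWphiA)\}$. So the information you need in this case is about the sequence of conditional entropies $\hcalCA{J_b}$, not about dimensions of projections directly. Your sketch of the final case (entropy increase, Diophantine separation, Kaimanovich--Vershik iteration) is morally faithful to the paper's argument via $\kappa_\calA\geq\hRWphiA$ and the super-exponential concentration \autoref{thm:super-exp}, though the details would need to be reorganized around that lemma.
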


\subsection{Super-exponential concentration}

Using \autoref{thm:omega-e-increase}, we derive the following theorem, which demonstrates that any linear acceleration of scales fails to produce positive entropy for $ \nu_{n}^{\omega} $. This indicates a super-exponential concentration of cylinders.


\begin{theorem}\label{thm:super-exp}
	If $ \dim \calA < d $ and $  \dim \pi_{[J]}\calA = \abs{J} $ for each $ J \subsetneq [d] $. Then for $ \varepsilon \in (0, 1)$ and $ n \in \bbN $ with $ \varepsilon^{-1} \ll n $, there exists $ \ol{\Omega} \subset \Omega $ with $  \bfP(\ol{\Omega}) > 1 - \varepsilon $ so that
	\begin{equation}
		  \frac{1}{n} \Hof{ \nu^{\omega}_{n}, \calE^{\omega}_{Mn} \mid \calE^{\omega}_{n} } < \varepsilon \mFor \omega \in \ol{\Omega}.
	\end{equation}
\end{theorem}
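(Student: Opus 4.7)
The plan is to argue by contradiction, using the dynamical self-affinity \eqref{eq:dyn-selfaff} together with the entropy increase \autoref{thm:omega-e-increase}. Suppose the statement fails: there exist $\varepsilon_0 \in (0,1)$ and arbitrarily large $n$ for which the set $B_n := \{\omega \in \Omega : H(\nu_n^\omega, \calE_{Mn}^\omega \mid \calE_n^\omega) \geq \varepsilon_0 n\}$ satisfies $\bfP(B_n) \geq \varepsilon_0$. Fix such an $n$, large enough for all scale hypotheses below, and work with $\omega \in B_n$ lying in the good sets produced by \autoref{lem:Aj-rate}, \autoref{lem:rand-asym-e} (applied at both $\omega$ and $T^n\omega$, using the $T$-invariance of $\bfP$), and \autoref{thm:omega-e-increase} (at $T^n\omega$). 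The auxiliary smallness parameter $\eta$ governing these lemmas will be fixed at the end in terms of $\varepsilon_0$ and $M$; for any such $\eta$, the intersection of the good sets with $B_n$ has positive $\bfP$-measure.

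To apply \autoref{thm:omega-e-increase}, I first rescale to bounded support. Decompose $\nu_n^\omega$ along $\calE_n^\omega$ via \eqref{eq:theta-E-decomp} into components $\tau_x = (\nu_n^\omega)_{\calE_n^\omega(x), n}^\omega$, and for a corner $c_x$ of $\calE_n^\omega(x)$ set $\zeta_x := A^{-\omega|n}(\tau_x - c_x)$, so $\supp \zeta_x \subset [0,1)^d$. Using \eqref{eq:Aomega-shift-part} and the translation commensurability \eqref{eq:almost-transinv}, $H(\tau_x, \calE_{Mn}^\omega) = H(\zeta_x, \calE_{(M-1)n}^{T^n\omega}) + O(1)$, so integrating yields $\int H(\zeta_x, \calE_{(M-1)n}^{T^n\omega}) \, d\nu_n^\omega(x) \geq \varepsilon_0 n - O(1)$. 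By \autoref{lem:Aj-rate}, at most $2^{(M-1)nN\sum_j \chi_j + O(n\eta)}$ cells of $\calE_{(M-1)n}^{T^n\omega}$ meet $[0,1)^d$, so each individual entropy is at most $C(M-1)n$ for $C$ depending only on $(\chi_j)$. Markov's inequality then produces $G^\omega$ with $\nu_n^\omega(G^\omega) \geq \alpha := \varepsilon_0/(3C(M-1))$ on which $\frac{1}{(M-1)n} H(\zeta_x, \calE_{(M-1)n}^{T^n\omega}) \geq \varepsilon_0/(2(M-1))$. Applying \autoref{thm:omega-e-increase} with parameter $\varepsilon_0/(2(M-1))$ at $T^n\omega$ and scale $(M-1)n$ yields $\delta = \delta(\varepsilon_0, M) > 0$ with
\begin{equation*}
\tfrac{1}{(M-1)n} H(\zeta_x * \mu^{T^n\omega}, \calE_{(M-1)n}^{T^n\omega}) \geq N\kappa_\calA + \delta \quad \text{for } x \in G^\omega,
\end{equation*}
while for $x \notin G^\omega$ the same quantity is at least $N\kappa_\calA - O(\eta)$ by concavity of entropy and \autoref{lem:rand-asym-e} applied to $\mu^{T^n\omega}$.

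Finally, I assemble these per-component bounds. Since $A^{\omega|n}\supp\mu^{T^n\omega}$ has extent $\lesssim \lambda_j^{\omega|n}$ in the $j$-th direction, matching a $\calE_n^\omega$-cell, a standard blurring argument (passing to the refinement $\calE_n^\omega \vee \calE_{Mn}^\omega$, which is $O(1)$-commensurable with $\calE_{Mn}^\omega$, and using \eqref{eq:almost-transinv} and \eqref{eq:fg-comm}) identifies the $\calE_n^\omega$-components of $\mu^\omega$ with $\tau_x * A^{\omega|n}\mu^{T^n\omega}$ up to bounded multiplicity. Translating the component entropy bounds back via \eqref{eq:Aomega-shift-part} gives
\begin{equation*}
H(\mu^\omega, \calE_{Mn}^\omega) \geq nN\kappa_\calA + (M-1)nN\kappa_\calA + \alpha(M-1)n\delta - O(n\eta) = MnN\kappa_\calA + \alpha(M-1)n\delta - O(n\eta),
\end{equation*}
where the first $nN\kappa_\calA$ comes from $H(\mu^\omega, \calE_n^\omega)$ via \autoref{lem:rand-asym-e}. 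Comparing with the matching upper bound $H(\mu^\omega, \calE_{Mn}^\omega) \leq MnN\kappa_\calA + O(n\eta)$ from \autoref{lem:rand-asym-e} forces $\alpha(M-1)\delta \leq O(\eta)$, contradicting the fact that $\alpha, \delta$ are positive constants independent of $\eta$ once $\eta$ is chosen small enough at the outset. The main technical obstacle is the assembly step: the components of $\nu_n^\omega$ in adjacent $\calE_n^\omega$-cells can leak mass into the same cell of $\mu^\omega$, but this is absorbed by the $O(1)$-commensurability framework from \autoref{sec:dis-linear}.
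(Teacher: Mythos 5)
Your proposal is correct and follows essentially the same route as the paper's proof: argue by contradiction, extract from the large conditional entropy a positive-$\nu_n^\omega$-mass set of $\calE_n^\omega$-components whose rescalings $\zeta_x$ have positive entropy at scale $(M-1)n$, apply \autoref{thm:omega-e-increase} at $T^n\omega$ to gain $\delta$, and compare against the two-sided asymptotics from \autoref{lem:rand-asym-e}. The one over-complication is in your assembly step: there is no need to ``identify the $\calE_n^\omega$-components of $\mu^\omega$'' with $\tau_x * A^{\omega|n}\mu^{T^n\omega}$ up to multiplicity --- the paper simply applies concavity of $\Hof{\cdot, \calE_{Mn}^\omega \mid \calE_n^\omega}$ to the representation $\mu^\omega = \int \tau_x * A^{\omega|n}\mu^{T^n\omega}\, \diff\nu_n^\omega(x)$ coming from \eqref{eq:dyn-selfaff} and \eqref{eq:theta-E-decomp}, then rescales the integrand via \eqref{eq:Aomega-shift-part} at the cost of an $O(1)$ error, which is exactly the bounded-multiplicity effect you gesture at.
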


\begin{proof}
	Suppose on the contrary that there exist $ M > 1 $, $ \varepsilon \in (0, 1)$, $ n \in \bbN $ with $ \varepsilon^{-1} \ll n $, and $ \Omega_{1} \subset \Omega $ with $ \bfP(\Omega_{1}) \geq \varepsilon $ so that for $ \omega \in \Omega_{1}$,
	\begin{equation} \label{eq:pos-e}
		\frac{1}{n} \Hof{ \nu^{\omega}_{n}, \calE^{\omega}_{Mn} \mid \calE^{\omega}_{n} } \geq \varepsilon.
	\end{equation}
	Let $ \eta \in (0,1) $ be with
	\begin{equation}\label{eq:superdense-depends}
		\varepsilon^{-1}, M \ll \eta^{-1} \ll n.
	\end{equation}
	Let $ \Omega_{2} $ be the intersection of the $ \ol{\Omega} $'s obtained from \autoref{lem:rand-asym-e} and \autoref{thm:omega-e-increase} with $ \varepsilon, \eta $ in place of $ \varepsilon, \eta $. Then $ \bfP(\Omega_{2}) > 1 - 2 \eta $. Define $ \Omega_{3} := \Omega_{1} \intxn \Omega_{2} \intxn T^{-n} \Omega_{2} $. Since $ \bfP $ is $ T $-invariant, $ \bfP(T^{-n}\Omega_{2}) = \bfP(\Omega_{2}) > 1 - 2 \eta $. By $ \varepsilon^{-1} \ll \eta^{-1} $ we have $ \bfP(\Omega_{3}) > \varepsilon - 4\eta > \varepsilon / 2 > 0  $. In what follows we take $ \omega \in \Omega_{3} $.
	
	For $ x \in \euclid $, define $ \theta_{x}^{\omega} := A^{-\omega|n} (\nu^{\omega}_{n})_{\calE^{\omega}_{n}(x)} $. Then $ \diam (\supp \theta^{\omega}_{x}) = \bigO{1} $.
	Combining \eqref{eq:Aomega-shift-part},  \eqref{eq:entropy-int-comp} and \eqref{eq:pos-e} yields that
	\begin{equation*}
	 \int \frac{1}{n}\Hof{ \theta^{\omega}_{x} , \calE^{T^{n}\omega}_{(M-1)n} }  \, \diff \nu^{\omega}_{n}(x) = \int \frac{1}{n}\Hof{ (\nu^{\omega}_{n})_{\calE^{\omega}_{n}(x)}, \calE^{\omega}_{Mn} }  \, \diff \nu^{\omega}_{n}(x) = \frac{1}{n}\Hof{ \nu^{\omega}_{n}, \calE^{\omega}_{Mn} \mid \calE^{\omega}_{n} } \geq \varepsilon.
	\end{equation*}
	Since $ \frac{1}{n}\Hof{ \theta^{\omega}_{x} , \calE^{T^{n}\omega}_{(M-1)n} } \leq C(M-1) $ for some $ C > 0 $, from above there exists $ E \subset \euclid $ with $ \nu^{\omega}_{n}(E) > \varepsilon / (4C(M-1)) $ so that for $ x \in E $,
	\begin{equation*}\label{eq:e-in-E}
	 \frac{1}{n}\Hof{ \theta^{\omega}_{x} , \calE^{T^{n}\omega}_{(M-1)n} } > \frac{\varepsilon}{4}.
	\end{equation*}
	Hence by $ T^{n} \omega \in \Omega_{2}$ and \autoref{thm:omega-e-increase} there exists $ \delta = \delta(\varepsilon, M) > 0 $ so that
	\begin{equation}\label{eq:app-e-increase}
		\frac{1}{n}\Hof{ \theta^{\omega}_{x} * \mu^{T^{n}\omega}, \calE^{T^{n}\omega}_{(M-1)n}} \geq (M-1) N\kappa_{\calA}+(M-1)\delta.
	\end{equation}
	
	By $ \omega, T^{n} \omega \in \Omega_{2} $ and $ M \ll \eta^{-1} \ll n $, it follows from \autoref{lem:rand-asym-e} that
	\begin{equation}\label{eq:LB-mu-Tn-omega}
		\frac{1}{n} \Hof{ \mu^{T^{n}\omega}, \calE^{T^{n}\omega}_{(M-1)n} } > (M-1)N\kappa_{\calA} - \bigO{\eta},
	\end{equation}
	and
	\begin{equation}\label{eq:UB-mu-diff-omega}
		\frac{1}{n}\Hof{\mu, \calE^{\omega}_{Mn} \mid \calE^{\omega}_{n}} < (M-1) N\kappa_{\calA} + \bigO{\eta}.
	\end{equation}

	 Note that $\diam\left(\supp \theta^{\omega}_{x} * \mu^{T^{n}\omega}\right) = \bigO{1}$. From all above we estimate that,
	\begin{align*}
		 & \hspace{-2em} (M-1) N\kappa_{\calA} + \bigO{\eta} \\
		& \geq \frac{1}{n}\Hof{\mu, \calE^{\omega}_{Mn} \mid \calE^{\omega}_{n}} \hspace{21em} \by{\eqref{eq:UB-mu-diff-omega}} \\ &  = \frac{1}{n}\Hof{ \nu^{\omega}_{n} \ast A^{\omega|n} \mu^{T^{n}\omega}, \calE^{\omega}_{Mn} \mid \calE^{\omega}_{n}} \hspace{15em} \by{\eqref{eq:dyn-selfaff}} \\ 
		& \geq \int \frac{1}{n}\Hof{ (\nu^{\omega}_{n})_{\calE^{\omega}_{n}(x)} \ast A^{\omega|n} \mu^{T^{n}\omega}, \calE^{\omega}_{Mn} \mid \calE^{\omega}_{n}} \, \diff \nu^{\omega}_{n}(x) \hspace{1em} \by{concavity of entropy}\\
		& \geq \int \frac{1}{n}\Hof{ \theta^{\omega}_{x} \ast \mu^{T^{n}\omega}, \calE^{T^{n}\omega}_{(M-1)n}} \, \diff \nu^{\omega}_{n}(x) - \bigO{\eta} \hspace{3em} \by{\eqref{eq:Aomega-shift-part}} \\
		& \geq  \int_{\euclid\setminus E} \frac{1}{n}\Hof{  \mu^{T^{n}\omega}, \calE^{T^{n}\omega}_{(M-1)n}} \, \diff \nu^{\omega}_{n}(x) \hspace{2em} \by{concavity of entropy and \eqref{eq:almost-transinv}}  \\
		& \qquad + \int_{E} \frac{1}{n}\Hof{ \theta^{\omega}_{x} * \mu^{T^{n}\omega}, \calE^{T^{n}\omega}_{(M-1)n}} \, \diff \nu^{\omega}_{n}(x)  - \bigO{\eta} \\
		& \geq (1-\nu^{\omega}_{n}(E))((M-1)N\kappa_{\calA} - \bigO{\eta} ) \hspace{13em} \by{\eqref{eq:LB-mu-Tn-omega}}  \\ 
		& \qquad + \nu^{\omega}_{n}(E) ((M-1) N\kappa_{\calA}+(M-1)\delta)   - \bigO{\eta} \hspace{8.3em}  \by{\eqref{eq:app-e-increase}} \\		& = (M-1)N\kappa_{\calA} + \frac{\varepsilon \delta}{4C} - \bigO{\eta}. \hspace{7.5em} \by{$\nu^{\omega}_{n}(E) > \varepsilon / (4C(M-1))$}
	\end{align*}
	Then a rearrangement shows that
	\begin{equation*}
		 \frac{\varepsilon \delta }{C} < \bigO{\eta}.
	\end{equation*}
	This contradicts $ \delta = \delta(\varepsilon, M )$ and $ \varepsilon^{-1}, M \ll \eta^{-1} $. The proof is completed.
\end{proof}

\subsection{Proof of \autoref{thm:main-final}}

We begin with a lemma that relates the entropies of $ \nu^{\omega}_{n}$ and $ \mu^{\omega} $.

\begin{lemma}\label{lem:asym-nu-n}
	Let $ \eta \in (0, 1)$ and $ n \in \bbN$ be with $ \eta^{-1} \ll n $. Then for $ \omega \in \Omega $,
	\begin{equation*}
		\Abs{ \frac{1}{n} \Hof{\nu^{\omega}_{n}, \calE^{\omega}_{n} } - \frac{1}{n} \Hof{\mu^{\omega}, \calE^{\omega}_{n}  } } < \eta.
	\end{equation*}
\end{lemma}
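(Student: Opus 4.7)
The plan is to realize $\Hof{\mu^{\omega}, \calE^{\omega}_{n}}$ and $\Hof{\nu^{\omega}_{n}, \calE^{\omega}_{n}}$ as entropies of $\beta^{\omega}$ on the common symbolic space $\Lambda^{\bbN}$, pulled back by two maps that differ only by a perturbation of the size of one cell of $\calE^{\omega}_{n}$, and then invoke the commensurability estimate \autoref{lem:list-ests}\ref{itm:commensure}. Define $\pi_{n} \colon \Lambda^{\bbN} \to \euclid$ by $\pi_{n}(x) = \varphi_{x|nN}(0)$, so that $\nu^{\omega}_{n} = (\pi_{n})_{*}\beta^{\omega}$ (after grouping atoms by the value $\varphi_{u}(0)$), while $\mu^{\omega} = \Pi_{*}\beta^{\omega}$ by definition. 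For $\beta^{\omega}$-a.e.\ $x$ we have $A_{\varphi_{x|nN}} = A^{\omega|n}$, and the coding identity $\varphi_{x|nN}(\Pi(T^{n}x)) = \Pi(x)$ gives
\begin{equation*}
	\Pi(x) - \pi_{n}(x) = A^{\omega|n} \Pi(T^{n}x) \in A^{\omega|n} K_{\Phi},
\end{equation*}
whose $j$-th coordinate has absolute value at most $\lambda^{\omega|n}_{j} \cdot \diam(K_{\Phi}) \leq \lambda^{\omega|n}_{j}$, precisely the $j$-th side-length of cells of $\calE^{\omega}_{n}$.

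I would then show that the two partitions $\calC_{\Pi} := \Pi^{-1}\calE^{\omega}_{n}$ and $\calC_{\pi_{n}} := \pi_{n}^{-1}\calE^{\omega}_{n}$ of $\Lambda^{\bbN}$ are $C$-commensurable with $C = 3^{d}$. Indeed, if $\Pi^{-1}(E) \intxn \pi_{n}^{-1}(F) \neq \emptyset$ for some $E, F \in \calE^{\omega}_{n}$, choose $x$ in the intersection: then $\pi_{n}(x) \in E - A^{\omega|n} K_{\Phi}$, a set contained in a box whose $j$-th side has length $\leq 2\lambda^{\omega|n}_{j}$ and which therefore meets at most $3$ cells of $\calE^{\omega}_{n}$ in each coordinate, hence at most $3^{d}$ cells in total. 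Thus each cell of $\calC_{\Pi}$ intersects at most $3^{d}$ cells of $\calC_{\pi_{n}}$, and the reverse estimate follows by an identical argument after interchanging the roles of $\Pi$ and $\pi_{n}$.

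Applying \autoref{lem:list-ests}\ref{itm:commensure} to $\beta^{\omega}$ then yields
\begin{equation*}
	\Abs{\Hof{\beta^{\omega}, \calC_{\Pi}} - \Hof{\beta^{\omega}, \calC_{\pi_{n}}}} \leq d \log 3,
\end{equation*}
and \autoref{lem:list-identities}\ref{itm:list-pushfoward-H} (with trivial $\calG$) identifies $\Hof{\beta^{\omega}, \calC_{\Pi}} = \Hof{\mu^{\omega}, \calE^{\omega}_{n}}$ and $\Hof{\beta^{\omega}, \calC_{\pi_{n}}} = \Hof{\nu^{\omega}_{n}, \calE^{\omega}_{n}}$. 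Dividing by $n$ and using $\eta^{-1} \ll n$ gives the desired bound. No serious obstacle is anticipated; the only point requiring mild care is that the commensurability constant be uniform in $\omega$ and $n$, which holds automatically because the displacement $\Pi(x) - \pi_{n}(x)$ always lies in a box whose side-lengths exactly match those of the cells of $\calE^{\omega}_{n}$ for the same $\omega$ and $n$, regardless of their specific values.
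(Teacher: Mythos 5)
Your proof is correct and follows essentially the same route as the paper: realize $\mu^{\omega}$ and $\nu^{\omega}_{n}$ as pushforwards of $\beta^{\omega}$ under $\Pi$ and $x \mapsto \varphi_{x|nN}(0)$, bound the coordinate-wise displacement by $\lambda^{\omega|n}_{j}$, and conclude via commensurability of the pulled-back partitions. The only difference is cosmetic: the paper cites the prepared estimate \eqref{eq:fg-comm} for the commensurability step, whereas you re-derive it inline with the explicit constant $3^{d}$.
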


\begin{proof}
	Define $ \Pi^{nN} \colon \Lambda^{\bbN} \to \euclid $ by $ \Pi^{nN}(x) = \varphi_{x|nN}(0)$ for $ x \in \Lambda^{\bbN }$. 
	Since $ \mu^{\omega} = \Pi \beta^{\omega}$, $ \nu^{\omega}_{n} = \Pi^{nN} \beta^{\omega}$, and $ \Abs{ \pi_{j}\left ( \Pi(x) - \Pi^{nN}(x)\right ) } \leq \bigO{ \lambda_{j}^{\omega|n}}$ for $ 1 \leq j \leq d $, the proof is finished by \eqref{eq:fg-comm}.
\end{proof}

Next, we give some properties of the function defined in \eqref{eq:f-LyaDim}. Let $ 1 \leq j_{1} < \cdots < j_{s} \leq d $ and write $ J = \{j_{b}\}_{b=1}^{s}$. Recall the IFS $ \Phi_{J} $ from \eqref{eq:def-Phi-J}. By \eqref{eq:f-LyaDim},
\begin{equation}\label{eq:def-fJ}
		f_{\Phi_{J}}(x) =  \begin{cases}
	\ell + \frac{x - \sum_{b=1}^{\ell}\chi_{j_{b}} }{\chi_{j_{\ell+1}}}  & \text{if } x \in \left[\sum_{b=1}^{\ell}\chi_{j_{b}},\sum_{b=1}^{\ell+1}\chi_{j_{b}}\right) \text{ for some } 0 \leq \ell \leq s - 1; \\[0.5em]
		s \frac{x }{\sum_{b=1}^{s}\chi_{j_{b}}} & \text{if } x \in \left[\sum_{b=1}^{s} \chi_{j_{b}}, \infty\right).
	\end{cases}
\end{equation}
The following two lemmas provide the desired properties of $ f_{\Phi_{J}}$. Their proofs follow directly from the definition and are thus omitted.

\begin{lemma}\label{lem:fJ-maximum}
	For $ x \geq 0 $, write
	\begin{equation*}
		Y(x) := \left\{ (y_{1},\ldots, y_{s}) \in \bbR^{s}\colon 0 \leq y_{b} \leq \chi_{j_{b}} \text{ for } 1 \leq b \leq s \text{ and } \sum_{b=1}^{s} y_{b} \leq x \right\},
	\end{equation*}
	and let $ g \colon Y(x) \to [0, \infty)$ be defined as 
	\begin{equation*}
		g(y) = \sum_{b=1}^{s} \frac{y_{b}}{\chi_{j_{b}}} \mFor y = (y_{1}, \ldots, y_{s}) \in Y(x).
	\end{equation*}
	If $ f_{\Phi_{J}}(x) \leq s $, then $ \max_{y\in Y(x)} g(y) =  f_{\Phi_{J}}(x) $ and the maximal value is uniquely attained at
	\begin{equation*}
		\wt{y} := \left (\chi_{j_{1}}, \ldots, \chi_{j_{m}}, x - \sum_{b=1}^{m}\chi_{j_{b}}, 0, \ldots, 0 \right),
	\end{equation*}
	where $ m = \max \{ 0 \leq k \leq s \colon \sum_{b=1}^{k} \chi_{j_{b}} \leq x \} $.
\end{lemma}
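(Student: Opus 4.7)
The plan is to recognize Lemma~\ref{lem:fJ-maximum} as a linear program on the box-cum-simplex $Y(x) = \prod_{b=1}^s [0, \chi_{j_b}] \cap \{\sum y_b \leq x\}$, then exploit the strict monotonicity of the objective coefficients to deduce that the greedy point $\wt{y}$ is the unique maximizer. The key structural fact is that since $\chi_{j_1} < \cdots < \chi_{j_s}$ (by the standing assumption $\chi_1 < \cdots < \chi_d$), the coefficients $1/\chi_{j_1} > 1/\chi_{j_2} > \cdots > 1/\chi_{j_s}$ of the linear functional $g$ are strictly decreasing in $b$, so one gains the most by filling the lowest indices first, which is exactly what $\wt{y}$ does.

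First I would verify by direct computation that $g(\wt{y}) = f_{\Phi_J}(x)$. For $m < s$ the definition of $m$ places $x \in [\sum_{b\leq m}\chi_{j_b}, \sum_{b \leq m+1}\chi_{j_b})$, hence $\wt{y} \in Y(x)$ and
\[
g(\wt{y}) = \sum_{b=1}^m \frac{\chi_{j_b}}{\chi_{j_b}} + \frac{x - \sum_{b=1}^m \chi_{j_b}}{\chi_{j_{m+1}}} = m + \frac{x - \sum_{b=1}^m \chi_{j_b}}{\chi_{j_{m+1}}},
\]
matching the first case of \eqref{eq:def-fJ}. The hypothesis $f_{\Phi_J}(x) \leq s$ combined with the second case of \eqref{eq:def-fJ} forces $x \leq \sum_{b \leq s}\chi_{j_b}$; in the boundary case $m = s$ it further forces $x = \sum_b \chi_{j_b}$, so $\wt{y} = (\chi_{j_1}, \ldots, \chi_{j_s})$ and $g(\wt{y}) = s = f_{\Phi_J}(x)$. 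Hence $\wt{y} \in Y(x)$ and $\max_{Y(x)} g \geq f_{\Phi_J}(x)$.

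For the matching upper bound together with uniqueness, I would use an exchange (rearrangement) argument. Let $y^* \in Y(x)$ be a maximizer; the claim is $y^* = \wt{y}$. If $\sum_b y^*_b < x$, then not all $y^*_b$ can equal $\chi_{j_b}$, because otherwise $\sum_b y^*_b = \sum_b \chi_{j_b} \geq x$ contradicts $\sum_b y^*_b < x$; hence some coordinate with $y^*_b < \chi_{j_b}$ can be strictly increased while staying in $Y(x)$, strictly increasing $g$ and contradicting maximality. Therefore $\sum_b y^*_b = x = \sum_b \wt{y}_b$. If in addition $y^* \neq \wt{y}$, then $\sum_b (y^*_b - \wt{y}_b) = 0$ yields indices with $y^*_b > \wt{y}_b$ and with $y^*_{b'} < \wt{y}_{b'}$. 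Any such $b$ satisfies $\wt{y}_b < \chi_{j_b}$, forcing $b \geq m+1$; any such $b'$ satisfies $\wt{y}_{b'} > 0$, forcing $b' \leq m+1$. The case $b = b' = m+1$ is impossible, so $b' < b$. Transferring mass $\epsilon \in (0, \min(y^*_b, \chi_{j_{b'}} - y^*_{b'}))$ from $y^*_b$ to $y^*_{b'}$ keeps the point in $Y(x)$ and changes $g$ by $\epsilon(1/\chi_{j_{b'}} - 1/\chi_{j_b}) > 0$, again contradicting maximality. Therefore $y^* = \wt{y}$, proving both uniqueness and $\max g = g(\wt{y}) = f_{\Phi_J}(x)$.

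The argument is entirely elementary, so the main obstacle is just bookkeeping: carefully handling the edge cases $m = 0$ (where $\wt{y} = (x, 0, \ldots, 0)$) and $m = s$ (where the sum constraint in $Y(x)$ becomes automatic and the maximizer is the unique upper corner of the box, as covered by the last step of the direct verification), and confirming that the swap parameter $\epsilon$ can be chosen strictly positive, which follows from $y^*_b > \wt{y}_b \geq 0$ and $y^*_{b'} < \wt{y}_{b'} \leq \chi_{j_{b'}}$.
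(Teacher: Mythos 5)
Your proof is correct. The paper omits the proof of this lemma entirely (it is dismissed as following ``directly from the definition''), so there is no argument to compare against; your linear-programming/exchange argument is the natural way to fill in the detail. All the steps check out: the hypothesis $f_{\Phi_J}(x)\leq s$ does force $x\leq\sum_{b=1}^{s}\chi_{j_b}$, which is exactly what makes the first exchange step (``a maximizer must saturate $\sum_b y_b^*=x$'') valid; the index dichotomy $b\geq m+1$, $b'\leq m+1$ with $b=b'=m+1$ excluded correctly yields $b'<b$ and hence $\chi_{j_{b'}}<\chi_{j_b}$ by the standing assumption $\chi_1<\cdots<\chi_d$; and the edge cases $m=0$ and $m=s$ are handled.
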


\begin{lemma}\label{lem:fJ-UB}
	For $ x \geq 0 $ and $ 0 \leq m < s$,
	\begin{equation*}
		m + \frac{x - \sum_{b=1}^{m} \chi_{j_{b}}}{\chi_{j_{m+1}}} \geq \min \left\{s, f_{\Phi_{J}}(x) \right\}.
	\end{equation*}
\end{lemma}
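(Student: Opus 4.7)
The plan is to prove the inequality by analyzing the piecewise linear function $f_{\Phi_J}$ directly. Write $\Sigma_\ell := \sum_{b=1}^\ell \chi_{j_b}$ for $0 \leq \ell \leq s$ (with $\Sigma_0 = 0$), and define the affine function
\[
g_m(x) := m + \frac{x - \Sigma_m}{\chi_{j_{m+1}}}.
\]
The key observation is that $g_m$ coincides with $f_{\Phi_J}$ on the interval $[\Sigma_m, \Sigma_{m+1}]$, since that is precisely the piece of the definition of $f_{\Phi_J}$ with $\ell = m$. So the goal reduces to showing $g_m(x) \geq \min\{s, f_{\Phi_J}(x)\}$ for every $x \geq 0$. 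I would split this into two regimes.

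For $x \in [0, \Sigma_s]$, observe that $f_{\Phi_J}$ is piecewise linear on successive intervals $[\Sigma_\ell, \Sigma_{\ell+1}]$ with slope $1/\chi_{j_{\ell+1}}$. Because the assumption $\chi_{j_1} < \cdots < \chi_{j_s}$ makes the slopes $1/\chi_{j_1} > 1/\chi_{j_2} > \cdots > 1/\chi_{j_s}$ strictly decreasing in $\ell$, the function $f_{\Phi_J}$ is concave on $[0, \Sigma_s]$. Since $g_m$ is an affine function that matches $f_{\Phi_J}$ on the sub-interval $[\Sigma_m, \Sigma_{m+1}]$ (so in particular $g_m - f_{\Phi_J}$ vanishes at two distinct points there, and $g_m$ is the tangent/supporting line of the concave piecewise linear $f_{\Phi_J}$), concavity gives $g_m(x) \geq f_{\Phi_J}(x)$ throughout $[0, \Sigma_s]$, which establishes the inequality in this regime.

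For $x \geq \Sigma_s$, one has $f_{\Phi_J}(x) \geq s$, so $\min\{s, f_{\Phi_J}(x)\} = s$ and it suffices to check $g_m(x) \geq s$. Since $g_m$ is increasing, I only need to verify this at $x = \Sigma_s$, where
\[
g_m(\Sigma_s) \;=\; m + \frac{\Sigma_s - \Sigma_m}{\chi_{j_{m+1}}} \;=\; m + \sum_{b=m+1}^{s} \frac{\chi_{j_b}}{\chi_{j_{m+1}}}.
\]
Since $j_{m+1} < j_{m+2} < \cdots < j_s$ and $\chi_1 < \cdots < \chi_d$, each ratio $\chi_{j_b}/\chi_{j_{m+1}} \geq 1$ for $b \geq m+1$, so the sum is at least $s - m$ and $g_m(\Sigma_s) \geq s$, as required.

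There is no serious obstacle here; this is an elementary estimate about a concrete piecewise linear function, and the only thing to be careful about is correctly identifying that $f_{\Phi_J}$ is concave on $[0, \Sigma_s]$ (but generally not beyond, since on $[\Sigma_s, \infty)$ its slope $s/\Sigma_s$ can exceed the preceding slope $1/\chi_{j_s}$). This is exactly why the statement only claims $\min\{s, f_{\Phi_J}(x)\}$ rather than $f_{\Phi_J}(x)$ on the right-hand side, and the case split above handles this cleanly.
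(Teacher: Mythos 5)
Your proof is correct. The paper omits the proof of this lemma entirely (stating only that it "follows directly from the definition"), and your argument—observing that $f_{\Phi_{J}}$ is concave on $[0,\sum_{b=1}^{s}\chi_{j_{b}}]$ because the slopes $1/\chi_{j_{\ell+1}}$ decrease (using the standing assumption $\chi_{1}<\cdots<\chi_{d}$), so the affine piece $g_{m}$ dominates $f_{\Phi_{J}}$ there, together with the endpoint computation $g_{m}(\sum_{b=1}^{s}\chi_{j_{b}})\geq s$ for the remaining range—is a clean and complete way to carry out that direct verification, including the correct caveat about why the $\min\{s,\cdot\}$ is needed beyond $\sum_{b=1}^{s}\chi_{j_{b}}$.
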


Now we are ready to prove \autoref{thm:main-final}.

\begin{proof}[Proof of \autoref{thm:main-final}]
	
The proof is adapted from \cite[Theorem 1.7]{Rapaport2023} and proceeds by induction on $ d $. To address the parameter dependence arising from disintegration and to maintain clarity, we include all necessary details. Assume that the theorem holds whenever the dimension of the ambient space is strictly less than $ d $. For $ d =  1 $, this induction hypothesis is vacuous.

Let $ \emptyset \neq J \subsetneq [d]$. Since $ \pi_{j} \varphi_{x|n} = \pi_{j} \varphi_{y|n} $ implies $ \varphi_{x|n} = \varphi_{y|n}$, the partitions $ (\calC_{n})_{n\in\bbN}$ are the same for $\Phi_{J}$ and $ \Phi $. Thus $ h_{RW}(\Phi_{J}, \calA) = h_{RW}(\Phi, \calA) $ by \eqref{eq:def-hRW-calA}. Since $ A_{\varphi_{x|n}} = A_{\varphi_{y|n}}$ implies $ A_{\pi_{J}\varphi_{x|n}} = A_{\pi_{J} \varphi_{y|n}}$, the partition $ \calA $ also satisfies the assumption in the theorem for $ \Phi_{J}$. Note that $ \dim \pi_{J} \calA $ is the dimension of $ \pi_{J} \Pi \beta^{\omega} = \Pi^{\Phi_{J}} \beta^{\omega} $ for $ \bfP \aev \omega $, where $ \Pi^{\Phi_{J}}$ is the coding map associated with $ \Phi_{J} $. Hence by the induction hypothesis,
\begin{equation}\label{eq:induction}
	  \dim \pi_{J} \calA = \min \left\{ \abs{J}, f_{\Phi_{J}}(\hRWphiA) \right\} \mFor \emptyset \neq J \subsetneq [d].
\end{equation}
Since combining \autoref{thm:L-Y-formula} and \autoref{lem:fJ-maximum} implies that $ f_{\Phi}(\hRWphiA) $ is always an upper bound of $ \dim \calA $, we only need to show that if $ \dim \calA < d$, then
\begin{equation*}
	\dim \calA \geq \min \left\{ d, f_{\Phi}(\hRWphiA) \right\}.
\end{equation*}
In what follows we assume $ \dim \calA < d$.

First, suppose that $ \dim \pi_{[d-1]} \calA < d - 1 $. Then $ \dim \pi_{[d-1]} \calA = f_{\Phi_{[d-1]}}(\hRWphiA) $ by \eqref{eq:induction}. It follows from \eqref{eq:def-fJ} that $ f_{\Phi_{[d-1]}}(\hRWphiA) = f_{\Phi}(\hRWphiA) $. Hence $ \dim \calA \geq \dim \pi_{[d-1]}\calA = f_{\Phi}(\hRWphiA) $.

Next, suppose $ \dim \pi_{[d-1]} \calA = d - 1$ and $ \dim \pi_{J} \calA < \abs{J} $ for some $ \emptyset \neq J \subsetneq [d]$.  Then $\dim \pi_{J} \calA = f_{\Phi_{J}}(\hRWphiA) $ by \eqref{eq:induction}. Write $ J = \{ j_{b}\}_{b=1}^{s}$ with $ j_{1} < \cdots < j_{s}$, and set $ J_{b} = \{j_{1}, \ldots, j_{b}\}$ for $ 0 \leq b \leq s $. It follows from \autoref{thm:L-Y-formula} that
\begin{equation*}
	\sum_{b=1}^{s} \frac{\Delta_{b}}{\chi_{j_{b}}} = f_{\Phi_{J}}(\hRWphiA),
\end{equation*}
where $ \Delta_{b} := \hcalCA{J_{b-1}} - \hcalCA{J_{b}} \leq \chi_{j_{b}}$ for $ 1 \leq b \leq s $. Recall $ \hcalCA{\emptyset} =  \hRWphiA $ by definition. Then \autoref{lem:fJ-maximum} implies that $ \hRWphiA - \hcalCA{J} = \sum_{b=1}^{s} \Delta_{b} = \hRWphiA $, and so $ \hcalCA{J} = 0 $. This shows $ \hcalCA{[d]} = 0 $ by \eqref{eq:def-hCalCA} and $ \xi_{J} \prec \xi_{[d]}$.
From $ \dim \pi_{[d-1]} \calA = d - 1 $ and \autoref{lem:fJ-maximum} it follows that
\begin{equation*}
	\hcalCA{[j-1]} - \hcalCA{[j]} = \chi_{j} \mFor 1 \leq j \leq d - 1.
\end{equation*}
Thus,
\begin{equation*}
	\hcalCA{[d-1]} - \hcalCA{[d]} = \hRWphiA - \sum_{j=1}^{d-1} \chi_{j}.
\end{equation*}
Combining the last two equations with \autoref{thm:L-Y-formula} gives
\begin{equation*}
	\dim \calA = \sum_{j=1}^{d} \frac{\hcalCA{[j-1]}-\hcalCA{[j]}}{\chi_{j}} = d - 1 + \frac{\hRWphiA-\sum_{j=1}^{d-1}\chi_{j}}{\chi_{d}} \geq \min \left\{d, f_{\Phi}(\hRWphiA)  \right\},
\end{equation*}
where the last inequality is by \autoref{lem:fJ-UB}.

Finally, suppose $ \dim \pi_{[d-1]} \calA = d - 1$ and $ \dim \pi_{J} \calA = \abs{J}$ for each $ J \subsetneq [d]$. Recall $ S_{n}(\Phi_{j}) $,  $ 1 \leq j \leq d $ from \eqref{eq:def-Diop-sep}. For $ n \in \bbN $, define $ S_{n}(\Phi) = \max_{1\leq j \leq d} S_{n}(\Phi_{j})$, and for $ \omega \in \Omega $, define
\begin{equation*}
	S_{n}^{\omega}(\Phi) = \min \left \{ \max_{1 \leq j \leq d} d(\varphi_{u, j}, \varphi_{v, j}) \colon u, v\in \Lambda^{nN}, \beta^{\omega}([u]) > 0, \beta^{\omega}([v])>0, \, \psi_{u} \neq \psi_{v} \right\},
\end{equation*}
with convention $ \min \emptyset = 0 $. Thus $ S_{n}^{\omega}(\Phi) > 0 $ implies $ S_{n}^{\omega}(\Phi) \geq S_{nN}(\Phi)$. Since $ \Phi_{j} $ is Diophantine for $ 1 \leq j \leq d $, there exists $ c >  0$ such that $ S_{n}(\Phi) > c^{n} $ for infinitely many $ n \in \bbN $. By pigeonholing, there exists $ 0 \leq l \leq N - 1 $ such that $ S_{nN+l}(\Phi) > c^{nN+l}$ for infinitely many $ n \in \bbN $. Thus,
\begin{equation}\label{eq:S_nN-LB}
	 S_{nN}(\Phi) \geq S_{nN+l}(\Phi) > c^{nN+l} \geq  (c^{2N})^{n}.
\end{equation}
In what follows we let $ \eta \in (0,1)$ and $ n \in \bbN $ be with $ \eta^{-1} \ll n $ such that \eqref{eq:S_nN-LB} holds for $ n $. Take $ M $ large enough so that $ 2 r_{\max}^{MN} < c^{2N}$.

Let $ \omega \in \Omega$. If $ S^{\omega}_{n}(\Phi) = 0 $, then $  \Hof{\nu_{n}^{\omega}, \calE^{\omega}_{Mn}} = \Hof{\beta^{\omega}, \calC_{nN}} = 0$; If $ S^{\omega}_{n}(\Phi) > 0  $, then $ S^{\omega}_{n}(\Phi) \geq S_{nN}(\Phi) > (c^{2N})^{n} $ by \eqref{eq:S_nN-LB}. From this, \eqref{eq:MinMaxSize} and  $ 2 r_{\max}^{MN} < c^{2N}$, it follows that  $ \Hof{ \nu^{\omega}_{n}, \calE^{\omega}_{Mn} } = \Hof{ \beta^{\omega}, \calC_{nN}} $. Hence,
\begin{equation}\label{eq:part->shannon}
	\Hof{ \nu^{\omega}_{n}, \calE^{\omega}_{Mn} } = \Hof{ \beta^{\omega}, \calC_{nN}} \mFor \omega \in \Omega.
\end{equation}

Let $ \ol{\Omega} $ be the intersection of the $\ol{\Omega}$'s obtained from \autoref{lem:rand-asym-e} with $ \eta, n $ in place of $ \eta, n$, and \autoref{thm:super-exp} with $ \eta, n $ in place of $ \varepsilon, n$. Then $ \bfP(\ol{\Omega}) > 1 - \bigO{\eta} $. For $ \omega \in \ol{\Omega}$, we have
\begin{align*}
	N \kappa_{\calA} &  > \frac{1}{n} \Hof{\mu^{\omega}, \calE^{\omega}_{n} } - \eta  & \by{\autoref{lem:rand-asym-e}}  \\ & >  \frac{1}{n} \Hof{\nu^{\omega}_{n}, \calE^{\omega}_{n} } - \bigO{\eta} & \by{\autoref{lem:asym-nu-n}} \\
	& > \frac{1}{n} \Hof{\nu^{\omega}_{n}, \calE^{\omega}_{Mn} } - \bigO{\eta}  & \by{\autoref{thm:super-exp}}\\
	& = \frac{1}{n}\Hof{ \beta^{\omega}, \calC_{nN}} - \bigO{\eta}. & \by{\eqref{eq:part->shannon}}
\end{align*}
Note that $\bfP(\ol{\Omega}) > 1 - \bigO{\eta}$ and $\Hof{ \beta^{\omega}, \calC_{nN}}/(nN) \leq H(p) $. From above, taking integral for $ \omega $ in $ \ol{\Omega} $ with respect to $ \bfP $ gives
\begin{align*}
	\kappa_{\calA} & \geq \int_{\ol{\Omega}} \frac{1}{nN}\Hof{ \beta^{\omega}, \calC_{nN}} \, \diff\bfP(\omega) - \bigO{\eta} \\ 
	& \geq \int_{\Omega} \frac{1}{nN}\Hof{ \beta^{\omega}, \calC_{nN}} \, \diff\bfP(\omega) - \bigO{\eta} \\
	& = \frac{1}{nN} \Hof{ \beta, \calC_{nN} \mid \wh{\calA} } - \bigO{\eta} & \by{\eqref{eq:int-omega-e}}\\
	& \geq \hRWphiA - \bigO{\eta}. & \by{\eqref{eq:def-hRW-calA}}
\end{align*}
Letting $ \eta \to  0 $ shows that $ \kappa_{\calA} \geq \hRWphiA $. Then by \eqref{eq:def-kappa-A} and \autoref{lem:fJ-UB},
\begin{equation*}
	\dim \calA \geq d-1 + \frac{ \hRWphiA - \sum_{j=1}^{d-1} \chi_{j}}{\chi_{d}} \geq \min\left\{d, f_{\Phi}(\hRWphiA) \right\} .
\end{equation*}
This finishes the proof of the final case, and so \autoref{thm:main-final}.
\end{proof}

\end{document}